\def\normo#1{\left\|#1\right\|}
\def\normb#1{\big\|#1\big\|}
\def\abs#1{|#1|}
\def\aabs#1{\left|#1\right|}
\def\brk#1{\left(#1\right)}
\def\rev#1{\frac{1}{#1}}
\def\half#1{\frac{#1}{2}}
\def\norm#1{\|#1\|}
\def\jb#1{\langle#1\rangle}
\def\wt#1{\widetilde{#1}}
\newcommand{\N}{{\mathbb N}}
\newcommand{\T}{{\mathbb T}}
\newcommand{\R}{{\mathbb R}}
\newcommand{\C}{{\mathbb C}}
\newcommand{\Z}{{\mathbb Z}}
\newcommand{\ft}{{\mathcal{F}}}
\newcommand{\Hl}{{\mathcal{H}}}
\newcommand{\les}{{\lesssim}}
\newcommand{\ges}{{\gtrsim}}
\newcommand{\ra}{{\rightarrow}}
\newcommand{\Sch}{{\mathcal{S}}}
\newcommand{\sub}{{\mbox{sub}}}
\newcommand{\thd}{{\mbox{thd}}}
\numberwithin{equation}{section}
\theoremstyle{plain}
  \newtheorem{theorem}[subsection]{Theorem}
  \newtheorem{proposition}[subsection]{Proposition}
  \newtheorem{lemma}[subsection]{Lemma}
  \newtheorem{corollary}[subsection]{Corollary}
    \newtheorem*{conj}{Conjecture}
\theoremstyle{remark}
  \newtheorem{remark}[subsection]{Remark}
\theoremstyle{definition}
  \newtheorem{definition}[subsection]{Definition}
\newenvironment{proof}{\noindent {\bf Proof.} }{\endprf\par}
\def \endprf{\hfill  {\vrule height6pt width6pt depth0pt}\medskip}
\begin{document}
\title[Modified Benjamin-Ono equation]{Local Well-posedness and a priori bounds for the modified Benjamin-Ono equation without using a gauge
transformation}
\thanks{This work is supported in part by RFDP of China No.
20060001010, the National Science Foundation of China, grant
10571004; and the 973 Project Foundation of China, grant
2006CB805902, and the Innovation Group Foundation of NSFC, grant
10621061.}
\author{Zihua Guo}
\address{LMAM, School of Mathematical Sciences, Peking University, Beijing
100871, China}

\email{zihuaguo@@math.pku.edu.cn}

\urladdr{http://www.math.pku.edu.cn:8000/blog/gallery/167/}

\begin{abstract}
We prove that the complex-valued modified Benjamin-Ono (mBO)
equation is locally wellposed if the initial data $\phi$ belongs to
$H^s$ for $s\geq 1/2$ with $\norm{\phi}_{L^2}$ sufficiently small
without performing a gauge transformation. Hence the real-valued mBO
equation is globally wellposed for those initial datas, which is
contained in the results of C. Kenig and H. Takaoka \cite{KenigT}
where the smallness condition is not needed. We also prove that the
real-valued $H^\infty$ solutions to mBO equation satisfy a priori
local in time $H^s$ bounds in terms of the $H^s$ size of the initial
data for $s>1/4$.
\end{abstract}

\keywords{Modified Benjamin-Ono equation, Global wellposedness, A
priori bounds}

\maketitle

\tableofcontents

\section{Introduction}
In this paper, we study the initial value problem for the
(defocusing) modified Benjamin-Ono equation of the form (also the
equation with focusing nonlinearity of the form $-u^2u_x$ can be
treated by our method)
\begin{eqnarray}\label{eq:mBO}
\begin{array}{l}
u_t+\Hl u_{xx}=u^2u_x,\ (x,t)\in \R^2,\\
u(x,0)=\phi(x),
\end{array}
\end{eqnarray}
where $u: \R^2 \ra \C$ is a complex-valued function and $\Hl$ is the
Hilbert transform which is defined as following
\begin{equation}\label{eq:Htran}
\Hl u(x)=\frac{1}{\pi} \mbox{p.v.}
\int_{-\infty}^{+\infty}\frac{u(y)}{x-y}dy.
\end{equation}
The equation with quadratic nonlinearity
\begin{equation}\label{eq:BO}
u_t+\Hl u_{xx}=uu_x
\end{equation}
was derived by Benjamin \cite{Ben} and Ono \cite{Ono} as a model for
one-dimensional waves in deep water. On the other hand, the cubic
nonlinearity is also of much interest for long wave models
\cite{ABFS,KPV}.

The initial value problems for \eqref{eq:mBO} and for the
Benjamin-Ono equation \eqref{eq:BO} have been extensively studied
\cite{BL, CKS, GV, GV2, GV3, HN, Iorio, KK, KPV2, KPV3, KenigT, KT,
KT2, MR, MR2, Ponce}. For instance, the energy method provides the
wellposedness on the Sobolev space $H^s$ for $s>3/2$ \cite{Iorio}.
For the Benjamin-Ono equation \eqref{eq:BO}, it has been know \cite
{KK, KT} that this is locally wellposed for $s>9/8$ by the
refinement of the energy method and dispersive estimates. Tao
\cite{Tao} obtained the global wellposedness in $H^s$ for $s\geq 1$
by performing a gauge transformation as for the derivative
Schr\"odinger equation and using the conservation law. This result
was improved by Ionescu and Kenig \cite{IK} who obtained global
wellposedness for $s\geq 0$, and also by Burq and Planchon \cite{BP}
who obtained local wellposedness for $s>1/4$.

For the modified Benjamin-Ono equation \eqref{eq:mBO}, Molinet and
Ribaud \cite{MR} obtained the local wellposedness in Sobolev space
$H^s$ for $s>1/2$. Their proof is based on Tao's gauge
transformation \cite{Tao}. The result for $s=1/2$ has been obtained
by Kenig and Takaoka \cite{KenigT} by using frequency dyadically
localized gauge transformation. Their result was sharp in the sense
that the solution map is not uniformly continuous in $H^s$ for
$s<1/2$. With the Sobolev space $H^s$ replaced by the Besov space
$B_{2,1}^s$, the result has been obtained in \cite{MR2} under a
smallness condition on the data. To the author's knowledge, these
results are all restricted to the real-valued mBO equation where the
gauge is easy to handle. Our method in this paper can also deal with
the complex-valued mBO equation.

The mBO equation \eqref{eq:mBO} has several symmetries. The first
one is the scaling invariance
\begin{equation}\label{eq:scaling}
u(x,t)\ra\
u_\lambda=\frac{1}{\lambda^{1/2}}u(\frac{x}{\lambda},\frac{t}{{\lambda^2}}),
\quad \phi_\lambda=\frac{1}{\lambda^{1/2}}\phi(\frac{x}{\lambda}),
\end{equation}
which leads to the constraint $s\geq 0$ on the wellposedness for
\eqref{eq:mBO}. We see that the equation \eqref{eq:mBO} is $L^2$
critical, hence the $L^2$ norm of the initial data is not
automatically small by the scaling, which is the main reason for us
assuming the initial data has a small $L^2$ norm. There are at least
the following three conservation laws preserved under the flow of
the real-valued mBO equation \eqref{eq:mBO}
\begin{eqnarray}\label{eq:conservation}
&&\frac{d}{dt}\int_\R u(x,t)dx=0, \\
&&\frac{d}{dt}\int_\R u(x,t)^2dx=0,\label{eq:L2con}\\
&&\frac{d}{dt}\int_\R \frac{1}{2}u\Hl u_x-\frac{1}{12}
|u(x,t)|^4dx=0.\label{eq:H1half}
\end{eqnarray}
These conservation laws provide a priori bounds on the solution. For
instance, we get from \eqref{eq:L2con} and \eqref{eq:H1half} that
the $H^{1/2}$ norm of the solution remains bounded for finite time
if the initial data belongs to $H^{1/2}$. Thus once one obtains a
solution of existence interval with a length determined by the
$H^{1/2}$ norm of the initial data, then the solution is
automatically extended to a global one.

In the first part of this paper, we reprove the results of Kenig and
Takaoka \cite{KenigT} without using a gauge transformation, but
under an extra condition that the $L^2$ norm of the initial data is
small. Since we don't perform a gauge transformation, our proof also
works for the complex-valued Cauchy problem \eqref{eq:mBO}. Our
method is to use the standard fixed-point argument in some Banach
space. Bourgain's space $X^{s,b}$ defined as a closure of the
following space
\[\{f\in \Sch(\R^2):\norm{f}_{X^{s,b}}=\norm{\jb{\xi}^s\jb{\tau-\omega(\xi)}^b\widehat{f}(\xi,\tau)}_{L^2}\}\]
is very useful in the study of the low regularity theory of the
nonlinear dispersive equations \cite{Bour, KPV4, IKT}. One might try
a direct perturbative approach in $X^{s,b}$ space as Kenig, Ponce
and Vega \cite{KPV4} did for the KdV and modified KdV equations.
However, one will find that the key trilinear estimate
\begin{equation}\label{eq:trilinearX}
\norm{\partial_x(u^3)}_{X^{s,b-1}}\les \norm{u}_{X^{s,b}}^3, \
\mbox{ for some } b\in [1/2,1)
\end{equation}
 fails for any $s$ due to logarithmic
divergences involving the modulation variable (see Proposition
\ref{countertrilinear}, \ref{counterXsb} below). The key observation
in this paper is that these logarithmic divergences can be removed
by us using Banach spaces which combine $X^{s,b}$ structure with
smoothing effect structure. The spaces of these structures were
first found and used by Ionescu and Kenig \cite{IK} to remove some
logarithmic divergence.

\begin{theorem}\label{t11}
Let $s\geq 1/2$. Assume $u_0 \in {H}^{s}$ with $\norm{u_0}_{L^2}\ll
1$. Then

(a) Existence. There exists $T=T(\norm{u_0}_{H^{1/2}})>0$ and a
solution $u$ to the complex-valued mBO equation \eqref{eq:mBO} (or
its focusing version) satisfying
\begin{equation}
u\in F^{s}(T)\subset C([-T,T]:H^{s}),
\end{equation}
where the function space $F^{s}(T)$ will be defined later (see
section 2).

(b) Uniqueness. For the real-valued case, the solution mapping
$u_0\rightarrow u$ is the unique extension of the mapping
$H^\infty\rightarrow C([-T,T]:H^\infty)$. For the complex-valued
case, the solution $u$ is unique in $B(u_0)$ which is defined in
\eqref{eq:Bu0}.

(c) Lipschitz continuity. For any $R>0$, the mapping $u_0\rightarrow
u$ is Lipschitz continuous from $\{u_0\in
H^{s}:\norm{u_0}_{H^{s}}<R, \norm{u_0}_{L^2}\ll 1 \}$ to
$C([-T,T]:H^{s})$.
\end{theorem}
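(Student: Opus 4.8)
The plan is to run the standard contraction-mapping argument for the Duhamel formulation
\[
u(t)=W(t)\phi-\int_0^t W(t-t')\,\big(u^2u_x\big)(t')\,dt',
\]
in the resolution space $F^s(T)$ (to be defined in Section 2) that combines the $X^{s,b}$ structure with a smoothing-effect piece, so that the logarithmic divergence obstructing \eqref{eq:trilinearX} is absorbed. The three clauses (a)--(c) all follow from one analytic input: a \emph{trilinear estimate} of the form
\[
\norm{\partial_x(u_1u_2u_3)}_{N^s(T)}\les \prod_{j=1}^3\norm{u_j}_{F^s(T)},
\]
together with the linear estimates $\norm{W(t)\phi}_{F^s(T)}\les\norm{\phi}_{H^s}$ and the energy/Duhamel estimate $\norm{\int_0^t W(t-t')F(t')dt'}_{F^s(T)}\les\norm{F}_{N^s(T)}$ for the dual space $N^s(T)$. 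Granting these, the map $\Lambda u := W(t)\phi-\int_0^t W\,(u^2u_x)$ sends a ball of $F^s(T)$ of radius $O(\norm{\phi}_{H^s})$ into itself and is a contraction there, provided the implicit constant times $\norm{\phi}_{H^s}^2$ (or, after localizing the low frequencies, $\norm{\phi}_{L^2}$ times $\norm{\phi}_{H^s}$) is $\ll 1$; this is exactly where the smallness hypothesis $\norm{u_0}_{L^2}\ll1$ enters, and it is why $T$ may be taken to depend only on $\norm{\phi}_{H^{1/2}}$ once $s\ge 1/2$ (higher Sobolev norms propagate by the same trilinear estimate without shrinking $T$). Uniqueness in $B(u_0)$ and the Lipschitz dependence on $\phi$ come from applying the trilinear estimate to differences $u-v$ in the usual way, and the real-valued uniqueness statement follows by approximating $H^\infty$ data and passing to the limit.

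The genuine obstacle is proving the trilinear estimate, i.e.\ showing that the space $F^s(T)$ is rich enough to kill the logarithmic loss identified in Propositions \ref{countertrilinear} and \ref{counterXsb}. I would decompose dyadically in the spatial frequencies of $u_1,u_2,u_3$ and in the output, and in the modulation variables $\jb{\tau_j-\xi_j|\xi_j|}$. The resonance function for $\Hl\partial_x^2$ on $\R$ is
\[
\xi_1|\xi_1|+\xi_2|\xi_2|+\xi_3|\xi_3|-\xi|\xi|,\qquad \xi=\xi_1+\xi_2+\xi_3,
\]
and the danger zone is the \emph{high-low-low} interaction in which two of the inputs have comparable high frequency of opposite sign, producing a small resonance and a large output frequency; this is precisely the configuration where the pure $X^{s,b}$ norm loses a logarithm. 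In that regime I would not estimate in $X^{s,b-1}$ but instead place the output in the smoothing-effect component of $N^s(T)$ (an $L^\infty_x L^2_t$-type norm) and pair it against an $L^2_xL^\infty_t$ or maximal-function bound for the two high-frequency factors; the $1/4$-smoothing gain of the Airy-type propagator compensates the derivative. In the remaining cases (all frequencies comparable, or a genuinely large modulation somewhere) the standard $L^4$/bilinear-Strichartz machinery for $X^{s,b}$ suffices with room to spare for $s\ge1/2$.

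For the a priori bound statement for $s>1/4$ (the second advertised result, proved later in the paper), the same framework is used but the contraction is given up: one works with $H^\infty$ solutions and proves an \emph{energy estimate} in $F^s(T)$ via the trilinear estimate applied to the equation satisfied by a Littlewood--Paley piece, together with a short-time $X^{s,b}$ theory on time intervals of length $\sim 2^{-k}$ at frequency $2^k$ (as in the Ionescu--Kenig--Tataru scheme), summing the frequency-localized bounds to control $\norm{u}_{H^s}$ on a fixed time interval in terms of $\norm{\phi}_{H^s}$; here the $L^2$-smallness is not needed because one only seeks bounds, not the contraction.
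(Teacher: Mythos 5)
Your top-level strategy matches the paper's: contraction mapping for the truncated Duhamel equation in $F^s$, with a linear estimate (Lemma~\ref{l61}), a retarded linear estimate (Lemma~\ref{l62}), and a trilinear estimate (Proposition~\ref{p63}), and with the logarithmic loss in \eqref{eq:trilinearX} absorbed by an auxiliary smoothing-effect component of the resolution space. Within that outline there are, however, two concrete gaps that would prevent the argument from closing as you wrote it.

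First, you misidentify the dangerous frequency interaction, and your assignment of estimates to the factors would not balance the derivative loss. You describe the danger as two high-frequency inputs of opposite sign with one low input and a high output; but two nearly opposite high frequencies plus one low frequency sum to a \emph{low} output, so this configuration simply cannot occur. The actual obstruction (Proposition~\ref{countertrilinear}) is $low\times low\times high\to high$ with both low frequencies at unit scale and the \emph{same} sign, where the resonance is only $\sim 2^{k}$ rather than $\sim 2^{2k}$, allowing the $\ell^1$ sum over output modulations $j\lesssim k$ to produce a factor of $k$. To kill it, Proposition~\ref{p51} must place the $L^\infty_xL^2_t$ smoothing bound (gain $2^{-k/2}$, Lemma~\ref{l34}) on the single \emph{high}-frequency input and the $L^2_xL^\infty_t$ maximal bound (loss $2^{k_i/2}$) on the two \emph{low}-frequency inputs; you propose the opposite, and under that assignment the $2^{k_4}$ from $\partial_x$ is not cancelled. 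You also omit the weights $\beta_{k,j}=1+2^{(j-2k)/2}$ in \eqref{eq:betakj}, which are a second and independent device, needed when a \emph{low}-frequency factor carries the largest modulation. Second, your account of where $\norm{u_0}_{L^2}\ll 1$ enters is not the mechanism in the paper, which is a \emph{scaling} argument: one works on the fixed time interval $[-1,1]$, and by \eqref{eq:scaling} $\norm{\phi_\lambda}_{L^2}=\norm{\phi}_{L^2}$ is unchanged while $\norm{\phi_\lambda}_{\dot{H}^s}=\lambda^{-s}\norm{\phi}_{\dot{H}^s}$ shrinks; since the closing factor in Proposition~\ref{p63} is $\norm{\cdot}_{F^{1/2}}^2$, one needs $\norm{\phi_\lambda}_{H^{1/2}}\ll 1$, which is achievable only because $\norm{\phi}_{L^2}\ll1$, and undoing the scaling gives $T\sim \lambda^{-2}$ depending only on $\norm{u_0}_{H^{1/2}}$. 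Without the scaling step you would need both $\norm{u_0}_{L^2}\ll1$ and $\norm{u_0}_{H^s}\ll 1$, which is not what Theorem~\ref{t11} asserts.
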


For the real-valued mBO equation, from the conservation laws
\eqref{eq:L2con}, \eqref{eq:H1half}, and iterating Theorem
\ref{t11}, we obtain the following corollary.

\begin{corollary}
The Cauchy problem for the real-valued mBO equation \eqref{eq:mBO}
(or its focusing version) is globally wellposed if $\phi$ belongs to
$H^s$ for $s\geq 1/2$ with $\norm{\phi}_{L^2}$ sufficiently small
\end{corollary}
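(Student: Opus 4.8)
The plan is to deduce the corollary from Theorem~\ref{t11} and the conservation laws by a standard iteration-in-time argument, exploiting that the local existence time depends only on the $H^{1/2}$ norm of the data and that this norm, together with the $L^2$ norm, stays controlled for all time. First I would record that for a real-valued $H^s$ initial datum $\phi$ with $s\geq 1/2$ and $\norm{\phi}_{L^2}\ll 1$, Theorem~\ref{t11} furnishes a solution $u\in C([-T,T]:H^s)$ on a time interval of length $T=T(\norm{\phi}_{H^{1/2}})>0$; since $H^s\hookrightarrow H^{1/2}$ for $s\geq 1/2$, this applies. The conservation law \eqref{eq:L2con} gives $\norm{u(t)}_{L^2}=\norm{\phi}_{L^2}$, so in particular $\norm{u(t)}_{L^2}\ll 1$ remains valid at every time the solution exists, which is exactly the smallness hypothesis needed to re-apply Theorem~\ref{t11} at any later time.

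Next I would show the $H^{1/2}$ norm is bounded on the maximal interval of existence. Combining the conserved quantities \eqref{eq:L2con} and \eqref{eq:H1half}, one has, for real-valued $u$,
\begin{equation*}
\int_\R \tfrac12 u\Hl u_x\,dx = \int_\R \tfrac12 u\Hl u_x\,dx\Big|_{t=0} + \tfrac{1}{12}\int_\R |u(x,t)|^4\,dx - \tfrac{1}{12}\int_\R |\phi(x)|^4\,dx.
\end{equation*}
The left side is comparable to $\norm{D^{1/2}u(t)}_{L^2}^2$ (with $D=|\partial_x|$, using $\int u\Hl u_x = \int |D^{1/2}u|^2$ for real $u$), and by the Gagliardo--Nirenberg inequality $\norm{u}_{L^4}^4\les \norm{u}_{L^2}^3\norm{D^{1/2}u}_{L^2}$; since $\norm{u(t)}_{L^2}=\norm{\phi}_{L^2}\ll1$, the quartic term is absorbed into the quadratic one. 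This yields a bound $\norm{u(t)}_{H^{1/2}}\les C(\norm{\phi}_{H^{1/2}})$ uniform in $t$, hence a uniform lower bound $T_0=T_0(\norm{\phi}_{H^{1/2}})>0$ on the length of the local existence interval guaranteed by Theorem~\ref{t11}, wherever we restart.

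Finally I would run the continuation argument: suppose the maximal time $T^*$ of existence in $C([0,T^*):H^s)$ were finite. Pick $t_1<T^*$ with $T^*-t_1<T_0/2$; by the uniform $H^{1/2}$ bound and the persistence-of-regularity part of Theorem~\ref{t11}, $u(t_1)\in H^s$ with $\norm{u(t_1)}_{L^2}\ll1$, so Theorem~\ref{t11} extends the solution past $t_1+T_0>T^*$, contradicting maximality. The same works for negative times by time-reversal symmetry, so the solution is global; uniqueness and Lipschitz dependence on compact $H^s$ balls propagate from the local statement by the usual gluing on finitely many subintervals covering any $[-T,T]$. The main technical point to verify carefully is the coercivity/absorption step in the previous paragraph — that the conserved energy \eqref{eq:H1half} genuinely controls $\norm{u(t)}_{\dot H^{1/2}}$ once $\norm{\phi}_{L^2}$ is small — since everything else is a routine bootstrap; I do not anticipate a serious obstacle beyond bookkeeping.
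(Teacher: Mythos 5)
Your approach---use the $L^2$ and $H^{1/2}$-level conservation laws \eqref{eq:L2con}, \eqref{eq:H1half} to get uniform a priori bounds, note that the smallness hypothesis is preserved since $\norm{u(t)}_{L^2}=\norm{\phi}_{L^2}$, and then iterate the local result of Theorem~\ref{t11}---is exactly the argument the paper has in mind, and the continuation/gluing mechanics you describe are correct.

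One slip in the coercivity step: the Gagliardo--Nirenberg inequality you quote, $\norm{u}_{L^4}^4\les\norm{u}_{L^2}^3\norm{D^{1/2}u}_{L^2}$, has the wrong exponents and fails by scaling (under $u(x)\mapsto u(x/\lambda)$ the left side scales like $\lambda$, the right side like $\lambda^{3/2}$). The correct 1D version is
\begin{equation*}
\norm{u}_{L^4}^4\ \les\ \norm{u}_{L^2}^2\,\norm{D^{1/2}u}_{L^2}^2,
\end{equation*}
coming from $\dot H^{1/4}\hookrightarrow L^4$ and interpolation of $\dot H^{1/4}$ between $L^2$ and $\dot H^{1/2}$. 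With the correct inequality the absorption is in fact cleaner and more decisive: from the conserved energy \eqref{eq:H1half} one gets
\begin{equation*}
\tfrac12\norm{D^{1/2}u(t)}_{L^2}^2\ \leq\ \aabs{E(\phi)}+C\,\norm{\phi}_{L^2}^2\,\norm{D^{1/2}u(t)}_{L^2}^2,
\end{equation*}
and since $\norm{\phi}_{L^2}\ll 1$ the last term is a small multiple of the left side and absorbs directly, giving $\norm{u(t)}_{\dot H^{1/2}}\les\norm{\phi}_{H^{1/2}}$ uniformly on the existence interval; no Young's inequality is needed. With this correction the rest of your proof is sound and matches the paper's intended route.
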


In the second part, we study the low regularity problem of the
real-valued mBO equation \eqref{eq:mBO}. From the ill-posedness
result in \cite{KenigT}, we see that for $s<1/2$ one can't use a
direct contraction mapping method, but we expect some wellposedness
results hold in the weak sense.

\begin{conj} The solution map $S_T^\infty: H^\infty \rightarrow C([-T,T]:H^\infty)$ of the real-valued modified Benjamin-Ono equation
\eqref{eq:mBO} can be uniquely extended to a continuous map from
$H^s$ to $C([-T,T]:H^s)$ for a small $T=T(\norm{\phi}_{H^s})>0$ if
$s>1/4$.
\end{conj}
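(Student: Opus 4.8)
The plan is to establish the a priori bounds advertised in the abstract (control of $\|u(t)\|_{H^s}$ on $[-T,T]$ by $\|\phi\|_{H^s}$ for $s>1/4$, with $T=T(\|\phi\|_{H^s})$) and then to try to bootstrap them to the conjectured continuous extension; the construction would build on the function-space technology behind Theorem \ref{t11}. First I would set up short-time Bourgain-type spaces in the style of \cite{IK} (and of Ionescu--Kenig--Tataru for the Benjamin--Ono equation): frequency-localize, and on the piece at frequency $2^k$ work on time intervals of length $\sim 2^{-k}$, the natural resolution for the dispersion $\omega(\xi)=-\xi|\xi|$, equipping that piece with a norm combining the local $X^{s,1/2}$ norm with the local smoothing norm $\sup_x\big(\int|\partial_x^{1/2}P_k u|^2\,dt\big)^{1/2}$ and the maximal-function norm $\|P_k u\|_{L^4_x L^\infty_t}$. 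Reassembling over $k$ and gluing the time pieces produces spaces $F^s(T)$, $N^s(T)$ and an energy space $E^s(T)$; the role of the short-time localization is precisely to excise the region of the $\tau$-integration responsible for the logarithmic divergence in \eqref{eq:trilinearX}.

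The analytic core is a single cubic estimate $\|P_k\partial_x(u_1u_2u_3)\|_{N_k}\lesssim\sum\prod_i\|P_{k_i}u_i\|_{F_{k_i}}$, the sum over admissible frequency configurations, proved by decomposing into frequency interactions: when two of the three frequencies are much lower than the output the derivative is absorbed at once by the smoothing or maximal gains, while the delicate case is the diagonal high$\times$high$\times$high interaction, where the modulation analysis is used and the $2^{-k}$ time truncation supplies the missing logarithmic room. Feeding this into the energy method — apply $P_k$ to \eqref{eq:mBO}, pair with $P_k u$, integrate in $x,t$, use the skew-adjointness of $\Hl\partial_x^2$ to kill the linear term, observe that the exactly diagonal quartic contribution $\int\partial_x(P_k u)(P_k u)^3\,dx=\tfrac14\int\partial_x(P_k u)^4\,dx=0$ vanishes by the odd symmetry of the real nonlinearity, and estimate the remaining quartic space-time integrals by $F^s(T)$ and the smoothing norm — yields $\|u\|_{F^s(T)}\lesssim\|u\|_{E^s(T)}+\|\partial_x(u^3)\|_{N^s(T)}$, $\|\partial_x(u^3)\|_{N^s(T)}\lesssim\|u\|_{F^s(T)}^3$, and $\|u\|_{E^s(T)}^2\lesssim\|\phi\|_{H^s}^2+\|u\|_{F^s(T)}^4$. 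A bootstrap on $\|u\|_{F^s(T)}$, using that this quantity is continuous in $T$ and bounded for small $T$, then closes the a priori bound for $T=T(\|\phi\|_{H^s})$, which is the statement announced in the abstract.

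To reach the conjectured \emph{well-posedness} I would run a Bona--Smith-type argument: solve with mollified data $\phi_n=P_{\le n}\phi\in H^\infty$ to obtain solutions $u_n$ uniformly bounded in $F^s(T)$; extract a weak-$*$ limit $u$; use the uniform bounds and the equation to upgrade to convergence in $C([-T,T]:H^{s'})$ for each $s'<s$, and then, via the Bona--Smith splitting in which the smoothness of $\phi_n$ pays for the derivative gap in the low-frequency part of the difference, to convergence in $C([-T,T]:H^s)$; the same estimates applied to $u_n-u_m$ and to solutions issued from distinct $H^s$ data would be meant to give uniqueness of the extension and its continuity from $H^s$ into $C([-T,T]:H^s)$.

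The decisive obstacle — the reason this is posed as a conjecture while only the a priori bound is actually proved — is the difference estimate. Since the flow is not uniformly continuous on $H^s$ for $s<1/2$ \cite{KenigT}, one cannot bound $u_n-u_m$ in $F^s(T)$ directly; differencing $u^2u_x$ produces $(u^2-u_m^2)\partial_x u$, costing a full derivative, and for the difference measured at regularity below $1/2$ it is far from clear that this loss is recovered from the a priori bounds on $u,u_m$ through the smoothing structure of the spaces. Whether it can be absorbed for every $s>1/4$ is exactly the open point; I would expect a complete proof to need either a genuinely new difference estimate exploiting the defocusing/conservation structure at the level of differences, or a normal-form (modified-energy) renormalization of the resonant cubic interaction valid for rough data — in effect a low-regularity substitute for precisely the gauge transformation this paper is designed to avoid.
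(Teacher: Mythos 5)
You correctly recognize that the statement is a \emph{conjecture} left open in the paper: only the a priori bound half (Theorem~\ref{aprioribound}) is proved, and the remaining obstruction is exactly what you identify, the absence of a difference estimate at low regularity. This matches the paper's own diagnosis (end of Section~8: \emph{``For the difference equation of two modified Benjamin--Ono equations, we don't know how to prove a similar energy estimate due to the lack of symmetry. That's why we can only solve the half problem of Conjecture.''}). Your sketch of the short-time Bourgain-space framework (frequency-localized $B_k$/$F_k$/$N_k$ norms on $2^{-k_+}$ time intervals, linear estimate, short-time trilinear estimate, bootstrap) follows the paper's Sections~2, 3, 7, 9, and your Bona--Smith outline is the standard route one would take if the difference estimate were available.

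There is, however, a genuine gap in your sketch of the energy estimate. You propose to pair $P_k u$ against the equation, use skew-adjointness of $\Hl\partial_x^2$, and observe that the exactly diagonal contribution $\int \partial_x(P_ku)(P_ku)^3\,dx=0$ vanishes, then control the rest by the short-time trilinear machinery. This does not reach $s>1/4$: for near-diagonal interactions with comparable but distinct frequencies the symmetry cancellation is absent, and the resulting quartic boundary term is not summable at regularity near $1/4$. What the paper actually does in Section~8 is introduce a \emph{modified} energy $E_0(u)+E_1(u)$, where $E_1$ is built from a quartic symbol $b_4$ (Proposition~\ref{exb4}) chosen to cancel the nonresonant quartic term exactly, in the spirit of the I-method and the normal-form modified energies of Koch--Tataru. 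The time derivative of the corrected energy is a sextilinear error $R_6(u)$, estimated in Proposition~\ref{6linear} using the short-time spaces, and the summability over dyadic scales is organized via the slowly varying sequence $\beta_\lambda$ of Lemma~\ref{Hsre}. Your quartic bound $\|u\|_{E^s(T)}^2\lesssim\|\phi\|_{H^s}^2+\|u\|_{F^s(T)}^4$ is not what the scheme proves; the actual estimate (Proposition~\ref{energyes}) has a sextilinear right-hand side $\|u\|_{F^{1/4-,1/4+}(T)}^4\|u\|_{F^{l,s}(T)}^2$ reflecting the normal-form correction. Without this modified-energy step the argument does not reach $s>1/4$, so the a priori bound you rely on would not be available as claimed.
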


To prove this one would need to establish a priori $H^s$ bounds for
the $H^\infty$ solutions and then prove continuous dependence on the
initial data. We solve the easier half of this problem.
\begin{theorem}\label{aprioribound}
Let $s>1/4$. For any $M>0$ there exists $T>0$ and $C>0$ so that for
any initial data $u_0\in H^{\infty}$ satisfying
\begin{eqnarray*}
\norm{u_0}_{H^s}\leq M
\end{eqnarray*}
then the solutions $u\in C([0,T]:H^{\infty})$ to the real-valued mBO
equation \eqref{eq:mBO} satisfies
\begin{equation}
\norm{u}_{H^s}\leq C \norm{u_0}_{H^s}.
\end{equation}
\end{theorem}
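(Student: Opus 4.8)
The plan is to establish the a priori bound by working at the level of frequency-localized pieces and combining an energy-type estimate with the multilinear estimates developed for the fixed-point argument in the first part of the paper. Since we only need a priori control (not continuous dependence or uniqueness), we may freely use that $u \in C([0,T]:H^\infty)$ is a genuine smooth solution, which removes all issues about making sense of nonlinear terms.

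\textbf{Step 1: Short-time function spaces at low regularity.} First I would introduce short-time versions of the $X^{s,b}$-type spaces used in Section 2, localized to time intervals of length $\sim 2^{-k}$ on the $k$-th dyadic frequency block, in the spirit of Ionescu--Kenig and the Koch--Tzvetkov-type approach. Concretely, for a frequency-localized piece $P_k u$ one uses a space $F_k$ built from the $X^{s,b}$ structure plus the smoothing-effect component on each interval of length $2^{-k}$, together with a dual space $N_k$ for the nonlinearity; one then assembles $\norm{u}_{F^s(T)}^2 \sim \sum_k 2^{2sk}\norm{P_k u}_{F_k}^2$ and similarly for $\norm{\cdot}_{N^s(T)}$. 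The point of the short-time localization is precisely that the trilinear estimate \eqref{eq:trilinearX} that fails on a fixed time interval \emph{does} hold on these short intervals, because the logarithmically divergent resonant interaction is controlled once one does not integrate in time over too long an interval.

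\textbf{Step 2: Linear estimate, nonlinear estimate, and energy estimate.} I would then prove three estimates. (i) A linear estimate: if $u$ solves $u_t + \Hl u_{xx} = G$ on $[0,T]$ then $\norm{u}_{F^s(T)} \lesssim \norm{u(0)}_{H^s} + \norm{G}_{N^s(T)}$. (ii) A nonlinear (trilinear) estimate: $\norm{\partial_x(u^3)}_{N^s(T)} \lesssim \norm{u}_{F^s(T)}^3$ for $s>1/4$ — this is where the combined $X^{s,b}$/smoothing structure earns its keep and where the restriction $s>1/4$ enters, through the high-high-high and high-high-low dyadic interactions. (iii) An a priori energy estimate: for the real-valued equation, using the $L^2$ and $H^{1/2}$ conservation laws together with a frequency-localized energy argument for the $\dot H^s$ component, one gets something like $\norm{u}_{E^s(T)}^2 \lesssim \norm{u_0}_{H^s}^2 + \norm{u}_{F^s(T)}^3\norm{u}_{E^s(T)} \,(1 + \norm{u}_{F^s(T)})$, where $E^s(T)$ is an energy norm comparable to $\sup_{t}\norm{u(t)}_{H^s}$. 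The energy estimate must be done carefully: one commutes $P_k$ with the equation, multiplies by $2^{2sk}P_k u$, integrates, and symmetrizes the resulting cubic expression $\int \partial_x(u^3)\, P_k^2 u$; the worst term is the one where two of the three factors are at frequency $\sim 2^k$, and one exploits the antisymmetry/cancellation and the smoothing effect to absorb the derivative loss.

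\textbf{Step 3: Bootstrap/continuity argument.} Finally, set $X(T') = \norm{u}_{F^s(T')} + \norm{u}_{E^s(T')}$ for $T' \le T$. Combining (i), (ii), (iii) gives a closed inequality of the form $X(T')^2 \lesssim \norm{u_0}_{H^s}^2 + X(T')^3 + X(T')^4$, while the $L^2$ and $H^{1/2}$ conservation laws keep the low-frequency part under control uniformly. Since $X(T')$ is continuous in $T'$ and small as $T'\to 0$ (here smooth solution used), a standard continuity argument shows that if $\norm{u_0}_{H^s}\le M$ then $X(T') \le C(M)$ for all $T' \le T$ with $T=T(M)>0$ chosen appropriately, and in particular $\norm{u}_{H^s} \le C\norm{u_0}_{H^s}$ on $[0,T]$. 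I expect the main obstacle to be the trilinear/energy estimate at $s$ close to $1/4$: one has to extract the full smoothing gain from the dispersion $\Hl \partial_x^2$ in exactly the resonant region where the naive $X^{s,b}$ estimate diverges, and track how the short-time interval length $2^{-k}$ compensates the logarithm — balancing these is the heart of the argument, and the threshold $1/4$ should emerge precisely from the high$\times$high$\times$high interaction in the energy estimate.
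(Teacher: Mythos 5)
Your high-level plan (short-time $X^{s,b}$-type spaces at time scale $2^{-k}$, then a linear estimate + trilinear estimate + energy estimate, closed by a continuity/bootstrap argument) is exactly the paper's Sections 7--9. However, the step you sketch most lightly is the one the paper spends most of its effort on, and your version of it has a genuine gap.

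\textbf{The energy estimate.} You propose to commute $P_k$ with the equation, pair with $2^{2sk}P_k u$, symmetrize, and use cancellation plus the smoothing effect to bound a cubic-in-$u$ error. This is a first-generation energy estimate and it does not reach $s>1/4$: the frequency-localized quartic form $\int_{\Gamma_4}[\xi_1 a(\xi_1)+\cdots+\xi_4 a(\xi_4)]\widehat u(\xi_1)\cdots\widehat u(\xi_4)$ has only the vanishing of the bracket on $a\equiv\text{const}$ going for it, which is not enough near the threshold. The paper instead runs the I-method: it adds a quadrilinear correction $E_1(u)=\int_{\Gamma_4}b_4\,\widehat u\widehat u\widehat u\widehat u$ with $b_4$ chosen (Proposition \ref{exb4}) so that $\frac{d}{dt}(E_0+E_1)=R_6(u)$ is a six-linear remainder, and then estimates $R_6$ using the short-time structure (Proposition \ref{6linear}). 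The extension lemma for $b_4$ off the resonance hypersurface, the spherically symmetric symbol class $S_\epsilon^s$, and the slowly-varying sequence $\beta_\lambda$ of Lemma \ref{Hsre} are all there to make this normal-form cancellation run. Correspondingly, the closed inequality is of the form $X(T')^2\les\norm{\phi}_{\dot H^l\cap\dot H^s}^2+X(T')^6$ (degree six), not the degree-four/five form you wrote; the bootstrap still closes, but only because the data has been rescaled to be small.

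\textbf{The conservation-law point is a second gap.} You invoke the $L^2$ \emph{and} $H^{1/2}$ conservation laws to control the low-frequency part. For the a priori bound with $1/4<s<1/2$, the initial data is only controlled in $H^s$, and $\norm{u_0}_{H^{1/2}}$ can be arbitrarily large, so the $H^{1/2}$ conservation law gives you nothing. The paper instead works with the two-exponent spaces $F^{l,s}(T)$, $E^{l,s}(T)$ with $0<l<1/4<s$ (different weights on $R_k u$ for $k\le 0$ and $k\ge 1$) and uses the scaling \eqref{eq:scaling}, which shrinks $\norm{\phi}_{\dot H^l\cap\dot H^s}$ precisely because both exponents are strictly positive; that smallness, not a conservation law, is what controls the low frequencies and lets Proposition \ref{energyes} (which needs $\norm{u}_{E^{l,s}(T)}\le\delta_0$) apply. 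You should replace the $H^{1/2}$ conservation law by this scaling step, and replace the direct commutator energy estimate by the modified-energy/I-method argument with the $b_4$ correction; with those two changes your outline matches the paper's proof.
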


\begin{remark}
A very similar equation to \eqref{eq:mBO} is the derivative
nonlinear Schr\"odinger equation
\begin{eqnarray}\label{eq:dNLS}
u_t-iu_{xx}=|u|^2u_x, \quad (x,t)\in \R^2,
\end{eqnarray}
and local wellposedness was known for the equation in $H^s$ for
$s\geq 1/2$ \cite{Takaoka}, where a fixed point argument is
performed in an adapted Bourgain's $X^{s,b}$ space after a gauge
transformation on the equation. Our methods also give the same
results in Theorem \ref{t11} and \ref{aprioribound} for
\eqref{eq:dNLS}.
\end{remark}

We discuss now some of the ingredients in the proof of Theorem
\ref{aprioribound}. We will follow the method of Ionescu, Kenig and
Tataru \cite{IKT} which approaches the problem in a less
perturbative way. It can be viewed as a combination of the energy
method and the perturbative method. More precisely, we will define
$F^{l,s}(T), N^{l,s}(T)$ and energy space $E^{l,s}(T)$ and show that
if $u$ is a smooth solution of \eqref{eq:mBO} on $\R\times [-T,T]$
with $\norm{u}_{E^{l,s}(T)}\ll 1$, then
\begin{eqnarray}\label{eq:scheme}
\left \{
\begin{array}{l}
\norm{u}_{F^{l,s}(T)}\les \norm{u}_{E^{l,s}(T)}+\norm{\partial_x(u^3)}_{N^{l,s}(T)};\\
\norm{\partial_x(u^3)}_{N^{l,s}(T)}\les \norm{u}_{F^{l,s}(T)}^3;\\
\norm{u}_{E^{l,s}(T)}^2\les \norm{\phi}_{\dot{H}^l\cap
\dot{H}^s}^2+\norm{u}_{F^{l,s}(T)}^3.
\end{array}
\right.
\end{eqnarray}
The inequalities \eqref{eq:scheme} and a simple continuity argument
still suffice to control $\norm{u}_{F^{l,s}(T)}$, provided that
$\norm{\phi}_{\dot{H}^l\cap \dot{H}^s}\ll 1$ (which can by arranged
by rescaling if $l,s>0$). The first inequality in \eqref{eq:scheme}
is the analogue of the linear estimate. The second inequality in
\eqref{eq:scheme} is the analogue of the trilinear estimate
\eqref{eq:trilinearX}. The last inequality in \eqref{eq:scheme} is
an energy-type estimate.

We explain the strategies in \cite{IKT} to define the main normed
and semi-normed spaces. As was explained before, standard using of
$X^{s,b}$ spaces for fixed-point argument will lead to a logarithmic
divergence in the key trilinear estimate. But we use $X^{s,b}$-type
structures only on small, frequency dependant time intervals. The
high-low interaction can be controlled for short time. The second
step is to define $\norm{u}_{E^{l,s}(T)}$ sufficiently large to be
able to still prove the linear estimate $\norm{u}_{F^{l,s}(T)}\les
\norm{u}_{E^{l,s}(T)}+\norm{\partial_x(u^3)}_{N^{l,s}(T)}$. Finally,
we use frequence-localized energy estimates and the symmetries of
the equation \eqref{eq:mBO} to prove the energy estimate
$\norm{u}_{E^{l,s}(T)}^2\les \norm{\phi}_{\dot{H}^l\cap
\dot{H}^s}^2+\norm{u}_{F^{l,s}(T)}^3$.

The rest of the paper is organized as follows: In section 2 we
present some notations and Banach function spaces. We summarize some
properties of the spaces in section 3. A symmetric estimate will be
given in section 4 which is used in section 5 to show dyadic
trilinear estimate. The proof of Theorem \ref{t11} is given in
section 6. In section 7 we prove short time dyadic trilinear
estimate and in section 8 we prove an energy estimate. Finally in
section 9 we prove Theorem \ref{aprioribound}.

\section{Notation and Definitions}
For $x, y\in \R^+$, $x\les y$ means that there exists $C>0$ such
that $x\leq Cy$. By $x\sim y$ we mean $x\les y$ and $y\les x$. For
$f\in \Sch'$ we denote by $\widehat{f}$ or $\ft (f)$ the Fourier
transform of $f$ for both spatial and time variables,
\begin{eqnarray*}
\widehat{f}(\xi, \tau)=\int_{\R^2}e^{-ix \xi}e^{-it \tau}f(x,t)dxdt.
\end{eqnarray*}
Besides, we use $\ft_x$ and $\ft_t$ to denote the Fourier transform
with respect to space and time variable respectively. Let
$\Z_+=\Z\cap[0, \infty)$. For $k\in \Z$ let
\[O_k=\{\xi:|\xi|\in [(3/4)\cdot 2^k,
(3/2)\cdot 2^k)\ \}\mbox{ and }{I}_k=\{\xi: |\xi|\in [2^{k-1},
2^{k+1}]\ \}.\] For $k\in \Z_+$ let $\widetilde{I}_k=[-2,2]$ if
$k=0$ and $\widetilde{I}_k=I_k$ if $k\geq 1$.

 Let $\eta_0: \R\rightarrow [0, 1]$ denote an even
smooth function supported in $[-8/5, 8/5]$ and equal to $1$ in
$[-5/4, 5/4]$. For $k\in \Z$ let
$\chi_k(\xi)=\eta_0(\xi/2^k)-\eta_0(\xi/2^{k-1})$, $\chi_k$
supported in $\{\xi: |\xi|\in[(5/8)\cdot 2^k, (8/5)\cdot 2^k]\}$,
and
\[\chi_{[k_1,k_2]}=\sum_{k=k_1}^{k_2}\chi_k \mbox{ for any } k_1\leq k_2\in \Z.\]
For simplicity of notation, let $\eta_k=\chi_k$ if $k\geq 1$ and
$\eta_k\equiv 0$ if $k\leq -1$. Also, for $k_1\leq k_2\in \Z$ let
\[\eta_{[k_1,k_2]}=\sum_{k=k_1}^{k_2}\eta_k \mbox{ and }\eta_{\leq k_2}=\sum_{k=-\infty}^{k_2}\eta_{k}.\]
Roughly speaking, $\{\chi_k\}_{k\in \mathbb{Z}}$ is the homogeneous
decomposition function sequence and $\{\eta_k\}_{k\in \mathbb{Z}_+}$
is the non-homogeneous decomposition function sequence to the
frequency space.

For $k\in \Z$ let $k_+=\max(k,0)$, and let $P_k$, $R_k$ denote the
operators on $L^2(\R)$ defined by
\[
\widehat{P_ku}(\xi)=\chi_k(\xi)\widehat{u}(\xi) \mbox{ and }
\widehat{R_ku}(\xi)=1_{O_k}(\xi)\widehat{u}(\xi).
\]
By a slight abuse of notation we also define the operators $P_k$,
$R_k$ on $L^2(\R\times \R)$ by formulas $\ft(P_ku)(\xi,
\tau)=\chi_k(\xi)\ft(u)(\xi, \tau)$ and $\ft(R_ku)(\xi,
\tau)=1_{O_k}(\xi)\ft(u)(\xi, \tau)$. For $l\in \Z$ let
\[
P_{\leq l}=\sum_{k\leq l}P_k, \quad P_{\geq l}=\sum_{k\geq l}P_k.
\] Similarly we also define the operators $R_{\leq l}$ and $R_{\geq l}$.

Let $a_1, a_2, a_3, a_4\in \R$. It will be convenient to define the
quantities $a_{max}\geq a_{sub}\geq a_{thd}\geq a_{min}$ to be the
maximum, sub-maximum, third-maximum, and minimum of $a_1,a_2,a_3,
a_4$ respectively. We also denote $\sub(a_1,a_2,a_3,a_4)=a_{sub}$
and $\thd(a_1,a_2,a_3,a_4)=a_{thd}$. Usually we use
$k_1,k_2,k_3,k_4$ and $j_1,j_2,j_3,j_4$ to denote integers,
$N_i=2^{k_i}$ and $L_i=2^{j_i}$ for $i=1,2,3,4$ to denote dyadic
numbers. For $a\in \R$ we define $a-$ to be the real number
$a-\epsilon$ for some $0<\epsilon\ll 1$. Similar we also define
$a+$.

For $\xi\in \R$ let
\begin{equation}\label{eq:dr}
\omega(\xi)=-\xi|\xi|,
\end{equation}
which is the dispersion relation associated to the linear
Benjamin-Ono equation. For $\phi \in L^2(\R)$ let $W(t)\phi\in
C(\R:L^2)$ denote the solution of the free Benjamin-Ono evolution
given by
\begin{equation}
\ft_x[W(t)\phi](\xi,t)=e^{it\omega(\xi)}\widehat{\phi}(\xi),
\end{equation}
where $\omega(\xi)$ is defined in \eqref{eq:dr}. For $k\in \Z_+$ and
$j\geq 0$ let $D_{k,j}=\{(\xi, \tau)\in \R \times \R: \xi \in
\widetilde{I}_k, \tau-\omega(\xi)\in \widetilde{I}_j\}$. For $k\in
\Z$ and $j\geq 0$ let $\dot{D}_{k,j}=\{(\xi, \tau)\in \R \times \R:
\xi \in {I}_k, \tau-\omega(\xi)\in \widetilde{I}_j\}$. For $k\in
\Z_+$ we define now the Banach spaces $X_k(\R\times \R)$:
\begin{eqnarray}\label{eq:Xk}
X_k&=&\{f\in L^2(\R^2): f \mbox{ is supported in } \widetilde
{I}_k\times\R \mbox{ and }\nonumber\\
&&\norm{f}_{X_k}:=\sum_{j=0}^\infty
2^{j/2}\beta_{k,j}\norm{\eta_j(\tau-w(\xi))\cdot
f(\xi,\tau)}_{L^2_{\xi,\tau}}<\infty\},
\end{eqnarray}
where
\begin{equation}\label{eq:betakj}
\beta_{k,j}=1+2^{(j-2k)/2}.
\end{equation}
Here the spaces $X_k$ is the same as those used by Ionescu and Kenig
\cite{IK} for $k>0$. $X_0$ is different, since we don't have the
special structures in the low frequency. The precise choice of the
coefficients $\beta_{k,j}$ is important in order for all the
trilinear estimates in Section 5 to hold. We see that if $k$ is
small then $\beta_{k,j}\approx 2^{j/2}$. This factor is particularly
important in controlling the high-low interaction. From the
technical level, we know from the K-Z method of Tao \cite{Taokz}
that the worst interaction is that the low frequency component has a
largest modulation. But the factor $\beta_{k,j}$ will compensate for
that. The logarithmic divergence caused by the other interaction,
namely high frequency component with largest modulation, can be
removed by using a smoothing effect structure.

As in \cite{IK}, the spaces $X_k$ are not sufficient for our
purpose, due to various logarithmic divergences involving the
modulation variable (See Proposition \ref{countertrilinear} below).
For $k\geq 100$ we also define the Banach spaces $Y_k=Y_k(\R^2)$.
For $k\geq 100$ we define
\begin{eqnarray}\label{eq:Yk}
Y_k&=&\{f\in L^2(\R^2): f \mbox{ is supported in }
\bigcup_{j=0}^{k-1}D_{k,j} \mbox{ and }\nonumber\\&&
\norm{f}_{Y_k}:=2^{-k/2}\norm{\ft^{-1}[(\tau-\omega(\xi)+i)f(\xi,\tau)]}_{L_x^1L_t^2}<\infty\}.
\end{eqnarray}
Then for $k\in \Z_+$ we define
\begin{equation}\label{eq:Zk}
Z_k:=X_k \mbox{ if } k\leq 99 \mbox{ and } Z_k:=X_k+Y_k \mbox{ if }
k\geq 100.
\end{equation}
The spaces $Z_k$ are our basic Banach spaces. For $s\geq 0$ we
define the Banach spaces $F^{s}=F^{s}(\R\times\R)$:
\begin{eqnarray}\label{eq:Fs}
F^{s}=\{u\in \Sch'(\R\times \R):
\norm{u}_{F^{s}}^2=\sum_{k=0}^{\infty}2^{2sk}\norm{\eta_k(\xi)\ft
(u)}_{Z_k}^2<\infty\},
\end{eqnarray}
and $N^{s}=N^{s}(\R\times\R)$ which is used to measure the nonlinear
term and can be viewed as an analogue of $X^{s,b-1}$
\begin{eqnarray}\label{eq:Ns}
N^{s}&=&\{u\in \Sch'(\R\times \R):\nonumber\\
&&\norm{u}_{N^{s}}^2=\sum_{k=0}^{\infty}2^{2sk}\norm{\eta_k(\xi)(\tau-\omega(\xi)+i)^{-1}\ft
(u)}_{Z_k}^2<\infty\}.
\end{eqnarray}
The spaces $F^s$ and $N^s$ have the same structures in high
frequency as those in \cite{IK}, but with different structures in
low frequency.

In order to prove a priori bounds, we will need another set of norms
and semi-norms which were first used by Ionescu, Kenig and Tataru in
\cite{IKT} for the KP-I equation. Similar idea can be found in
\cite{KochTataru}. For $k\in \Z$ we define
\begin{eqnarray}\label{eq:Bk}
B_k&=&\{f\in L^2(\R^2): f \mbox{ is supported in } I_k\times\R
\mbox{ and }\nonumber\\&& \norm{f}_{B_k}:=\sum_{j=0}^\infty
2^{j/2}\norm{\eta_j(\tau-w(\xi))\cdot
f(\xi,\tau)}_{L^2_{\xi,\tau}}<\infty\}.
\end{eqnarray}
These $l^1$-type $X^{s,b}$ structures were first introduced and used
in \cite{Tataru, IK, IKT}. It is also useful in the study of uniform
global wellposedness and inviscid limit for the KdV-Burgers equation
\cite{GW}.

At frequency $2^k$ we will use the $X^{s, b}$ structure given by the
$B_k$ norm, uniformly on the $2^{-k_+}$ time scale. For $k\in \Z$ we
define the normed spaces
\begin{eqnarray*}
&& F_k=\left\{
\begin{array}{l}
f\in L^2(\R^2): \widehat{f} \mbox{ is supported in }
I_k\times\R \mbox{ and }\\
\norm{f}_{F_k}=\sup_{t_k\in \R}\norm{\ft[f\cdot
\eta_0(2^{k_+}(t-t_k))]}_{B_k}<\infty
\end{array}
\right\},
\\
&&N_k=\left\{
\begin{array}{l}
f\in L^2(\R^2): \widehat{f} \mbox{ is supported in }
I_k\times\R \mbox{ and } \norm{f}_{N_k}=\\
\sup_{t_k\in \R}\norm{(\tau-\omega(\xi)+i2^{k_+})^{-1}\ft[f\cdot
\eta_0(2^{k_+}(t-t_k))]}_{B_k}<\infty
\end{array}
\right\}.
\end{eqnarray*}
The bounds we obtain for smooth solutions of the equation
\eqref{eq:mBO} are on a fixed time interval, while the above
function spaces are not. Thus we define a local version of the
spaces. For $T\in (0,1]$ we define the normed spaces
\begin{eqnarray*}
F_k(T)&=&\{f\in C([-T,T]:E_k): \norm{f}_{F_k(T)}=\inf_{\wt{f}=f
\mbox{ in } \R\times [-T,T]}\norm{\wt f}_{F_k}\};\\
N_k(T)&=&\{f\in C([-T,T]:E_k): \norm{f}_{N_k(T)}=\inf_{\wt{f}=f
\mbox{ in } \R\times [-T,T]}\norm{\wt f}_{N_k}\}.
\end{eqnarray*}
For $l,s\geq 0$ and $T\in (0,1]$, we define the normed spaces
\begin{eqnarray*}
&&F^{l,s}(T)=\left\{
\begin{array}{l}
u\in C([-T,T]:H^\infty):\quad \norm{u}_{F^{l,s}}^2=\\
\sum_{k=-\infty}^{0}2^{2lk}\norm{R_k(u)}_{F_k(T)}^2+\sum_{k=1}^{\infty}2^{2sk}\norm{R_k(u)}_{F_k(T)}^2<\infty
\end{array}
\right\},
\\
&&N^{l,s}(T)=\left\{
\begin{array}{l}
u\in
C([-T,T]:H^\infty):\quad \norm{u}_{N^{l,s}}^2=\\
\sum_{k=-\infty}^{0}2^{2lk}\norm{R_k(u)}_{N_k(T)}^2+\sum_{k=1}^{\infty}2^{2sk}\norm{R_k(u)}_{N_k(T)}^2<\infty
\end{array}
\right\}.
\end{eqnarray*}
We still need an energy space. For $l,s\geq 0$ and $u\in
C([-T,T]:H^\infty)$ we define
\begin{eqnarray*}
\norm{u}_{E^{l,s}(T)}^2=\norm{R_{\leq
0}(u(0))}_{\dot{H}^l}^2+\sum_{k\geq 1}\sup_{t_k\in
[-T,T]}2^{2sk}\norm{R_k(u(t_k))}_{L^2}^2.
\end{eqnarray*}

\section{Properties of the Spaces $Z_k$}

In this section we devote to study the properties of the spaces
$Z_k$. Using the definitions, if $k\in \Z_+$ and $f_k \in Z_k$ then
$f_k$ can be written in the form
\begin{eqnarray}
\left \{
\begin{array}{l}
f_k=\sum_{j=0}^{\infty}f_{k,j}+g_k;\\
\sum_{j=0}^{\infty}2^{j/2}\beta_{k,j}\norm{f_{k,j}}_{L^2}+\norm{g_k}_{Y_k}\leq
2\norm{f_k}_{Z_k},
\end{array}
\right.
\end{eqnarray}
such that $f_{k,j}$ is supported in $D_{k,j}$ and $g_k$ is supported
in $\cup_{j=0}^{k-1}D_{k,j}$ (if $k\leq 99$ then $g_k\equiv 0$). We
start with the elementary properties.
\begin{lemma}[Lemma 4.1, \cite{IK}]\label{l31} (a) If $m, m':\R\rightarrow \C$,
$k\in \Z_+$, and $f_k\in Z_k$ then
\begin{eqnarray}
\left \{
\begin{array}{l}
\norm{m(\xi)f_k(\xi,\tau)}_{Z_k}\leq C\norm{\ft^{-1}(m)}_{L^1(\R)}\norm{f_k}_{Z_k};\\
\norm{m'(\tau)f_k(\xi,\tau)}_{Z_k}\leq
C\norm{m}_{L^\infty(\R)}\norm{f_k}_{Z_k}.
\end{array}
\right.
\end{eqnarray}

(b) If $k\in \Z_+$, $j\geq 0$, and $f_k\in Z_k$ then
\begin{equation}
\norm{\eta_j(\tau-\omega(\xi))f_k(\xi,\tau)}_{X_k}\leq
C\norm{f_k}_{Z_k}.
\end{equation}

(c) If $k\geq 1$, $j\in [0,k]$, and $f_k$ is supported in $I_k\times
\R$ then
\begin{equation}
\norm{\ft^{-1}[\eta_{\leq
j}(\tau-\omega(\xi))f_k(\xi,\tau)]}_{L_x^1L_t^2}\leq
C\norm{\ft^{-1}(f_k)}_{L_x^1L_t^2}.
\end{equation}

\end{lemma}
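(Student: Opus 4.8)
The plan is to reduce every assertion to the building blocks $X_k$ and $Y_k$: since $Z_k=X_k$ for $k\le 99$ and $Z_k=X_k+Y_k$ for $k\ge 100$, and each of (a)--(c) asserts that a Fourier-multiplier-type operation is bounded out of $Z_k$, it suffices to prove the corresponding bound on $X_k$ and on $Y_k$ separately and, for $k\ge 100$, to optimize over decompositions $f_k=f+g$, $f\in X_k$, $g\in Y_k$ at the end. A preliminary remark: multiplication by $m(\xi)$, by $m'(\tau)$, and the cutoffs $\eta_j(\tau-\omega(\xi))$, $\eta_{\le j}(\tau-\omega(\xi))$ only shrink Fourier supports, so the support conditions in the definitions of $X_k$ and $Y_k$ are preserved. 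Part (a) is then soft: on the physical side multiplication by $m(\xi)$ is convolution in $x$ with $\ft^{-1}(m)$ and it commutes with every operation defining $\norm{\cdot}_{X_k}$ and $\norm{\cdot}_{Y_k}$ (the cutoffs and multiplication by $\tau-\omega(\xi)+i$), so Young's inequality $\norm{\ft^{-1}(m)\ast_x h}_{L^2_x}\le\norm{\ft^{-1}(m)}_{L^1}\norm{h}_{L^2_x}$---applied at each fixed $\tau$ for $X_k$, in the $L^1_xL^2_t$ mixed norm for $Y_k$---produces the factor $\norm{\ft^{-1}(m)}_{L^1}$; multiplication by $m'(\tau)$ is a bounded Fourier multiplier in $t$, handled for $X_k$ by the pointwise bound $\abs{m'(\tau)}\le\norm{m'}_{L^\infty}$ and for $Y_k$ by Plancherel in $t$ inside the $L^1_x$ integral. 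For (b) on $X_k$: $\eta_{j'}\eta_j\equiv 0$ unless $\abs{j-j'}\le 1$, and then $\beta_{k,j'}\sim\beta_{k,j}$, $2^{j'/2}\sim 2^{j/2}$, so $\norm{\eta_j(\tau-\omega(\xi))f}_{X_k}\les 2^{j/2}\beta_{k,j}\norm{\eta_j(\tau-\omega(\xi))f}_{L^2}\le\norm{f}_{X_k}$.

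The content is in (b) on $Y_k$ and all of (c), and both use the curvature of $\tau=\omega(\xi)$ on the shell $\abs{\xi}\sim 2^k$: there $\abs{\omega'(\xi)}=2\abs{\xi}\sim 2^k$, so for fixed $\tau$ the slice $\{\xi:\abs{\xi}\sim 2^k,\ \abs{\tau-\omega(\xi)}\les 2^j\}$ is (at most two intervals of) total length $\les 2^{j-k}$, and for $j\le k$ the smooth symbol $\eta_{\le j}(\tau-\omega(\xi))$ times a fattened $\xi$-cutoff is a normalized bump at that scale. For (b) with $g\in Y_k$, set $G=\ft^{-1}[(\tau-\omega(\xi)+i)g]$, so $\norm{G}_{L^1_xL^2_t}=2^{k/2}\norm{g}_{Y_k}$; since $\eta_j(\tau-\omega(\xi))g$ is at modulation $\sim 2^j$ and equals $\frac{\eta_j(\tau-\omega(\xi))}{\tau-\omega(\xi)+i}\,\widehat G$, whose symbol is $O(2^{-j})$ and, at fixed $\tau$, supported on a $\xi$-set of measure $\les 2^{j-k}$, bound the fixed-$\tau$ $L^2_\xi$ norm by $2^{-j}(2^{j-k})^{1/2}\norm{\widehat G(\cdot,\tau)}_{L^\infty_\xi}\le 2^{-(j+k)/2}\norm{(\ft_tG)(\cdot,\tau)}_{L^1_x}$, then take $L^2_\tau$ and use Minkowski and Plancherel in $t$ to get $\norm{\eta_j(\tau-\omega(\xi))g}_{L^2}\les 2^{-(j+k)/2}\norm{G}_{L^1_xL^2_t}=2^{-j/2}\norm{g}_{Y_k}$; hence $\norm{\eta_j(\tau-\omega(\xi))g}_{X_k}\les 2^{j/2}\beta_{k,j}\cdot 2^{-j/2}\norm{g}_{Y_k}=\beta_{k,j}\norm{g}_{Y_k}\les\norm{g}_{Y_k}$ ($\beta_{k,j}\le 2$ for $j<k$). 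For (c): with $F=\ft^{-1}(f_k)$ and $F_\tau:=(\ft_tF)(\cdot,\tau)$, Plancherel in $t$ gives, for each fixed $x$, $\norm{\ft^{-1}[\eta_{\le j}(\tau-\omega(\xi))f_k](x,\cdot)}_{L^2_t}=\norm{(K_\tau\ast_x F_\tau)(x)}_{L^2_\tau}$, where $\ast_x$ is convolution in space, $K_\tau=\ft^{-1}_x m_\tau$, and $m_\tau(\xi)=\eta_{\le j}(\tau-\omega(\xi))\widetilde{\chi}_k(\xi)$ with $\widetilde{\chi}_k$ a smooth cutoff equal to $1$ on $I_k$ and supported in $\{\abs{\xi}\sim 2^k\}$; by the curvature remark $m_\tau$ is a width-$2^{j-k}$ bump, so $\norm{K_\tau}_{L^1_x}=O(1)$ uniformly in $\tau$ and, $\abs{K_\tau}$ being essentially $\tau$-independent, $\sup_\tau\abs{K_\tau}\in L^1_x$ with $O(1)$ norm. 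Then $\norm{(K_\tau\ast_x F_\tau)(x)}_{L^2_\tau}\le\int(\sup_\tau\abs{K_\tau(x-y)})\norm{F(y,\cdot)}_{L^2_t}\,dy$ (Minkowski in $\tau$, Plancherel in $t$), and taking $\norm{\cdot}_{L^1_x}$ (Young) gives $\norm{\ft^{-1}[\eta_{\le j}(\tau-\omega(\xi))f_k]}_{L^1_xL^2_t}\les\norm{\ft^{-1}(f_k)}_{L^1_xL^2_t}$.

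The step I expect to need the most care is (b) on $Y_k$: this is exactly where the precise coefficients $\beta_{k,j}=1+2^{(j-2k)/2}$ and the weight $2^{-k/2}$ in $\norm{\cdot}_{Y_k}$ are forced, and where one must genuinely use the $2^{j-k}$-thinning from the curvature of $\omega$---a crude $L^1$ bound on the kernel of the relevant multiplier loses powers of $2^k$ and is useless. Everything else is routine once the reduction to $X_k$ and $Y_k$ is made; in the high-frequency range these are the real-variable counterparts of Lemma~4.1 of Ionescu--Kenig \cite{IK}, whose argument we are in effect following.
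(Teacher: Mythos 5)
The paper does not prove this lemma; it is cited verbatim as Lemma~4.1 of Ionescu--Kenig \cite{IK}, and your reconstruction is correct and follows essentially the same route as that reference. In particular the decisive steps---using $|\omega'(\xi)|\sim 2^k$ on $|\xi|\sim 2^k$ to get the $2^{j-k}$ thinning of fixed-$\tau$ $\xi$-slices, the Hausdorff--Young plus Minkowski/Plancherel chain yielding $\|\eta_j(\tau-\omega(\xi))g\|_{L^2}\les 2^{-j/2}\|g\|_{Y_k}$ for (b) and the uniform $L^1_x$ bound on $\sup_\tau|K_\tau|$ for (c), together with the observation $\beta_{k,j}\le 2$ for $j\le k$---are all executed correctly.
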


We study now the embedding properties of the spaces $Z_k$. We will
see that the spaces $X_k$ and $Y_k$ are very close.
\begin{lemma}\label{l32}
Let $k\in \Z_+$, $s\in \R$ and $I\subset \R$ be an interval. Let $Y$
be $L_x^pL_{t\in I}^q$ or $L_{t\in I}^qL_x^p$ for some $1\leq
p,q\leq \infty$ with the property that
\[\norm{e^{-t\Hl\partial_x^2}f}_Y\les 2^{ks}\norm{f}_{L^2(\R)}\]
for all $f\in L^2(\R)$ with $\widehat{f}$ supported in $\wt{I}_k$
and $\tau_0\in \R$. Then we have that if $f_k\in Z_k$
\begin{equation}
\norm{\ft^{-1}(f_k)}_{Y}\les 2^{ks}\norm{f_k}_{Z_k}.
\end{equation}
\end{lemma}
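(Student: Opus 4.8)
The goal is to transfer a fixed-time linear estimate for the free Benjamin-Ono flow, valid on frequency-localized $L^2$ data, to the Bourgain-type spaces $Z_k = X_k + Y_k$. The natural strategy is to reduce everything to the $X_k$ building blocks, handle the $Y_k$ piece separately, and in both cases decompose according to the modulation variable $\lambda = \tau - \omega(\xi)$ and represent $\ft^{-1}(f_k)$ as a superposition (in $\lambda$) of modulated free solutions. First I would treat a single dyadic modulation block: suppose $f_{k,j}$ is supported in $D_{k,j}$, so $f_{k,j}(\xi,\tau) = g_{\lambda}(\xi)$ after writing $\tau = \omega(\xi) + \lambda$ with $|\lambda| \sim 2^j$. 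Then $\ft^{-1}(f_{k,j})(x,t) = \int e^{it\lambda} [e^{-t\Hl\partial_x^2}(\ft_x^{-1} g_\lambda)](x)\, d\lambda$ (up to constants), and the hypothesis gives $\|e^{-t\Hl\partial_x^2}(\ft_x^{-1}g_\lambda)\|_Y \les 2^{ks}\|g_\lambda\|_{L^2_\xi}$ uniformly in $\lambda$. Since $Y$ is an $L^p_x L^q_t$ (or $L^q_t L^p_x$) space with $q \geq$ something controlled, I would apply Minkowski's inequality in the $\lambda$-integral together with Cauchy-Schwarz over the interval $|\lambda| \sim 2^j$ (of measure $\sim 2^j$), picking up exactly the factor $2^{j/2}$:
\[
\norm{\ft^{-1}(f_{k,j})}_Y \les 2^j \cdot \frac{1}{2^{j}}\cdot\! \int_{|\lambda|\sim 2^j}\! \norm{\cdots}_Y\,d\lambda \les 2^{j/2}\cdot 2^{ks}\norm{f_{k,j}}_{L^2_{\xi,\tau}}.
\]
Summing over $j$ with the triangle inequality in $Y$ then yields $\norm{\ft^{-1}(\sum_j f_{k,j})}_Y \les 2^{ks}\sum_j 2^{j/2}\norm{f_{k,j}}_{L^2} \leq 2^{ks}\sum_j 2^{j/2}\beta_{k,j}\norm{f_{k,j}}_{L^2}$, since $\beta_{k,j} \geq 1$; this is $\les 2^{ks}\norm{f_k}_{X_k}$.

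\textbf{The $Y_k$ component.} For $k \geq 100$ and $g_k \in Y_k$, I would use the definition directly: $\norm{\ft^{-1}[(\tau - \omega(\xi) + i)g_k]}_{L^1_x L^2_t} = 2^{k/2}\norm{g_k}_{Y_k}$, and $g_k$ is supported in $\cup_{j=0}^{k-1} D_{k,j}$, so modulations are $\les 2^k$. Writing $h = \ft^{-1}[(\tau-\omega(\xi)+i)g_k]$, one has $\ft^{-1}(g_k)(x,t) = \int_\R e^{it\lambda}(\lambda + i)^{-1}[e^{-t\Hl\partial_x^2}\ft_x^{-1}\ft_t(h)(\cdot,\lambda)](x)\,d\lambda$. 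The key point is that $h \in L^1_x L^2_t$: for fixed $x$, the function $t \mapsto h(x,t)$ is in $L^2_t$, and I would apply the hypothesis $\|e^{-t\Hl\partial_x^2}f\|_Y \les 2^{ks}\|f\|_{L^2}$ after integrating in $x$. Concretely, using Minkowski's inequality to bring the $L^1_x$ norm outside, and exploiting that the modulation support has size $\sim 2^k$ together with the decay $(\lambda+i)^{-1}$ being $L^2_\lambda$ on that range with norm $\les 2^{-1/2}\cdot\log$-free $\les$ constant... actually the $L^2_\lambda$ norm of $(\lambda+i)^{-1}$ is bounded, so Cauchy-Schwarz in $\lambda$ gives $\norm{\ft^{-1}(g_k)}_Y \les 2^{ks} \cdot \|(\lambda+i)^{-1}\|_{L^2_\lambda} \cdot 2^{k/2}\norm{h}_{L^1_x L^2_t}^{?}$ — here I need to be careful that the free-evolution bound applies fiberwise in $x$ and the $L^1_x$ sits outside. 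The cleanest route: note that since $h \in L^1_x(L^2_t)$, write $h = \int \delta(x - x_0) h(x_0,\cdot)\,dx_0$; for each slice the bound $\|e^{-t\Hl\partial_x^2}(\text{inverse transform of a single-}x\text{ profile})\|_Y$ is what the hypothesis controls, but this is delicate because $\ft_x^{-1}$ of $\ft_t h(\cdot,\lambda)$ mixes the $x$ variable. So instead I would use the frequency localization: $\ft_x h(\xi,t)$ is supported in $I_k$, hence $\ft_x^{-1}$ acting on it is a nice (Bernstein) operator, and bound $\norm{\ft^{-1}(g_k)}_Y$ by duality pairing against a test function in the dual of $Y$, reducing to an $L^\infty_x L^2_t \to$ bound which matches the $Y_k$ definition after one more use of the hypothesis in the form $\|e^{-t\Hl\partial_x^2}\|_{L^2_x \to Y}$.

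\textbf{Assembling and the main obstacle.} Finally, given arbitrary $f_k \in Z_k$ with decomposition $f_k = \sum_j f_{k,j} + g_k$ satisfying $\sum_j 2^{j/2}\beta_{k,j}\norm{f_{k,j}}_{L^2} + \norm{g_k}_{Y_k} \leq 2\norm{f_k}_{Z_k}$, I would combine the $X_k$-type bound and the $Y_k$-type bound by the triangle inequality in $Y$ to conclude $\norm{\ft^{-1}(f_k)}_Y \les 2^{ks}\norm{f_k}_{Z_k}$. I expect the main obstacle to be the $Y_k$ term: the $Y_k$ norm is an $L^1_x L^2_t$ norm after multiplying by $(\tau - \omega(\xi) + i)$, which is not a weighted-$L^2$ structure, so one cannot simply slice in modulation and apply Cauchy-Schwarz as for $X_k$. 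One must instead exploit that $Y$ itself is an iterated $L^p L^q$ (mixed-norm) space compatible with the $L^1_x$ structure — that is, $Y$ should be of the form $L^\infty_x L^2_t$ or embed appropriately — and use Minkowski together with the $L^1$-to-$L^1$ boundedness of the relevant multipliers (frequency projection onto $I_k$, and the modulation cutoff, via Lemma \ref{l31}(a),(c)). If $Y$ is one of $L^\infty_x L^q_t$ or $L^q_t L^\infty_x$ the argument is straightforward; for genuine $L^p_x$ with $p < \infty$ one needs the free evolution hypothesis to already encode the smoothing/Strichartz gain that makes the $L^1_x$ input of $Y_k$ acceptable, which is precisely why the hypothesis is stated uniformly over frequency-localized data.
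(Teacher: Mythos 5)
Your treatment of the $X_k$ component is correct and is exactly what the paper does: decompose $f_k$ into modulation blocks $f_{k,j}$ supported in $D_{k,j}$, write $\ft^{-1}(f_{k,j})$ as a superposition over the modulation variable of translated free solutions, apply Minkowski's inequality in $Y$, invoke the hypothesis at each modulation level, and use Cauchy--Schwarz over the interval of measure $\sim 2^j$ to produce the $2^{j/2}$ factor; summation over $j$ via $\beta_{k,j}\geq 1$ then gives the $X_k$ bound.

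The $Y_k$ component, however, has a genuine gap. You correctly identify the right first move — write $g_k \in Y_k$ so that $\norm{g_k}_{Y_k}\sim\norm{h}_{L^1_x L^2_\tau}$ for some profile $h(x,\tau)$, then slice $h$ into single-$x$ profiles $\delta(x-x_0)h(x_0,\cdot)$ and use Minkowski's inequality together with translation invariance of the mixed norm $Y$. But you then back away from this, worrying that ``$\ft_x^{-1}$ of $\ft_t h(\cdot,\lambda)$ mixes the $x$ variable'' and that a single $x$-slice is not frequency localized. That worry is unfounded: the representation of $g_k$ used in the paper is $g_k(\xi,\tau)=2^{k/2}\chi_{[k-1,k+1]}(\xi)(\tau-\omega(\xi)+i)^{-1}\eta_{\leq k}(\tau-\omega(\xi))\,\ft_x h(\xi,\tau)$, and the explicit cutoff $\chi_{[k-1,k+1]}(\xi)$ restores the $\wt{I}_k$-localization for each $x_0$-slice, so the frequency hypothesis applies to the slice $\chi_{[k-1,k+1]}(\xi)e^{-ix_0\xi}h(x_0,\tau)$. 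After the Minkowski step one is left with a single scalar-profile estimate: if $f(\xi,\tau)=2^{k/2}\chi_{[k-1,k+1]}(\xi)(\tau-\omega(\xi)+i)^{-1}\eta_{\leq k}(\tau-\omega(\xi))h(\tau)$ with $h\in L^2_\tau$, then $\norm{\ft^{-1}f}_Y\les 2^{ks}\norm{h}_{L^2}$. This is the content of Lemma 4.2(b) in \cite{IK}, which the paper cites; it is \emph{not} a direct corollary of the $X_k$ argument because $h(\omega(\xi)+\lambda)$ is no longer a pure function of $\lambda$ and the kernel $(\lambda+i)^{-1}\eta_{\leq k}(\lambda)$ must be analyzed (it is $L^2_\lambda$ but one also needs to control the $\xi$-mixing through $\omega(\xi)$). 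Your reroute to a duality pairing ``reducing to an $L^\infty_x L^2_t\to$ bound'' is not carried out and would not substitute for this estimate.

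Your closing diagnosis — that the $L^1_x$ input of $Y_k$ is only acceptable when $Y$ is an $L^\infty_x$-type space — is also off the mark. The $L^1_x$ structure is handled uniformly for every admissible mixed norm $Y$ by translation invariance $\norm{F(\cdot-x_0,\cdot)}_Y=\norm{F}_Y$ plus Minkowski; no special smoothing built into the hypothesis is required for that step, only for the ensuing single-profile estimate, which is what the hypothesis itself supplies once the cutoff $\chi_{[k-1,k+1]}(\xi)$ is inserted.
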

\begin{proof}
We assume first that $f_k=f_{k,j}$ with
$\norm{f_k}_{X_k}=2^{j/2}\beta_{k,j}\norm{f_{k,j}}_{L^2}$ and
$f_{k,j}$ is supported in $D_{k,j}$ for some $j\geq 0$. Then we have
\begin{eqnarray*}
\ft^{-1}(f_{k})(x,t)&=&\int f_{k,j}(\xi,\tau) e^{ix\xi}e^{it\tau}d\xi d\tau\\
&=&\int_{\widetilde{I}_j} e^{it\tau} \int
f_{k,j}(\xi,\tau+\omega(\xi)) e^{ix\xi}e^{it\omega(\xi)}d\xi d\tau.
\end{eqnarray*}
From the hypothesis on $Y$, we obtain
\begin{eqnarray*}
\norm{\ft^{-1}(f_{k})(x,t)}_Y &\les&  \int \eta_j(\tau)
\normo{e^{it\tau} \int f_{k,j}(\xi,\tau+\omega(\xi))
e^{ix\xi}e^{it\omega(\xi)}d\xi}_{Y}d\tau\\
&\les& 2^{ks}2^{j/2}\norm{f_{k,j}}_{L^2},
\end{eqnarray*}
which completes the proof in this case.

We assume now that $k\geq 100$ and $f_k=g_k\in Y_k$. From definition
$g_k$ can be written in the form
\begin{eqnarray}\label{eq:gkform}
\left \{
\begin{array}{l}
g_k(\xi,\tau)=2^{k/2}\chi_{[k-1,k+1]}(\xi)(\tau-\omega(\xi)+i)^{-1}\eta_{\leq k}(\tau-\omega(\xi))\ft_x h(\xi,\tau);\\
\norm{g_k}_{Y_k}=C\norm{h}_{L_x^1L_\tau^2}.
\end{array}
\right.
\end{eqnarray}
It suffices to prove that if
\[
f(\xi,\tau)=2^{k/2}\chi_{[k-1,k+1]}(\xi)(\tau-\omega(\xi)+i)^{-1}\eta_{\leq
k}(\tau-\omega(\xi))\cdot h(\tau)
\]
then
\begin{equation}
\normo{\int_{\R^2}f(\xi,\tau)e^{ix\xi}e^{it\tau}d\xi d\tau}_{Y}\les
2^{ks}\norm{h}_{L^2},
\end{equation}
which follows from the proof of Lemma 4.2 (b) in \cite{IK}.
\end{proof}

In order to obtain the more specific embedding properties of the
spaces $Z_k$, we need the estimate for the free Benjamin-Ono
equation. We recall the Strichartz estimates, smoothing effects, and
maximal function estimates (for the proof, see, e.g. \cite{KeelT,
KPV5} and the reference therein)

\begin{lemma}\label{l33}
Let $k\in \Z_+$ and $I\subset \R$ be an interval with $|I|\les 1$.
Then for all $\phi \in L^2(\R)$ with $\widehat{\phi}$ supported in
$\wt{I}_k$,

(a) Strichartz estimates
\begin{eqnarray*}
\norm{e^{-t\Hl\partial_x^2}\phi}_{L_t^qL_x^r}\leq
C\norm{\phi}_{L^2(\R)},
\end{eqnarray*}
where $(q,r)$ is admissible, namely $2\leq q,r\leq \infty$ and
$2/q=1/2-1/r$.

(b) Smoothing effect
\begin{eqnarray*}
\norm{e^{-t\Hl\partial_x^2}\phi}_{L_x^\infty L_t^2}\leq
C2^{-k/2}\norm{\phi}_{L^2(\R)}.
\end{eqnarray*}

(c) Maximal function estimate
\begin{eqnarray*}
\norm{e^{-t\Hl\partial_x^2}\phi}_{L_x^2 L_{t\in I}^\infty}&\leq&
C2^{k/2}\norm{\phi}_{L^2(\R)},\\
\norm{e^{-t\Hl\partial_x^2}\phi}_{L_x^4 L_{t}^\infty}&\leq&
C2^{k/4}\norm{\phi}_{L^2(\R)}.
\end{eqnarray*}
\end{lemma}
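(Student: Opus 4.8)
The plan is to deduce each of the three families of estimates from the corresponding classical estimate for the free Schr\"odinger group, exploiting that on all of $\R$ the Benjamin--Ono phase $\omega(\xi)=-\xi|\xi|$ is as nondegenerate as $-\xi^2$: one has $\omega''(\xi)=-2\,\sgn\xi$, so $|\omega''|\equiv2$ for every $\xi\neq0$, and the single singular point $\xi=0$ is harmless in the stationary-phase arguments below, since van der Corput's lemma only needs the lower bound on $|\omega''|$ off a null set (and, if one prefers, the $\xi$-integral may be split at the origin, $\omega'=-2|\xi|$ being monotone on each half-line).

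For (a), I would first prove the dispersive bound $\norm{e^{-t\Hl\partial_x^2}f}_{L_x^\infty}\les|t|^{-1/2}\norm{f}_{L_x^1}$ directly from van der Corput applied to the kernel $\int e^{i(x\xi-t\xi|\xi|)}\,d\xi$, using $|\partial_\xi^2(x\xi-t\xi|\xi|)|=2|t|$; interpolating this with the trivial $L^2$ bound and running the usual $TT^*$ argument together with the Hardy--Littlewood--Sobolev inequality (or invoking \cite{KeelT}) yields $\norm{e^{-t\Hl\partial_x^2}f}_{L_t^qL_x^r}\les\norm{f}_{L^2}$ for all admissible $(q,r)$ — no frequency localization is needed, and the range $4\le q\le\infty$ stays away from the forbidden one-dimensional endpoint. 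For (b), I would fix $x$ and substitute $\mu=\xi|\xi|$ (a bijection of $\R$ with $d\mu=2|\xi|\,d\xi$) to recognize $t\mapsto e^{-t\Hl\partial_x^2}\phi(x)$ as a constant multiple of the inverse time-Fourier transform of $\mu\mapsto e^{ix\xi(\mu)}\widehat\phi(\xi(\mu))/(2|\xi(\mu)|)$; Plancherel in $t$ then gives
\[
\norm{e^{-t\Hl\partial_x^2}\phi(x)}_{L_t^2}^2=c\int_{\widetilde I_k}\frac{|\widehat\phi(\xi)|^2}{|\xi|}\,d\xi\les2^{-k}\norm{\phi}_{L^2}^2,
\]
uniformly in $x$, which is (b) after taking the supremum over $x$ (the gain $2^{-k/2}$ genuinely uses $|\xi|\sim2^k$ on $\supp\widehat\phi$, i.e.\ it is meant for $k\ge1$).

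The estimates in (c) are the Benjamin--Ono analogues of the maximal function estimates for the Schr\"odinger equation: the global bound $\norm{e^{-t\Hl\partial_x^2}f}_{L_x^4L_t^\infty}\les2^{k/4}\norm{f}_{L^2}$ is the Kenig--Ruiz strong-type estimate, and the local bound $\norm{e^{-t\Hl\partial_x^2}f}_{L_x^2L_{t\in I}^\infty}\les2^{k/2}\norm{f}_{L^2}$ (for $|I|\les1$) is the sharp local maximal function estimate; after the standard $TT^*$ reduction both follow from the $L^2$-based oscillatory integral bounds of Kenig--Ponce--Vega, whose only hypothesis is again $|\omega''|\sim1$, so I would simply invoke \cite{KPV5}. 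This last point is the one genuine obstacle: a soft argument — splitting $I$ into $\sim2^k$ intervals of length $2^{-k}$, using Sobolev embedding in $t$ on each, and summing in $\ell^2$ — only produces the weaker power $2^{k}$, so recovering the sharp exponent $2^{k/2}$ really requires the almost-orthogonality structure encoded in the Kenig--Ruiz / Kenig--Ponce--Vega oscillatory integral estimates, while everything else reduces to textbook stationary phase and Plancherel.
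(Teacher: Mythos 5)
The paper does not prove Lemma~\ref{l33} at all; it simply refers the reader to \cite{KeelT,KPV5} ``and the references therein'' for the Strichartz, smoothing, and maximal function estimates, so there is no internal proof to compare against. Your sketch is the correct and standard one, and is essentially what those references contain once transported to the Benjamin--Ono phase via the observation that $\omega(\xi)=-\xi|\xi|$ is a nondegenerate quadratic on each half-line: van der Corput gives the $|t|^{-1/2}$ dispersive bound, which with the $TT^{*}$ argument and Hardy--Littlewood--Sobolev (or directly \cite{KeelT}) yields (a), noting that the 1D admissible range $q\in[4,\infty]$ never reaches the $q=2$ endpoint; the substitution $\mu=\xi|\xi|$ followed by Plancherel in $t$ is exactly the standard local smoothing computation for (b); and (c) is the Kenig--Ruiz / Kenig--Ponce--Vega maximal function theory, which depends only on $|\omega''|\sim 1$ off the origin and so applies verbatim. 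You correctly identify the one genuinely nontrivial ingredient: the local $L^2_xL^\infty_{t\in I}$ bound at exponent $2^{k/2}$ really does require the oscillatory-integral almost-orthogonality of \cite{KPV5}, since splitting $I$ into $\sim 2^{k}$ pieces of length $2^{-k}$ and using Gagliardo--Nirenberg in $t$ on each only gives $2^{k}$ after $\ell^2$-summation over the pieces.

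Your caveat on part (b) is also correct and worth recording: the change-of-variables argument gives $\norm{e^{-t\Hl\partial_x^2}\phi(x)}_{L^2_t}^2 \sim \int_{\wt I_k}|\widehat\phi(\xi)|^2|\xi|^{-1}\,d\xi$, which for $k=0$ (where $\wt I_0=[-2,2]$ contains a neighbourhood of the origin) need not be finite for $\phi\in L^2$, so the global-in-time $L^\infty_xL^2_t$ estimate in (b) is genuinely false as stated when $k=0$. This is a harmless imprecision in the lemma rather than in your argument: in the paper's subsequent uses (Lemma~\ref{l34} and the trilinear estimates of Sections~5 and~7) the smoothing bound is only ever applied to the highest-frequency factor, which satisfies $k\geq 100$, so the $k=0$ case of (b) is never invoked.
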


In particular we note the case $(6,6)$ is admissible which we will
use in the sequel. From Lemma \ref{l32}, \ref{l33}, we immediately
get the following

\begin{lemma}\label{l34}
Let $k \in \Z_+$ and $I\subset \R$ be an interval with $|I|\les 1$.
Assume $(q,r)$ is admissible and $f_k \in Z_k$. Then
\begin{eqnarray*}
\begin{array}{l}
\norm{\ft^{-1}(f_k)}_{L_x^2L_{t\in I}^\infty}\leq C
2^{k/2}\norm{f_k}_{Z_k}.\\
\norm{\ft^{-1}(f_k)}_{L_x^\infty L_t^2}\leq C
2^{-k/2}\norm{f_k}_{Z_k},\\
\norm{\ft^{-1}(f_k)}_{L_x^4 L_t^\infty}\leq C
2^{k/4}\norm{f_k}_{Z_k},\\
\norm{\ft^{-1}(f_k)}_{L_t^q L_x^r}\leq C \norm{f_k}_{Z_k}.
\end{array}
\end{eqnarray*}
As a consequence,
\begin{equation}
F^{s}\subseteq C(\R; {H}^s) \mbox{ for any } s\geq 0.
\end{equation}
\end{lemma}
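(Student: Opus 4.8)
The plan is to read off all four norm inequalities directly from Lemma \ref{l32} together with Lemma \ref{l33}, and then to bootstrap the last of them into the embedding. Lemma \ref{l32} says precisely that a space--time bound for the free evolution of the shape $\norm{e^{-t\Hl\partial_x^2}\phi}_{Y}\les 2^{ks}\norm{\phi}_{L^2}$ (for $\widehat{\phi}$ supported in $\wt{I}_k$, with $Y$ a mixed-norm space of the admitted type) transfers verbatim to $\norm{\ft^{-1}(f_k)}_{Y}\les 2^{ks}\norm{f_k}_{Z_k}$. Hence the first estimate follows by taking $Y=L_x^2L_{t\in I}^\infty$ and $s=1/2$, whose hypothesis is the first maximal function bound of Lemma \ref{l33}(c) (this is where $|I|\les 1$ is used); the second by taking $Y=L_x^\infty L_t^2$ and $s=-1/2$, whose hypothesis is the smoothing effect of Lemma \ref{l33}(b); the third by taking $Y=L_x^4L_t^\infty$ and $s=1/4$, using the second bound of Lemma \ref{l33}(c); and the fourth by taking $Y=L_t^qL_x^r$ and $s=0$, which is the Strichartz estimate of Lemma \ref{l33}(a) for admissible $(q,r)$. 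In each case $Y$ is of one of the two forms allowed in Lemma \ref{l32} (possibly with $I=\R$), so nothing further is needed.

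For the embedding $F^{s}\subseteq C(\R;H^s)$, I would use that $(q,r)=(\infty,2)$ is admissible, since $2/\infty=0=1/2-1/2$. The fourth estimate with this pair reads $\norm{\ft^{-1}(f_k)}_{L_t^\infty L_x^2}\les\norm{f_k}_{Z_k}$ for every $f_k\in Z_k$. Taking $f_k=\eta_k(\xi)\ft(u)$, multiplying by $2^{2sk}$, and summing over $k\geq 0$ gives $\sup_{t\in\R}\norm{u(t)}_{H^s}^2\les\sum_{k\geq 0}2^{2sk}\norm{\eta_k\ft(u)}_{Z_k}^2=\norm{u}_{F^{s}}^2$, so $u\in L^\infty(\R;H^s)$ and in particular $u(t)\in H^s$ for every $t$. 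To upgrade to continuity I would argue block by block. For an $X_k$-atom $f_{k,j}$, supported in $D_{k,j}$, the computation used in the proof of Lemma \ref{l32} writes $\ft^{-1}(f_{k,j})(x,t)=c\int_{\wt{I}_j}e^{it\tau}\,(W(t)g_\tau)(x)\,d\tau$ with $g_\tau=\ft_x^{-1}[f_{k,j}(\cdot,\tau+\omega(\cdot))]$; since $W(t)$ is a strongly continuous unitary group on $L^2$, the integrand $t\mapsto e^{it\tau}W(t)g_\tau$ is continuous into $L^2$, it is dominated by $\norm{g_\tau}_{L^2}$, and $\int_{\wt{I}_j}\norm{g_\tau}_{L^2}\,d\tau\les 2^{j/2}\norm{f_{k,j}}_{L^2}<\infty$ by Cauchy--Schwarz, so dominated convergence yields $\ft^{-1}(f_{k,j})\in C(\R;L^2)$ with $\sup_t\norm{\ft^{-1}(f_{k,j})(t)}_{L^2}\les 2^{j/2}\norm{f_{k,j}}_{L^2}$. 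Summing the atoms against the weights $2^{j/2}\beta_{k,j}\geq 2^{j/2}$ (and treating the $Y_k$ component by the same transfer mechanism, or by a density argument using the $L_t^\infty L_x^2$ bound just proved) gives $\ft^{-1}(\eta_k\ft(u))\in C(\R;L^2)$ for each $k$. Finally, since $\sum_{k>K}2^{2sk}\norm{\eta_k\ft(u)}_{Z_k}^2\to 0$ as $K\to\infty$, the partial sums $\sum_{k\leq K}\ft^{-1}(\eta_k\ft(u))$ converge to $u$ uniformly in $t$ in $H^s$, and a uniform limit of $H^s$-valued continuous functions is continuous; hence $u\in C(\R;H^s)$.

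The four inequalities are pure bookkeeping once Lemmas \ref{l32} and \ref{l33} are in hand, so the only step with any content is the continuity argument, and even there I expect no real obstacle. The one delicate point worth noting is that an individual frequency block of an element of $F^{s}$ is not localized in the modulation variable $\tau-\omega(\xi)$, so time continuity is not automatic; it is exactly the $\ell^1$-in-$j$ summation built into the $X_k$ (hence $Z_k$) norm, together with the factor $\beta_{k,j}\geq 1$, that makes the bound $\int_{\wt{I}_j}\norm{g_\tau}_{L^2}\,d\tau<\infty$ available and lets one sum the atoms uniformly in $t$.
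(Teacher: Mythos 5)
Your proof is correct and follows exactly the route the paper intends: the paper itself gives no details, stating only that Lemma \ref{l34} follows immediately from Lemmas \ref{l32} and \ref{l33}, and your matching of $Y$-spaces to the corresponding $s$ values ($s=1/2,-1/2,1/4,0$) is precisely what is meant. Your atom-by-atom continuity argument for the $C(\R;H^s)$ conclusion, correctly using the $\ell^1$-in-$j$ structure of $X_k$ and strong continuity of $W(t)$, supplies a detail the paper leaves implicit but does not constitute a different approach.
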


Now we turn to study the properties of the space $F^{l,s}$. The
definition shows easily that if $k\in \Z$ and $f_k\in B_k$ then
\begin{eqnarray}\label{eq:pBk1}
\normo{\int_{\R}|f_k(\xi,\tau')|d\tau'}_{L_\xi^2}\les
\norm{f_k}_{B_k}.
\end{eqnarray}
Moreover, if $k\in \Z$, $l\in \Z_+$, and $f_k\in B_k$ then
\begin{eqnarray}\label{eq:pBk2}
&&\sum_{j=l+1}^\infty 2^{j/2}\normo{\eta_j(\tau-\omega(\xi))\cdot
\int_{\R}|f_k(\xi,\tau')|\cdot
2^{-l}(1+2^{-l}|\tau-\tau'|)^{-4}d\tau'}_{L^2}\nonumber\\
&&+2^{l/2}\normo{\eta_{\leq l}(\tau-\omega(\xi))\cdot \int_{\R}
|f_k(\xi,\tau')|\cdot
2^{-l}(1+2^{-l}|\tau-\tau'|)^{-4}d\tau'}_{L^2}\nonumber\\
&&\les \norm{f_k}_{B_k}.
\end{eqnarray}
In particular, if $k\in \Z$, $l\in \Z_+$, $t_0\in \R$, $f_k \in
B_k$, and $\gamma\in \Sch(\R)$, then
\begin{eqnarray}\label{eq:pBk3}
\norm{\ft[\gamma(2^l(t-t_0))\cdot \ft^{-1}(f_k)]}_{B_k}\les
\norm{f_k}_{B_k}.
\end{eqnarray}

Indeed, to prove \eqref{eq:pBk2}, first for the second term on the
left-hand side of \eqref{eq:pBk2}, we immediately get from
Cauchy-Schwarz inequality and \eqref{eq:pBk1} that
\begin{eqnarray*}
&&2^{l/2}\normo{\eta_{\leq l}(\tau-\omega(\xi))\cdot \int_{\R}
|f_k(\xi,\tau')|\cdot
2^{-l}(1+2^{-l}|\tau-\tau'|)^{-4}d\tau'}_{L^2}\nonumber\\
&&\les \normo{\int_{\R} |f_k(\xi,\tau')|d\tau'}_{L_\xi^2}\les
\norm{f_k}_{B_k}.
\end{eqnarray*}
For the first term on the left-hand side of \eqref{eq:pBk2}, we
decompose $f_k(\xi,\tau')=\sum_{j_1\geq 0}f_{k,j_1}$ where
$f_{k,j_1}=f_k(\xi,\tau')\eta_{j_1}(\tau'-\omega(\xi))$, and then we
get
\begin{eqnarray*}
&&\sum_{j=l+1}^\infty 2^{j/2}\normo{\eta_j(\tau-\omega(\xi))\cdot
\int_{\R}|f_k(\xi,\tau')|\cdot
2^{-l}(1+2^{-l}|\tau-\tau'|)^{-4}d\tau'}_{L^2}\nonumber\\
&&\les \sum_{j=l+1}^\infty \sum_{j_1=0}^\infty
2^{j/2}\normo{\eta_j(\tau-\omega(\xi))
\int_{\R}|f_{k,j_1}(\xi,\tau')|\cdot
2^{-l}(1+2^{-l}|\tau-\tau'|)^{-4}d\tau'}_{L^2}\nonumber\\
&&=\sum_{j=l+1}^\infty (\sum_{j_1>j+5}+\sum_{j_1<j-5}+\sum_{j-5\leq
j_1\leq j+5})\nonumber\\
&&\quad 2^{j/2}\normo{\eta_j(\tau-\omega(\xi))
\int_{\R}|f_{k,j_1}(\xi,\tau')|\cdot
2^{-l}(1+2^{-l}|\tau-\tau'|)^{-4}d\tau'}_{L^2}\\
&&=:I+II+III.
\end{eqnarray*}
For the contribution of $I$, we first observe that $|\tau-\tau'|\sim
2^{j_1}$ in this case. Then we get that
\begin{eqnarray*}
I\les \sum_{j_1\geq l}\sum_{j\leq j_1}2^j
2^{3l}2^{-4j_1}\normo{\int_{\R}|f_{k,j_1}(\xi,\tau')|d\tau'}_{L_\xi^2}\leq
\norm{f_k}_{B_k}.
\end{eqnarray*}
Similarly we can estimate the contribution of $II$. For the third
term $III$, using Young's inequality, then we get
\begin{eqnarray*}
III\les \sum_{j=l+1}^\infty \sum_{|j-j_1|\leq 5}2^{j/2}
\norm{f_{k,j_1}}_{L^2}\les \norm{f_k}_{B_k}.
\end{eqnarray*}

As in \cite{IKT}, for any $k\in \Z$ we define the set $S_k$ of
$k-acceptable$ time multiplication factors
\begin{eqnarray}
S_k=\{m_k:\R\rightarrow \R: \norm{m_k}_{S_k}=\sum_{j=0}^{10}
2^{-jk_+}\norm{\partial^jm_k}_{L^\infty}< \infty\}.
\end{eqnarray}
For instance, $\eta(2^{k_+}t) \in S_k$ for any $\eta$ satisfies
$\norm{\partial_x^k \eta}_{L^\infty}\leq C$ for $j=1,2,\ldots, 10$.
Direct estimates using the definitions and \eqref{eq:pBk2} show that
for any $s\geq 0$ and $T\in (0,1]$
\begin{eqnarray}\label{eq:Sk}
\left \{
\begin{array}{l}
\norm{\sum_{k\in \Z} m_k(t)\cdot R_k(u)}_{F^{l,s}(T)}\les (\sup_{k\in \Z}\norm{m_k}_{S_k})\cdot \norm{u}_{F^{l,s}(T)};\\
\norm{\sum_{k\in \Z} m_k(t)\cdot R_k(u)}_{N^{l,s}(T)}\les
(\sup_{k\in \Z}\norm{m_k}_{S_k})\cdot \norm{u}_{N^{l,s}(T)};\\
\norm{\sum_{k\in \Z} m_k(t)\cdot R_k(u)}_{E^{l,s}(T)}\les
(\sup_{k\in \Z}\norm{m_k}_{S_k})\cdot \norm{u}_{E^{l,s}(T)}.
\end{array}
\right.
\end{eqnarray}

Actually, for instance we show the first inequality in
\eqref{eq:Sk}. In view of definition, it suffices to prove that if
$u_k \in F_k$, then
\begin{eqnarray*}
\norm{m_k(t)u_k}_{F_k}\les \norm{u_k}_{F_k}\norm{m_k}_{S_k}, \quad
\forall \ k\in \Z.
\end{eqnarray*}
From \eqref{eq:pBk3} we see that we only need to prove that
\begin{eqnarray*}
\big|\ft\big[m_k(\cdot)\eta_0(2^{k_+}(\cdot-t_k))\big]\big|\les
2^{-k_+} (1+2^{-k_+}|\tau|)^{-4}\norm{m_k}_{S_k},
\end{eqnarray*}
which follows from partial integration and the definition of $S_k$.

\section{A Symmetric Estimate}

In this section we prove a symmetric estimate which will be used to
prove a trilinear estimate. For $\xi_1,\xi_2,\xi_3\in \R$ and
$\omega:\R \rightarrow \R$ as in \eqref{eq:dr} let
\begin{equation}\label{eq:reso}
\Omega(\xi_1,\xi_2,\xi_3)=\omega(\xi_1)+\omega(\xi_2)+\omega(\xi_3)-\omega(\xi_1+\xi_2+\xi_3).
\end{equation}
This is the resonance function that plays a crucial role in the
trilinear estimate of the $X^{s,b}$-type space. See \cite{Taokz} for
a perspective discussion. For compactly supported functions $f,g,h,u
\in L^2(\R\times \R)$ let
\begin{eqnarray*}
&&J(f,g,h,u)=\int_{\R^6}f(\xi_1,\mu_1)g(\xi_2,\mu_2)h(\xi_3,\mu_3)\\
&&\qquad
u(\xi_1+\xi_2+\xi_3,\mu_1+\mu_2+\mu_3+\Omega(\xi_1,\xi_2,\xi_3))d\xi_1d\xi_2d\xi_3d\mu_1d\mu_2d\mu_3.
\end{eqnarray*}

\begin{lemma}\label{l41}
Assume $k_1,k_2,k_3,k_4 \in \Z$ and $k_1\leq k_2\leq k_3\leq k_4$,
$j_1,j_2,j_3,j_4\in \Z_+$, and $f_{k_i,j_i}\in L^2(\R\times \R)$ are
nonnegative functions supported in $I_{k_i}\times
\cup_{l=0}^{j_i}\widetilde{I}_{l}, \ i=1,\ 2,\ 3,\ 4$. For
simplicity we write
$J=|J(f_{k_1,j_1},f_{k_2,j_2},f_{k_3,j_3},f_{k_4,j_4})|$. Then

(a) For any $k_1\leq k_2\leq k_3\leq k_4$ and $j_1,j_2,j_3,j_4\in
\Z_+$,
\begin{equation}
J\leq C 2^{(j_{min}+j_{thd})/2}2^{(k_{min}+k_{thd})/2}
\prod_{i=1}^4\norm{f_{k_i,j_i}}_{L^2}.
\end{equation}

(b) If $k_2\leq k_3-5$ and $j_2\neq j_{max}$,
\begin{equation}\label{eq:l41rb}
J\leq C
2^{(j_1+j_2+j_3+j_4)/2}2^{-j_{max}/2}2^{-k_{max}/2}2^{k_{min}/2}
\prod_{i=1}^4\norm{f_{k_i,j_i}}_{L^2};
\end{equation}
if $k_2\leq k_3-5$ and $j_2=j_{max}$,
\begin{equation}
J\leq C
2^{(j_1+j_2+j_3+j_4)/2}2^{-j_{max}/2}2^{-k_{max}/2}2^{k_{thd}/2}
\prod_{i=1}^4\norm{f_{k_i,j_i}}_{L^2}.
\end{equation}

(c) For any $k_1,k_2,k_3,k_4 \in \Z$ and $j_1,j_2,j_3,j_4\in \Z_+$,
\begin{equation}
J\leq C2^{(j_1+j_2+j_3+j_4)/2}2^{-j_{max}/2}
\prod_{i=1}^4\norm{f_{k_i,j_i}}_{L^2}.
\end{equation}

(d) If $k_{min}\leq k_{max}-10$, then
\begin{equation}\label{eq:l41d}
J\leq C2^{(j_1+j_2+j_3+j_4)/2}2^{-k_{max}}
\prod_{i=1}^4\norm{f_{k_i,j_i}}_{L^2}.
\end{equation}

\end{lemma}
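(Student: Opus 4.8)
The plan is to reduce $J$ to an elementary convolution integral on $\R^2$ and then to play the Cauchy--Schwarz and Strichartz mechanisms against the resonance identity. Substituting $\tau_i=\mu_i+\omega(\xi_i)$ for $i=1,2,3$, $\xi_4=\xi_1+\xi_2+\xi_3$, $\tau_4=\tau_1+\tau_2+\tau_3$, and using $\Omega(\xi_1,\xi_2,\xi_3)+\omega(\xi_4)=\omega(\xi_1)+\omega(\xi_2)+\omega(\xi_3)$ (cf.\ \eqref{eq:reso}, \eqref{eq:dr}), one rewrites
\[
J=\int_{\Gamma}F_1F_2F_3F_4\,,\qquad \Gamma=\{(\zeta_1,\dots,\zeta_4)\in(\R^2)^4:\ \zeta_1+\zeta_2+\zeta_3=\zeta_4\},
\]
where $F_i(\xi,\tau):=f_{k_i,j_i}(\xi,\tau-\omega(\xi))$ is supported in $\{\xi\in I_{k_i},\ |\tau-\omega(\xi)|\les 2^{j_i}\}$ with $\norm{F_i}_{L^2}=\norm{f_{k_i,j_i}}_{L^2}$. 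On the support one has $\sum_{i=1}^3(\tau_i-\omega(\xi_i))-(\tau_4-\omega(\xi_4))=-\Omega(\xi_1,\xi_2,\xi_3)$, hence $|\Omega(\xi_1,\xi_2,\xi_3)|\les 2^{j_{max}}$ there. The two tools are: (i) for any splitting $\{a,b\}\cup\{c,d\}=\{1,2,3,4\}$, Plancherel and Cauchy--Schwarz give $J\le\norm{F_a*F_b}_{L^2}\norm{F_c*\check F_d}_{L^2}$ (with $\check F(\zeta)=F(-\zeta)$), and for fixed $\zeta$ the overlap set $\{\zeta':\zeta'\in\supp F_a,\ \zeta-\zeta'\in\supp F_b\}$ has measure $\les\min(2^{k_a},2^{k_b})\min(2^{j_a},2^{j_b})$, which improves to $\les 2^{j_a+j_b-\max(k_a,k_b)}$ once $k_a,k_b$ are well separated, because then $\xi'\mapsto\omega(\xi')+\omega(\zeta_\xi-\xi')$ has derivative of size $\sim 2^{\max(k_a,k_b)}$; (ii) the Strichartz, smoothing and maximal bounds of Lemma \ref{l33}, lifted to the $F_i$ as in the proof of Lemma \ref{l32} at the cost of one factor $2^{j_i/2}$ each, in particular $\norm{\ft^{-1}(F_i)}_{L^6_{t,x}}\les 2^{j_i/2}\norm{F_i}_{L^2}$.

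\emph{Parts (a) and (c).} Part (a) uses only tool (i) and no resonance: $J\le[\min(2^{k_a},2^{k_b})\min(2^{k_c},2^{k_d})]^{1/2}[\min(2^{j_a},2^{j_b})\min(2^{j_c},2^{j_d})]^{1/2}\prod\norm{f_{k_i,j_i}}_{L^2}$, and one chooses the splitting. Since $k_1\le k_2\le k_3\le k_4$, the splittings $\{1,3\}\cup\{2,4\}$ and $\{1,4\}\cup\{2,3\}$ both produce the $k$--factor $2^{(k_1+k_2)/2}=2^{(k_{min}+k_{thd})/2}$, and at least one of them keeps the two largest--modulation factors in different groups, producing the $j$--factor $2^{(j_{min}+j_{thd})/2}$. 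Part (c) uses only tool (ii): in physical space $J=c\int_{\R^2}\ft^{-1}(F_1)\ft^{-1}(F_2)\ft^{-1}(F_3)\overline{\ft^{-1}(\overline{F_4})}$, and Hölder with exponents $(6,6,6,2)$ --- putting the factor with the largest $j_i$ in $L^2$ and using $\norm{\ft^{-1}(F_i)}_{L^6_{t,x}}\les 2^{j_i/2}\norm{F_i}_{L^2}$ for the other three --- gives $J\les 2^{(j_1+j_2+j_3+j_4)/2}2^{-j_{max}/2}\prod\norm{f_{k_i,j_i}}_{L^2}$.

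\emph{Parts (b) and (d).} These run a short case analysis combining (i), (ii) and the explicit form of $\Omega$ for $\omega(\xi)=-\xi|\xi|$. For (d): if moreover $k_2\le k_3-5$, then $|\xi_3|\sim|\xi_4|\sim 2^{k_{max}}$ are well separated from $\xi_1,\xi_2$, so the splitting $\{1,3\}\cup\{2,4\}$ with the improved overlap bound on \emph{both} pairs produces the factor $2^{-(k_3+k_4)/2}\sim 2^{-k_{max}}$; if instead $k_2\ge k_3-4$ then only $\xi_1$ is small, and writing $\Omega=[\omega(\xi_2)+\omega(\xi_3)-\omega(\xi_2+\xi_3)]+O(2^{k_1+k_{max}}+2^{2k_1})$ with the bracket of size $\sim 2^{2k_{max}}$ (since $|\xi_2+\xi_3|\sim|\xi_4|\sim 2^{k_{max}}$ rules out near--cancellation of $\xi_2,\xi_3$) forces $|\Omega|\ges 2^{2k_{max}}$, hence $j_{max}\ges 2k_{max}$, so (d) follows from the already proven (c). For (b) the regime $k_2\le k_3-5$ is the ``high--high--low'' interaction; there $\Omega=2\xi_3(\xi_1+\xi_2)+O(2^{2k_2})$ up to signs, so $|\Omega|\les 2^{j_{max}}$ pins $\xi_1+\xi_2$ into an interval of length $\les 2^{\max(j_{max},2k_2)-k_{max}}$. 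Feeding this into the splittings $\{1,3\}\cup\{2,4\}$ and $\{1,4\}\cup\{2,3\}$ --- each pairing a low frequency with a high one, hence admitting the improved bound --- produces $2^{-k_{max}/2}$ on each ``high leg'' and $2^{k_{min}/2}$ on the ``low legs'' when $f_{k_2,j_2}$ can be assigned so as to contribute only a $\min(2^{j_2},\cdot)^{1/2}$, which is exactly the case $j_2\neq j_{max}$ (giving (b1)); when $j_2=j_{max}$ that pair cannot be improved and one gets only $2^{k_{thd}/2}$, i.e.\ (b2).

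\emph{Main obstacle.} Parts (a) and (c) are routine. The delicate point is the bookkeeping for (b): keeping track of which index carries $k_{min},k_{thd},k_{max}$ and $j_{max}$, verifying the geometric facts (that $|\xi_3|\sim|\xi_4|$ when $k_2\le k_3-5$, and the size of $\Omega$ in each sub-regime), and --- since the exponents in (b) are used at a sharp threshold in Section 5 --- making the powers of $2^k$ come out exactly, including in the sub-configurations where a chosen pair of frequencies is \emph{not} separated and one must instead invoke $|\Omega|\ges 2^{k_{max}}|\xi_1+\xi_2|$ to recover the needed gain.
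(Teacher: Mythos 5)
Your plan is sound, and parts (a) and (c) are essentially the paper's argument (Cauchy--Schwarz over the supports, then the $L^6$ Strichartz bound). The genuine differences, and a couple of places that deserve tightening, are in (b) and (d).

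For (d) with $k_2\leq k_3-5$: your ``double-improved'' bilinear bound --- pairing $\{1,3\}$ and $\{2,4\}$, using for each pair the measure estimate for the set $\{\xi':\ |\tau-\omega(\xi')-\omega(\xi-\xi')|\les 2^{\max(j_a,j_b)}\}$ which has size $\les 2^{\max(j_a,j_b)-\max(k_a,k_b)}$ because $|\omega'(\xi')-\omega'(\xi-\xi')|\sim 2^{\max(k_a,k_b)}$ --- gives directly $J\les 2^{(j_1+j_3-k_3)/2}2^{(j_2+j_4-k_4)/2}\prod\|f_{k_i,j_i}\|_2$, i.e. $2^{-(k_3+k_4)/2}\sim 2^{-k_{max}}$. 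This is cleaner than the paper's treatment, which splits further on $\sgn(\xi_1\xi_2)$, localizes $|\xi_1+\xi_2|\sim 2^l$, and sums dyadically using $|\Omega|\sim 2^{l+k_3}$. Your version handles both signs uniformly and avoids the $l$-summation. The same pairing mechanism, with the trivial overlap $2^{\min(k_a,k_b)+\min(j_a,j_b)}$ on one pair (chosen so that it contains index $1$ and the index carrying $j_{max}$) and the improved overlap on the other, also reproduces (b1); when $j_2=j_{max}$ the pair containing $j_{max}$ must contain index $2$, the trivial overlap then yields $2^{k_2}=2^{k_{thd}}$, and (b2) follows. This is a genuinely different route from the paper's, which proves a single quadrilinear estimate \eqref{eq:l41b} by fixing $\xi_1$, changing $\xi_3'=\xi_2+\xi_3$, and substituting $\mu_2=\Omega$; your bilinear version factors the same Jacobian computation through two pair estimates.

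Two points you should shore up. First, your stated mechanism for (b) (``$\Omega=2\xi_3(\xi_1+\xi_2)+O(2^{2k_2})$ pins $\xi_1+\xi_2$ ... produces $2^{-k_{max}/2}$ on each high leg'') does not parse as written: if the improved bound is used on both legs you get $2^{-k_{max}}$, which is (d) not (b), and the $\xi_1+\xi_2$ localization never enters the pair estimates $\{1,3\}\cup\{2,4\}$ or $\{1,4\}\cup\{2,3\}$. What you actually need is the trivial/improved split described above, with the pairing dictated by which index carries $j_{max}$; spell this out. Second, in (d) for $k_2\geq k_3-4$ the bracket-plus-error decomposition $\Omega=[\omega(\xi_2)+\omega(\xi_3)-\omega(\xi_2+\xi_3)]+O(2^{k_1+k_{max}})$ is not by itself conclusive at the threshold $k_{min}\leq k_{max}-10$: in the same-sign case the bracket is only $2\xi_2\xi_3\sim 2^{k_2+k_3}$, which can be as small as $2^{2k_{max}-12}$ while the error is as large as $2^{2k_{max}-7}$, so ``bracket dominates error'' is false numerically. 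The conclusion $|\Omega|\ges 2^{k_2+k_3}\sim 2^{2k_{max}}$ is nevertheless correct, but one must use the sign-dependent factorization of $\Omega$ (e.g.\ $\Omega=2(\xi_1\xi_2+\xi_1\xi_3+\xi_2\xi_3)$ or $\Omega=2(\xi_1+\xi_2)(\xi_1+\xi_3)$, etc.), as the paper does implicitly, rather than an additive error bound. Once stated this way, $j_{max}\ges 2k_{max}$ and (c) finishes the argument exactly as you say.
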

\begin{proof}
Let $A_{k_i}(\xi)=[\int_\R |f_{k_i,j_i}(\xi,\mu)|^2d\mu]^{1/2}$,
$i=1,2,3,4$. Using the Cauchy-Schwarz inequality and the support
properties of the functions $f_{k_i,j_i}$,
\begin{eqnarray*}
&&|J(f_{k_1,j_1},f_{k_2,j_2},f_{k_3,j_3},f_{k_4,j_4})|\\
&\leq& C
2^{(j_{min}+j_{thd})/2}\int_{\R^3}A_{k_1}(\xi_1)A_{k_2}(\xi_1)A_{k_3}(\xi_1)A_{k_4}(\xi_1+\xi_2+\xi_3)d\xi_1d\xi_2d\xi_3\\
&\leq& C
2^{(k_{min}+k_{thd})/2}2^{(j_{min}+j_{thd})/2}\prod_{i=1}^4\norm{f_{k_i,j_i}}_{L^2},
\end{eqnarray*}
which is part (a), as desired.

For part (b), in view of the support properties of the functions, it
is easy to see that
$J(f_{k_1,j_1},f_{k_2,j_2},f_{k_3,j_3},f_{k_4,j_4})\equiv 0$ unless
\begin{equation}\label{eq:freeq}
k_4\leq k_3+5.
\end{equation}
Simple changes of variables in the integration and the observation
that the function $\omega$ is odd show that
\begin{eqnarray*}
|J(f,g,h,u)|=|J(g,f,h,u)|=|J(f,h,g,u)|=|J(\widetilde{f},g,u,h)|,
\end{eqnarray*}
where $\widetilde{f}(\xi,\mu)=f(-\xi,-\mu)$. We assume first that
$j_2\neq j_{max}$. Then we have several cases: if $j_4=j_{max}$,
then we will prove that if $g_i:\R\rightarrow \R_+$ are $L^2$
functions supported in $I_{k_i}$, $i=1,2,3$, and $g: \R^2\rightarrow
\R_+$ is an $L^2$ function supported in $I_{k_4}\times
\widetilde{I}_{j_4}$, then
\begin{eqnarray}\label{eq:l41b}
&&\int_{\R^3}g_1(\xi_1)g_2(\xi_2)g_3(\xi_3)g(\xi_1+\xi_2+\xi_3,\Omega(\xi_1,\xi_2,\xi_3))d\xi_1d\xi_2d\xi_3\nonumber\\
&&\les
2^{-k_{max}/2}2^{k_{min}/2}\norm{g_1}_{L^2}\norm{g_2}_{L^2}\norm{g_3}_{L^2}\norm{g}_{L^2}.
\end{eqnarray}
This suffices for \eqref{eq:l41rb}.

To prove \eqref{eq:l41b}, we first observe that since $k_2\leq
k_3-5$ then $|\xi_3+\xi_2|\sim |\xi_3|$. By change of variable
$\xi'_1=\xi_1$, $\xi'_2=\xi_2$, $\xi'_3=\xi_3+\xi_2$, we get that
the left side of \eqref{eq:l41b} is bounded by
\begin{eqnarray}\label{eq:l41b2}
&&\int_{|\xi_1|\sim 2^{k_1},|\xi_2|\sim 2^{k_2},|\xi_3|\sim
2^{k_3}}g_1(\xi_1)g_2(\xi_2)\nonumber\\
&&g_3(\xi_3-\xi_2)g(\xi_1+\xi_3,\Omega(\xi_1,\xi_2,\xi_3-\xi_2))d\xi_1d\xi_2d\xi_3.
\end{eqnarray}
Note that in the integration area we have
\begin{eqnarray*}
\big|\frac{\partial}{\partial_{\xi_2}}\left[\Omega(\xi_1,\xi_2,\xi_3-\xi_2)\right]\big|=|\omega'(\xi_2)-\omega'(\xi_3-\xi_2)|\sim
2^{k_3},
\end{eqnarray*}
where we use the fact $\omega'(\xi)=|\xi|$ and $k_2\leq k_3-5$. By
change of variable $\mu_2=\Omega(\xi_1,\xi_2,\xi_3-\xi_2)$, we get
that \eqref{eq:l41b2} is bounded by
\begin{eqnarray}
&&2^{-k_3/2}\int_{|\xi_1|\sim
2^{k_1}}g_1(\xi_1)\norm{g_2}_{L^2}\norm{g_3}_{L^2}\norm{g}_{L^2}d\xi_1 \nonumber\\
&\les&2^{-k_{max}/2}2^{k_{min}/2}\norm{g_1}_{L^2}\norm{g_2}_{L^2}\norm{g_3}_{L^2}\norm{g_1}_{L^2}\norm{g}_{L^2}.
\end{eqnarray}

If $j_3=j_{max}$, this case is identical to the case $j_4=j_{max}$
in view of \eqref{eq:freeq}. If $j_1=j_{max}$ it suffices to prove
that if $g_i:\R\rightarrow \R_+$ are $L^2$ functions supported in
$I_{k_i}$, $i=2,3,4$, and $g: \R^2\rightarrow \R_+$ is an $L^2$
function supported in $I_{k_1}\times \widetilde{I}_{j_1}$, then
\begin{eqnarray}\label{eq:l41b3}
&&\int_{\R^3}g_2(\xi_2)g_3(\xi_3)g_4(\xi_4)g(\xi_2+\xi_3+\xi_4,\Omega(\xi_2,\xi_3,\xi_4))d\xi_2d\xi_3d\xi_4\nonumber\\
&&\les
2^{-k_{max}/2}2^{k_{min}/2}\norm{g_2}_{L^2}\norm{g_3}_{L^2}\norm{g_4}_{L^2}\norm{g}_{L^2}.
\end{eqnarray}

Indeed, by change of variables
$\xi'_2=\xi_2,\xi'_3=\xi_3,\xi'_4=\xi_2+\xi_3+\xi_4$ and noting that
in the area $|\xi'_2|\sim 2^{k_2},|\xi'_3|\sim 2^{k_3},|\xi'_4|\sim
2^{k_1}$,
\begin{eqnarray*}
\big|\frac{\partial}{\partial_{\xi'_2}}\left[\Omega(\xi'_2,\xi'_3,\xi'_4-\xi'_2-\xi'_3)\right]\big|=|\omega'(\xi'_2)-\omega'(\xi'_4-\xi'_2-\xi'_3)|\sim
2^{k_3},
\end{eqnarray*}
we get from Cauchy-Schwarz inequality that
\begin{eqnarray}
&&\int_{\R^3}g_2(\xi_2)g_3(\xi_3)g_4(\xi_4)g(\xi_2+\xi_3+\xi_4,\Omega(\xi_2,\xi_3,\xi_4))d\xi_2d\xi_3d\xi_4\nonumber\\
&\les& \int_{|\xi'_2|\sim 2^{k_2},|\xi'_3|\sim 2^{k_3}, |\xi'_4|\sim
2^{k_1}}g_2(\xi'_2)g_3(\xi'_3)\nonumber\\
&&\quad \cdot
g_4(\xi'_4-\xi'_2-\xi'_3)g(\xi'_4,\Omega(\xi'_2,\xi'_3,\xi'_4-\xi'_2-\xi'_3))d\xi'_2d\xi'_3d\xi'_4\nonumber\\
&\les&2^{-k_3/2}\int_{|\xi'_3|\sim 2^{k_3}, |\xi'_4|\sim
2^{k_1}}g_3(\xi'_3)\norm{g_2(\xi'_2)g_4(\xi'_4-\xi'_2-\xi'_3)}_{L_{\xi'_2}^2}\norm{g(\xi'_4,\cdot)}_{L_{\xi'_2}^2}d\xi'_3d\xi'_4\nonumber\\
&\les&2^{-k_{max}/2}2^{k_{min}/2}\norm{g_2}_{L^2}\norm{g_3}_{L^2}\norm{g_4}_{L^2}\norm{g}_{L^2}.
\end{eqnarray}

We assume now that $j_2=j_{max}$. This case is identical to the case
$j_1=j_{max}$. We note that we actually prove that if $k_2\leq
k_3-5$ then
\begin{eqnarray}\label{eq:l41bb}
J\leq C
2^{(j_1+j_2+j_3+j_4)/2}2^{-j_{sub}/2}2^{-k_{max}/2}2^{k_{min}/2}\prod_{i=1}^4\norm{f_{k_i,j_i}}_{L^2}.
\end{eqnarray}
Therefore, we complete the proof for part (b).

For part (c), setting $f^{\sharp}_{k_i,j_i}=f_{k_i,j_i}(\xi,
\tau-\omega(\xi))$, $i=1,2,3$, then we get
\begin{eqnarray*}
|J(f_{k_1,j_1},f_{k_2,j_2},f_{k_3,j_3},f_{k_4,j_4})|&=&|\int
f^\sharp_{k_1,j_1}*f^\sharp_{k_2,j_2}*f^\sharp_{k_3,j_3}\cdot
f^\sharp_{k_4,j_4}|\\
&\les&
\norm{f^\sharp_{k_1,j_1}*f^\sharp_{k_2,j_2}*f^\sharp_{k_3,j_3}}_{L^2}\norm{f^\sharp_{k_4,j_4}|}_{L^2}\\
&\les&\prod_{i=1}^3\norm{\ft^{-1}(f^\sharp_{k_i,j_i})}_{L^6}\norm{f_{k_4,j_4}|}_{L^2}.
\end{eqnarray*}
On the other hand, from
\begin{eqnarray*}
\ft^{-1}(f^\sharp_{k_1,j_1})&=&\int_{\R^2}f_{k_i,j_i}(\xi,
\tau-\omega(\xi))e^{ix\xi}e^{it\tau}d\xi d\tau\\
&=&\int_{\R^2} f_{k_i,j_i}(\xi,
\tau)e^{ix\xi}e^{it\omega(\xi)}e^{it\tau}d\xi d\tau,
\end{eqnarray*}
then it follows from Lemma \ref{l33} (a) that
\[\norm{\ft^{-1}(f^\sharp_{k_1,j_1})}_{L^6}\les  \int_{\R}\normo{\int_\R f_{k_i,j_i}(\xi,
\tau)e^{ix\xi}e^{it\omega(\xi)}d\xi}_{L^6} d\tau\les
2^{j_i/2}\norm{f_{k_i,j_i}}_{L^2}.\] Thus part (c) follows form the
symmetry.

For part (d), we only need to consider the worst cases $\xi_1\cdot
\xi_2<0$ and $k_2\leq k_3-5$. Indeed in the other cases we get
\eqref{eq:l41d} from the fact $|\Omega(\xi_1,\xi_2,\xi_3)|\ges
2^{k_2+k_3}$ which implies that $j_{max}\geq k_2+k_3-20$ by checking
the support properties. Thus (d) follows from (b) and (c) in these
cases. We assume now $\xi_1\cdot \xi_2<0$ and $k_2\leq k_3-5$. If
$j_4=j_{max}$, it suffices to prove that if $g_i$ is $L^2$
nonnegative functions supported in $I_{k_i}$, $i=1,2,3$, and $g$ is
a $L^2$ nonnegative function supported in $I_{k_4}\times
\widetilde{I}_{j_4}$, then
\begin{eqnarray}\label{eq:l41d1}
&&\int_{\R^3\cap \{\xi_1\cdot
\xi_2<0\}}g_1(\xi_1)g_2(\xi_2)g_3(\xi_3)g(\xi_1+\xi_2+\xi_3,
\Omega(\xi_1,\xi_2,\xi_3))d\xi_1d\xi_2d\xi_3\nonumber\\
&\les&
2^{j_4/2}2^{-k_3}\norm{g_1}_{L^2}\norm{g_2}_{L^2}\norm{g_3}_{L^2}\norm{g}_{L^2}.
\end{eqnarray}
By localizing $|\xi_1+\xi_2|\sim 2^l$ for $l\in \Z$, we get that the
right-hand side of \eqref{eq:l41d1} is bounded by
\begin{eqnarray}\label{eq:l41d2}
\sum_{l}\int_{\R^3}\chi_{l}(\xi_1+\xi_2)g_1(\xi_1)g_2(\xi_2)g_3(\xi_3)g(\xi_1+\xi_2+\xi_3,
\Omega(\xi_1,\xi_2,\xi_3))d\xi_1d\xi_2d\xi_3.
\end{eqnarray}
From the support properties of the functions $g_i,\ g$ and the fact
that in the integration area
\[|\Omega(\xi_1,\xi_2,\xi_3)|=(\xi_1+\xi_2)(\xi_1+\xi_3)\sim 2^{l+k_3},\]
We get that
\begin{equation}\label{eq:l41dsi}
j_{max}\geq l+k_3-20.
\end{equation}
By changing variable of integration $\xi_1'=\xi_1+\xi_2$,
$\xi_2'=\xi_2$, $\xi_3'=\xi_1+\xi_3$, we obtain that
\eqref{eq:l41d2} is bounded by
\begin{eqnarray}\label{eq:l41d3}
&&\sum_{l}\int_{|\xi_1'|\sim 2^l,|\xi_2'|\sim 2^{k_2},|\xi_3'|\sim 2^{k_3}}\chi_{l}(\xi_1')g_1(\xi_1'-\xi_2')g_2(\xi_2')g_3(\xi_2'+\xi_3'-\xi_1')\nonumber\\
&&\quad g(\xi_2'+\xi_3',
\Omega(\xi_1'-\xi_2',\xi_2',\xi_2'+\xi_3'-\xi_1'))d\xi_1'd\xi_2'd\xi_3'.
\end{eqnarray}
Since in the integration area
\begin{eqnarray}\label{eq:l41djo}
\big|\frac{\partial}{\partial_{\xi_1'}}[\Omega(\xi_1'-\xi_2',\xi_2',\xi_2'+\xi_3'-\xi_1')]\big|
=|\omega'(\xi_1'-\xi_2')-\omega'(\xi_2'+\xi_3'-\xi_1')|\sim 2^{k_3},
\end{eqnarray}
then we get from \eqref{eq:l41djo} that \eqref{eq:l41d3} is bounded
by
\begin{eqnarray}
&&\sum_{l}\int_{|\xi_1'|\sim 2^l}\chi_{l}(\xi_1')\norm{g_1}_{L^2}\norm{g_3}_{L^2}\nonumber\\
&&\quad \norm{g_2(\xi_2')g(\xi_2'+\xi_3',
\Omega(\xi_1'-\xi_2',\xi_2',\xi_2'+\xi_3'-\xi_1'))}_{L^2_{\xi_2',\xi_3'}}d\xi_1'\nonumber\\
&\les&\sum_{l}2^{l/2}2^{-k_3/2}\norm{g_1}_{L^2}\norm{g_2}_{L^2}\norm{g_3}_{L^2}\norm{g}_{L^2}\nonumber\\
&\les&
2^{j_{max}/2}2^{-k_3}\norm{g_1}_{L^2}\norm{g_2}_{L^2}\norm{g_3}_{L^2}\norm{g}_{L^2},
\end{eqnarray}
where we used \eqref{eq:l41dsi} in the last inequality.

From symmetry we know the case $j_3=j_{max}$ is identical to the
case $j_4=j_{max}$, and the case $j_1=j_{max}$ is identical to the
case $j_2=j_{max}$, thus it reduces to prove the case $j_2=j_{max}$.
It suffices to prove that if $g_i$ is $L^2$ nonnegative functions
supported in $I_{k_i}$, $i=1,3,4$, and $g$ is a $L^2$ nonnegative
function supported in $I_{k_2}\times \widetilde{I}_{j_2}$, then
\begin{eqnarray}\label{eq:l41dc21}
&&\int_{\R^3\cap \{\xi_1\cdot
\xi_2<0\}}g_1(\xi_1)g_3(\xi_3)g_4(\xi_4)g(\xi_1+\xi_3+\xi_4,
\Omega(\xi_1,\xi_3,\xi_4))d\xi_1d\xi_3d\xi_4\nonumber\\
&\les&
2^{j_2/2}2^{-k_3}\norm{g_1}_{L^2}\norm{g_4}_{L^2}\norm{g_3}_{L^2}\norm{g}_{L^2}.
\end{eqnarray}
As the case $j_4=j_{max}$, we get that the right-hand side of
\eqref{eq:l41dc21} is bounded by
\begin{eqnarray}\label{eq:l41dc22}
\sum_{l}\int_{\R^3}\chi_{l}(\xi_3+\xi_4)g_1(\xi_1)g_4(\xi_4)g_3(\xi_3)g(\xi_1+\xi_4+\xi_3,
\Omega(\xi_1,\xi_3,\xi_4))d\xi_1d\xi_4d\xi_3.
\end{eqnarray}
From the support properties of the functions $g_i,\ g$ and the fact
that in the integration area
\[|\Omega(\xi_1,\xi_2,\xi_3)|=|(\xi_1+\xi_4)(\xi_4+\xi_3)|\sim 2^{l+k_3},\]
We get that
\begin{equation}\label{eq:l41dc2si}
j_{max}\geq l+k_3-20.
\end{equation}
By changing variable of integration $\xi_1'=\xi_1+\xi_3$,
$\xi_3'=\xi_3+\xi_4$, $\xi_4'=\xi_1+\xi_3+\xi_4$, we obtain that
\eqref{eq:l41dc22} is bounded by
\begin{eqnarray}\label{eq:l41dc23}
&&\sum_{l}\int_{|\xi_3'|\sim 2^l,|\xi_4'|\sim 2^{k_2},|\xi_1'|\sim 2^{k_3}}\chi_{l}(\xi_3')g_1(\xi_4'-\xi_3')g_3(\xi_1'+\xi_3'-\xi_4')g_4(\xi_4'-\xi_1')\nonumber\\
&&\quad g(\xi_4',
\Omega(\xi_4'-\xi_3',\xi_1'+\xi_3'-\xi_4',\xi_4'-\xi_1'))d\xi_1'd\xi_3'd\xi_4'.
\end{eqnarray}
Since in the integration area,
\begin{eqnarray}\label{eq:l41dc2jo}
&&\big|\frac{\partial}{\partial_{\xi_3'}}[\Omega(\xi_4'-\xi_3',\xi_1'+\xi_3'-\xi_4',\xi_4'-\xi_1')]\big|\nonumber\\
&=&|-\omega'(\xi_4'-\xi_3')+\omega'(\xi_1'+\xi_3'-\xi_4')|\sim
2^{k_3},
\end{eqnarray}
then we get from \eqref{eq:l41dc2jo} that \eqref{eq:l41dc23} is
bounded by
\begin{eqnarray}
&&\sum_{l}\int_{|\xi_3'|\sim 2^l}\chi_{l}(\xi_3')\norm{g_1}_{L^2}\norm{g_3}_{L^2}\nonumber\\
&&\quad \norm{g_4(\xi_4'-\xi_1')g(\xi_4',
\Omega(\xi_4'-\xi_3',\xi_1'+\xi_3'-\xi_4',\xi_4'-\xi_1'))}_{L^2_{\xi_1',\xi_4'}}d\xi_3'\nonumber\\
&\les&\sum_{l}2^{l/2}2^{-k_3/2}\norm{g_1}_{L^2}\norm{g_3}_{L^2}\norm{g_4}_{L^2}\norm{g}_{L^2}\nonumber\\
&\les&
2^{j_{max}/2}2^{-k_3}\norm{g_1}_{L^2}\norm{g_2}_{L^2}\norm{g_3}_{L^2}\norm{g}_{L^2},
\end{eqnarray}
where we used \eqref{eq:l41dc2si} in the last inequality. Therefore,
we complete the proof for part (d).
\end{proof}

We restate now Lemma \ref{l41} in a form that is suitable for the
trilinear estimates in the next sections.
\begin{corollary}\label{cor42}
Assume $k_1,k_2,k_3,k_4\in \Z$, $j_1,j_2,j_3,j_4\in \Z_+$, and
$f_{k_i,j_i}\in L^2(\R\times \R)$ are functions supported in
$\dot{D}_{k_i,j_i}$, $i=1,2$.

(a) For any $k_1,k_2,k_3,k_4\in \Z$ and $j_1,j_2,j_3,j_4\in \Z_+$,
\begin{eqnarray}
&&\norm{1_{\dot{D}_{k_4,j_4}}(\xi,\tau)(f_{k_1,j_1}*f_{k_2,j_2}*f_{k_3,j_3})}_{L^2}\nonumber\\
&\leq&
C2^{(k_{min}+k_{thd})/2}2^{(j_{min}+j_{thd})/2}\prod_{i=1}^3\norm{f_{k_i,j_i}}_{L^2}.
\end{eqnarray}

(b) For any $k_1,k_2,k_3,k_4\in \Z$ with $k_{thd}\leq k_{max}-5$,
and $j_1,j_2,j_3,j_4\in \Z_+$. If for some $i\in \{1,2,3,4\}$ such
that $(k_i,j_i)=(k_{thd},j_{max})$, then
\begin{eqnarray}
&&\norm{1_{\dot{D}_{k_4,j_4}}(\xi,\tau)(f_{k_1,j_1}*f_{k_2,j_2}*f_{k_3,j_3})}_{L^2}\nonumber\\
&\leq&
C2^{(-k_{max}+k_{thd})/2}2^{(j_{1}+j_{2}+j_3+j_4)/2}2^{-j_{max}/2}\prod_{i=1}^3\norm{f_{k_i,j_i}}_{L^2},
\end{eqnarray}
else we have
\begin{eqnarray}
&&\norm{1_{\dot{D}_{k_4,j_4}}(\xi,\tau)(f_{k_1,j_1}*f_{k_2,j_2}*f_{k_3,j_3})}_{L^2}\nonumber\\
&\leq&
C2^{(-k_{max}+k_{min})/2}2^{(j_{1}+j_{2}+j_3+j_4)/2}2^{-j_{max}/2}\prod_{i=1}^3\norm{f_{k_i,j_i}}_{L^2}.
\end{eqnarray}

(c) For any $k_1,k_2,k_3,k_4\in \Z$ and $j_1,j_2,j_3,j_4\in \Z_+$,
\begin{eqnarray}
&&\norm{1_{\dot{D}_{k_4,j_4}}(\xi,\tau)(f_{k_1,j_1}*f_{k_2,j_2}*f_{k_3,j_3})}_{L^2}\nonumber\\
&\leq&
C2^{(j_{1}+j_{2}+j_3+j_4)/2}2^{-j_{max}/2}\prod_{i=1}^3\norm{f_{k_i,j_i}}_{L^2}.
\end{eqnarray}

(d) For any $k_1,k_2,k_3,k_4\in \Z$ with $k_{min}\leq k_{max}-10$,
and $j_1,j_2,j_3,j_4\in \Z_+$,
\begin{eqnarray}
&&\norm{1_{\dot{D}_{k_4,j_4}}(\xi,\tau)(f_{k_1,j_1}*f_{k_2,j_2}*f_{k_3,j_3})}_{L^2}\nonumber\\
&\leq&
C2^{(j_{1}+j_{2}+j_3+j_4)/2}2^{-k_{max}/2}\prod_{i=1}^3\norm{f_{k_i,j_i}}_{L^2}.
\end{eqnarray}
\end{corollary}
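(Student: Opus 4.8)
The plan is to deduce Corollary \ref{cor42} directly from Lemma \ref{l41} by dualizing the convolution estimate. The basic observation is that for nonnegative $f_{k_i,j_i}$,
\[
\norm{1_{\dot D_{k_4,j_4}}(\xi,\tau)(f_{k_1,j_1}*f_{k_2,j_2}*f_{k_3,j_3})}_{L^2}=\sup_{\norm{h}_{L^2}\le 1}\int_{\dot D_{k_4,j_4}}(f_{k_1,j_1}*f_{k_2,j_2}*f_{k_3,j_3})(\xi,\tau)\,h(\xi,\tau)\,d\xi d\tau,
\]
and writing out the convolution this last integral is exactly $|J(f_{k_1,j_1},f_{k_2,j_2},f_{k_3,j_3},f_{k_4,j_4})|$ after the substitution $\tau\mapsto \tau+\omega(\xi)$ in each variable, where $f_{k_4,j_4}:=1_{\dot D_{k_4,j_4}}\cdot h$. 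Indeed, with $f^\sharp_{k_i,j_i}(\xi,\tau)=f_{k_i,j_i}(\xi,\tau+\omega(\xi))$ the convolution $f_{k_1,j_1}*f_{k_2,j_2}*f_{k_3,j_3}$ evaluated against $f_{k_4,j_4}$ becomes, after shifting, the integral of $f^\sharp_{k_1,j_1}(\xi_1,\mu_1)f^\sharp_{k_2,j_2}(\xi_2,\mu_2)f^\sharp_{k_3,j_3}(\xi_3,\mu_3)f^\sharp_{k_4,j_4}(\xi_1+\xi_2+\xi_3,\mu_1+\mu_2+\mu_3)$; but $\omega(\xi_1)+\omega(\xi_2)+\omega(\xi_3)-\omega(\xi_1+\xi_2+\xi_3)=\Omega(\xi_1,\xi_2,\xi_3)$, so the $\mu_4$-slot of $f_{k_4,j_4}$ gets evaluated at $\mu_1+\mu_2+\mu_3+\Omega$, which is precisely the integrand of $J$. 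Since $f^\sharp_{k_i,j_i}$ is supported in $I_{k_i}\times\cup_{l=0}^{j_i}\wt I_l$ (the tilde-region for the modulation coming from $f_{k_i,j_i}\in\dot D_{k_i,j_i}$), and $\norm{f^\sharp_{k_i,j_i}}_{L^2}=\norm{f_{k_i,j_i}}_{L^2}$, the hypotheses of Lemma \ref{l41} are met with the dual function playing the role of the fourth factor.

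With this identification the four parts (a)--(d) of the corollary follow from the corresponding parts of Lemma \ref{l41}. Part (a) is immediate from Lemma \ref{l41}(a) and $\norm{f_{k_4,j_4}}_{L^2}\le\norm{h}_{L^2}\le 1$. Part (c) is immediate from Lemma \ref{l41}(c), and part (d) from Lemma \ref{l41}(d), after noting that the index $k_4$ appearing in the corollary is one of the four $k_i$'s in the lemma, so $k_{max}$ and $k_{min}$ refer to the same quantities in both statements (and similarly for the modulations). Part (b) requires a small case check: the corollary's hypothesis is $k_{thd}\le k_{max}-5$ together with information about which factor realizes $(k_{thd},j_{max})$. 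One should relabel so that $k_1\le k_2\le k_3\le k_4$; then $k_{thd}=k_3$ and the condition $k_3\le k_4-5$ says exactly $k_2\le k_3-5$ fails to be forced but $k_{max}=k_4$ is well separated — actually one checks that $k_{thd}\le k_{max}-5$ is what Lemma \ref{l41}(b) calls $k_2\le k_3-5$ after the appropriate permutation of the four slots (using the symmetry $|J(f,g,h,u)|=|J(g,f,h,u)|=|J(f,h,g,u)|=|J(\wt f,g,u,h)|$ recorded in the proof of Lemma \ref{l41}). The dichotomy in part (b) — gain $2^{(-k_{max}+k_{thd})/2}$ when the third-largest frequency carries the maximal modulation, versus $2^{(-k_{max}+k_{min})/2}$ otherwise — matches precisely the two displayed bounds \eqref{eq:l41rb} in Lemma \ref{l41}(b) (the case $j_2\ne j_{max}$ giving $2^{k_{min}/2}$, the case $j_2=j_{max}$ giving $2^{k_{thd}/2}$), once one tracks which of the four functions sits in the $k_2$-slot of the lemma after permuting.

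The only genuine bookkeeping issue — and the step I expect to be the main obstacle — is making the permutation dictionary between Lemma \ref{l41} and Corollary \ref{cor42} completely explicit, since the lemma is stated with the frequencies pre-sorted ($k_1\le k_2\le k_3\le k_4$) and with the separation hypothesis written as $k_2\le k_3-5$, whereas the corollary leaves the $k_i$ unsorted and writes the hypothesis as $k_{thd}\le k_{max}-5$. One must verify that for every arrangement of $(k_1,k_2,k_3,k_4)$ and every choice of which slot is the output slot $k_4$, there is a permutation of the four arguments of $J$ (allowed by the stated symmetries, including the tilde-reflection that swaps two arguments and conjugates) carrying the given configuration to one where the smaller-two frequencies occupy the first two slots; then the bound \eqref{eq:l41bb}, namely $J\le C2^{(j_1+j_2+j_3+j_4)/2}2^{-j_{sub}/2}2^{-k_{max}/2}2^{k_{min}/2}\prod\norm{f_{k_i,j_i}}_{L^2}$, together with its $j_{max}$-on-the-$k_{thd}$-slot improvement, yields exactly the two inequalities of part (b). Parts (a), (c), (d) need no such care because their bounds are already symmetric in the four indices. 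I would therefore organize the proof as: (i) the duality identity converting the $L^2$ norm of the restricted convolution into $J$; (ii) transfer of parts (a), (c), (d) verbatim; (iii) the permutation discussion plus the two-case split for part (b).
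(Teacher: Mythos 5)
Your proposal follows the paper's own proof exactly: dualize the $L^2$ norm against a unit vector $h$, set $f_{k_4,j_4}=1_{\dot D_{k_4,j_4}}h$, apply the change of variables $f^{\sharp}_{k_i,j_i}(\xi,\mu)=f_{k_i,j_i}(\xi,\mu+\omega(\xi))$ to turn the pairing into $J(f^\sharp_{k_1,j_1},\ldots,f^\sharp_{k_4,j_4})$, and then invoke Lemma~\ref{l41}. The extra bookkeeping discussion for part~(b) is a reasonable elaboration of what the paper leaves implicit; note only that after sorting $k_1\le k_2\le k_3\le k_4$ one has $k_{thd}=k_2$ (not $k_3$), so the lemma's hypothesis $k_2\le k_3-5$ reads $k_{thd}\le k_{sub}-5$, and the residual range $k_{sub}-5<k_{thd}\le k_{max}-5$ (permitted by the corollary but not by the lemma) is absorbed into the constant via part~(c) together with $k_{max}\le k_{sub}+5$.
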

\begin{proof}
Clearly, we have
\begin{eqnarray}
&&\norm{1_{\dot{D}_{k_4,j_4}}(\xi,\tau)(f_{k_1,j_1}*f_{k_2,j_2}*f_{k_3,j_3})(\xi,\tau)}_{L^2}\nonumber\\
&=&\sup_{\norm{f}_{L^2}=1}\aabs{\int_{D_{k_4,j_4}} f\cdot
f_{k_1,j_1}*f_{k_2,j_2}*f_{k_3,j_3} d\xi d\tau}.
\end{eqnarray}
Let $f_{k_3,j_3}=1_{D_{k_4,j_4}}\cdot f$, and then
$f_{k_i,j_i}^\sharp(\xi,\mu)=f_{k_i,j_i}(\xi,\mu+\omega(\xi))$,
$i=1,2,3,4$. The functions $f_{k_i,j_i}^\sharp$ are supported in
$I_{k_i}\times \cup_{|m|\leq 3}\wt{I}_{j_i+m}$,
$\norm{f_{k_i,j_i}^\sharp}_{L^2}=\norm{f_{k_i,j_i}}_{L^2}$. Using
simple changes of variables, we get
\[\int_{D_{k_4,j_4}} f\cdot
f_{k_1,j_1}*f_{k_2,j_2}*f_{k_3,j_3} d\xi d\tau =
J(f_{k_1,j_1}^\sharp,f_{k_2,j_2}^\sharp,f_{k_3,j_3}^\sharp,f_{k_4,j_4}^\sharp).\]
Then Corollary \ref{cor42} follows from Lemma \ref{l41}.
\end{proof}

\begin{remark}\label{rem43}
From the proof, we see that Lemma \ref{l41} and Corollary
\ref{cor42} also hold if $k_1,k_2,k_3,k_4 \in \Z_+$ and with
$I_{k_i}$ replaced by $\wt{I}_{k_i}$, $\dot{D}_{k_i,j_i}$ replaced
by $\cup_{l=0}^{j_i}D_{k_i,l}$. The methods can be used to deal with
a general dispersion relation $\omega(\xi)$.
\end{remark}

\section{Trilinear Estimates}

In this section we devote to prove some dyadic trilinear estimates,
using the symmetric estimates obtained in the last section. We
divide it into several cases. The first case is $low\times high
\rightarrow high$ interactions.

\begin{proposition}\label{p51} Assume $k_3\geq 110$, $|k_4-k_3|\leq 5$, $0\leq
k_1, k_2\leq k_3-10$, $|k_1-k_2|\leq 10$, and $f_{k_i}\in Z_{k_i}$
with $\ft^{-1}(f_{k_i})$ compactly supported (in time) in $J_0$,
$|J_0|\les 1$, $i=1,2,3$. Then
\begin{eqnarray}\label{eq:p51}
2^{k_4}\norm{\eta_{k_4}(\xi)(\tau-\omega(\xi)+i)^{-1}f_{k_1}*f_{k_2}*f_{k_3}}_{Z_{k_4}}\les
2^{(k_1+k_2)/2}\prod_{i=1}^3\norm{f_{k_i}}_{Z_{k_i}}.
\end{eqnarray}
\end{proposition}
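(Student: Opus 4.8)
The plan is to estimate the $X_{k_4}$- and $Y_{k_4}$-parts of the left side of \eqref{eq:p51} separately (recall $k_4\geq 110$, so $Z_{k_4}=X_{k_4}+Y_{k_4}$). First I would expand each input via its $Z_{k_i}$-structure, $f_{k_i}=\sum_{j_i\geq 0}f_{k_i,j_i}+g_{k_i}$ with $f_{k_i,j_i}$ supported in $D_{k_i,j_i}$, $g_{k_i}\in Y_{k_i}$ (nonzero only if $k_i\geq 100$), and $\sum_{j_i}2^{j_i/2}\beta_{k_i,j_i}\|f_{k_i,j_i}\|_{L^2}+\|g_{k_i}\|_{Y_{k_i}}\lesssim\|f_{k_i}\|_{Z_{k_i}}$; the cutoff $\eta_{k_4}(\xi)$ is harmless by Lemma \ref{l31}(a). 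I would also split the output by its own modulation, sending the part supported where $|\tau-\omega(\xi)|<2^{k_4}$ into $Y_{k_4}$ and the rest into $X_{k_4}$.

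For the $X_{k_4}$-part, on $\dot{D}_{k_4,j_4}$ one has $|(\tau-\omega(\xi)+i)^{-1}|\sim\min(1,2^{-j_4})$, so the $X_{k_4}$-norm together with the resolvent produces the effective output weight $2^{j_4/2}\beta_{k_4,j_4}\min(1,2^{-j_4})\sim\min(2^{-j_4/2},2^{-k_4})$, whence $2^{k_4}$ times it is $\min(2^{k_4-j_4/2},1)$. It then suffices to bound $\sum_{j_1,j_2,j_3,j_4}\min(2^{k_4-j_4/2},1)\,\|1_{\dot{D}_{k_4,j_4}}(f_{k_1,j_1}*f_{k_2,j_2}*f_{k_3,j_3})\|_{L^2}$ by $2^{(k_1+k_2)/2}\prod_{i=1}^3\|f_{k_i}\|_{Z_{k_i}}$, which I would carry out by cases on which of $j_1,j_2,j_3,j_4$ is largest, feeding in Corollary \ref{cor42}. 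When a low-frequency factor carries the largest modulation (the worst interaction in the sense of the K-Z method) I would use Corollary \ref{cor42}(b) together with the gain $2^{j_i/2}\beta_{k_i,j_i}\gtrsim 2^{j_i-k_i}$ built into a low-frequency $Z_{k_i}$-norm; when the high input or the output carries a large modulation — so that some input modulation must exceed $j_4$ and $|\Omega|\lesssim 2^{k_1+k_3}$ — I would use Corollary \ref{cor42}(b),(c),(d) and the factor $\beta_{k_3,j_3}$ for $j_3>2k_3$; and in the subcase $|\Omega|\gtrsim 2^{k_4}$ — which, since $\Omega(\xi_1,\xi_2,\xi_3)=2\xi_3(\xi_1+\xi_2)+2\xi_1\xi_2$, forces $|\xi_1+\xi_2|$ to be bounded below — I would further decompose dyadically in $|\xi_1+\xi_2|\sim 2^l$, obtaining $|\Omega|\sim 2^{l+k_3}$ and a geometric series in $l$, exactly as in the proof of Lemma \ref{l41}(d). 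Contributions with some $f_{k_i}$ replaced by $g_{k_i}\in Y_{k_i}$ are handled the same way after using Lemma \ref{l32} to embed the $Y_{k_i}$-pieces into the physical-space norms of Lemma \ref{l34}.

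For the $Y_{k_4}$-part I would pass to physical space. With $u_i=\ft^{-1}(f_{k_i})$ (compactly supported in $J_0$, $|J_0|\lesssim 1$) one has $\ft^{-1}(f_{k_1}*f_{k_2}*f_{k_3})=c\,u_1u_2u_3$, and since $(\tau-\omega(\xi)+i)$ cancels the resolvent exactly, the $Y_{k_4}$-norm of the output equals, up to the cutoffs $\eta_{k_4}(\xi)$ and $\eta_{\leq k_4}(\tau-\omega(\xi))$ (handled by Lemma \ref{l31}(a),(c)), a constant times $2^{-k_4/2}\|u_1u_2u_3\|_{L_x^1L_t^2}$; thus $2^{k_4}$ times it is $\sim 2^{k_4/2}\|u_1u_2u_3\|_{L_x^1L_t^2}$. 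I would estimate this by H\"older, putting the high-frequency factor $u_3$ in $L_x^\infty L_t^2$ (gaining $2^{-k_3/2}$ from the smoothing effect) and the two low-frequency factors $u_1,u_2$ in $L_x^2 L_t^\infty$ (costing $2^{k_1/2}2^{k_2/2}$ from the maximal function estimate), all via Lemma \ref{l34}, giving $2^{k_4/2}\cdot 2^{-k_3/2}\cdot 2^{(k_1+k_2)/2}\prod_i\|f_{k_i}\|_{Z_{k_i}}\lesssim 2^{(k_1+k_2)/2}\prod_i\|f_{k_i}\|_{Z_{k_i}}$ since $|k_4-k_3|\leq 5$. The derivative $2^{k_4}$ is absorbed by $2^{-k_4/2}$ from the $Y_{k_4}$-norm together with the $2^{-k_3/2}$ smoothing gain at the high frequency, which is precisely why the $Y_{k_4}$-structure is included.

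I expect the main obstacle to be the bookkeeping in the $X_{k_4}$-part: arranging the subcases — which factor has the largest modulation; whether $|\Omega|$ is small (so one may sit near the resonant set) or large; whether $2^{k_1}\sim 2^{k_2}$ are genuinely small or only mildly below $2^{k_3}$; where the output modulation lies relative to $2^{k_4}$ and to $2^{2k_4}$ — so that in every regime the modulation sums $\sum_{j_1,j_2,j_3,j_4}$ close with the constant $2^{(k_1+k_2)/2}$ and without logarithmic loss. The genuinely delicate regimes are exactly those the spaces were built for: a low-frequency factor with modulation far above $2^{2k_i}$ (rescued by $\beta_{k_i,j_i}\approx 2^{(j_i-2k_i)/2}$) and the high-frequency factor or the output carrying a modulation near the resonance level $2^{k_1+k_3}$ (rescued by the $Y$-structure via the physical-space estimate above), together with checking that the thresholds for the $X_{k_4}/Y_{k_4}$ split and for choosing between Corollary \ref{cor42}(b),(c),(d) are mutually consistent.
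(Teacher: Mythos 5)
Your proposal is essentially the paper's proof. The split of the output by its own modulation — sending modulation below $2^{k_4}$ into $Y_{k_4}$ and estimating in physical space by H\"older with $L_x^\infty L_t^2$ (smoothing) on the high factor and $L_x^2 L_{t\in I_0}^\infty$ (maximal function) on the two low factors, versus modulation above $2^{k_4}$ into $X_{k_4}$ via modulation decomposition and Corollary \ref{cor42} — is precisely the paper's decomposition into terms $I$, $II$, $III$, and your $Y_{k_4}$ calculation matches term $I$ line for line, including the observation that the cancellation $2^{k_4}\cdot 2^{-k_4/2}\cdot 2^{-k_3/2}\sim 1$ is what makes the $Y$-structure necessary. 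The one cosmetic difference is in the medium range $j_4\in[k_4,2k_4]$: the paper (term $II$) applies a second physical-space H\"older $L_x^\infty L_t^2\cdot L_x^4 L_t^\infty\cdot L_x^4 L_t^\infty$ without decomposing the inputs and gets the stronger $2^{(k_1+k_2)/4}$, while you continue the full $j_1,j_2,j_3,j_4$ decomposition; both close, but the paper's shortcut is cleaner. For $j_4>2k_4$ (term $III$) the paper, like you, decomposes and applies the symmetric estimate — in fact it only needs Corollary \ref{cor42}(a) together with the $\beta_{k_3,j_3}$ gain, not parts (b)--(d).

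Two small corrections worth recording. First, the effective $X_{k_4}$ weight after multiplying by $2^{k_4}$ is $2^{k_4}\cdot 2^{j_4/2}\beta_{k_4,j_4}2^{-j_4}=2^{k_4-j_4/2}+1\sim\max(2^{k_4-j_4/2},1)$, not $\min$ as you wrote; the $\min$ would falsely suggest geometric decay for $j_4>2k_4$, which is exactly the regime where the weight stays bounded below and the $\beta_{k_3,j_3}\ges 2^{(j_3-2k_3)/2}$ gain of the high-frequency input is what saves the sum. Second, the identity $\Omega=2\xi_3(\xi_1+\xi_2)+2\xi_1\xi_2$ is valid only when the frequencies share a sign; in the opposite-sign case (the one in which $|\xi_1+\xi_2|$ can be small) the correct expression, used in the proof of Lemma \ref{l41}(d), is $|\Omega|=|(\xi_1+\xi_2)(\xi_1+\xi_3)|$. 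Neither correction alters the structure of your argument, and in any case Corollary \ref{cor42}(d) and the $|\xi_1+\xi_2|\sim 2^l$ refinement are not actually needed for this proposition.
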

\begin{proof}
We first divide it into three parts, according to the modulation.
\begin{eqnarray*}
&&2^{k_4}\norm{\chi_{k_4}(\xi)(\tau-\omega(\xi)+i)^{-1}f_{k_1}*f_{k_2}*f_{k_3}}_{Z_{k_4}}\\
&\leq&2^{k_4}\norm{\chi_{k_4}(\xi)\eta_{\leq k_4-1}(\tau-\omega(\xi))(\tau-\omega(\xi)+i)^{-1}f_{k_1}*f_{k_2}*f_{k_3}}_{Z_{k_4}}\\
&&+2^{k_4}\norm{\chi_{k_4}(\xi)\eta_{[k_4,2k_4]}(\tau-\omega(\xi))(\tau-\omega(\xi)+i)^{-1}f_{k_1}*f_{k_2}*f_{k_3}}_{Z_{k_4}}\\
&&+2^{k_4}\norm{\chi_{k_4}(\xi)\eta_{\geq
2k_4+1}(\tau-\omega(\xi))(\tau-\omega(\xi)+i)^{-1}f_{k_1}*f_{k_2}*f_{k_3}}_{Z_{k_4}}\\
&=&I+II+III.
\end{eqnarray*}

We consider first the contribution of $I$. Using $Y_k$ norm, then we
get from Lemma \ref{l31} (a), (c) and Lemma \ref{l34} that
\begin{eqnarray*}
I&\leq& 2^{k_4}\norm{\chi_{k_4}(\xi)\eta_{\leq
k_4-1}(\tau-\omega(\xi))(\tau-\omega(\xi)+i)^{-1}f_{k_1}*f_{k_2}*f_{k_3}}_{Y_{k_4}}\\
&\les& 2^{k_4/2}\norm{\ft^{-1}[f_{k_1}*f_{k_2}*f_{k_3}]}_{L_x^1L_t^2}\\
&\les& 2^{k_4/2}\norm{\ft^{-1}(f_{k_3})}_{L_x^\infty L_t^2}\norm{\ft^{-1}(f_{k_2})}_{L_x^2L_{t\in I_0}^\infty}\norm{\ft^{-1}(f_{k_1})}_{L_x^2L_{t\in I_0}^\infty}\\
&\les& 2^{(k_1+k_2)/2}\prod_{i=1}^3\norm{f_{k_i}}_{Z_{k_i}},
\end{eqnarray*}
which is \eqref{eq:p51} as desired.

For the contribution of $II$, we use $X_k$ norm. Then we get from
Lemma \ref{l34} that
\begin{eqnarray}
II&\leq&
2^{k_4}\norm{\chi_{k_4}(\xi)\eta_{[k_4,2k_4]}(\tau-\omega(\xi))(\tau-\omega(\xi)+i)^{-1}f_{k_1}*f_{k_2}*f_{k_3}}_{Z_{k_4}}\nonumber\\
&\leq&\sum_{k_4\leq j \leq
2k_4}2^{k_4}2^{-j/2}\norm{1_{D_{k_4,j}}(\xi,\tau)f_{k_1}*f_{k_2}*f_{k_3}}_{L^2}\nonumber\\
&\leq&\sum_{k_4\leq j \leq
2k_4}2^{k_4}2^{-j/2}\norm{\ft^{-1}(f_{k_3})}_{L_x^\infty L_t^2}\norm{\ft^{-1}(f_{k_1})}_{L_x^4 L_t^\infty}\norm{\ft^{-1}(f_{k_2})}_{L_x^4 L_t^\infty}\nonumber\\
&\les& 2^{(k_1+k_2)/4}\prod_{i=1}^3\norm{f_{k_i}}_{Z_{k_i}},
\end{eqnarray}
which is acceptable.

Finally we consider the contribution of $III$. For $j_i\geq 0$,
$i=1,2,3$, let
$f_{k_i,j_i}(\xi,\tau)=f_{k_i}(\xi,\tau)\eta_{j_i}(\tau-\omega(\xi))$.
Using $X_k$ norm, we get
\begin{equation}
III\leq \sum_{j_4\geq 2k_4+1}\sum_{j_1,j_2,j_3\geq
0}\norm{1_{D_{k_4,j_4}}(\xi,\tau)f_{k_1,j_1}*f_{k_2,j_2}*f_{k_3,j_3}}_{L^2}.
\end{equation}
Since in the area $\{|\xi_i|\in \wt{I}_{k_i}, i=1,2,3\}$, we have
$|\Omega(\xi_1,\xi_2,\xi_3)|\ll 2^{2k_4}$. By checking the support
properties of $f_{k_i,j_i}$, we get $|j_{max}-j_{sub}|\leq 5$. We
consider only  the worst case $|j_4-j_3|\leq 5$, since the other
cases are better. It follows from Corollary \ref{cor42} and Lemma
\ref{l31} (b) that
\begin{eqnarray}
III&\les& \sum_{j_3\geq 2k_4+1}\sum_{j_1,j_2\geq
0}2^{(j_1+j_2)/2}2^{(k_1+k_2)/2}\norm{f_{k_1,j_1}}_{L^2}\norm{f_{k_2,j_2}}_{L^2}\norm{f_{k_3,j_3}}_{L^2}\nonumber\\
&\les&\sum_{j_3\geq 2k_4+1}2^{j_3/4}2^{k_3-j_3}2^{(k_1+k_2)/2}2^{j_3-k_3}\norm{f_{k_1}}_{Z_{k_1}}\norm{f_{k_2}}_{Z_{k_2}}\norm{f_{k_3,j_3}}_{L^2}\nonumber\\
&\les& \sum_{j_3\geq
2k_4+1}2^{k_3-\frac{3}{4}j_3}2^{(k_1+k_2)/2}\norm{f_{k_1}}_{Z_{k_1}}\norm{f_{k_2}}_{Z_{k_2}}\norm{f_{k_3}}_{Z_{k_3}}\nonumber\\
&\les&
2^{(k_1+k_2)/4}\norm{f_{k_1}}_{Z_{k_1}}\norm{f_{k_2}}_{Z_{k_2}}\norm{f_{k_3}}_{Z_{k_3}}.
\end{eqnarray}
Therefore, we complete the proof of the proposition.
\end{proof}

This proposition suffices to control $high\times low$ interaction in
the case that the two low frequences are comparable. However, for
the case that the two low frequences are not comparable, we will
need an improvement.

\begin{proposition} Assume $k_3\geq 110$, $|k_4-k_3|\leq 5$, $0\leq k_1, k_2\leq
k_3-10$, $k_2\geq 10$, $k_1\leq k_2-5$ and $f_{k_i}\in Z_{k_i}$ with
$\ft^{-1}(f_{k_i})$ compactly supported (in time) in $J_0$,
$|J_0|\les 1$, $i=1,2,3$. Then
\begin{eqnarray}
2^{k_4}\norm{\chi_{k_4}(\xi)(\tau-\omega(\xi)+i)^{-1}f_{k_1}*f_{k_2}*f_{k_3}}_{Z_{k_4}}\leq
C 2^{(k_1+k_2)/4}\prod_{i=1}^3\norm{f_{k_i}}_{Z_{k_i}}.
\end{eqnarray}
\end{proposition}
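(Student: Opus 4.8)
I will follow the proof of Proposition \ref{p51}, decomposing the output according to the size of its modulation. Write $f_{k_1}*f_{k_2}*f_{k_3}$ as the sum of its restrictions to the three regions $\{|\tau-\omega(\xi)|\leq 2^{k_4-1}\}$, $\{2^{k_4}\leq |\tau-\omega(\xi)|\leq 2^{2k_4}\}$ and $\{|\tau-\omega(\xi)|\geq 2^{2k_4+1}\}$, and bound the three contributions $I$, $II$, $III$ separately. The estimates given for $II$ and $III$ in the proof of Proposition \ref{p51} use only $k_1,k_2\leq k_3-10$, never $|k_1-k_2|\leq 10$, and already yield the bound $2^{(k_1+k_2)/4}\prod_{i=1}^3\norm{f_{k_i}}_{Z_{k_i}}$: for $II$ one places $f_{k_1}$ and $f_{k_2}$ in $L_x^4L_t^\infty$, each costing only $2^{k_i/4}$ by Lemma \ref{l34}, and sums the output modulation; for $III$ the large output modulation forces $j_{max}$ to be large via the resonance, and Corollary \ref{cor42} closes the estimate. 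Thus $II$ and $III$ carry over with only notational changes, and all the new work lies in improving the low-modulation piece $I$, where the argument of Proposition \ref{p51} produced only $2^{(k_1+k_2)/2}$.

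The improvement for $I$ exploits the new hypothesis $k_1\leq k_2-5$, which forces $|\xi_1+\xi_2|\sim 2^{k_2}$ on the support of the integrand, with two consequences. First, we are in the regime $k_{thd}=k_2\leq k_{max}-5$, so the sharp symmetric estimates Corollary \ref{cor42}(b), (d) become available and supply an extra gain of essentially $2^{(k_{min}-k_{max})/2}=2^{(k_1-k_4)/2}$ over the crude bound. Second, exactly as in the proof of Lemma \ref{l41}(d), the resonance function \eqref{eq:reso} satisfies $|\Omega(\xi_1,\xi_2,\xi_3)|\sim 2^{k_2+k_3}$ in this high-low interaction. Since on $I$ the output modulation is $\leq 2^{k_4-1}$, which (using $k_2\geq 10$) is far smaller than $2^{k_2+k_3}$, the identity $\tau-\omega(\xi)=(\tau_1-\omega(\xi_1))+(\tau_2-\omega(\xi_2))+(\tau_3-\omega(\xi_3))+\Omega$ forces $\max(j_1,j_2,j_3)\gtrsim k_2+k_3$; moreover, when this maximum exceeds the other two input modulations by at least $5$, its value is pinned to $k_2+k_3+O(1)$.

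Accordingly I would split $I$ into the cases $j_1=j_{max}$, $j_2=j_{max}$, $j_3=j_{max}$, and in each further distinguish whether $j_{max}$ dominates the remaining two input modulations. When it does, $j_{max}$ is pinned near $k_2+k_3$ and there is no summation loss in that parameter; when it does not, a second input modulation is comparably large and the factor $2^{-k_{max}/2}$ or $2^{-j_{max}/2}$ from Corollary \ref{cor42}, combined with the coefficients $\beta_{k_i,j_i}$ from \eqref{eq:betakj} (which behave like $2^{j_i/2}$ for small $k_i$ and are tuned precisely for this high-low interaction), makes the remaining modulation sums converge. The output is measured in the $X_{k_4}$ norm, except in the case $j_3=j_{max}$ --- the high-frequency factor carrying the largest modulation --- where one instead uses the smoothing-effect component $Y_{k_4}$ of $Z_{k_4}$ to kill the logarithmic divergence, exactly as in the contribution $I$ of Proposition \ref{p51}; the cases $j_1=j_{max}$ and $j_2=j_{max}$, which are the worst interaction in Tao's K-Z heuristic \cite{Taokz}, are instead controlled by the $\beta_{k_i,j_i}$. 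Assembling the cases gives $I\les 2^{(k_1+k_2)/4}\prod_{i=1}^3\norm{f_{k_i}}_{Z_{k_i}}$, and with $II$ and $III$ this proves the proposition.

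The main obstacle will be the bookkeeping in the case $j_3=j_{max}$ of $I$: one must sum over the output modulation, over the two low input modulations, and over the internal frequency $|\xi_1+\xi_2|$ with strictly better-than-logarithmic control, which works only because the resonance pins $j_3$ when it is strictly dominant and because the $\beta_{k,j}$-weighted $X_k$ norm and the $Y_{k_4}$ smoothing structure are both available. Once that case is organized, the cases $j_1=j_{max}$ and $j_2=j_{max}$ are strictly easier and follow the same pattern.
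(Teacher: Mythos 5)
Your proposal follows essentially the same route as the paper. You decompose the output by modulation size into $I,II,III$, observe that $II,III$ carry over verbatim from Proposition \ref{p51}, exploit the resonance identity $|\Omega(\xi_1,\xi_2,\xi_3)|\sim 2^{k_2+k_3}$ (which the paper records as equation \eqref{eq:resop52}) to force some input modulation to exceed $\sim 2^{k_2+k_3}$ on the low-output-modulation piece $I$, and then split according to which $j_i$ is largest, reaching for $Y_{k_4}$ when $j_3$ is large and for the $\beta_{k,j}$-weighted $X_{k_4}$ norm (via Corollary \ref{cor42}(b)) when $j_1$ or $j_2$ is large. This is precisely the paper's strategy: the paper organizes $I$ by splitting $f_{k_3}$ into high- and low-modulation pieces $f_{k_3}^h,f_{k_3}^l$ at threshold $2^{k_4+k_2-10}$, handles $f_{k_3}^h$ in $Y_{k_4}$ (your $j_3=j_{max}$ case) and $f_{k_3}^l$ in $X_{k_4}$, the latter forcing $j_1$ or $j_2$ to be $\gtrsim k_2+k_3$ by the resonance, which is your $j_1=j_{max}$ and $j_2=j_{max}$ cases.

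One point of imprecision to flag: in the $j_3=j_{max}$ case, repeating the H\"older arrangement of Proposition \ref{p51}'s term $I$ verbatim, namely $\|\ft^{-1}f_{k_3}\|_{L_x^\infty L_t^2}\|\ft^{-1}f_{k_1}\|_{L_x^2L_t^\infty}\|\ft^{-1}f_{k_2}\|_{L_x^2L_t^\infty}$, yields only $2^{(k_1+k_2)/2}$. To reach $2^{(k_1+k_2)/4}$ the paper instead places $f_{k_1},f_{k_2}$ in $L_x^4L_t^\infty$ (each costing $2^{k_i/4}$ by Lemma \ref{l34}) and $f_{k_3}^h$ in plain $L^2_{x,t}$, and then converts the large modulation of $f_{k_3}^h$ into the needed decay via $2^{k_4/2}\|f_{k_3}^h\|_{L^2}\lesssim \|f_{k_3}^h\|_{X_{k_3}}\lesssim \|f_{k_3}\|_{Z_{k_3}}$. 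Your remark that the pinning of $j_3$ near $k_2+k_3$ removes summation loss is the correct heuristic, but the actual mechanism in $I_1$ is the trade of the smoothing-effect pairing $L_x^\infty L_t^2\times L_x^2L_t^\infty$ for the pairing $L^2_{x,t}\times L_x^4L_t^\infty$ once $j_3$ is large, not a summation gain. With that adjustment spelled out, your outline matches the paper's proof.
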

\begin{proof}
We first observe that in this case we have
\begin{eqnarray}\label{eq:resop52}
|\Omega(\xi_1,\xi_2,\xi_3)|\sim 2^{k_3+k_2},
\end{eqnarray}
which follows from the fact that $\xi_1+\xi_2+\xi_3$ has the same
sign as $\xi_3$, $k_1\leq k_2-10$ and in the area $\{\xi_i \in
\wt{I}_{k_i}, i=1,2,3\}$
\begin{equation}
|\omega(\xi_3)-\omega(\xi_1+\xi_2+\xi_3)|\sim 2^{k_3+k_2}.
\end{equation}
Dividing it into three parts as before, we obtain
\begin{eqnarray*}
&&2^{k_4}\norm{\chi_{k_4}(\xi)(\tau-\omega(\xi)+i)^{-1}f_{k_1}*f_{k_2}*f_{k_3}}_{Z_{k_4}}\\
&\leq&2^{k_4}\norm{\chi_{k_4}(\xi)\eta_{\leq k_4-1}(\tau-\omega(\xi))(\tau-\omega(\xi)+i)^{-1}f_{k_1}*f_{k_2}*f_{k_3}}_{Z_{k_4}}\\
&&+2^{k_4}\norm{\chi_{k_4}(\xi)\eta_{[k_4,2k_4]}(\tau-\omega(\xi))(\tau-\omega(\xi)+i)^{-1}f_{k_1}*f_{k_2}*f_{k_3}}_{Z_{k_4}}\\
&&+2^{k_4}\norm{\chi_{k_4}(\xi)\eta_{\geq
2k_4+1}(\tau-\omega(\xi))(\tau-\omega(\xi)+i)^{-1}f_{k_1}*f_{k_2}*f_{k_3}}_{Z_{k_4}}\\
&=&I+II+III.
\end{eqnarray*}
For the last two terms $II,\ III$,  we can use the same argument as
for $II$, $III$ in the proof of Proposition \ref{p51}. We consider
now the first term $I$.
\begin{eqnarray*}
I&\leq&2^{k_4}\norm{\chi_{k_4}(\xi)\eta_{\leq
k_4-1}(\tau-\omega(\xi))(\tau-\omega(\xi)+i)^{-1}f_{k_1}*f_{k_2}*f_{k_3}^h}_{Z_{k_4}}\\
&&+2^{k_4}\norm{\chi_{k_4}(\xi)\eta_{\leq
k_4-1}(\tau-\omega(\xi))(\tau-\omega(\xi)+i)^{-1}f_{k_1}*f_{k_2}*f_{k_3}^l}_{Z_{k_4}}\\
&=&I_1+I_2,
\end{eqnarray*}
where
\[f_{k_3}^h=f_{k_3}(\xi,\tau)\eta_{\geq k+k_2-10}(\tau-\omega(\xi)),\quad f_{k_3}^l=f_{k_3}(\xi,\tau)\eta_{\leq k+k_2-9}(\tau-\omega(\xi)).\]

For the contribution of $I_1$, we observe first that from the
support of $f_{k_3}^h$ and the definition of $Y_{k}$, one easily get
that
\begin{equation}
\norm{f_{k_3}^h}_{X_{k_3}}\les \norm{f_{k_3}}_{Z_{k_3}}.
\end{equation}
Thus from the definition of $Y_k$, and from H\"older's inequality,
Lemma \ref{l31} (a), (c) and Lemma \ref{l34},  we get
\begin{eqnarray*}
I_1&\les&2^{k_4}\norm{\chi_{k_4}(\xi)\eta_{\leq
k_4-1}(\tau-\omega(\xi))(\tau-\omega(\xi)+i)^{-1}f_{k_1}*f_{k_2}*f_{k_3}^h}_{Y_{k_4}}\\
&\les&2^{k_4/2}\norm{\ft^{-1}[f_{k_1}*f_{k_2}*f_{k_3}^h]}_{L_x^1L_t^2}\\
&\les&2^{k_4/2}\norm{\ft^{-1}(f_{k_3}^h)}_{L_x^2L_t^2}\norm{\ft^{-1}(f_{k_1})}_{L_x^4L_t^\infty}\norm{\ft^{-1}(f_{k_2})}_{L_x^4L_t^\infty}\\
&\les&2^{k_4/2}2^{(k_1+k_2)/4}\norm{f_{k_3}^h}_{L^2}\norm{f_{k_1}}_{Z_{k_1}}\norm{f_{k_2}}_{Z_{k_2}}.
\end{eqnarray*}
Then from the fact that
\begin{eqnarray}
2^{k_4/2}\norm{f_{k_3}^h}_{L^2}&\les& \sum_{j\geq
k_4+k_2-10}2^{k_4/2}\norm{f_{k_3}^h\eta_{j}(\tau-\omega(\xi))}_{L^2}\nonumber\\
&\les&\norm{f_{k_3}^h}_{X_{k_3}}\les \norm{f_{k_3}}_{Z_{k_3}}
\end{eqnarray}
we conclude the proof for $I_1$.

We consider now the contribution of $I_2$. Let
$f_{k_i,j_i}(\xi,\tau)=f_{k_i}(\xi,\tau)\eta_{j_i}(\tau-\omega(\xi))$,
$j_i\geq 0$, $i=1,2,3$. Using $X_k$ norm, we get
\begin{equation}
I_2\leq \sum_{j_4\leq k_4-1}\sum_{j_3\leq
k_4+k_2-9}\sum_{j_1,j_2\geq
0}2^{k_4}2^{-j_4/2}\norm{1_{D_{k_4,j_4}}(\xi,\tau)f_{k_1,j_1}*f_{k_2,j_2}*f_{k_3,j_3}}_{L^2}.\nonumber
\end{equation}
By checking the support properties of the functions $f_{k_i,j_i}\
(i=1,2,3)$ and from \eqref{eq:resop52}, we easily get that
$1_{D_{k_4,j_4}}(\xi,\tau)f_{k_1,j_1}*f_{k_2,j_2}*f_{k_3,j_3}\equiv
0$ unless
\begin{eqnarray*}
\left \{
\begin{array}{l}
j_1, j_2 \geq k_3+k_2-10, |j_1-j_2|\leq 5; \mbox{ or }\\
|j_1-k_3-k_2|\leq 5, j_2\leq k_3+k_2-10; \mbox{ or }
|j_2-k_3-k_2|\leq 5, j_1\leq k_3+k_2-10.
\end{array}
\right.
\end{eqnarray*}

{\bf Case 1.} $j_1, j_2 \geq k_3+k_2-10, |j_1-j_2|\leq 5$.

It follows from Corollary \ref{cor42} (b) and Lemma \ref{l31} (b)
that
\begin{eqnarray}
I_2&\les& \sum_{j_3,j_4\leq 2k_4}\sum_{j_1,j_2\geq
0}2^{k_4}2^{-j_4/2}2^{(j_1+j_2+j_3+j_4)/2}2^{-j_1/2}2^{-k_3/2}2^{k_1/2}\prod_{i=1}^3\norm{f_{k_i,j_i}}_2\nonumber\\
&\les& \sum_{j_1,j_2\geq 0}k_4^2
2^{j_2/2}2^{k_3/2}2^{k_1/2}\prod_{i=1}^2\norm{f_{k_i,j_i}}_2
\norm{f_{k_3}}_{Z_{k_3}}\nonumber\\
&\les& \sum_{j_1,j_2\geq
0}k_4^2 2^{j_2/2}2^{k_3/2}2^{k_1/2}2^{k_1+k_2-j_1-j_2}\prod_{i=1}^2(2^{j_i-k_i}\norm{f_{k_i,j_i}}_2) \norm{f_{k_3}}_{Z_{k_3}}\nonumber\\
&\les&2^{(k_1+k_2)/4}\prod_{i=1}^3\norm{f_{k_i}}_{Z_{k_i}},
\end{eqnarray}
which is acceptable.

{\bf Case 2.} $|j_1-k_3-k_2|\leq 5, j_2\leq k_3+k_2-10; \mbox{ or }
|j_2-k_3-k_2|\leq 5, j_1\leq k_3+k_2-10.$

We consider only the worse case $|j_2-k_3-k_2|\leq 5,\ j_1\leq
k_3+k_2-10$. It follows from Corollary (b) that
\begin{eqnarray*}
I_2&\les& \sum_{j_1,j_3,j_4\leq 2k_4}\sum_{j_2\geq
0}2^{k_4}2^{(j_1+j_3)/2}2^{-k_3/2}2^{k_2/2}\prod_{i=1}^3\norm{f_{k_i,j_i}}_2\\
&\les&  k_4^32^{k_4}2^{k_2-j_2}2^{-k_3/2}2^{k_2/2}\prod_{i=1}^3\norm{f_{k_i}}_{Z_{k_i}}\\
&\les&2^{(k_1+k_2)/4}\prod_{i=1}^3\norm{f_{k_i}}_{Z_{k_i}}.
\end{eqnarray*}
Therefore, we complete the proof of the proposition.
\end{proof}

\begin{proposition} Assume $k_3\geq 110$, $|k_4-k_3|\leq 5$, $k_3-10\leq k_2\leq
k_3$, $0\leq k_1\leq k_2-10$ and $f_{k_i}\in Z_{k_i}$ with
$\ft^{-1}(f_{k_i})$ compactly supported (in time) in $J_0$,
$|J_0|\les 1$, $i=1,2,3$. Then
\begin{eqnarray}
2^{k_4}\norm{\chi_{k_4}(\xi)(\tau-\omega(\xi)+i)^{-1}f_{k_1}*f_{k_2}*f_{k_3}}_{Z_{k_4}}\les
2^{(k_1+k_2)/4}\prod_{i=1}^3\norm{f_{k_i}}_{Z_{k_i}}.
\end{eqnarray}
\end{proposition}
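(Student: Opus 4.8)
The plan is to follow the scheme of the proof of Proposition \ref{p51} (and of the two propositions immediately preceding this statement), the new feature being that here $k_2\sim k_3\sim k_4$, so three of the four frequencies live at level $\sim 2^{k_3}$ while only $f_{k_1}$ is genuinely at low frequency. First I would record the relevant resonance estimate: for $\xi_i\in\wt{I}_{k_i}$ $(i=1,2,3)$ with $\xi_1+\xi_2+\xi_3\in\wt{I}_{k_4}$,
\[
|\Omega(\xi_1,\xi_2,\xi_3)|\sim 2^{k_2+k_3}.
\]
Indeed $|\xi_1|\les 2^{k_2}$ and $|\xi_1+\xi_2+\xi_3|\sim 2^{k_4}\sim 2^{k_3}$ force $|\xi_2+\xi_3|\sim 2^{k_3}$, and then, writing $\omega(\xi)=-\xi|\xi|$ and distinguishing the signs of $\xi_2,\xi_3$, the leading term of $\Omega$ is $\sim\xi_2(\xi_2+\xi_3)$ or $\sim\xi_3(\xi_2+\xi_3)$, of size $\sim 2^{k_2+k_3}$ (as in \eqref{eq:resop52}). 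Since the output modulation equals $\sum_{i=1}^3(\tau_i-\omega(\xi_i))+\Omega$, the consequence is that whenever the output modulation is $\ll 2^{2k_3}$, at least one of the three input modulations must be $\ges 2^{k_2+k_3}\sim 2^{2k_3}$.

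As in the proofs above I would split the output modulation into the three ranges $I$ ($j_4\le k_4-1$), $II$ ($k_4\le j_4\le 2k_4$) and $III$ ($j_4\ge 2k_4+1$). For $II$: in the $X_{k_4}$ norm the weight together with the prefactor $2^{k_4}(\tau-\omega(\xi)+i)^{-1}$ contributes $\les\sum_{k_4\le j\le 2k_4}2^{k_4-j/2}\les 2^{k_4/2}$, and H\"older with Lemma \ref{l34} — $f_{k_3}$ in $L_x^\infty L_t^2$ (gaining $2^{-k_3/2}$), $f_{k_1},f_{k_2}$ in $L_x^4 L_t^\infty$ (costing $2^{k_1/4}$ and $2^{k_2/4}$) — gives $\les 2^{k_4/2-k_3/2}2^{(k_1+k_2)/4}\prod_{i=1}^3\norm{f_{k_i}}_{Z_{k_i}}$, acceptable since $k_4\sim k_3$. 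For $III$ the $X_{k_4}$ weight together with the prefactor is now $\les 1$; when $j_4$ far exceeds the resonance scale $2^{k_2+k_3}$ the support properties force $|j_{max}-j_{sub}|\le 5$ and one argues as for the term $III$ of Proposition \ref{p51} via Corollary \ref{cor42}; when $j_4\les 2^{k_2+k_3}$ the resonance estimate forces some input modulation to be $\ges 2^{k_2+k_3}$, and Corollary \ref{cor42} together with the gain $\norm{f_{k_i,j_i}}_{L^2}\les 2^{k_i-j_i}\norm{f_{k_i}}_{Z_{k_i}}$ (valid for $j_i\ges 2k_i$, at any input frequency, since then $\beta_{k_i,j_i}\sim 2^{(j_i-2k_i)/2}$) closes the estimate, any residual polylogarithmic losses being swallowed by the factor $2^{(k_1+k_2)/4}$ on the right.

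The term $I$ is the main point. Here $j_4\le k_4-1\ll 2^{2k_3}$, so by the resonance estimate one of $j_1,j_2,j_3$ is $\ges 2^{k_2+k_3}$; I would write each $f_{k_i}=f_{k_i}^h+f_{k_i}^l$ according to whether its modulation is $\ges 2^{k_2+k_3}$ or not, thereby reducing to the three cases $i\in\{1,2,3\}$ in which $f_{k_i}^h$ carries the interaction. In each case I would measure the output in the $Y_{k_4}$ norm, so that, exactly as in the treatment of the piece $f_{k_3}^h$ in the preceding proposition, Lemma \ref{l31}(a),(c) give the bound $\les 2^{k_4/2}\norm{\ft^{-1}[f_{k_1}'*f_{k_2}'*f_{k_3}']}_{L_x^1 L_t^2}$, where the primed factor at the resonant index $i$ is $f_{k_i}^h$. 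Then H\"older (the resonant factor in $L_x^2 L_t^2$, the other two in $L_x^4 L_t^\infty$), Lemma \ref{l34}, and the elementary estimate
\[
\norm{f_{k_i}^h}_{L^2}\les 2^{k_i-(k_2+k_3)}\norm{f_{k_i}}_{Z_{k_i}}
\]
— valid because the $X_{k_i}$ coefficient $2^{j/2}\beta_{k_i,j}$ equals $2^{j-k_i}$ for $j\ges 2k_i$, while the $Y_{k_i}$ component of $f_{k_i}$, supported at modulation $<2^{k_i}\ll 2^{k_2+k_3}$, does not contribute to $f_{k_i}^h$ — give a bound of the shape $2^{k_4/2}\,2^{k_i-(k_2+k_3)}\,2^{(k_1+k_2+k_3-k_i)/4}\prod_{j=1}^3\norm{f_{k_j}}_{Z_{k_j}}$, which in each of the three cases $i=1,2,3$ is $\les 2^{(k_1+k_2)/4}\prod_{j=1}^3\norm{f_{k_j}}_{Z_{k_j}}$ because $k_i\le k_3$, $k_4\sim k_3$ and $k_3\ge 110$.

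The step I expect to be the main obstacle is the case $i=1$ of the term $I$, in which the factor carrying the large modulation is the low-frequency one $f_{k_1}^h$: this is precisely the interaction — low frequency with the largest modulation — that is the worst one in the K-method of \cite{Taokz}, and it survives only because of the large value $\beta_{k_1,j_1}\sim 2^{(j_1-2k_1)/2}$ that $X_{k_1}$ attaches to high modulation at low frequency; one must check carefully that after all the modulation summations the surviving exponent is a negative multiple of $k_3$ (in fact the bound becomes $\les 2^{k_1-k_3}\prod_{j=1}^3\norm{f_{k_j}}_{Z_{k_j}}$), comfortably inside the room $2^{(k_1+k_2)/4}$. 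A further, purely bookkeeping, difficulty is controlling the polylogarithmic factors that appear when $k_i\ge 100$, since one then has only the per-piece inequality $2^{j/2}\beta_{k_i,j}\norm{f_{k_i,j}}_{L^2}\les\norm{f_{k_i}}_{Z_{k_i}}$ from Lemma \ref{l31}(b); these are again harmless, being dominated by the genuine gain $2^{(k_1+k_2)/4}\ges 2^{k_2/4}$.
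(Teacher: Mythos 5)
Your proof is correct, but it takes a genuinely different and more elaborate route than the paper. The paper's key observation for this $low\times high\times high\to high$ configuration is that the resonance is pinned: $|\Omega(\xi_1,\xi_2,\xi_3)|\sim 2^{2k_3}$ throughout the support of the convolution, because $|\xi_1|\ll|\xi_2|\sim|\xi_3|\sim|\xi_1+\xi_2+\xi_3|$. Because of that, the paper never invokes $Y_{k_4}$ at all here. It places the output in $X_{k_4}$, writes the left side as $\sum_{j}2^{k_4}2^{-j_4/2}(1+2^{(j_4-2k_4)/2})\norm{1_{D_{k_4,j_4}}f_{k_1,j_1}*f_{k_2,j_2}*f_{k_3,j_3}}_{L^2}$, observes that the pinned resonance forces either $j_{max},j_{sub}\ge 2k_3-10$ with $|j_{max}-j_{sub}|\le 5$, or $|j_{max}-2k_3|\le 5$ with $j_{sub}\le 2k_3-10$, and disposes of the two cases via Corollary \ref{cor42}(a) and (c) plus Lemma \ref{l31}(b). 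You instead import the three-way split of the output modulation ($j_4<k_4$, $k_4\le j_4\le 2k_4$, $j_4>2k_4$) from Proposition \ref{p51} and the proposition preceding this one, passing to $Y_{k_4}$ on the low-output piece $I$ via Lemma \ref{l31}(a),(c), closing with H\"older, Lemma \ref{l34}, and the weight gain $\norm{f_{k_i}^h}_{L^2}\les 2^{k_i-(k_2+k_3)}\norm{f_{k_i}}_{Z_{k_i}}$. That machinery is exactly what the $low\times low\times high$ propositions genuinely need (there $\Omega$ can degenerate), but here the fixed resonance scale makes it unnecessary — the $X_{k_4}$ weight alone already compensates the $2^{k_4}2^{-j_4/2}$ loss once the forced size of $j_{max}$ is used. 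So your argument works but reproduces more structure than the statement requires.

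One imprecision worth flagging in your treatment of $III$: for the boundary range $2k_4<j_4\les k_2+k_3$ you assert that "the resonance estimate forces some input modulation to be $\ges 2^{k_2+k_3}$". That is false precisely when $2^{j_4}\sim|\Omega|$: then $\tau-\omega(\xi)-\Omega$ can be tiny and all three input modulations can be small, so the gain $\norm{f_{k_i,j_i}}_{L^2}\les 2^{k_i-j_i}\norm{f_{k_i}}_{Z_{k_i}}$ you cite (valid only for $j_i\ges 2k_i$) is not available. The estimate still closes in that sub-case — with $j_4=j_{max}\sim 2k_4$ and the weight $2^{k_4-j_4/2}\beta_{k_4,j_4}\sim 1$, Corollary \ref{cor42}(c) gives at worst a polylogarithmic factor, which the room $2^{(k_1+k_2)/4}\ge 2^{(k_3-10)/4}$ comfortably absorbs — but the mechanism is Corollary \ref{cor42}(c), not an input-modulation gain, and you should cite it as such.
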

\begin{proof}
We first observe that this case corresponds to an integration in the
area $\{|\xi_i|\in I_{k_i},\ i=1,2,3\}\cap \{|\xi_1+\xi_2+\xi_3|\in
I_{k_4}\}$, where we have
\begin{equation}
|\Omega(\xi_1,\xi_2,\xi_3)|\sim 2^{2k_3}.
\end{equation}
Let
$f_{k_i,j_i}(\xi,\tau)=f_{k_i}(\xi,\tau)\eta_{j_i}(\tau-\omega(\xi))$,
$j_i\geq 0$ and $i=1,2,3$. Using $X_k$ norm, we get
\begin{eqnarray}\label{eq:p531}
&&2^{k_4}\norm{\chi_{k_4}(\xi)(\tau-\omega(\xi)+i)^{-1}f_{k_1}*f_{k_2}*f_{k_3}}_{Z_{k_4}}\nonumber\\
&\les& \sum_{j_1,j_2,j_3,j_4\geq
0}2^{k_4}2^{-j_4/2}(1+2^{(j_4-2k_4)/2})\norm{1_{D_{k_4,j_4}}(\xi,\tau)f_{k_1,j_1}*f_{k_2,j_2}*f_{k_3,j_3}}_{L^2}.\nonumber
\end{eqnarray}
From the support properties of the functions $f_{k_i,j_i}$,
$i=1,2,3$, it is easy to see that
$1_{D_{k_4,j_4}}(\xi,\tau)f_{k_1,j_1}*f_{k_2,j_2}*f_{k_3,j_3}\equiv
0$ unless
\begin{eqnarray*}
\left \{
\begin{array}{l}
j_{max}, j_{sub} \geq 2k_3-10, |j_{max}-j_{sub}|\leq 5; \mbox{ or }\\
|j_{max}-2k_3|\leq 5, j_{sub}\leq 2k_3-10.
\end{array}
\right.
\end{eqnarray*}

{\bf Case 1.} $j_{max}, j_{sub} \geq 2k_3-10, |j_{max}-j_{sub}|\leq
5$.

It follows from Corollary \ref{cor42} (a) that the right-hand side
of \eqref{eq:p531} is bounded by
\begin{eqnarray}\label{eq:p53c1}
&&\sum_{j_1,j_2,j_3,j_4\geq
0}2^{k_4}2^{(j_1+j_2+j_3)/2}(1+2^{(j_4-2k_4)/2})\nonumber\\
&&\qquad
2^{-(j_{sub}+j_{max})/2}2^{(k_1+k_2)/2}\prod_{i=1}^3\norm{f_{k_i,j_i}}_2.
\end{eqnarray}
It suffices to consider the worst case $j_3,j_4=j_{max}, j_{sub}$.
We get from Lemma \ref{l31} (b) that  \eqref{eq:p53c1} is bounded by
\begin{eqnarray}
\sum_{j_3\geq
2k_3-10}2^{k_4}2^{-\frac{3}{4}j_3}2^{(k_1+k_2)/2}\prod_{i=1}^3\norm{f_{k_i}}_{Z_{k_i}}\les2^{(k_1+k_2)/4}\prod_{i=1}^3\norm{f_{k_i}}_{Z_{k_i}}.
\end{eqnarray}

{\bf Case 2.} $|j_{max}-2k_3|\leq 5, j_{sub}\leq 2k_3-10$.

From Corollary \ref{cor42} (c), we get that the right-hand side of
\eqref{eq:p531} is bounded by
\begin{eqnarray}
&&\sum_{j_1,j_2,j_3,j_4\geq
0}2^{k_4}2^{(j_1+j_2+j_3)/2}2^{-j_{max}/2}\prod_{i=1}^3\norm{f_{k_i,j_i}}_2\nonumber\\
&&\les 2^{(k_1+k_2)/4}\prod_{i=1}^3\norm{f_{k_i}}_{Z_{k_i}},
\end{eqnarray}
where we used Lemma \ref{l31} (b). Thus, we complete the proof of
the proposition.
\end{proof}

\begin{proposition} Assume $k_3\geq 110$, $|k_4-k_3|\leq 5$, $k_3-30\leq k_1,\ k_2
\leq k_3$, and $f_{k_i}\in Z_{k_i}$ with $\ft^{-1}(f_{k_i})$
compactly supported (in time) in $J_0$, $|J_0|\les 1$, $i=1,2,3$.
Then
\begin{eqnarray}
2^{k_4}\norm{\chi_{k_4}(\xi)(\tau-\omega(\xi)+i)^{-1}f_{k_1}*f_{k_2}*f_{k_3}}_{Z_{k_4}}\leq
C 2^{k_3}\prod_{i=1}^3\norm{f_{k_i}}_{Z_{k_i}}.
\end{eqnarray}
\end{proposition}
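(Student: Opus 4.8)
The plan is to split the output according to the size of the modulation $\tau-\omega(\xi)$. The key point is that here all four frequencies are comparable to $2^{k_3}$, so the resonance function $\Omega(\xi_1,\xi_2,\xi_3)$ provides no gain at all — it can be of size $O(1)$ — and hence the loss $2^{k_3}$ in the claimed bound is unavoidable; it will turn out to come entirely from the low-modulation piece. Accordingly I would bound the left-hand side by $I+II+III$, where $I$, $II$, $III$ are obtained by inserting $\eta_{\le k_4-1}(\tau-\omega(\xi))$, $\eta_{[k_4,2k_4]}(\tau-\omega(\xi))$ and $\eta_{\ge 2k_4+1}(\tau-\omega(\xi))$ respectively, exactly as in the proofs of the preceding propositions.

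\emph{The main term $I$.} Since $k_4\ge 105$ we may measure this contribution in the $Y_{k_4}$ norm. By Lemma \ref{l31}(a),(c) and the definition of $Y_{k_4}$,
\[
I\les 2^{k_4/2}\,\norm{\ft^{-1}(f_{k_1})\cdot\ft^{-1}(f_{k_2})\cdot\ft^{-1}(f_{k_3})}_{L_x^1L_t^2}.
\]
I would then distribute by H\"older, putting one factor in $L_x^\infty L_t^2$ and the other two in $L_x^2L_t^\infty$; by Lemma \ref{l34} the smoothing-effect factor contributes $2^{-k_3/2}$ and each maximal-function factor contributes $2^{k_3/2}$ (using that $\ft^{-1}(f_{k_i})$ is supported in a time interval of length $\les 1$), so that $I\les 2^{k_4/2}\cdot 2^{-k_3/2}\cdot 2^{k_3/2}\cdot 2^{k_3/2}\prod_{i=1}^3\norm{f_{k_i}}_{Z_{k_i}}\les 2^{k_3}\prod_{i=1}^3\norm{f_{k_i}}_{Z_{k_i}}$. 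This is exactly the desired estimate, and it shows that the exponent $k_3$ cannot be improved.

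\emph{The terms $II$ and $III$.} For both I would use the $X_{k_4}$ norm. On $II$ one has $\beta_{k_4,j_4}\les 1$, so the $j_4$-summand carries the combined weight $2^{k_4}2^{-j_4}2^{j_4/2}\beta_{k_4,j_4}\les 2^{k_4-j_4/2}$; estimating $\norm{1_{D_{k_4,j_4}}f_{k_1}*f_{k_2}*f_{k_3}}_{L^2}$ by Plancherel and H\"older with three $L^6$ factors (Lemma \ref{l34}, the pair $(6,6)$ being admissible) and summing over $k_4\le j_4\le 2k_4$ yields $II\les 2^{k_4/2}\prod_{i=1}^3\norm{f_{k_i}}_{Z_{k_i}}$, which is more than enough. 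On $III$, where the output modulation dominates $|\Omega|\les 2^{2k_3}$, one has $\beta_{k_4,j_4}\sim 2^{(j_4-2k_4)/2}$, so the combined weight is $O(1)$; checking supports forces one of the input modulations to be comparable to $j_4$, and after using the symmetry of the convolution one reduces to a few cases, each controlled by Corollary \ref{cor42}(a) — the convolution is bounded by $2^{k_3}2^{(j_{min}+j_{thd})/2}\prod_{i=1}^3\norm{f_{k_i,j_i}}_{L^2}$ — and the $j$-summations close against the $Z_{k_i}$ norms once one uses that $\beta_{k,j}\ge 1$, a crude Cauchy--Schwarz on the $Y$-parts, and, crucially, the fact that modulations $j\ge 2k$ lie entirely in the $X_k$ component of $Z_k$. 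Collecting the powers one finds $III\les 2^{k_3}\prod_{i=1}^3\norm{f_{k_i}}_{Z_{k_i}}$.

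The only delicate point is term $III$: one has to keep track of the $\beta_{k,j}$ weights and of the modulation supports of the $Y_k$ components so that the high-modulation pieces of the inputs automatically live in the $X_k$ spaces and every geometric series closes with the correct power of $2^{k_3}$. There is no real obstruction; the content of the proposition is precisely that the unavoidable loss $2^{k_3}$, generated by the low-modulation term $I$, is exactly what can be afforded when $s=1/2$.
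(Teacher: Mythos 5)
Your proof is correct, but you take a slightly different route from the paper's own argument. The paper uses a two-part decomposition of the output modulation, $\eta_{\le 2k_4+20}(\tau-\omega(\xi))$ and $\eta_{\ge 2k_4+21}(\tau-\omega(\xi))$, and treats the whole low-and-medium-modulation piece with a single $L^6_{t,x}\times L^6_{t,x}\times L^6_{t,x}$ Strichartz estimate (from Lemma \ref{l34}), which already yields the factor $2^{k_4}\sim 2^{k_3}$; the $Y_{k_4}$ structure is never invoked in this proposition. You instead keep the three-part decomposition from Propositions \ref{p51}--5.3 and estimate the lowest-modulation piece via the $Y_{k_4}$ norm followed by the $L_x^\infty L_t^2\times L_x^2L_{t\in I}^\infty\times L_x^2L_{t\in I}^\infty$ pairing. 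Both yield $2^{k_3}$, and both are valid (the hypothesis $k_3\ge 110$ ensures $k_4\ge 105\ge 100$ so $Y_{k_4}$ is defined, and the compact time support is available). The paper's version is a bit more economical here, since in the comparable-frequency regime the plain $L^6$ estimate suffices and one need not track the $Y_k$ support constraint; your version has the merit of matching the structure of the earlier propositions and of making explicit that the $2^{k_3}$ loss is produced already by the low-modulation region. For the high-modulation region your argument is the same in spirit as the paper's (Corollary \ref{cor42} together with the forced relation $|j_{max}-j_{sub}|\le 5$ when $j_4\ge 2k_4$ and $|\Omega|\les 2^{2k_3}$), and both in fact give a better bound than $2^{k_3}$ there. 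One small caution: your phrasing \emph{``a crude Cauchy--Schwarz on the $Y$-parts''} is not quite the mechanism; the correct observation (which you also mention) is that the inputs whose modulation is forced to be $\gtrsim 2^{2k_4}$ are automatically in the $X_{k_i}$ component since $Y_{k_i}$ lives on modulations $\le 2^{k_i}$, and for the remaining low-modulation inputs the weight $2^{j_i/2}$ sums against the $\ell^1$ structure of $X_{k_i}$ via Lemma \ref{l31}(b). Your remark that the estimate in term $I$ ``shows that the exponent $k_3$ cannot be improved'' is a heuristic rather than a proof of sharpness, but it does not affect the correctness of the argument.
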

\begin{proof}
First we divide it into two parts.
\begin{eqnarray*}
&&2^{k_4}\norm{\chi_{k_4}(\xi)(\tau-\omega(\xi)+i)^{-1}f_{k_1}*f_{k_2}*f_{k_3}}_{Z_{k_4}}\\
&\les& 2^{k_4}\norm{\chi_{k_4}(\xi)\eta_{\leq 2k_4+20} (\tau-\omega(\xi))(\tau-\omega(\xi)+i)^{-1}f_{k_1}*f_{k_2}*f_{k_3}}_{Z_{k_4}}\\
&&+2^{k_4}\norm{\chi_{k_4}(\xi)\eta_{\geq 2k_4+21}
(\tau-\omega(\xi))(\tau-\omega(\xi)+i)^{-1}f_{k_1}*f_{k_2}*f_{k_3}}_{Z_{k_4}}\\
&=&I+II.
\end{eqnarray*}

We consider first the contribution of the first term $I$. Using the
$X_k$ norm and Lemma \ref{l34}, then we get
\begin{eqnarray}
I&\les& 2^{k_4}\sum_{j_4\geq
0}^{2k_4+20}2^{-j_4/2}\norm{1_{D_{k_4,j_4}}(\xi,\tau)f_{k_1}*f_{k_2}*f_{k_3}}_{L^2}\nonumber\\
&\les&2^{k_4}\prod_{i=1}^3\norm{\ft^{-1}(f_{k_i})}_{L^6}\les
2^{k_4}\prod_{i=1}^3\norm{f_{k_i}}_{Z_{k_i}}.
\end{eqnarray}

We consider now the contribution of the second term $II$. Let
$f_{k_i,j_i}(\xi,\tau)=f_{k_i}(\xi,\tau)\eta_{j_i}(\tau-\omega(\xi))$,
$j_i\geq 0$, $i=1,2,3$. Using the $X_k$ norm, we get
\begin{eqnarray}
II&\les&\sum_{j_4\geq 2k_4+20}\sum_{j_1,j_2,j_3\geq
0}\norm{1_{D_{k_4,j_4}}(\xi,\tau)f_{k_1,j_1}*f_{k_2,j_2}*f_{k_3,j_3}}_{L^2}.
\end{eqnarray}
Since in the area $\{|\xi_i|\in I_{k_i},\ i=1,2,3\}$ we have
$|\Omega(\xi_1,\xi_2,\xi_3)|\les 2^{2k_3}$, by checking the support
properties of the functions $f_{k_i,j_i}$, $i=1,2,3$, we get
$|j_{max}-j_{sub}|\leq 5$ and $j_{sub}\geq 2k_3+10$. From symmetry,
we assume $j_3,j_4=j_{max}, j_{sub}$, then we get
\begin{eqnarray}
II&\les&\sum_{j_4\geq 2k_4+20}\sum_{j_1,j_2,j_3\geq
0}2^{(j_1+j_2)/2}2^{k_3}2^{k_3-j_3}2^{j_3-k_3}\prod_{i=1}^3\norm{f_{k_i,j_i}}_2\nonumber\\
&\les&\sum_{j_3\geq
2k_4+20}2^{2k_3}2^{-j_3/2}\prod_{i=1}^3\norm{f_{k_i}}_{Z_{k_i}}\les2^{k_4}\prod_{i=1}^3\norm{f_{k_i}}_{Z_{k_i}}.
\end{eqnarray}
Therefore we complete the proof of the proposition.
\end{proof}

We consider now the case which corresponds to $high\times high $
interactions. This case is better than $high\times low$ interaction
case.
\begin{proposition} Assume $k_1\geq 110$, $|k_1-k_2|\leq 5$, $0\leq k_3\leq k_1+10,\ 0\leq k_4\leq
k_1-10$, and $f_{k_i}\in Z_{k_i}$ with $\ft^{-1}(f_{k_i})$ compactly
supported (in time) in $J_0$, $|J_0|\les 1$, $i=1,2,3$. Then
\begin{eqnarray}
2^{k_4}\norm{\eta_{k_4}(\xi)(\tau-\omega(\xi)+i)^{-1}f_{k_1}*f_{k_2}*f_{k_3}}_{Z_{k_4}}
\leq C {k^4_1}\prod_{i=1}^3\norm{f_{k_i}}_{Z_{k_i}}
\end{eqnarray}
\end{proposition}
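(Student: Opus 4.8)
The plan is to follow the pattern of the preceding propositions, dualizing to the dyadic bounds of Corollary \ref{cor42}, but now with the output frequency $2^{k_4}$ \emph{low} relative to $2^{k_1}\sim 2^{k_2}$. First I would decompose $f_{k_i}=\sum_{j_i\ge 0}f_{k_i,j_i}$ with $f_{k_i,j_i}=f_{k_i}(\xi,\tau)\eta_{j_i}(\tau-\omega(\xi))$, and use the $X_{k_4}$-structure of $Z_{k_4}$ together with the observation $|(\tau-\omega(\xi)+i)^{-1}|\les 2^{-j_4}$ on the support of $\eta_{j_4}(\tau-\omega(\xi))$ for $j_4\ge1$; this bounds the left side by
\[
2^{k_4}\sum_{j_1,j_2,j_3,j_4\ge0}2^{j_4/2}\beta_{k_4,j_4}(1+2^{j_4})^{-1}\norm{1_{D_{k_4,j_4}}(\xi,\tau)\cdot f_{k_1,j_1}*f_{k_2,j_2}*f_{k_3,j_3}}_{L^2}.
\]
When $k_4\ge100$ I would, exactly as for the term $I$ in the proof of Proposition \ref{p51}, handle the part with $j_4\le k_4-1$ by the $Y_{k_4}$-norm, reducing it to an $L^1_xL^2_t$ estimate on $\ft^{-1}(f_{k_1})\ft^{-1}(f_{k_2})\ft^{-1}(f_{k_3})$ via Lemmas \ref{l31} and \ref{l34}. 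The key geometric input is the size of $\Omega=\Omega(\xi_1,\xi_2,\xi_3)$: writing $\Omega=[\omega(\xi_1)+\omega(\xi_2)-\omega(\xi_1+\xi_2)]+[\omega(\xi_1+\xi_2)+\omega(\xi_3)-\omega(\xi_4)]$, $\xi_4=\xi_1+\xi_2+\xi_3$, and computing as in the proof of Lemma \ref{l41}, one finds $|\Omega|\sim 2^{k_1}|\xi_1+\xi_2|$ whenever $|\xi_1+\xi_2|\ll2^{k_1}$ (the first bracket dominating), and $|\Omega|\sim2^{2k_1}$ otherwise; in either case $\max(j_1,j_2,j_3,j_4)\ges\log_2|\Omega|$.

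Next I would split according to $k_3$. \emph{Case 1: $k_3\ge k_1-10$.} Here $|\xi_4|\sim2^{k_4}$ small forces $|\xi_1+\xi_2|\sim2^{k_1}$ (otherwise $\xi_3$ would be low), hence $|\Omega|\sim2^{2k_1}$ and $j_{max}\ges 2k_1$. This is the easy case: the factor $2^{-j_{max}/2}$ furnished by Corollary \ref{cor42}(c) already beats $2^{k_4}\le2^{k_1}$; after distributing the large modulation (when $j_4=j_{max}$ one pairs $2^{k_4}$ against $2^{j_4/2}\beta_{k_4,j_4}(1+2^{j_4})^{-1}\sim2^{-k_4}$, valid since $j_4\ges2k_1>2k_4$; when $j_4$ is not the maximum, some $j_i\ges2k_1$, and if that $j_i$ is very large then $\beta_{k_i,j_i}\sim2^{(j_i-2k_i)/2}$ absorbs the tail, while $\sum_{j_i}2^{j_i/2}\norm{f_{k_i,j_i}}_{L^2}\le\norm{f_{k_i}}_{X_{k_i}}$) one obtains the bound with only logarithmic losses coming from the crude $\sum_{j_4}$ and from the pairing of $j_{max}$ with $j_{sub}$.

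\emph{Case 2: $k_3\le k_1-10$} is the main case. Now $k_{thd}=\max(k_3,k_4)\le k_1-10\le k_{max}-5$ and $k_{min}\le k_{max}-10$, so parts (b) and (d) of Corollary \ref{cor42} (and Remark \ref{rem43}) both apply. I would further localize $|\xi_1+\xi_2|\sim2^m$, i.e. insert the projection to frequencies $\sim2^m$ into $f_{k_1}*f_{k_2}$ before convolving with $f_{k_3}$, where $m\le\max(k_3,k_4)+O(1)\le k_1-8$, plus the range $|\xi_1+\xi_2|\les1$. On the $m$-piece one has $|\Omega|\sim2^{k_1+m}$, hence $j_{max}\ges k_1+m$. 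For each choice of which index realizes $j_{max}$ I would invoke the corresponding bound of Corollary \ref{cor42}(b) or (d) and then combine: the extra decay $2^{(-k_{max}+k_{min})/2}$ (respectively $2^{(-k_{max}+k_{thd})/2}$) of (b), the resolvent factor $(1+2^{j_4})^{-1}$, the weight $\beta_{k_4,j_4}$ (which for the low output frequency grows like $2^{j_4/2}$ once $j_4\ges2k_4$ --- precisely the mechanism the paper indicates is needed for high--low interactions), and the constraint $j_{max}\ges k_1+m$ --- used to convert $\sum_{j_i\ge k_1+m}2^{j_i/2}\norm{f_{k_i,j_i}}_{L^2}\le\norm{f_{k_i}}_{X_{k_i}}$ into a gain of $2^{-(k_1+m)/2}$ once the factor $2^{j_i/2}$ has been consumed against the $2^{-j_{max}/2}$ of Corollary \ref{cor42}(b) --- must together yield a bounded quantity. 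Summing the resulting $2^{-m/2}$-type factor over $m$ costs one power of $k_1$, the crude sum over $j_4$ another, and the two remaining input-modulation sums (with the one tied to $j_{max}$ handled geometrically via $\beta$) another two, for the total $k_1^4$.

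The step I expect to be the main obstacle is Case 2 in the sub-regime where both $|\xi_1+\xi_2|\sim2^m$ and the output modulation $2^{j_4}$ are small: then $|\Omega|\sim2^{k_1+m}$ is only modestly large, there is essentially no room to spare, and Corollary \ref{cor42}(d) by itself --- whose frequency gain is only $2^{-k_{max}/2}$ --- is too weak to absorb $2^{k_4}$ when $k_4$ is close to $k_1-10$. One must instead exploit the genuinely bilinear structure of the $high\times high\to low$ interaction: the smallness of the set $\{|\xi_1|\sim2^{k_1},\ |\xi_1+\xi_2|\sim2^m\}$ together with the transversality $|\omega'(\xi_1)-\omega'(\xi_2)|\sim2^m$ on it --- which is already the mechanism behind the proof of Lemma \ref{l41}(d) --- and play it against the resolvent decay on the output and the $\beta_{k_4,j_4}$ weight, keeping track of constants precisely enough that exactly four logarithmic summations survive.
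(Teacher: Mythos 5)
Your proposal is considerably more elaborate than the paper's own proof, and it contains a genuine gap precisely in the place you flag as the ``main obstacle.'' The gap stems from a misreading of Corollary \ref{cor42}(d): you take its gain to be $2^{-k_{max}/2}$, which is what the corollary's displayed bound literally says, but this is a typo --- Lemma \ref{l41}(d) (from which the corollary follows by the straightforward $TT^*$/duality argument used for parts (a)--(c)) gives the gain $2^{-k_{max}}$, and the paper's own application of part (d) in the proof of this very proposition writes $2^{-k_1}$, not $2^{-k_1/2}$. With the correct exponent your ``obstacle'' evaporates: even when $k_4$ is close to $k_1-10$, the factor $2^{k_4}\cdot 2^{-k_{max}}\sim 2^{k_4-k_1}\le 2^{-10}$ already has a strict margin, and the sum $\sum_{j_4\le 2k_1+O(1)}2^{-j_4/2}\beta_{k_4,j_4}\cdot 2^{j_4/2}\les k_1+2^{k_1-k_4}$ is absorbed. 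You implicitly sense this --- you correctly identify the weight $\beta_{k_4,j_4}$ as the mechanism that compensates for the low output frequency --- but you never actually carry out the bilinear-transversality refinement you say is needed, so as written the proof does not close.

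The paper's route is also structurally simpler than yours and worth internalizing. After the modulation decomposition (your first display is essentially \eqref{eq:hh}), it does not split by $k_3$ nor localize $|\xi_1+\xi_2|\sim 2^m$; instead it splits only on whether $\max(j_1,\dots,j_4)\le 2k_1+20$ or not. In the small-modulation case, Corollary \ref{cor42}(d) with the gain $2^{-k_{max}}=2^{-k_1}$ immediately gives a bounded prefactor, and each of the four bounded $j$-sums costs a factor $O(k_1)$ (via Lemma \ref{l31}(b) applied to each $f_{k_i}\in Z_{k_i}$), yielding the stated $k_1^4$. In the large-modulation case, since $|\Omega|\les 2^{2k_1+O(1)}$ on the whole frequency support, the resonance identity forces $|j_{max}-j_{sub}|\le 5$, and Corollary \ref{cor42}(a) together with the $j^{-1}$-type decay of $\|f_{k,j}\|_{L^2}$ encoded in $\beta_{k,j}$ (for $j\gg 2k$) closes the sum. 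Your additional dyadic sum over $|\xi_1+\xi_2|\sim 2^m$ duplicates work already done internally in the proof of Lemma \ref{l41}(d), and the proposed use of the $Y_{k_4}$ structure is not needed (the paper bounds $\|\cdot\|_{Z_{k_4}}$ by $\|\cdot\|_{X_{k_4}}$ throughout this proposition; note also that $Y_{k_4}$ is only defined for $k_4\ge 100$). Finally, your claim in Case~1 that $k_3\ge k_1-10$ forces $|\xi_1+\xi_2|\sim 2^{k_1}$ is not quite right at the boundary, since $k_4$ may be as large as $k_1-10$ and hence $|\xi_4|$ comparable to $|\xi_3|$; this is another reason the modulation dichotomy is cleaner than a frequency dichotomy here.
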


\begin{proof}
We divide the argument into two cases. Let
$f_{k_i,j_i}(\xi,\tau)=f_{k_i}(\xi,\tau)\eta_{j_i}(\tau-\omega(\xi))$,
$j_i\geq 0$, $i=1,2,3$. Using $X_k$ norm, then we get
\begin{eqnarray}\label{eq:hh}
&&2^{k_4}\norm{\chi_{k_4}(\xi)(\tau-\omega(\xi)+i)^{-1}f_{k_1}*f_{k_2}*f_{k_3}}_{Z_{k_4}}\nonumber\\
&\les& \sum_{j_i\geq
0}2^{k_4}2^{-j_4/2}(1+2^{(j_4-2{k_4})/2})\norm{1_{D_{k_4,j_4}}(\xi,\tau)f_{k_1,j_1}*f_{k_2,j_2}*f_{k_3,j_3}}_{L^2}.\qquad
\end{eqnarray}

{\bf Case 1.} $\max(j_1,j_2,j_3,j_4)\leq 2k_1+20$.

It follows from Corollary \ref{cor42} (d) that the right-hand side
of \eqref{eq:hh} is bounded by
\begin{eqnarray}
\sum_{j_i\geq
0}2^{k_4}(1+2^{(j_4-2{k_4})/2})2^{(j_1+j_2+j_3)/2}2^{-k_1}\prod_{i=1}^3\norm{f_{k_i,j_i}}_{L^2}\les{k^4_1}
\prod_{i=1}^3\norm{f_{k_i}}_{Z_{k_i}},
\end{eqnarray}
where we used Lemma \ref{l31} (b).

{\bf Case 2.} $\max(j_1,j_2,j_3,j_4)\geq 2k_1+20$.

By checking the support properties, we get $|j_{max}-j_{sub}|\leq
5$. We consider only the worst case $j_1,j_4=j_{max},j_{sub}$. It
follows from Corollary \ref{cor42} (a) and Lemma \ref{l31} (b) that
the right side of \eqref{eq:hh} is bounded by
\begin{eqnarray}
\sum_{j_i\geq
0}2^{k_4}2^{-j_4/2}(1+2^{(j_4-2{k_4})/2})2^{(j_2+j_3)/2}2^{k_4}\prod_{i=1}^3\norm{f_{k_i,j_i}}_{L^2}\les{k_1^4}\prod_{i=1}^3\norm{f_{k_i}}_{Z_{k_i}}.
\end{eqnarray}
Therefore, we complete the proof of the proposition.
\end{proof}

The next proposition is used to control $low \times low$
interactions. This interaction is easy to control.

\begin{proposition} Assume $0\leq k_1, k_2, k_3, k_4\leq 120$,  and $f_{k_i}\in Z_{k_i}$, $i=1,2,3$. Then
\begin{eqnarray}
2^{k_4}\norm{\eta_{k_4}(\xi)(\tau-\omega(\xi)+i)^{-1}f_{k_1}*f_{k_2}*f_{k_3}}_{Z_{k_4}}\leq
C\prod_{i=1}^3\norm{f_{k_i}}_{Z_{k_i}}.
\end{eqnarray}
\end{proposition}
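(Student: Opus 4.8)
The plan is to throw away all the fine structure and work with the plain $X_{k_4}$ norm, using that all four frequencies are $O(1)$ so that every dyadic summation over modulations collapses to an absolute constant. Since $2^{k_4}\le 2^{120}\les1$ and $\norm{\cdot}_{Z_{k_4}}\le\norm{\cdot}_{X_{k_4}}$, it suffices to prove
\[\norm{\eta_{k_4}(\xi)(\tau-\omega(\xi)+i)^{-1}f_{k_1}*f_{k_2}*f_{k_3}}_{X_{k_4}}\les\prod_{i=1}^3\norm{f_{k_i}}_{Z_{k_i}}.\]
First I would write each input in the modulation decomposition recalled at the beginning of Section 3: $f_{k_i}=\sum_{j_i\ge0}f_{k_i,j_i}+g_{k_i}$ with $f_{k_i,j_i}$ supported in $D_{k_i,j_i}$, $g_{k_i}$ supported in $\cup_{j=0}^{k_i-1}D_{k_i,j}$, and $\sum_{j_i}2^{j_i/2}\beta_{k_i,j_i}\norm{f_{k_i,j_i}}_{L^2}+\norm{g_{k_i}}_{Y_{k_i}}\le2\norm{f_{k_i}}_{Z_{k_i}}$.

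The first key point is the substitute estimate $\sum_{j_i}2^{j_i/2}\norm{\eta_{j_i}(\tau-\omega(\xi))f_{k_i}}_{L^2}\les\norm{f_{k_i}}_{Z_{k_i}}$, with a constant depending only on the bound $120$. For the $\sum_{j_i}f_{k_i,j_i}$ part this is immediate since $\beta_{k_i,j_i}\ge1$. For the tail $g_{k_i}$ (nonzero only when $k_i\ge100$), note that $g_{k_i}$ has modulation $<2^{k_i}$, so its modulation decomposition has $O(k_i)$ nonzero terms, each of size $\les2^{k_i/2}\norm{\eta_j(\tau-\omega(\xi))g_{k_i}}_{L^2}$ since $\beta_{k_i,j}\les1$ for $j\le k_i+1$; Cauchy--Schwarz in $j$ bounds the sum by $\les2^{k_i/2}k_i^{1/2}\norm{g_{k_i}}_{L^2}$, and the representation \eqref{eq:gkform}, together with $|(\tau-\omega(\xi)+i)^{-1}|\le1$ and Minkowski's inequality, gives $\norm{g_{k_i}}_{L^2}\les2^{k_i}\norm{g_{k_i}}_{Y_{k_i}}$. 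Since $k_i\le120$ the total loss $2^{O(k_i)}$ is a harmless constant. After replacing each $f_{k_i,j_i}$ by $f_{k_i,j_i}+\eta_{j_i}(\tau-\omega(\xi))g_{k_i}$, we may therefore assume $f_{k_i}=\sum_{j_i}f_{k_i,j_i}$ with $f_{k_i,j_i}$ supported in (roughly) $I_{k_i}\times\wt I_{j_i}$ and $\sum_{j_i}2^{j_i/2}\norm{f_{k_i,j_i}}_{L^2}\les\norm{f_{k_i}}_{Z_{k_i}}$.

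Now I would expand the $X_{k_4}$ norm. Setting $G=f_{k_1}*f_{k_2}*f_{k_3}$,
\[\norm{\eta_{k_4}(\xi)(\tau-\omega(\xi)+i)^{-1}G}_{X_{k_4}}=\sum_{j_4\ge0}2^{j_4/2}\beta_{k_4,j_4}\norm{\eta_{j_4}(\tau-\omega(\xi))(\tau-\omega(\xi)+i)^{-1}\eta_{k_4}(\xi)G}_{L^2}.\]
On $\{|\tau-\omega(\xi)|\sim2^{j_4}\}$ one has $|(\tau-\omega(\xi)+i)^{-1}|\les2^{-j_4}$ and, because $k_4\ge0$, $2^{j_4/2}\beta_{k_4,j_4}2^{-j_4}=2^{-j_4/2}+2^{-k_4}\les1$; hence the right side is $\les\sum_{j_1,j_2,j_3,j_4}\norm{\eta_{j_4}(\tau-\omega(\xi))\eta_{k_4}(\xi)\,f_{k_1,j_1}*f_{k_2,j_2}*f_{k_3,j_3}}_{L^2}$. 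I would bound each summand by Corollary \ref{cor42}(c) (valid in the present support configuration by Remark \ref{rem43}), giving $C\,2^{(j_1+j_2+j_3+j_4)/2}2^{-j_{max}/2}\prod_{i=1}^3\norm{f_{k_i,j_i}}_{L^2}$. Moreover the convolution vanishes unless the supports are compatible; since $|\Omega(\xi_1,\xi_2,\xi_3)|\les2^{250}$ here (all frequencies being $\le2^{120}$), the identity $\tau-\omega(\xi)=\sum_i(\tau_i-\omega(\xi_i))+\Omega(\xi_1,\xi_2,\xi_3)$ forces $j_4\le\max(j_1,j_2,j_3,C)$ with $C$ depending only on $120$. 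Therefore $\sum_{j_4}2^{j_4/2}2^{-j_{max}/2}\les1$, and the remaining sum is $\les\prod_{i=1}^3\bigl(\sum_{j_i}2^{j_i/2}\norm{f_{k_i,j_i}}_{L^2}\bigr)\les\prod_{i=1}^3\norm{f_{k_i}}_{Z_{k_i}}$ by the previous paragraph. This completes the proof.

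The only step that is not pure bookkeeping is the substitute estimate in the second paragraph, i.e. checking that the $Z_{k_i}$ norm controls the $\ell^1$-in-modulation quantity $\sum_{j_i}2^{j_i/2}\norm{\eta_{j_i}(\tau-\omega(\xi))f_{k_i}}_{L^2}$; this is where the hypothesis $k_i\le120$ is genuinely used (beyond the trivial $2^{k_4}\les1$), since the bound on the $Y_{k_i}$ contribution is lossy by a factor $2^{O(k_i)}$. Everything else is routine dyadic summation, trivialized by the boundedness of all the frequencies, which is why this $low\times low$ interaction is the easiest case.
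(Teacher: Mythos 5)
Your proof is correct. It follows the same general plan as the paper's: reduce to the $X_{k_4}$ norm (observing that $2^{k_4}\les1$ and the weight $2^{j_4/2}\beta_{k_4,j_4}(1+2^{j_4})^{-1}\les1$ trivializes the $j_4$-weight), decompose the inputs into modulation blocks, apply the symmetric estimate of Corollary \ref{cor42}, and use the resonance identity $\tau-\omega(\xi)=\sum_i(\tau_i-\omega(\xi_i))+\Omega$ with $|\Omega|\les1$ to close the $j$-summation. The differences are in the specific choices: the paper applies Corollary \ref{cor42}(a), which gives the factor $2^{(j_{min}+j_{thd})/2}2^{(k_{min}+k_{thd})/2}$, and bounds each modulation block in $L^2$ via Lemma \ref{l31}(b), which already knows $Z_k$ controls each $\eta_j f_k$ uniformly; you instead apply Corollary \ref{cor42}(c), which produces $2^{(j_1+j_2+j_3+j_4)/2}2^{-j_{max}/2}$, and therefore need the genuinely $\ell^1$-in-modulation bound $\sum_j 2^{j/2}\norm{\eta_j f_k}_{L^2}\les\norm{f_k}_{Z_k}$, which you justify explicitly — in particular you handle the $Y_k$ contribution for $100\le k\le120$ by combining \eqref{eq:gkform} with the Bernstein-type inequality $\norm{g_k}_{L^2}\les 2^k\norm{g_k}_{Y_k}$ and accepting a $2^{O(k)}$ loss; this extra step is the only place your argument does more work than the paper's one-line invocation of Lemma \ref{l31}(b). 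Both derivations then close by the same resonance observation (you phrase it as $j_4\le\max(j_1,j_2,j_3,C)$, the paper as $|j_{max}-j_{sub}|\le5$ when $j_{max}$ is large), and both lose a harmless constant from the bound $k_i\le120$. Your route is slightly heavier but equally valid, and your explicit treatment of the $Y_k$ contribution is a sound substitute for the paper's appeal to Lemma \ref{l31}(b).
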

\begin{proof}
Let
$f_{k_i,j_i}(\xi,\tau)=f_{k_i}(\xi,\tau)\eta_{j_i}(\tau-\omega(\xi))$,
$j_i\geq 0$, $i=1,2,3$. Using $X_k$ norm, Corollary \ref{cor42} (a)
and Lemma \ref{l31} (b), then we get
\begin{eqnarray*}
&&2^{k_4}\norm{\chi_{k_4}(\xi)(\tau-\omega(\xi)+i)^{-1}f_{k_1}*f_{k_2}*f_{k_3}}_{Z_{k_4}}\\
&\les& \sum_{j_1,j_2,j_3,j_4\geq
0}2^{(j_{min}+j_{thd})/2}2^{(k_{min}+k_{thd})/2}\prod_{i=1}^3\norm{f_{k_i,j_i}}_{L^2}\\
&\les&2^{(k_{min}+k_{thd})/2}\prod_{i=1}^3\norm{f_{k_i}}_{Z_{k_i}},
\end{eqnarray*}
since for the case $j_{max}\geq 200$ we have $|j_{max}-j_{sub}|\leq
5$ by checking the support properties of the functions
$f_{k_i,j_i}$, $i=1,2,3$.
\end{proof}

Finally we present two counterexamples. The first one shows why we
use a $l^1$-type $X^{s,b}$ structure. The other one shows a
logarithmic divergence if we only use $X_k$ which is the reason for
us applying $Y_k$ structure. See also the similar phenomenon in
\cite{IK2} for the complex-valued Benjamin-Ono equation.

\begin{proposition}\label{countertrilinear}
Assume $k\geq 200$. Then there exist $f_{1}\in X_1,\ f_k\in X_k$
such that
\begin{eqnarray}
2^{k}\norm{\eta_k(\xi)(\tau-\omega(\xi)+i)^{-1}f_1*f_1*f_k}_{X_k}\ges
\  k \norm{f_1}_{X_1}\norm{f_1}_{X_1}\norm{f_k}_{X_k}.
\end{eqnarray}
\end{proposition}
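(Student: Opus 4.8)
The plan is to realize the divergence through the interaction of two opposite-sign low frequencies with one high frequency: the resonance function can then be made small enough to be absorbed by a bottom-modulation high factor, and in return the output spreads across $\sim k$ dyadic modulation scales. Fix a large absolute constant $C$ and take $f_1=1_A$, $f_k=1_B$ with
\[
A=\{(\xi,\tau):|\xi|\in[1,2],\ |\tau-\omega(\xi)|\le C\},\qquad
B=\{(\xi,\tau):\xi\in[2^k,2^k+1],\ |\tau-\omega(\xi)|\le C\}.
\]
Then $A\subset I_1$ and $B\subset I_k$, so $f_1\in X_1$, $f_k\in X_k$; since each factor occupies only $O(1)$ bottom modulation bands one has $\|f_1\|_{X_1}\sim1$ and $\|f_k\|_{X_k}\sim1$. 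As $f_1,f_k\ge0$, the contribution to $f_1*f_1*f_k$ coming from the region $\xi_1\in[1,2]$, $\xi_2\in[-2,-1]$, $\xi_3\in[2^k,2^k+1]$ of the convolution integral is already a lower bound, so it suffices to analyze that piece.

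The core is a pointwise lower bound for this piece. Writing $u=\xi_1+\xi_2\in[-1,1]$ and $\xi_3=\xi-u$, the resonance function from \eqref{eq:reso} is
\[
\Omega(\xi_1,\xi_2,\xi_3)=u(\xi_2-\xi_1+2\xi_3+u)=2u\xi+O(|u|),
\]
which sweeps through all of $[1,2^k]$ as $u$ runs over $[2^{-k},1]$. Since $f_k$ is at bottom modulation, the identity $\tau-\omega(\xi)=(\tau_1-\omega(\xi_1))+(\tau_2-\omega(\xi_2))+(\tau_3-\omega(\xi_3))+\Omega$ shows the output modulation at $(\xi,\tau)$ equals $\Omega+O(1)$. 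Hence, fixing $\xi$ in a length-$\sim1$ window just above $2^k$ and $\sigma:=\tau-\omega(\xi)\in[C_0,2^{k}/10]$, the requirement $(\xi-u,\tau-\tau_1-\tau_2)\in B$ pins $u$ to an interval of length $\sim2^{-k}$ about $\sigma/(2\xi)$ and leaves the $\xi_1-\xi_2$ direction and the variables $\tau_1,\tau_2$ free over sets of measure $\sim1$ each. Integrating these out gives $f_1*f_1*f_k(\xi,\tau)\gtrsim2^{-k}$ on a set $G$ of measure $\sim2^{k}$ which meets the $j$-th modulation band in measure $\sim2^{j}$ for every $j\in[C_0',k-C_0']$; equivalently $\|\eta_j(\tau-\omega(\xi))(f_1*f_1*f_k)\|_{L^2}\gtrsim2^{-k}2^{j/2}$ for all such $j$.

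Given this, the rest is bookkeeping. Dividing by $\tau-\omega(\xi)+i$ gains a factor $\sim2^{-j}$ on the $j$-th band, exactly cancelling the $2^{j/2}$ growth, so each $j\in[C_0',k-C_0']$ contributes $2^{j/2}\beta_{k,j}\cdot2^{-j}\cdot2^{-k}2^{j/2}\gtrsim2^{-k}$ to $\|\eta_k(\xi)(\tau-\omega(\xi)+i)^{-1}f_1*f_1*f_k\|_{X_k}$ (using $\beta_{k,j}\ge1$ for $j\le2k$ and $\eta_k\equiv1$ on the output frequencies at play). Summing the $\sim k$ such bands and multiplying by $2^k$ makes the left-hand side $\gtrsim k$, while the right-hand side is $\|f_1\|_{X_1}^2\|f_k\|_{X_k}\sim1$; this is the asserted inequality once $k\ge200$ is large enough to absorb the $O(1)$ constants. (For this same output the $Y_k$ norm replaces the weighted modulation-band sum by a single $L^1_xL^2_t$ bound and does not feel the loss, which is precisely why the paper uses $Z_k=X_k+Y_k$.)

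The main obstacle is the pointwise convolution bound of the second paragraph: one must check that the resonant $u$-slice of width $\sim2^{-k}$ is nondegenerate and, more delicately, that the $O(|u|)$ error in $\Omega$ and the $O(1)$ motion of $\omega(\xi_3)$ over a unit $\xi_3$-interval do not displace the output modulation off the dyadic band one is aiming for. This calls for a careful choice of $C$, $C_0$, $C_0'$ and a simultaneous tracking of the three scales involved --- $2^{-k}$ in $u$, $O(1)$ in modulation, $2^k$ in the high frequency --- so as to certify that genuinely $\sim k$ distinct modulation bands receive mass.
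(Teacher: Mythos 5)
Your construction is correct, and the pointwise lower bound in your second paragraph — the step you flag as the main obstacle — does check out after routine bookkeeping: with $\xi_1\in[1,2]$, $\xi_2\in[-2,-1]$, $\xi_3>0$ one has $\Omega=u(\xi_2-\xi_1+2\xi-u)=2u\xi+O(|u|)$; for $\sigma\in[C_0,2^k/10]$ this pins $u$ to a window of width $\sim 2^{-k}$ around $\sigma/(2\xi)$, the constraint $\xi-u\in[2^k,2^k+1]$ then admits $\xi\in 2^k+[1/20,1]$ independently of $\sigma$, and the remaining $\xi_1-\xi_2$, $\mu_1$, $\mu_2$ degrees of freedom give a factor $\sim 1$, so $f_1*f_1*f_k\gtrsim 2^{-k}$ on a set meeting each dyadic band $j\in[C_0',k-C_0']$ in measure $\sim 2^j$. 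The rest is arithmetic, as you say.

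Your route is, however, genuinely different from the paper's. The paper keeps both low-frequency inputs on the \emph{same} side ($\xi_1,\xi_2\in[1/2,1]$), so that $\Omega$ is automatically of size $\sim 2^k$, and then places $f_k$ itself at modulation $\sim 2^k$ (hence $\|f_k\|_{X_k}\sim 2^{3k/2}$, not $\sim 1$); the cancellation between that large input modulation and $\Omega$ produces small output modulations over $j\lesssim k/2$, with the convolution of size $\sim 1$ on a set of measure $\sim 2^{k+j}$ in each band. Your construction instead makes $\Omega$ small by taking \emph{opposite-sign} low frequencies, keeps every factor at bottom modulation, and lets $\Omega=2u\xi$ sweep the full range $[1,2^k]$ as $u$ varies; the trade-off is a $2^{-k}$ convolution value on an $O(1)$-wide output frequency window. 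Both mechanisms produce the same $\log$-divergence --- $\sim k$ dyadic bands each contributing a fixed amount to the $l^1$-in-$j$ sum defining $X_k$ --- but your version traces it to a near-resonant cascade with all inputs at bottom modulation, which is exactly the regime that the $Y_k$-part of $Z_k$ is designed to absorb, as you observe in the closing parenthesis. (One incidental remark: with the paper's exact choice $f_k=\chi_{I_k}\eta_k(\tau-\omega)$, the relation $\mu_3=\sigma-\mu_1-\mu_2-\Omega$ with $\Omega\approx 2u\xi\in[2,4]\cdot 2^k$ actually forces $|\mu_3|\sim 2^{k+1}$; one should take $\eta_{k+1}$ or $\eta_{k+2}$ there, a harmless adjustment, but a small point on which your same-sign-free construction is cleaner.)
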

\begin{proof}
From the proof of Proposition \ref{p51}, we easily see that the
worst interaction comes from the case that largest frequency
component has a largest modulation. So we construct this case
explicitly. Let $I=[1/2,1]$, and take
\[f_1(\xi,\tau)=\chi_{I}(\xi)\eta_1(\tau-\omega(\xi)),\ f_k(\xi,\tau)=\chi_{I_k}(\xi)\eta_k(\tau-\omega(\xi)).\]
From definition, we easily get $\norm{f_1}_{X_1}\sim 1$ and
$\norm{f_k}_{X_k}\sim 2^{3k/2}$ and
\[2^{k}\norm{\eta_k(\xi)(\tau-\omega(\xi)+i)^{-1}f_1*f_1*f_k}_{X_k}\ges 2^{k}\sum_{j=0}^{k/2}2^{-j/2}\norm{1_{D_{k,j}}\cdot f_1*f_1*f_k}_{L_{\xi,\tau}^2}.\]
On the other hand, we have for $j\leq k/2$
\begin{eqnarray*}
&&1_{D_{k,j}}(\xi,\tau)\cdot f_1*f_1*f_k\\
&=&\int
f_1(\xi_1,\tau_1)f_2(\xi_2,\tau_2)f_k(\xi-\xi_1-\xi_2,\tau-\tau_1-\tau_2)d\xi_1d\xi_2d\tau_1d\tau_2\\
&=&\int
\chi_I(\xi_1)\chi_I(\xi_2)\eta_1(\tau_1)\eta_1(\tau_2)\chi_{I_k}(\xi-\xi_1-\xi_2)\\
&&\cdot \eta_k(
\tau-\tau_1-\tau_2-\omega(\xi_1)-\omega(\xi_2)-\omega(\xi-\xi_1-\xi_2))d\xi_1d\xi_2d\tau_1d\tau_2\\
&\ges&
\chi_{[\frac{2^{10}-1}{2^{10}}2^k,\frac{2^{10}+1}{2^{10}}2^k]}(\xi)\eta_j(\tau-\omega(\xi)).
\end{eqnarray*}
Therefore, we get
\begin{eqnarray}\label{eq:logdiv}
2^{k}\sum_{j=0}^{k/2}2^{-j/2}\norm{1_{D_{k,j}}\cdot
f_1*f_1*f_k}_{L_{\xi,\tau}^2}\ges \ k2^{3k/2},
\end{eqnarray}
which completes the proof of the proposition.
\end{proof}

\begin{proposition}\label{counterXsb}
For any $s\in \R$, there doesn't exists $b\in \R$ such that
\begin{equation}\label{eq:trilinearXsb}
\norm{\partial_x(uvw)}_{X^{s,b-1}}\les\
\norm{u}_{X^{s,b}}\norm{v}_{X^{s,b}}\norm{w}_{X^{s,b}}.
\end{equation}
\end{proposition}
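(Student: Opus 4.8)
The plan is to exhibit, for each fixed $s$ and each candidate $b$, a family of test functions $u=v=w$ depending on a large parameter $k$ for which the left-hand side of \eqref{eq:trilinearXsb} grows strictly faster (in $k$) than the right-hand side. The heuristic is exactly the one already extracted in Proposition \ref{countertrilinear}: the worst interaction occurs when the largest frequency component also carries the largest modulation, producing a logarithmic pile-up over the dyadic modulation scales $j=0,1,\dots,k/2$ of the output. The $l^1$-type norm in $X_k$ turned that pile-up into a factor $k$; in the $l^2$-type Bourgain norm $X^{s,b}$ the same mechanism produces a factor that a single power $\langle\tau-\omega(\xi)\rangle^{b-1}$ cannot absorb uniformly in $b$.

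Concretely, I would take the spatial frequency of $u$ localized to a fixed interval $I=[1/2,1]$ and the modulation localized to $\langle\tau-\omega(\xi)\rangle\sim 2^m$ for a second large parameter $m$; more precisely set $\widehat{u}(\xi,\tau)=\chi_I(\xi)\,1_{[2^{m},2^{m+1}]}(\tau-\omega(\xi))$, so that $\norm{u}_{X^{s,b}}\sim 2^{mb}$. As in the computation at the end of Proposition \ref{countertrilinear}, the triple convolution $\widehat u*\widehat u*\widehat u$ is, on the frequency region $\xi\sim 1$, bounded below by $c\cdot\chi_{[c,c']}(\xi)\,1_{[0,2^{m}]}(\tau-\omega(\xi))$, because the resonance function $\Omega(\xi_1,\xi_2,\xi_3)$ is $O(1)$ there and the modulation from the three inputs spreads over the whole interval $[-3\cdot2^{m+1},3\cdot2^{m+1}]$. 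Hence on output modulations $\langle\tau-\omega(\xi)\rangle\sim 2^{j}$ for every $0\le j\le m$ the symbol of $\partial_x(uvw)$ has size $\sim 1$ on a set of measure $\gtrsim 2^{j}$, giving
\[
\norm{\partial_x(uvw)}_{X^{s,b-1}}^2\gtrsim \sum_{j=0}^{m}2^{2j(b-1)}\cdot 2^{j}=\sum_{j=0}^{m}2^{(2b-1)j}.
\]
If $b\le 1/2$ this sum is $\gtrsim m$ (for $b=1/2$) or $\gtrsim 1$ with the summand at $j=0$ dominating, while the right-hand side $\norm{u}_{X^{s,b}}^3\sim 2^{3mb}\to 0$ as $m\to\infty$ when $b<0$, or stays bounded when $b=0$; in either regime \eqref{eq:trilinearXsb} fails. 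If $b>1/2$ the sum is $\gtrsim 2^{(2b-1)m}$, so the left side is $\gtrsim 2^{(b-1/2)m}$, and one still needs $2^{(b-1/2)m}\les 2^{3mb}$, i.e. $b\ge -1/4$, which holds — so this particular family does not by itself kill $b>1/2$. For that range I would instead run the dual high$\times$low$\to$high computation: put two inputs at frequency $\sim 1$ with modulation $O(1)$ and one input at frequency $\sim 2^{k}$ with modulation $\sim 2^{2k}$ (the output-largest-modulation scenario of Proposition \ref{p51}), exactly as in Proposition \ref{countertrilinear}; that produces the extra factor $k$ on the left with no compensating gain, ruling out every $b$ with $b>1/2$ as well. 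Combining the two families covers all $b\in\R$.

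The main obstacle is organizational rather than technical: one must verify that no single value of $b$ escapes by playing the two regimes against each other, so the argument is genuinely a case split on the size of $b$ relative to $1/2$ (and, inside $b\le 1/2$, on the sign of $b$), with a tailored test family in each case. The individual lower bounds on the triple convolution are routine and are essentially already carried out in the proof of Proposition \ref{countertrilinear}; the only care needed is to track the $\langle\xi\rangle^s$ weights, but since all frequencies in each configuration are dyadically fixed, the weight contributes only a harmless constant power of $2^{k}$ or $2^{m}$ that is identical on both sides and cancels. Thus the statement follows by assembling these scaling computations.
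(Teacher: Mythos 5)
Your proposal leaves the range $b>1/2$ uncovered, and the computations for both families contain errors. For the first family (three inputs at frequency $\sim 1$, modulation $\sim 2^m$), the norm is $\norm{u}_{X^{s,b}}\sim 2^{m(b+1/2)}$, not $2^{mb}$ --- you omitted the measure $\sim 2^m$ of the modulation support --- and the triple convolution has size $\sim 2^{2m}$ on its support rather than $\sim 1$, since the two free modulation integrals each contribute a factor $\sim 2^m$; moreover with the one-sided choice $1_{[2^m,2^{m+1}]}(\tau-\omega(\xi))$ the output modulation is concentrated near $3\cdot 2^m$ rather than spread over $[0,2^m]$. Once corrected, this family only rules out $b$ below a threshold near $0$ (roughly $b<1/6$ in a two-sided version), nowhere near $b\le 1/2$. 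For the second family (the $\text{high}\times\text{low}\to\text{high}$ configuration of Proposition \ref{countertrilinear}, with the high-frequency input carrying modulation $\sim 2^k$), tracking the $X^{s,b}$ norms gives $\norm{f_k}_{X^{s,b}}\sim 2^{ks+kb+k}$ and $\mbox{LHS}^2/\mbox{RHS}^2\sim 2^{k(1-2b)}\sum_{j=0}^{k/2}2^{j(2b-1)}$, which is $\sim 2^{k(1-2b)}$ for $b<1/2$, $\sim k$ at $b=1/2$, and $\sim 2^{k(1-2b)/2}\to 0$ for $b>1/2$. So that family rules out $b\le 1/2$, precisely the opposite of what you claim; it gives no contradiction for $b>1/2$ because the large input modulation $\sim 2^k$ can cancel the resonance and move the output down to small modulation, where the weight $\jb{\tau-\omega(\xi)}^{b-1}$ is harmless.

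The configuration you are missing is the one the paper uses: two inputs at frequency $\sim 1$ and one at frequency $\sim N$, \emph{all} at bounded modulation. Since no input carries a large modulation, nothing can cancel the resonance $\Omega\sim N$, and the output is forced to modulation $\sim N$. With $\norm{w}_{X^{s,b}}\sim N^{s+1/2}$ and the convolution of size $\sim N^{-1}$ on a set of measure $\sim N^2$, one finds $\mbox{LHS}/\mbox{RHS}\sim N^{b-1/2}$, which blows up exactly for $b>1/2$. The paper then moves the high-frequency input to modulation $\sim N$ (the set $B'$), letting the resonance cancel so the output sits at modulation $\sim 1$, which gives $\mbox{LHS}/\mbox{RHS}\sim N^{1/2-b}$ and rules out $b<1/2$; the endpoint $b=1/2$ is killed by the $k^{1/2}$ logarithm from Proposition \ref{countertrilinear}. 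The essential idea your proposal lacks is to make the output modulation large through the resonance \emph{alone}, without placing a compensating large modulation on any input --- that is what penalizes $b>1/2$ and is a genuinely different mechanism from the logarithmic pile-up you focused on.
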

\begin{proof}
It is easy to see that the counterexample in the proof of
Proposition \ref{countertrilinear} shows that
\eqref{eq:trilinearXsb} doesn't hold for $b=1/2$ with a $k^{1/2}$
divergence in \eqref{eq:logdiv}. We assume now $b\neq 1/2$. By using
Plancherel's equality, we get that \eqref{eq:trilinearXsb} is
equivalent to
\begin{eqnarray}\label{eq:trilinearXsbequiv}
&&\norm{\frac{\jb{\xi}^s\xi}{\jb{\tau-\omega(\xi)}^{1-b}}\int
\frac{u(\xi_1,\tau_1)}{\jb{\xi_1}^s \jb{\tau_1-\omega(\xi_1)}^b}\frac{v(\xi_2,\tau_2)}{\jb{\xi_2}^s \jb{\tau_2-\omega(\xi_2)}^b}\nonumber\\
&&\quad \cdot \frac{w(\xi-\xi_1-\xi_2,\tau-\tau_1-\tau_2)}{\jb{\xi-\xi_1-\xi_2}^s \jb{\tau-\tau_1-\tau_2-\omega(\xi-\xi_1-\xi_2)}^b}d\tau_1 d\tau_2d\xi_1d\xi_2}_{L_{\xi,\tau}^2}\nonumber\\
&&\les \  \norm{u}_{L^2}\norm{v}_{L^2}\norm{w}_{L^2}.
\end{eqnarray}
Fix any dyadic number $N\gg 1$. Let
\[A=\{1/2\leq \xi\leq 10,\ |\tau|\leq
1\} \mbox{ and } B=\{N/2\leq \xi\leq 2N,\ |\tau|\leq 2^{10}\}.\]
Take
\[u(\xi,\tau)=v(\xi,\tau)=\chi_{A}(\xi,\tau-\omega(\xi)),\ w(\xi,\tau)=\chi_{B}(\xi,\tau-\omega(\xi)).\]
We easily see that $\norm{u}_{L_2}=\norm{v}_{L_2}\sim 1$ and
$\norm{w}_{L^2}\sim N^{1/2}$. Denote
$f(\xi,\tau)=u*v*w(\xi,\tau+\omega(\xi))$. Then we have
\begin{eqnarray*}
f(\xi,\tau)&=&\int
u(\xi_1,\tau_1)v(\xi_2,\tau_2)w(\xi-\xi_1-\xi_2,\tau+\omega(\xi)-\tau_1-\tau_2)d\xi_1d\xi_2d\tau_1d\tau_2\\
&=&\int \chi_{\leq
2^{10}}(\tau-\tau_1-\tau_2+\omega(\xi)-\omega(\xi-\xi_1-\xi_2)-\omega(\xi_1)-\omega(\xi_2))\\
&&\chi_A(\xi_1,\tau_1)\chi_A(\xi_2,\tau_2)\chi_{[N/2,2N]}(\xi-\xi_1-\xi_2)d\xi_1d\xi_2d\tau_1d\tau_2\\
&=&\int \chi_{\leq 2^{10}}(\tau-\tau_1-\tau_2+2(\xi_1+\xi_2)\xi+(\xi_1-\xi_2)^2-\omega(\xi_1)-\omega(\xi_2))\\
&&\chi_A(\xi_1,\tau_1)\chi_A(\xi_2,\tau_2)\chi_{[N/2,2N]}(\xi-\xi_1-\xi_2)d\xi_1d\xi_2d\tau_1d\tau_2.
\end{eqnarray*}
Therefore, fixing $M\gg 1$, we get for any $(\xi, \tau) \in
[(M-1)N/M, (M+1)N/M]\times [-8N,-4N]$, then $\tau=-C_0\xi$ for some
$2\leq C_0\leq 9$ and
\begin{eqnarray*}
f(\xi,\tau)\ges
\int\chi_A(\xi_1,\tau_1)\chi_A(\xi_2,\tau_2)\chi_{|\xi_1+\xi_2-C_0|\les
N^{-1}}d\xi_1d\xi_2d\tau_1d\tau_2\ges N^{-1}.
\end{eqnarray*}
Thus we see that the left-hand side of \eqref{eq:trilinearXsbequiv}
is larger than $N^b$, while the right-hand side is $N^{1/2}$, which
implies $b< 1/2$.

Similarly, by taking $B'=\{N/2\leq \xi\leq 2N,\ N\leq |\tau|\leq
N\}$ as before, we obtain that $b>1/2$. Therefore we complete the
proof of the proposition.
\end{proof}

\section{Proof of Theorem \ref{t11}}

In this section we devote to prove Theorem \ref{t11} by using the
standard fixed-point machinery. From Duhamel's principle, we get
that the equation \eqref{eq:mBO} is equivalent to the following
integral equation:
\begin{eqnarray}\label{eq:inteq}
u=W(t)\phi+\int_0^t W(t-t')(\partial_x(u^3)(t'))dt'.
\end{eqnarray}
We will mainly work on the following truncated version
\begin{eqnarray}\label{eq:truninteq}
u=\psi(t)W(t)\phi+\psi(t)\int_0^t
W(t-t')(\partial_x[(\psi(t')u)^3](t'))dt',
\end{eqnarray}
where $\psi(t)=\eta_0(t)$ is a smooth cut-off function. Then we
easily see that if $u$ is a solution to \eqref{eq:truninteq} on
$\R$, then $u$ solves \eqref{eq:inteq} on $t\in [-1,1]$. Our first
lemma is on the estimate for the linear solution.

\begin{lemma}\label{l61}
If $s\geq 0$ and $\phi \in {H}^s$ then
\begin{equation}
\norm{\psi(t)\cdot (W(t)\phi)}_{F^{s}}\leq C \norm{\phi}_{{H}^s}.
\end{equation}
\end{lemma}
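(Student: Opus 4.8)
The plan is to reduce the estimate to a direct computation of the $X_k$ norm of a single frequency-localized piece of the free flow, since $F^s$ is built by summing such pieces with $H^s$ weights. First I would compute the full space-time Fourier transform of the truncated linear solution: from $\ft_x[W(t)\phi](\xi,t)=e^{it\omega(\xi)}\widehat\phi(\xi)$, taking the Fourier transform in $t$ as well gives
\[
\ft(\psi(t)W(t)\phi)(\xi,\tau)=\widehat\phi(\xi)\,\widehat\psi(\tau-\omega(\xi)),
\]
so $\eta_k(\xi)\ft(\psi(t)W(t)\phi)(\xi,\tau)=\eta_k(\xi)\widehat\phi(\xi)\,\widehat\psi(\tau-\omega(\xi))$, which is supported in $\wt I_k\times\R$ and is thus a legitimate element of $X_k$. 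Moreover $\norm{f}_{Z_k}\le\norm{f}_{X_k}$ for every $k\in\Z_+$ (take the trivial decomposition $f=f+0$ in \eqref{eq:Zk}), so it is enough to bound the $X_k$ norm of this piece.

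Next I would use the fact that the $\xi$-dependence and the $(\tau-\omega(\xi))$-dependence factor. By the definition \eqref{eq:Xk} of $\norm{\cdot}_{X_k}$,
\[
\norm{\eta_k(\xi)\widehat\phi(\xi)\widehat\psi(\tau-\omega(\xi))}_{X_k}
=\Big(\sum_{j\ge 0}2^{j/2}\beta_{k,j}\,\norm{\eta_j(\theta)\widehat\psi(\theta)}_{L^2_\theta}\Big)\cdot\norm{\eta_k(\xi)\widehat\phi(\xi)}_{L^2_\xi}.
\]
Since $\psi=\eta_0\in\Sch(\R)$, its Fourier transform is rapidly decreasing, so $\norm{\eta_j(\theta)\widehat\psi(\theta)}_{L^2_\theta}\les_N 2^{-Nj}$ for every $N$. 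Combined with $2^{j/2}\beta_{k,j}=2^{j/2}+2^{j-k}\le 2^{j/2}+2^{j}$ for all $k\ge 0$, the $j$-sum converges to a constant uniform in $k$, yielding $\norm{\eta_k(\xi)\ft(\psi(t)W(t)\phi)}_{Z_k}\les\norm{\eta_k(\xi)\widehat\phi}_{L^2_\xi}$ for every $k\in\Z_+$.

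Finally I would sum in $k$: by the definition \eqref{eq:Fs} of $F^s$,
\[
\norm{\psi(t)W(t)\phi}_{F^s}^2\les\sum_{k\ge 0}2^{2sk}\norm{\eta_k(\xi)\widehat\phi}_{L^2_\xi}^2\les\norm{\phi}_{H^s}^2,
\]
the last step being the standard Littlewood--Paley bound $\sum_{k\ge 0}2^{2sk}|\eta_k(\xi)|^2\les\jb{\xi}^{2s}$ together with Plancherel. The computation is essentially routine; the only point that requires attention is that the weight $\beta_{k,j}$ degenerates to $\approx 2^{j/2}$ when $k$ is small — precisely the feature introduced to handle the high-low interactions — so one must check the $j$-summation remains bounded \emph{uniformly} in $k$. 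As noted above, the Schwartz decay of $\widehat\psi$ dominates $2^{j/2}\beta_{k,j}$ with room to spare, so this is harmless.
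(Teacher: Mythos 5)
Your proof is correct and follows essentially the same route as the paper's: compute the space-time Fourier transform, reduce to a frequency-localized $X_k$ bound via $\norm{\cdot}_{Z_k}\le\norm{\cdot}_{X_k}$, factor the $L^2_{\xi,\tau}$ norm after the change of variable $\theta=\tau-\omega(\xi)$, and use the rapid decay of $\widehat\psi$ to absorb $2^{j/2}\beta_{k,j}\les 2^j$ uniformly in $k\ge 0$. The only cosmetic difference is that you keep the exact factorized identity where the paper immediately passes to the upper bound $2^j$; the content is identical.
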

\begin{proof}
A direct computation shows that
\begin{eqnarray*}
\ft[\psi(t)\cdot
(W(t)\phi)](\xi,\tau)=\widehat{\phi}(\xi)\widehat{\psi}(\tau-\omega(\xi)).
\end{eqnarray*}
In view of definition, it suffices to prove that if $k\in \Z_+$ then
\begin{equation}\label{eq:l61}
\norm{\eta_k(\xi)\widehat{\phi}(\xi)\widehat{\psi}(\tau-\omega(\xi))}_{Z_k}\leq
C \norm{\eta_k(\xi)\widehat{\phi}(\xi)}_{L^2}.
\end{equation}
Indeed, from definition we have
\begin{eqnarray*}
\norm{\eta_k(\xi)\widehat{\phi}(\xi)\widehat{\psi}(\tau-\omega(\xi))}_{Z_k}&\leq&
\norm{\eta_k(\xi)\widehat{\phi}(\xi)\widehat{\psi}(\tau-\omega(\xi))}_{X_k}\\
&\leq&C\sum_{j=0}^\infty 2^{j}
\norm{\eta_k(\xi)\widehat{\phi}(\xi)}_{L^2}
\norm{\eta_j(\tau)\widehat{\psi}(\tau)}_{L^2}\\
&\leq&C\norm{\eta_k(\xi)\widehat{\phi}(\xi)}_{L^2},
\end{eqnarray*}
which is \eqref{eq:l61} as desired.
\end{proof}

Next lemma is on the estimate for the retarded linear term. These
estimates were also used in \cite{IK}. The only difference is that
here we don't have special structure for the low frequency.

\begin{lemma}\label{l62}
If $l,s\geq 0$ and $u \in \Sch(\R\times \R)$ then
\begin{equation}
\normo{\psi(t)\cdot \int_0^tW(t-s)(u(s))ds}_{F^{s}}\leq C
\norm{u}_{N^{s}}.
\end{equation}
\end{lemma}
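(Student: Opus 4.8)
The plan is to follow the template used for the same estimate in \cite{IK}: reduce to a dyadic bound, write down the Fourier kernel of the truncated Duhamel operator, and split that kernel so that each piece is handled by the properties of $Z_k$ from Section~3 together with Lemma~\ref{l61}. First I would localize in frequency. Multiplication by $\psi$, the integral $\int_0^t\cdots ds$, and $W(t)$ all commute with spatial frequency cut-offs, so $\eta_k(\xi)\ft[\psi(t)\int_0^tW(t-s)u(s)ds]=\ft[\psi(t)\int_0^tW(t-s)u_k(s)ds]$ with $\ft(u_k)=\eta_k(\xi)\ft(u)$. By the definitions \eqref{eq:Fs}, \eqref{eq:Ns} it therefore suffices to show that for each $k\in\Z_+$, setting $f_k:=\eta_k(\xi)(\tau-\omega(\xi)+i)^{-1}\ft(u)\in Z_k$ (so that $\ft(u_k)=(\tau-\omega(\xi)+i)f_k$),
\begin{equation*}
\normo{\ft\Big[\psi(t)\int_0^tW(t-s)u_k(s)ds\Big]}_{Z_k}\les\norm{f_k}_{Z_k}.
\end{equation*}
A direct computation, exactly as for the corresponding identity in \cite{Tao,IK}, gives
\begin{equation*}
\ft\Big[\psi(t)\int_0^tW(t-s)u_k(s)ds\Big](\xi,\tau)=c\int_\R\frac{\widehat\psi(\tau-\tau')-\widehat\psi(\tau-\omega(\xi))}{i(\tau'-\omega(\xi))}(\tau'-\omega(\xi)+i)f_k(\xi,\tau')\,d\tau'.
\end{equation*}

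Next I would fix an even $\chi\in C^\infty_c(\R)$ with $\chi\equiv1$ near $0$ and, with $\lambda=\tau'-\omega(\xi)$, split the kernel into
\begin{equation*}
\frac{\widehat\psi(\tau-\tau')-\widehat\psi(\tau-\omega(\xi))}{i\lambda}=\frac{\chi(\lambda)\big(\widehat\psi(\tau-\tau')-\widehat\psi(\tau-\omega(\xi))\big)}{i\lambda}+\frac{(1-\chi(\lambda))\widehat\psi(\tau-\tau')}{i\lambda}-\frac{(1-\chi(\lambda))\widehat\psi(\tau-\omega(\xi))}{i\lambda},
\end{equation*}
which gives three contributions $I+II+III$ to the integral above. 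The design of the split: the troublesome factor $(\lambda+i)/\lambda$ multiplying $f_k$ is singular only on $\supp\chi$, and there $\widehat\psi(\tau-\tau')-\widehat\psi(\tau-\omega(\xi))=-\lambda\int_0^1\widehat{\psi}'(\tau-\omega(\xi)-r\lambda)dr=O(\lambda)$ absorbs it; on $\supp(1-\chi)$ the factor $(\lambda+i)/\lambda$ is $O(1)$, so $II$ and $III$ act on $f_k$ only through the bounded symbol $(1-\chi(\lambda))(\lambda+i)(i\lambda)^{-1}$.

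Then I would estimate the three pieces on $Z_k$. Piece $I$, after the Taylor expansion, is a convolution in $\tau$ by a fixed Schwartz function composed with multiplication by the compactly supported symbol $\chi(\tau-\omega(\xi))$, applied to $f_k$; it is $\les\norm{f_k}_{Z_k}$ by Lemma~\ref{l31} together with the boundedness on $Z_k$ of multiplication by a fixed Schwartz function of $t$ (equivalently, convolution in $\tau$ by $\widehat\psi$), which is proved from the definitions of $X_k$ and $Y_k$ just as \eqref{eq:pBk3}. Piece $II$: decompose $f_k=\sum_{j\ge0}f_{k,j}+g_k$ as at the start of Section~3; since $\widehat\psi(\tau-\tau')$ is Schwartz in $\tau-\tau'$ and the symbol is $O(1)$, on each $f_{k,j}$ this is again a Schwartz convolution in $\tau$ concentrated at modulation $\sim2^j$, so its $X_k$-norm is $\les2^{j/2}\beta_{k,j}\norm{f_{k,j}}_{L^2}$ and the $j$-sum reproduces $\norm{f_k}_{X_k}$; the $g_k\in Y_k$ part is handled exactly as in Lemma~\ref{l32}, i.e.\ as \cite[Lemma~4.2]{IK}. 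Piece $III$: its output is $\widehat\psi(\tau-\omega(\xi))\eta_k(\xi)\widehat g(\xi)$ with $\widehat g(\xi)=\int(1-\chi(\lambda))(\lambda+i)(i\lambda)^{-1}f_k(\xi,\tau')d\tau'$, hence equals $\ft[\psi(t)W(t)g]$; by Lemma~\ref{l61} its $Z_k$-norm is $\les\norm{\eta_k(\xi)\widehat g}_{L^2}$, and $\norm{\eta_k(\xi)\widehat g}_{L^2}\les\norm{f_k}_{Z_k}$ follows from Cauchy--Schwarz in $\tau'$ on each $f_{k,j}$ (using $\normo{\int_{|\lambda|\sim2^j}f_{k,j}d\tau'}_{L^2_\xi}\les2^{j/2}\norm{f_{k,j}}_{L^2}$ and $\beta_{k,j}\ge1$) together with a direct $L^2$ estimate on the $Y_k$ part.

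The hard part will be organizational rather than any single inequality: I need all the $\ell^1$-in-$j$ summations above to close without a logarithmic loss — this is exactly what the $\ell^1$-structure of $Z_k$ and the weight $(\tau-\omega(\xi)+i)^{-1}$ in the definition of $N^s$ are for — and I need to carry the $Y_k$ component cleanly through pieces $II$ and $III$ following \cite{IK}. The genuinely new case here, the low frequencies $k\le99$, is actually the easiest: there $Z_k=X_k$, no $Y_k$ piece appears, and the argument collapses to its $X_k$-only version, needing no smoothing-effect structure.
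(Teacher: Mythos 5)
Your reduction, formula, and three–piece split of the kernel are all reasonable, and the paper's own proof is essentially the same reduction followed by a citation of~\cite[Lemma~5.2]{IK} for the uniform $Z_k\to Z_k$ bound. But the part of that argument that is actually nontrivial — and the whole reason $Y_k$ appears in the definition of $Z_k$ — is what happens to the $g_k\in Y_k$ component when $k\geq 100$, and your sketch does not close there. You estimate the output of each piece in $X_k$, using for piece~III ``a direct $L^2$ estimate on the $Y_k$ part,'' and for piece~II a vague appeal to Lemma~\ref{l32}. Neither works: Lemma~\ref{l32} is an \emph{embedding} of $Z_k$ into mixed Lebesgue spaces, not a bound for an operator on $Z_k$, and the direct $L^2$ estimate loses a logarithm.

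To see the log loss concretely, note that from the form \eqref{eq:gkform} one only gets the shell-wise bound $\norm{\eta_j(\tau-\omega(\xi))g_k}_{L^2}\lesssim 2^{-j/2}\norm{g_k}_{Y_k}$, uniformly (not summably) in $0\leq j\leq k$. For piece~III, $\widehat g(\xi)=\int(1-\chi(\lambda))\tfrac{\lambda+i}{i\lambda}g_k\,d\tau'$, and Cauchy--Schwarz shell by shell gives $\norm{\widehat g}_{L^2_\xi}\lesssim\sum_{j\lesssim k}2^{j/2}\norm{\eta_j(\lambda)g_k}_{L^2}\lesssim k\norm{g_k}_{Y_k}$. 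This is sharp: taking $g_k$ of the form \eqref{eq:gkform} with $h(x,t)=\delta(x)H(t)$ and $\widehat H$ spread flatly over modulations $1\leq|\lambda|\leq 2^{k-1}$ yields $\norm{\widehat g}_{L^2}\sim k\norm{g_k}_{Y_k}$. The same $\sum_{j\leq k}2^{j/2}\cdot 2^{-j/2}\sim k$ loss shows up if one feeds $g_k$ into piece~II and measures the output in $X_k$. So the pieces, measured in $X_k$, genuinely diverge logarithmically on the $Y_k$ input — precisely the divergence $Y_k$ is designed to absorb. The missing idea is that the low-modulation output of $T$ on the $Y_k$ component must itself be measured in $Y_k$: one has to bound $\ft^{-1}[(\tau-\omega(\xi)+i)T(g_k)]$ in $L^1_xL^2_t$ directly, by writing $(\tau-\omega(\xi)+i)T(g_k)$ as $\widehat\psi*_\tau[(\tau'-\omega(\xi)+i)g_k]$ plus a correction with the same structure as $T$ (with $\widehat\Psi(\mu)=(\mu+i)\widehat\psi(\mu)$ in place of $\widehat\psi$), and using $\norm{\ft^{-1}[(\tau'-\omega(\xi)+i)g_k]}_{L^1_xL^2_t}=2^{k/2}\norm{g_k}_{Y_k}$ rather than any $L^2$ bound on $g_k$. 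That is the content of the proof of~\cite[Lemma~5.2]{IK}, and it is the step your proposal replaces by an $L^2$ estimate that does not hold.
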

\begin{proof}
A straightforward computation shows that
\begin{eqnarray*}
&&\ft\left[\psi(t)\cdot
\int_0^tW(t-s)(u(s))ds\right](\xi,\tau)\\
&&=c\int_\R
\ft(u)(\xi,\tau')\frac{\widehat{\psi}(\tau-\tau')-\widehat{\psi}(\tau-\omega(\xi))}{\tau'-\omega(\xi)}d\tau'.
\end{eqnarray*}
For $k\in \Z_+$ let
$f_k(\xi,\tau')=\ft(u)(\xi,\tau')\chi_k(\xi)(\tau'-\omega(\xi)+i)^{-1}$.
For $f_k\in Z_k$ let
\begin{eqnarray*}
T(f_k)(\xi,\tau)=\int_\R
f_k(\xi,\tau')\frac{\widehat{\psi}(\tau-\tau')-\widehat{\psi}(\tau-\omega(\xi))}{\tau'-\omega(\xi)}(\tau'-\omega(\xi)+i)d\tau'.
\end{eqnarray*}
In view of the definitions, it suffices to prove that
\begin{equation}
\norm{T}_{Z_k\rightarrow Z_k}\leq C \mbox{ uniformly in } k\in Z_+,
\end{equation}
which follows from the proof of Lemma 5.2 in \cite{IK}.
\end{proof}

We prove a trilinear estimate in the following proposition which is
an important component for using fixed-point argument.
\begin{proposition}\label{p63}
Let $s\geq 1/2$. Then
\begin{eqnarray}
\norm{\partial_x(\psi(t)^3uvw)}_{N^{s}}&\les&
\norm{u}_{F^{s}}\norm{v}_{F^{\frac{1}{2}}}\norm{w}_{F^{\frac{1}{2}}}\nonumber\\
&&+\norm{u}_{F^{\frac{1}{2}}}\norm{v}_{F^{s}}\norm{w}_{F^{\frac{1}{2}}}+\norm{u}_{F^{\frac{1}{2}}}\norm{v}_{F^{\frac{1}{2}}}\norm{w}_{F^{s}}.
\end{eqnarray}
\end{proposition}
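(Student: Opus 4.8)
The plan is to reduce the trilinear estimate \eqref{eq:p63} to the dyadic estimates proved in Section 5. First I would make the standard Littlewood-Paley decomposition: write $u=\sum_{k_1\geq 0}P_{k_1}u$, $v=\sum_{k_2\geq 0}P_{k_2}v$, $w=\sum_{k_3\geq 0}P_{k_3}w$, and decompose the output frequency into pieces $P_{k_4}$. Since $\psi(t)^3$ is a fixed Schwartz cutoff, by Lemma \ref{l31}(a) (or rather its consequences for the spaces $F^s$, $N^s$ via \eqref{eq:Sk}-type estimates) I can absorb $\psi(t)^3$ harmlessly; the point is to bound
\[
\sum_{k_4}2^{2sk_4}\norm{\eta_{k_4}(\xi)(\tau-\omega(\xi)+i)^{-1}\ft\big(\partial_x(P_{k_1}u\,P_{k_2}v\,P_{k_3}w)\big)}_{Z_{k_4}}^2
\]
in terms of the right-hand side. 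The derivative $\partial_x$ contributes a factor $2^{k_4}$. Writing $f_{k_i}=\eta_{k_i}(\xi)\ft(\cdot)$ for each input, the task is exactly to sum the dyadic bounds
\[
2^{k_4}\norm{\eta_{k_4}(\xi)(\tau-\omega(\xi)+i)^{-1}f_{k_1}*f_{k_2}*f_{k_3}}_{Z_{k_4}}
\]
over all frequency configurations $(k_1,k_2,k_3,k_4)$.

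Second, I would organize the frequency interactions into the standard cases according to the two largest of $k_1,k_2,k_3,k_4$. By symmetry of the three inputs (and the fact that $f_{k_4}$ is dual), the relevant regimes are: $\text{high}\times\text{low}\to\text{high}$ (covered by Propositions \ref{p51} and the two that follow it, split further according to whether the two low frequencies are comparable and whether the other input is high or low); $\text{high}\times\text{high}\to\text{low}$ (the proposition giving a $k_1^4$ loss); and $\text{low}\times\text{low}\to\text{low}$ (the all-bounded case, giving an $O(1)$ bound). In each regime the dyadic proposition gives a gain of the form $2^{(k_{\min}+k_{\text{sub-among-inputs}})/2}$ or $2^{(k_1+k_2)/4}$ over the product $\prod\norm{f_{k_i}}_{Z_{k_i}}$; I would then distribute the Sobolev weights: in the output we need $2^{sk_4}$, and two of the inputs carry only $2^{k_i/2}$ while one carries $2^{sk_i}$. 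Since $s\geq 1/2$, in a $\text{high}\times\text{low}\to\text{high}$ interaction $k_4\sim k_{\max}$ is comparable to the high input frequency, so the $2^{sk_4}$ weight is paid by the $2^{sk_i}$ weight on that high input, and the gain from the dyadic estimate (a positive power of the two low frequencies) is more than enough to beat the $2^{k_i/2}$ weights on the two low inputs and to sum the geometric series in the low frequencies. For the $\text{high}\times\text{high}\to\text{low}$ case the output weight $2^{sk_4}$ is harmless (small frequency), the $k_1^4$ logarithmic loss is absorbed by the exponential gain $2^{-k_{\max}}$ inside the dyadic estimate against the $2^{sk_i}$ weights on the high inputs (which are comparable), and one must check the sum over the near-diagonal $k_1\sim k_2$ converges — it does because $s>0$ strictly in the remaining weight balance, or because one of the high inputs still carries the extra $2^{(s-1/2)k_i}$. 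The $\text{low}\times\text{low}\to\text{low}$ case is trivial since there are only finitely many frequencies involved.

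Third, I would take care of the orthogonality/summation bookkeeping: in each regime the constraint relating $k_{\max}$ to the output frequency allows one to sum over the "free" low frequencies geometrically and then apply Cauchy-Schwarz in $k_4$ (or in the high frequency) against the $\ell^2$ structure defining $F^s$ and $N^s$, converting $\sum_k 2^{2sk}\norm{\cdot}_{Z_k}^2$ norms into the stated product of three $F$-norms. The key technical input that makes everything work — and the place where the main obstacle lies — is ensuring that the \emph{logarithmic} divergences do not reappear: the dyadic estimates in Section 5 already incorporate the $Y_k$ structure and the weights $\beta_{k,j}$ precisely to kill these, so the work here is to verify that no borderline summation over modulation variables $j_1,\dots,j_4$ has been left uncontrolled when one reassembles the full norms. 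Concretely, the hard part will be the $\text{high}\times\text{low}\to\text{high}$ interaction with the largest modulation on the high-frequency factor: this is exactly the configuration exhibited in Proposition \ref{countertrilinear} as producing a $k$-divergence for the plain $X_k$ space, and one must check that invoking Proposition \ref{p51} (which uses $Y_k$) genuinely removes it and that the residual powers of the low frequencies, once paired with the Sobolev weights at $s=1/2$, leave a summable series with no leftover logarithm. Once that case is handled, the remaining cases are strictly easier and follow the same template.
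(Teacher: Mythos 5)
Your proposal follows essentially the same route as the paper: Littlewood--Paley decompose all four frequency variables, fold $\psi$ into the inputs so that the time-support hypotheses of the Section 5 dyadic propositions are met, reduce by symmetry to $0\leq k_1\leq k_2\leq k_3$, split into frequency regimes, invoke the corresponding dyadic proposition, and reassemble the $\ell^2_{k_4}$ norm using the Sobolev weights and Cauchy--Schwarz. This is exactly what the paper does (it partitions into six index sets $A_1,\dots,A_6$ matched one-to-one with Propositions 5.1--5.6).

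There is one omission in your casework: you list only three regimes --- $\mathrm{high}\times\mathrm{low}\to\mathrm{high}$ (your Propositions 5.1--5.3), $\mathrm{high}\times\mathrm{high}\to\mathrm{low}$ (Proposition 5.5), and $\mathrm{low}\times\mathrm{low}\to\mathrm{low}$ (Proposition 5.6) --- but the paper has a fourth nontrivial regime, $A_4=\{k_1\geq k_3-30,\ |k_4-k_3|\leq 5,\ k_3\geq 110\}$, the $\mathrm{high}\times\mathrm{high}\to\mathrm{high}$ interaction with all four dyadic frequencies within $O(1)$ of each other, handled by Proposition 5.4 with the bound $2^{k_4}\|\cdot\|_{Z_{k_4}}\les 2^{k_3}\prod\|f_{k_i}\|_{Z_{k_i}}$. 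Your phrase ``whether the two low frequencies are comparable and whether the other input is high or low'' covers $A_1,A_2,A_3$ but not the fully resonant window $A_4$. It is the easiest case (the $2^{k_3}$ from Proposition 5.4 is absorbed by the two $2^{k_i/2}$ weights on the inputs, which are comparable to $k_3$, and for each $k_3$ there are only $O(1)$ choices of $(k_1,k_2,k_4)$), so this is a small oversight rather than a structural flaw, but it should be added to make the casework exhaustive. The rest of the argument --- in particular your identification of the $\mathrm{high}\times\mathrm{low}\to\mathrm{high}$ interaction with largest modulation on the high-frequency factor as the delicate point that the $Y_k$ structure was designed to tame, and the role of the $\beta_{k,j}$ weights in the other summations --- matches the paper's reasoning.
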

\begin{proof}
For the simplicity of notation, we write $u=\psi(t)u$, $v=\psi(t)v$
and $w=\psi(t)w$. In view of definition, we get
\begin{eqnarray*}
\norm{\partial_x(uvw)}_{N^{s}}^2=\sum_{k_4=0}^{\infty}2^{2sk_4}\norm{\eta_{k_4}(\xi)(\tau-\omega(\xi)+i)^{-1}\ft
(\partial_x(uvw))}_{Z_k}^2.
\end{eqnarray*}
Setting $f_{k_1}=\eta_{k_1}(\xi)\ft(u)(\xi,\tau)$,
$f_{k_2}=\eta_{k_2}(\xi)\ft(v)(\xi,\tau)$, and
$f_{k_3}=\eta_{k_3}(\xi)\ft(w)(\xi,\tau)$, for $k_1, k_2, k_3\in
\Z_+$, then we get
\begin{eqnarray*}
&&2^{k_4}\norm{\eta_{k_4}(\xi)(\tau-\omega(\xi)+i)^{-1}\ft
(uvw)}_{Z_{k_4}}\\
&&\les \sum_{k_1,k_2,k_3\in
\Z_+}2^{k_4}\norm{\eta_{k_4}(\xi)(\tau-\omega(\xi)+i)^{-1}f_{k_1}*f_{k_2}*f_{k_3}}_{Z_{k_4}}.
\end{eqnarray*}
From symmetry it suffices to bound
\begin{eqnarray*}
\sum_{0\leq k_1\leq k_2\leq
k_3}2^{k_4}\norm{\eta_{k_4}(\xi)(\tau-\omega(\xi)+i)^{-1}f_{k_1}*f_{k_2}*f_{k_3}}_{Z_{k_4}}.
\end{eqnarray*}
Dividing the summation into the several parts, we get
\begin{eqnarray}\label{eq:trilinear}
&&\sum_{k_1\leq k_2\leq
k_3}2^{k_4}\norm{\eta_{k_4}(\xi)(\tau-\omega(\xi)+i)^{-1}f_{k_1}*f_{k_2}*f_{k_3}}_{Z_{k_4}}\nonumber\\
&\leq& \sum_{j=1}^6\sum_{(k_1,k_2,k_3,k_4)\in A_j}
2^{k_4}\norm{\eta_{k_4}(\xi)(\tau-\omega(\xi)+i)^{-1}f_{k_1}*f_{k_2}*f_{k_3}}_{Z_{k_4}},
\end{eqnarray}
where we denote
\begin{eqnarray*}
&&A_1=\{0\leq k_1\leq k_2\leq k_3-10, k_3\geq 110, |k_4-k_3|\leq 5, |k_1-k_2|\leq 10 \};\\
&&A_2=\{0\leq k_1\leq k_2\leq k_3-10, k_3\geq 110, |k_4-k_3|\leq 5,
k_1\leq k_2-5 \};\\
&&A_3=\{0\leq k_1\leq k_2\leq k_3, k_2\geq k_3-10, k_3\geq 110,
|k_4-k_3|\leq 5, k_1\leq k_2-10 \};\\
&&A_4=\{0\leq k_1\leq k_2\leq k_3, k_1\geq k_3-30, k_3\geq 110,
|k_4-k_3|\leq 5\};\\
&&A_5=\{0\leq k_1\leq k_2\leq k_3, k_4\leq k_3-10, k_3\geq 110,
|k_2-k_3|\leq 5\};\\
&&A_6=\{0\leq k_1\leq k_2\leq k_3, \max(k_3,k_4)\leq 120\}.
\end{eqnarray*}
Noting that $\ft^{-1}(f_{k_i})(x,t)$ is supported in $\R\times I$
with $|I|\les 1$, we will apply Proposition 5.1-5.6 obtained in the
last section to bound the six terms in \eqref{eq:trilinear}. For
example, for the first term, from Proposition 5.1, we have
\begin{eqnarray*}
&&\normb{2^{sk_4}\sum_{k_i\in
A_1}2^{k_4}\norm{\eta_{k_4}(\xi)(\tau-\omega(\xi)+i)^{-1}f_{k_1}*f_{k_2}*f_{k_3}}_{Z_{k_4}}}_{l_{k_4}^2}\\
&\leq& C\normb{2^{sk_4}\sum_{k_i\in
A_1}2^{(k_1+k_2)/2}\norm{f_{k_1}}_{Z_{k_1}}\norm{f_{k_2}}_{Z_{k_2}}\norm{f_{k_3}}_{Z_{k_3}}}_{l_{k_4}^2}\\
&\leq& \norm{u}_{F^{1/2}}\norm{v}_{F^{1/2}}\norm{w}_{F^s}.
\end{eqnarray*}
For the other terms we can handle them in the similar ways.
Therefore we complete the proof of the proposition.
\end{proof}

Now we prove Theorem \ref{t11}. To begin with, we renormalize the
data a bit via scaling. By the scaling \eqref{eq:scaling}, we see
that if $s\geq 1/2$
\begin{eqnarray*}
&&\norm{\phi_\lambda}_{L^2}=\norm{\phi}_{L^2},\\
&&\norm{\phi_\lambda}_{\dot{H}^s}=\lambda^{-s}\norm{\phi}_{\dot{H}^s}.
\end{eqnarray*}
From the assumption $\norm{\phi}_{L^2}\ll 1$, thus we can first
restrict ourselves to considering \eqref{eq:mBO} with data $\phi$
satisfying
\begin{equation}
\norm{\phi}_{H^s}=r\ll 1.
\end{equation}
This indicates the reason why we assume that $\norm{\phi}_{L^2}\ll
1$.

Define the operator
\begin{eqnarray*}
\Phi_{\phi}(u)=\psi(t)W(t)\phi+\psi(t)\int_0^t
W(t-t')(\partial_x((\psi(t')u)^3)(t'))dt',
\end{eqnarray*}
and we will prove that $\Phi_\phi (\cdot)$ is a contraction  mapping
from
\begin{equation}
{\mathcal{B}}=\{w\in F^s:\ \norm{w}_{F^s}\leq 2cr\}
\end{equation}
into itself. From Lemma \ref{l61}, \ref{l62} and Proposition
\ref{p63} we get if $w\in \mathcal{B}$, then
\begin{eqnarray}
\norm{\Phi_\phi(w)}_{F^s}&\leq&
c\norm{\phi}_{H^s}+\norm{\partial_x(\psi(t)^3w^3(\cdot, t))}_{N^s}\nonumber\\
&\leq& cr+c\norm{w}_{F^s}^3\leq cr+c(2cr)^3\leq 2cr,
\end{eqnarray}
provided that $r$ satisfies $8c^3r^2\leq 1/2$. Similarly, for $w,
h\in \mathcal{B}$
\begin{eqnarray}
\norm{\Phi_\phi(w)-\Phi_\phi(h)}_{F^s}
&\leq& c\normo{ L \partial_x(\psi^3(\tau)(w^3(\tau)-h^3(\tau)))}_{F^s}\nonumber\\
&\leq&c(\norm{w}_{F^s}^2+\norm{h}_{F^s}^2)\norm{w-h}_{F^s}\nonumber\\
&\leq&8c^3r^2\norm{w-h}_{F^s}\leq \frac{1}{2}\norm{w-h}_{F^s}.
\end{eqnarray}
Thus $\Phi_\phi(\cdot)$ is a contraction. Therefore, there exists a
unique $u\in \mathcal{B}$ such that
\begin{eqnarray*}
u=\psi(t)W(t)\phi+\psi(t)\int_0^t
W(t-t')(\partial_x[(\psi(t')u)^3](t'))dt'.
\end{eqnarray*}
Hence $u$ solves the integral equation \eqref{eq:inteq} in the time
interval $[-1,1]$.

Part (c) of Theorem \ref{t11} follows from the scaling
\eqref{eq:scaling}, Lemma \ref{l34} and Proposition \ref{p63}. We
prove now part (b). For the real-valued case, according to Theorem
1.2 in \cite{KK}, it suffices to prove that if $s> 1$ then
\[\partial_x u \in L^4_{t\in [0,T]}L^\infty_x.\] Indeed, this follows
from the fact that $u \in F^s(T)$ and $(4,\infty)$ is an admissible
pair and Lemma \ref{l34}. For the complex-valued case, we have some
weak uniqueness. From the proof we see $u$ is unique in the
following set
\begin{eqnarray}\label{eq:Bu0}
B(u_0)=\left\{u:
\begin{array}{l}
u=\lambda^{1/2}\widetilde{u}(\lambda^2 t,\lambda x) \mbox{ in }
[-T,T] \mbox{ for some
} \lambda=\lambda(\norm{\phi}_{H^s})\gg 1\\
 \mbox{and a solution $\widetilde{u}$ to \eqref{eq:truninteq} satisfying
} \norm{\widetilde{u}}_{F^s}\les \lambda^{-s}
\end{array}
\right\}.
\end{eqnarray}
Therefore, we complete the proof of Theorem \ref{t11}.

\begin{remark}
For the real-valued case, the uniqueness actually holds in $F^s(T)$
by the uniqueness in \cite{KenigT} and Lemma \ref{l34}. From the
proof we see the $L^2$ norm smallness condition is due to the
$high\times low\rightarrow high$ interaction where both low
frequency are around $0$. It is also due to this interaction that
one can not apply the methods as in the second part. This bad
interaction is removed via gauge transformation in the previous
results. On the other hand, one may also remove the smallness
condition by performing a gauge transformation as following
\[v(x,t)=e^{-(i/2)\int_{-\infty}^x (P_{\les 1} u(y,t))^2 dy}P_{+}P_{\gg 1}u(x,t),\]
and using the similar methods in \cite{IK}.

\end{remark}

\section{Short-time Trilinear Estimates}

In this section we devote to prove some dyadic trilinear estimates
in the spaces $F_k,\ N_k$. For $k\in \Z$ and $j\in \Z$ let
\begin{eqnarray}
\wt{D}_{k,j}=\{(\xi,\tau): \xi \in I_k, |\tau-\omega(\xi)|\leq
2^j\}.
\end{eqnarray}
Note that $\dot{D}_{k,j}\subset \wt{D}_{k,j}$ for any $k,j\in \Z$.

\begin{proposition}\label{p71} If $k_4\geq 20$, $|k_3-k_4|\leq 5$, $k_1\leq k_2\leq
k_3-10$, then
\begin{eqnarray}\label{eq:p711}
\norm{R_{k_4}\partial_x(u_{k_1}v_{k_2}w_{k_3})}_{N_{k_4}}\leq C
\min(2^{k_1/2}, \jb{k_2})
\norm{u_{k_1}}_{F_{k_1}}\norm{v_{k_2}}_{F_{k_2}}\norm{w_{k_3}}_{F_{k_3}}.
\end{eqnarray}
\end{proposition}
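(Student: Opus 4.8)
The plan is to run, in the short-time spaces $F_k$ and $N_k$, the same argument that proves the $low\times high\to high$ trilinear estimates of Section~5 (beginning with Proposition~\ref{p51}); the one new feature is that the uniform localization to time intervals of length $2^{-k_4}$ built into $F_k,N_k$ forces a preliminary reduction and, in exchange, upgrades the bound $2^{(k_1+k_2)/2}$ of Proposition~\ref{p51} to $\min(2^{k_1/2},\jb{k_2})$. \emph{Step 1 (reduction to localized building blocks).} By the definition of $N_{k_4}$, and absorbing the factor $|\xi|\sim 2^{k_4}$ produced by $\partial_x$ (the sharp cutoff $R_{k_4}$ being harmless), it suffices to bound, uniformly in $t_{k_4}\in\R$, the quantity $2^{k_4}\norm{(\tau-\omega(\xi)+i2^{k_4})^{-1}\ft[\eta_0(2^{k_4}(t-t_{k_4}))\cdot u_{k_1}v_{k_2}w_{k_3}]}_{B_{k_4}}$. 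Since $k_4\geq 20$ and $k_1\leq k_2\leq k_3-10\leq k_4-5$, the time scale $2^{-k_4}$ is finer than $2^{-\max(k_i,0)}$ for $i=1,2,3$; hence we may insert into each factor a cutoff $\gamma(2^{k_4}(t-t_{k_4}))$ with $\gamma\in\Sch$ equal to $1$ on $\supp\eta_0$, and -- taking $t_{k_i}=t_{k_4}$ in the definition of $F_{k_i}$ and using \eqref{eq:pBk3} -- replace $u_{k_1},v_{k_2},w_{k_3}$ by functions $f_{k_1},f_{k_2},f_{k_3}$ frequency supported in $I_{k_1},I_{k_2},I_{k_3}$, whose inverse Fourier transforms are supported in a time interval of length $\lesssim 2^{-k_4}$, and with $\norm{f_{k_1}}_{B_{k_1}}\lesssim\norm{u_{k_1}}_{F_{k_1}}$ and likewise for $v,w$. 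We are reduced to bounding $2^{k_4}\norm{(\tau-\omega(\xi)+i2^{k_4})^{-1}f_{k_1}*f_{k_2}*f_{k_3}}_{B_{k_4}}$ by $\min(2^{k_1/2},\jb{k_2})\prod_{i}\norm{f_{k_i}}_{B_{k_i}}$.

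\emph{Step 2 (collapsing low modulations).} Because $\ft^{-1}(f_{k_i})$ lives on a time interval of length $2^{-k_4}$, estimate \eqref{eq:pBk2} with $l=k_4$ lets us write $f_{k_i}=\sum_{j_i>k_4}f_{k_i,j_i}+g_{k_i}$, with $f_{k_i,j_i}=\eta_{j_i}(\tau-\omega(\xi))f_{k_i}$ and $\sum_{j_i>k_4}2^{j_i/2}\norm{f_{k_i,j_i}}_{L^2}+2^{k_4/2}\norm{g_{k_i}}_{L^2}\lesssim\norm{f_{k_i}}_{B_{k_i}}$; thus for the purposes of the $B_{k_i}$ norm every modulation may be taken $\geq 2^{k_4}$ for free (with $g_{k_i}$ treated as modulation $\sim 2^{k_4}$). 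On the output side $(\tau-\omega(\xi)+i2^{k_4})^{-1}$ has size $2^{-\max(j_4,k_4)}$ on $\{|\tau-\omega(\xi)|\sim 2^{j_4}\}$, and \eqref{eq:pBk2} again collapses the $j_4\leq k_4$ part of the $B_{k_4}$ norm to its $j_4=k_4$ term, so it suffices to estimate
\[
2^{k_4}\sum_{j_4\geq k_4}2^{-j_4/2}\normo{1_{\dot{D}_{k_4,j_4}}(\xi,\tau)\,\big(f_{k_1}*f_{k_2}*f_{k_3}\big)}_{L^2}.
\]

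\emph{Step 3 (the two bounds).} Decompose each $f_{k_i}$ dyadically in modulation ($j_i\geq k_4$) and insert the bounds of Corollary~\ref{cor42}. Two structural facts are used repeatedly: after Step~2 the largest modulation obeys $j_{max}\geq k_4\sim k_3$, and the resonance function satisfies $|\Omega(\xi_1,\xi_2,\xi_3)|\lesssim 2^{k_2+k_3}$ -- with $|\Omega|\sim 2^{l+k_3}$ when $|\xi_1+\xi_2|\sim 2^l$ and $\xi_1,\xi_2$ have opposite signs, as in the proof of Lemma~\ref{l41}(d) -- so that in the identity $\tau_4-\omega(\xi_4)=\sum_{i=1}^{3}(\tau_i-\omega(\xi_i))+\Omega$ the largest modulation is $\lesssim 2^{k_2+k_3}$ unless two modulations are comparably large (a case Corollary~\ref{cor42}(c),(d) and the $B$-weights dispose of with room to spare). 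The workhorse is Corollary~\ref{cor42}(d), whose spatial gain $2^{-k_{max}}\sim 2^{-k_3}$ cancels the prefactor $2^{k_4}\sim 2^{k_3}$; the modulation sums then telescope except for a residual count of the $\lesssim\jb{k_2}$ ``moderate'' dyadic values in $[2^{k_4},2^{k_2+k_3}]$ of the output (equivalently the largest) modulation, and this produces the factor $\jb{k_2}$. For the bound $2^{k_1/2}$ one uses instead Corollary~\ref{cor42}(b), whose gain $2^{(k_{min}-k_{max})/2}\sim 2^{(k_1-k_3)/2}$ against $2^{k_4}$ together with a genuinely geometric modulation sum leaves $2^{k_1/2}$ -- this is the smoothing effect of Lemma~\ref{l33}(b) for $w_{k_3}$ against a maximal-function bound for one of $u_{k_1},v_{k_2}$ on the short interval, i.e.\ a crude Cauchy--Schwarz over the $O(2^{k_1})$-measure variable $\xi_1$; in the single case where the medium--low factor $f_{k_2}$ carries the largest modulation one localizes in addition $|\xi_1+\xi_2|\sim 2^l$ and exploits $|\partial_{\xi_1}\Omega|\sim 2^{k_3}$ as in Lemma~\ref{l41}(d) (so that the $\xi_1$-integration contributes only $2^{\min(k_1,l)/2}\leq 2^{k_1/2}$). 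Taking the smaller of the two estimates yields \eqref{eq:p711}.

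\emph{Main obstacle.} The delicate point is exactly this last ``K--Z worst case,'' in which the medium--low component $f_{k_2}$ carries the largest modulation: Corollary~\ref{cor42}(b) is by itself too lossy there, so one must use the sharper Corollary~\ref{cor42}(d) (the one obtained from $|\partial_{\xi}\Omega|\sim 2^{k_3}$) together with the Step~2 cap confining the relevant modulations to the window $[2^{k_4},2^{k_2+k_3}]$ -- this is the short-time analogue of the mechanism that removes the logarithmic divergence of Proposition~\ref{countertrilinear}. Propagating the convention ``modulations $\leq 2^{k_4}$ count as $=2^{k_4}$'' consistently through all the sub-cases of the case analysis is where the bookkeeping is most error-prone.
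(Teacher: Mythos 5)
Your Steps 1 and 2 agree with the paper's reduction: the definition of $N_{k_4}$ and \eqref{eq:pBk3} produce exactly the localized quantity you write, and the low-modulation pieces (both input and output) are collapsed to a floor $j_i=k_4$ using the weight $(\tau-\omega(\xi)+i2^{k_4})^{-1}\sim 2^{-k_4}$ for $|\tau-\omega(\xi)|\les 2^{k_4}$ together with \eqref{eq:pBk2}; this is what the paper does to reduce to \eqref{eq:p713}. Your $\jb{k_2}$ mechanism (Corollary~\ref{cor42}(d) plus a $\les\jb{k_2}$ count of moderate output modulations, using the resonance cap $|\Omega|\les 2^{k_2+k_3}$) is also in substance what the paper does in \eqref{eq:p715}.

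The gap is in Step 3's treatment of the ``K--Z worst case'' for the $2^{k_1/2}$ bound, when the medium--low factor $f_{k_2}$ carries the largest modulation. You write that one should localize $|\xi_1+\xi_2|\sim 2^l$ and ``exploit $|\partial_{\xi_1}\Omega|\sim 2^{k_3}$ as in Lemma~\ref{l41}(d) so that the $\xi_1$-integration contributes $2^{\min(k_1,l)/2}$.'' This is not correct. With $\omega'(\xi)=-2|\xi|$, $\partial_{\xi_1}\Omega(\xi_1,\xi_3,\xi_4)=\omega'(\xi_1)-\omega'(\xi_1+\xi_3+\xi_4)=2(|\xi_2|-|\xi_1|)\sim 2^{k_2}$, not $2^{k_3}$; the large derivative $\sim 2^{k_3}$ in Lemma~\ref{l41}(d) is in a variable that involves a \emph{high}-frequency factor (e.g.\ $\xi_3'=\xi_3+\xi_4$), and the Cauchy--Schwarz there costs $2^{l/2}$, which in your regime (after the $\Gamma_4$ constraint forces $|\xi_3+\xi_4|\sim 2^{k_2}$ when $k_1\le k_2-5$) is $\sim 2^{k_2/2}$, not $2^{k_1/2}$. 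So the proposed refinement of part (d) does not produce the factor $2^{k_1/2}$, and for $k_1<0$ the ``$\les 1$'' you would obtain from (d) is not good enough. The paper handles this case without touching (d): it applies Corollary~\ref{cor42}(a) (which yields $2^{(k_1+k_2)/2}2^{(j_{\min}+j_{thd})/2}$) and exploits the resonance constraint $j_{\max}\ge k_4+k_2-20$ together with $j_i\ge k_4$, which forces $j_{\max}+j_{sub}\ge 2k_4+k_2-C$; the resulting gain $2^{-(j_{\max}+j_{sub})/2}$ absorbs both the extra $2^{k_2/2}$ and the residual $2^{k_4}2^{-j_4/2}\les 2^{k_4/2}$, leaving $2^{k_1/2}$. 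If you replace your (d)-based sub-argument by that (a)-plus-resonance argument, the rest of your outline goes through.
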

\begin{proof}
Using the definitions and \eqref{eq:pBk3}, we get that the left-hand
side of \eqref{eq:p711} is dominated by
\begin{eqnarray}\label{eq:p712}
&&C\sup_{t_k\in \R}\norm{(\tau-\omega(\xi)+i2^{k_4})^{-1}\cdot
2^{k_4}1_{I_k}(\xi)\cdot \ft[u_{k_1}\eta_0(2^{k_4-2}(t-t_k))] \nonumber\\
&&\quad *\ft[v_{k_2} \eta_0(2^{k_4-2}(t-t_k))]*\ft[w_{k_3}
\eta_0(2^{k_4-2}(t-t_k))]}_{B_k}.
\end{eqnarray}
It suffices to prove that if $j_i\geq k_4$ and $f_{k_i,j_i}:
\R^2\rightarrow \R_+$ are supported in $\widetilde{D}_{k_i,j_i}$ for
$i=1,2,3$, then
\begin{eqnarray}\label{eq:p713}
&&2^{k_4}\sum_{j_4\geq
k_4}2^{-j_4/2}\norm{1_{\widetilde{D}_{k_4,j_4}}\cdot
(f_{k_1,j_1}*f_{k_2,j_2}*f_{k_3,j_3})}_{L^2}\nonumber\\
&\leq& C\min(2^{k_1/2},
\jb{k_2})2^{j_1/2}\norm{f_{k_1,j_1}}_{L^2}2^{j_2/2}\norm{f_{k_2,j_2}}_{L^2}2^{j_3/2}\norm{f_{k_3,j_3}}_{L^2}.
\end{eqnarray}

We assume first \eqref{eq:p713}. Let $f_{k_1}=\ft[u_{k_1}\cdot
\eta_0(2^{k_4-2}(t-t_k))]$, $f_{k_2}=\ft[v_{k_2}\cdot
\eta_0(2^{k_4-2}(t-t_k))]$ and $f_{k_3}=\ft[w_{k_3}\cdot
\eta_0(2^{k_4-2}(t-t_k))]$. Then from the definition of $B_k$ we get
that \eqref{eq:p712} is dominated by
\begin{eqnarray}\label{eq:p714}
\sup_{t_k\in
\R}2^{k_4}\sum_{j_4=0}^{\infty}2^{j_4/2}\sum_{j_1,j_2,j_3\geq
k_4}\norm{(2^{j_4}+i2^{k_4})^{-1}1_{\dot{D}_{k_4,j_4}}\cdot
f_{k_1,j_1}*f_{k_2,j_2}*f_{k_3,j_3}}_{L^2},
\end{eqnarray}
where $f_{k_i,j_i}=f_{k_i}(\xi,\tau)\eta_{j_i}(\tau-\omega(\xi))$
for $j_i>k_4$ and $f_{k_i,k_4}=f_{k_i}(\xi,\tau)\eta_{\leq
k_4}(\tau-\omega(\xi))$, $i=1,2,3$. For the summation on the terms
$j_4<k_4$ in \eqref{eq:p714}, we get from the fact
$1_{D_{k_4,j_4}}\leq 1_{\wt{D}_{k_4,j_4}}$ that
\begin{eqnarray}
&&\sup_{t_k\in
\R}2^{k_4}\sum_{j_4<k_4}2^{j_4/2}\sum_{j_1,j_2,j_3\geq
k_4}\norm{(2^{j_4}+i2^{k_4})^{-1}1_{\dot{D}_{k_4,j_4}}\cdot
f_{k_1,j_1}*f_{k_2,j_2}*f_{k_3,j_3}}_{L^2}\nonumber\\
&&\les \sup_{t_k\in \R}\sum_{j_1,j_2,j_3\geq
k_4}2^{k_4/2}\norm{1_{\wt{D}_{k_4,k_4}}\cdot
f_{k_1,j_1}*f_{k_2,j_2}*f_{k_3,j_3}}_{L^2}.
\end{eqnarray}

From the fact that $f_{k_i,j_i}$ is supported in $\wt{D}_{k_i,j_i}$
for $i=1,2,3$ and using \eqref{eq:p713}, we get that
\begin{eqnarray*}
&&\sup_{t_k\in \R}\sum_{j_1,j_2,j_3\geq k_4}2^{k_4}\sum_{j_4\geq
k_4}2^{-j_4/2}\norm{1_{\wt{D}_{k_4,j_4}}\cdot
f_{k_1,j_1}*f_{k_2,j_2}*f_{k_3,j_3}}_{L^2}\\
&&\les\sup_{t_k\in \R} \min(2^{k_1/2},
\jb{k_2})\sum_{j_1,j_2,j_3\geq
k_4}2^{j_1/2}\norm{f_{k_1,j_1}}_{L^2}2^{j_2/2}\norm{f_{k_2,j_2}}_{L^2}2^{j_3/2}\norm{f_{k_3,j_3}}_{L^2}.
\end{eqnarray*}
Thus using \eqref{eq:pBk2} and \eqref{eq:pBk3} we obtain
\eqref{eq:p711}, as desired.

To prove \eqref{eq:p713}, we consider first the case $|k_1-k_2|\leq
5$. If $k_2\geq 0$, it follows from Corollary \ref{cor42} (b) and
Remark \ref{rem43} that
\begin{eqnarray}\label{eq:p715}
&&2^{k_4}\sum_{j_4\geq
k_4}2^{-j_4/2}\norm{1_{\widetilde{D}_{k_4,j_4}}\cdot
(f_{k_1,j_1}*f_{k_2,j_2}*f_{k_3,j_3})}_{L^2}\nonumber\\
&\les& 2^{k_4}\sum_{j_4\geq
k_4+k_2}2^{-j_4/2}2^{(j_1+j_2+j_3)/2}2^{-k_4/2}2^{k_1/2}\prod_{i=1}^3\norm{f_{k_i,j_i}}_{L^2}\nonumber\\
&& +2^{k_4}\sum_{k_4\leq j_4\leq
k_4+k_2}2^{-j_4/2}2^{(j_1+j_2+j_3+j_4)/2}2^{-k_4}\prod_{i=1}^3\norm{f_{k_i,j_i}}_{L^2}\nonumber\\
&\les&
(1+k_2)2^{(j_1+j_2+j_3)/2}\prod_{i=1}^3\norm{f_{k_i,j_i}}_{L^2},
\end{eqnarray}
which is \eqref{eq:p713} as desired. If $k_2<0$, then from Corollary
\ref{cor42} (a) and Remark \ref{rem43} we get that
\begin{eqnarray}\label{eq:p716}
&&2^{k_4}\sum_{j_4\geq
k_4}2^{-j_4/2}\norm{1_{\widetilde{D}_{k_4,j_4}}\cdot
(f_{k_1,j_1}*f_{k_2,j_2}*f_{k_3,j_3})}_{L^2}\nonumber\\
&\les&2^{k_4}\sum_{j_4\geq
k_4}2^{-j_4/2}2^{(j_1+j_2+j_3)/2}2^{-j_3/2}2^{k_1/2}2^{k_2/2}\prod_{i=1}^3\norm{f_{k_i,j_i}}_{L^2}\nonumber\\
&\les&
2^{k_1/2}2^{(j_1+j_2+j_3)/2}\prod_{i=1}^3\norm{f_{k_i,j_i}}_{L^2},
\end{eqnarray}
which is \eqref{eq:p713} as desired. We assume now $k_1<k_2-5$. If
$k_2<0$, then arguing as in \eqref{eq:p716} we get \eqref{eq:p713}
as desired. If $k_2>0$, then \eqref{eq:p715} also holds in this
case. On the other hand, by checking the support properties of the
function $f_{k_i,j_i}$, $i=1,2,3$, we get that
$1_{\widetilde{D}_{k_4,j_4}}\cdot
(f_{k_1,j_1}*f_{k_2,j_2}*f_{k_3,j_3})\equiv 0$ unless $j_{max}\geq
k_4+k_2-20$. For the summation on the terms $j_4>k_4+k_2-30$ in
\eqref{eq:p713}, we have
\begin{eqnarray}
&&2^{k_4}\sum_{j_4\geq
k_4+k_2-30}2^{-j_4/2}\norm{1_{\widetilde{D}_{k_4,j_4}}\cdot
(f_{k_1,j_1}*f_{k_2,j_2}*f_{k_3,j_3})}_{L^2}\nonumber\\
&\les&2^{k_4}\sum_{j_4\geq
k_4+k_2-30}2^{-j_4/2}2^{(j_1+j_2+j_3)/2}2^{-j_3/2}2^{k_1/2}2^{k_2/2}\prod_{i=1}^3\norm{f_{k_i,j_i}}_{L^2}\nonumber\\
&\les&
2^{k_1/2}2^{(j_1+j_2+j_3)/2}\prod_{i=1}^3\norm{f_{k_i,j_i}}_{L^2}.
\end{eqnarray}
For the summation on the terms $j_4<k_4+k_2-30$, we have $j_4\leq
j_{med}$. Thus using Corollary \ref{cor42} (a), then we get
\begin{eqnarray}
&&2^{k_4}\sum_{j_4<
k_4+k_2-30}2^{-j_4/2}\norm{1_{\widetilde{D}_{k_4,j_4}}\cdot
(f_{k_1,j_1}*f_{k_2,j_2}*f_{k_3,j_3})}_{L^2}\nonumber\\
&\les&2^{k_4}\sum_{k_4\leq j_4<
k_4+k_2-30}2^{-j_4/2}2^{(j_1+j_2+j_3)/2}2^{-j_{max}/2}2^{k_1/2}2^{k_2/2}\prod_{i=1}^3\norm{f_{k_i,j_i}}_{L^2}\nonumber\\
&\les&
2^{k_1/2}2^{(j_1+j_2+j_3)/2}\prod_{i=1}^3\norm{f_{k_i,j_i}}_{L^2}.
\end{eqnarray}
Therefore, we complete the proof of the proposition.
\end{proof}

\begin{proposition}\label{p72}I
f $k_4\geq 20$, $|k_3-k_4|\leq 5$, $k_3-10\leq k_2\leq
k_3$ and $k_1\leq k_2-20$, then we have
\begin{eqnarray}
\norm{R_{k_4}\partial_x(u_{k_1}v_{k_2}w_{k_3})}_{N_{k_4}}\leq
C\min(2^{k_1/2}, 1)
\norm{u_{k_1}}_{F_{k_1}}\norm{v_{k_2}}_{F_{k_2}}\norm{w_{k_3}}_{F_{k_3}}.
\end{eqnarray}
\end{proposition}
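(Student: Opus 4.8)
The plan is to follow the scheme of the proof of Proposition \ref{p71}. First I would use the definitions of $N_k$, $F_k$, $B_k$, together with \eqref{eq:pBk2}, \eqref{eq:pBk3} and Lemma \ref{l31}(c), to reduce the stated bound to the dyadic estimate: if $j_i\geq k_4$ and $f_{k_i,j_i}:\R^2\to[0,\infty)$ are supported in $\wt{D}_{k_i,j_i}$ for $i=1,2,3$, then
\[
2^{k_4}\sum_{j_4\geq k_4}2^{-j_4/2}\norm{1_{\wt{D}_{k_4,j_4}}\cdot(f_{k_1,j_1}*f_{k_2,j_2}*f_{k_3,j_3})}_{L^2}\leq C\min(2^{k_1/2},1)\prod_{i=1}^{3}2^{j_i/2}\norm{f_{k_i,j_i}}_{L^2}.
\]
This is the exact analogue of \eqref{eq:p713}, and the passage from it to the full statement is identical to the corresponding step in Proposition \ref{p71}: one handles the $\sup_{t_k}$ and the time-localization $\eta_0(2^{k_4}(t-t_k))$ by \eqref{eq:pBk2}--\eqref{eq:pBk3} (the cutoff being a $k$-acceptable factor), and one absorbs the tail $j_4<k_4$ using $1_{\dot{D}_{k_4,j_4}}\leq 1_{\wt{D}_{k_4,k_4}}$.

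Next, the frequency geometry is that of Proposition 5.3: since $k_3-10\leq k_2\leq k_3$, $k_1\leq k_2-20$ and $|k_3-k_4|\leq 5$, on the relevant set one has $|\Omega(\xi_1,\xi_2,\xi_3)|\sim 2^{2k_3}$ (with $\Omega$ as in \eqref{eq:reso}), so the truncated convolution vanishes unless $j_{max}\geq 2k_3-C$. I would split as in that proposition: (Case 1) two of $j_1,\ldots,j_4$ are $\geq 2k_3-C$ and differ by $O(1)$; (Case 2) exactly one of them, the maximum, is $\sim 2k_3$. Within each case I would further split the $j_4$-sum according to whether $j_4\sim j_{max}$ or $j_4\ll j_{max}$, and apply Corollary \ref{cor42} together with Remark \ref{rem43}: part (a) provides the frequency factors $2^{(k_{min}+k_{thd})/2}$, part (c) the gain $2^{-j_{max}/2}$, and part (d) --- available since $k_{min}=k_1\leq k_2-20\leq k_{max}-10$ --- the sharper gain $2^{-k_{max}}$. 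The bound by $2^{k_1/2}$ then comes from the $2^{k_{min}/2}$ factor in Corollary \ref{cor42}; the bound by $1$ comes from the balance $2^{k_4}\cdot 2^{-j_4/2}\les 2^{k_4-k_3}\sim 1$ available on the resonant part $j_4\gtrsim 2k_3$, once the $j_4$-summation has been rendered geometric.

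The step I expect to be the main obstacle is Case 2 with the maximal (resonant) modulation carried by one of the high-frequency inputs $v_{k_2}$ or $w_{k_3}$ while the output modulation $j_4$ is as small as $k_4$: a crude use of Corollary \ref{cor42}(c) there produces a term $2^{k_4}2^{(j_1+j_2)/2}\sum_{k_4\leq j_4\les 2k_3}1$, i.e.\ a spurious logarithmic factor $\sim k_3$, which is not permitted by the stated bound $\min(2^{k_1/2},1)$ --- in contrast with Proposition \ref{p71}, whose analogous bound carries the harmless factor $\jb{k_2}$, the point being that here $k_1\leq k_2-20$ is a strict separation. To carry out the $j_4$-summation without this loss one must exploit the large resonance together with the refined frequency gains of Corollary \ref{cor42}(b),(d), as in the treatment of the corresponding case in Proposition 5.3; I would do this after a further sub-split according to the relative sizes of $k_2,k_3,k_4$, namely: when $k_{thd}\leq k_{max}-5$ one gains the extra factor $2^{(k_{thd}-k_{max})/2}2^{-j_{max}/2}$ (or $2^{(k_{min}-k_{max})/2}2^{-j_{max}/2}$ in the complementary branch) from Corollary \ref{cor42}(b), which also yields the $2^{k_1/2}$ improvement; the remaining near-diagonal case $k_{thd}\geq k_{max}-5$ (all of $k_2,k_3,k_4$ essentially equal, $k_1$ far below) is handled directly by Corollary \ref{cor42}(a),(d), the gain $2^{-k_{max}}$ together with the forced lower bounds $j_i\geq k_4\sim k_3$ on all modulations leaving only a controlled range of $j_4$. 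The remaining bookkeeping --- summing the geometric series in $j_4$ and the input modulations against the $2^{j_i/2}\norm{f_{k_i,j_i}}_{L^2}$, which reconstitute the $B_{k_i}$ norms --- is routine.
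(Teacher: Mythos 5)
Your framework---reducing to the dyadic bound \eqref{eq:p722} via \eqref{eq:pBk2}--\eqref{eq:pBk3}, identifying $|\Omega|\sim 2^{2k_4}$ and hence $j_{max}\geq 2k_4-C$, then invoking Corollary \ref{cor42}---is exactly the paper's. However, the obstacle you flag is not the operative one, and the repair you propose would not close it. The paper applies Corollary \ref{cor42}(a), not (c), and it is precisely the extra factor $2^{-j_{sub}/2}$ hidden in (a)'s $2^{(j_{min}+j_{thd})/2}$ that renders the $j_4$-sum convergent with no logarithm: after dividing by $\prod 2^{j_i/2}\norm{f_{k_i,j_i}}_{L^2}$ and cancelling the $2^{-j_4/2}$ from $N_{k_4}$, one is left with $2^{k_4}2^{(k_1+k_{thd})/2}\sum_{j_4\geq k_4}2^{-(j_{max}+j_{sub})/2}$, and with $j^*\geq j^{**}$ the two largest of $j_1,j_2,j_3$ (both $\geq k_4$) this sum is $\les (j^{**}-k_4+1)2^{-(j^*+j^{**})/2}+2^{-j^*}\les 2^{-3k_4/2}$, using $j^*\geq 2k_4-30$ and $(m+1)2^{-m/2}\les 1$. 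There is no $\log k_3$ loss once (a) is used. Your fallback to Corollary \ref{cor42}(b) is also unavailable in part of the range: (b) demands $k_{thd}\leq k_{max}-5$, which can fail since $k_2,k_3,k_4$ may all lie within $5$ of one another; and (d) carries no modulation decay, so its $j_4$-sum genuinely diverges. The ``near-diagonal'' branch you sketch does not close, and no case split on the relative sizes of $k_2,k_3,k_4$ is needed for the $j_4$-summation.

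The genuine subtlety---which neither your sketch nor the paper's terse last sentence fully addresses---is the constant. The computation above gives $2^{k_4}2^{(k_1+k_{thd})/2}\cdot 2^{-3k_4/2}\sim 2^{k_1/2}$ (since $k_{thd}\sim k_4$), which equals the claimed $\min(2^{k_1/2},1)$ only for $k_1\leq 0$ and exceeds it when $k_1>0$. To improve beyond $2^{k_1/2}$ for $k_1>0$ one must play (a) off against (d), taking their minimum at each fixed $j_4$ and tracking the crossover; done carefully this yields $\min(2^{k_1/2},\jb{k_1})$, which is still summable against $\norm{u_{k_1}}_{F_{k_1}}\les 2^{-k_1/4}\norm{u}_{F^{1/4,1/4}}$ and hence suffices for the $k_1$-summation in Proposition \ref{p93}. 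That balancing step, not the $j_4$-log you worry about and not a sub-split over $k_2,k_3,k_4$, is where the remaining work lies.
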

\begin{proof}
As in the proof of Proposition \ref{p71}, using \eqref{eq:pBk2} and
\eqref{eq:pBk3}, we see that it suffices to prove that if
$j_1,j_2,j_3\geq k_4$, and $f_{k_i,j_i}: \R^2\rightarrow \R_+$ are
supported in $\widetilde{D}_{k_i,j_i}$, $i=1,2,3$, then
\begin{eqnarray}\label{eq:p722}
&&2^{k_4}\sum_{j_4\geq
k_4}2^{-j_4/2}\norm{1_{\widetilde{D}_{k_4,j_4}}\cdot
(f_{k_1,j_1}*f_{k_2,j_2}*f_{k_3,j_3})}_{L^2}\nonumber\\
&&\leq C\min(2^{k_1/2},
1)\cdot2^{j_1/2}\norm{f_{k_1,j_1}}_{L^2}\cdot2^{j_2/2}\norm{f_{k_2,j_2}}_{L^2}\cdot2^{j_3/2}\norm{f_{k_3,j_3}}_{L^2}.\qquad
\end{eqnarray}
Since in the area $\{|\xi_i| \in \widetilde{I}_{k_i},i=1,2,3\}\cap
\{|\xi_1+\xi_2+\xi_3|\in I_{k_4}\}$
\[|\Omega(\xi_1,\xi_2,\xi_3)|\sim 2^{2k_4},\]
then by checking the support properties, we get
$1_{\widetilde{D}_{k_4,j_4}}\cdot
(f_{k_1,j_1}*f_{k_2,j_2}*f_{k_3,j_3})\equiv 0$ unless $j_{max}\geq
2k_4-30$. It follows from Corollary \ref{cor42} (a) that the
left-hand side of \eqref{eq:p722} is bounded by
\begin{eqnarray}
2^{k_4}\sum_{j_4\geq
k_4}2^{-j_4/2}2^{(j_1+j_2+j_3+j_4)/2}2^{-j_{max}/2}2^{-j_{sub}/2}2^{k_1/2}2^{k_2/2}\prod_{i=1}^3\norm{f_{k_i,j_i}}_{L^2}.
\end{eqnarray}
Then we get the bound \eqref{eq:p722} by considering either
$j_4=j_{max}$ or $j_4\ne j_{max}$.
\end{proof}

\begin{proposition}
If $k_4\geq 20$, $|k_3-k_4|\leq 5$, $k_3-10\leq k_2\leq k_3$ and
$k_2-30\leq k_1\leq k_2$, then we have
\begin{eqnarray}
\norm{R_{k_4}\partial_x(u_{k_1}v_{k_2}w_{k_3})}_{N_{k_4}}\leq C
2^{k_3/2}
\norm{u_{k_1}}_{F_{k_1}}\norm{v_{k_2}}_{F_{k_2}}\norm{w_{k_3}}_{F_{k_3}}.
\end{eqnarray}
\end{proposition}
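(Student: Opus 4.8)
The plan is to follow the scheme of the proofs of Propositions \ref{p71} and \ref{p72}. This is the case in which all four frequencies $2^{k_1},2^{k_2},2^{k_3},2^{k_4}$ are comparable; in contrast with the $high\times low$ interactions treated above, no gain from the resonance function $\Omega$ is available here, and it will turn out that the stated loss $2^{k_3/2}$ is precisely what a crude space-time $L^6$ Strichartz bound yields, so the argument is short.

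First I would use the definitions of $N_{k_4}$ and $F_{k_i}$ together with \eqref{eq:pBk2} and \eqref{eq:pBk3}, exactly as in the proofs of Propositions \ref{p71} and \ref{p72}, to reduce to the following dyadic inequality: if $j_1,j_2,j_3\geq k_4$ and $f_{k_i,j_i}:\R^2\to\R_+$ is supported in $\widetilde{D}_{k_i,j_i}$ for $i=1,2,3$, then
\[
2^{k_4}\sum_{j_4\geq k_4}2^{-j_4/2}\norm{1_{\widetilde{D}_{k_4,j_4}}\cdot(f_{k_1,j_1}*f_{k_2,j_2}*f_{k_3,j_3})}_{L^2}\leq C2^{k_3/2}\prod_{i=1}^3 2^{j_i/2}\norm{f_{k_i,j_i}}_{L^2},
\]
the contribution of the modulations $j_4<k_4$ being absorbed into the $j_4=k_4$ term exactly as in the proof of Proposition \ref{p71}.

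To prove this inequality I would simply discard the characteristic function, $1_{\widetilde{D}_{k_4,j_4}}\leq 1$, and sum the geometric series $\sum_{j_4\geq k_4}2^{-j_4/2}\les 2^{-k_4/2}$; this already bounds the left-hand side by $C\,2^{k_4/2}\norm{f_{k_1,j_1}*f_{k_2,j_2}*f_{k_3,j_3}}_{L^2}$. Writing the convolution as a product on the physical side and applying H\"older's inequality with exponent $6$ in each factor gives $\norm{f_{k_1,j_1}*f_{k_2,j_2}*f_{k_3,j_3}}_{L^2}\leq\prod_{i=1}^3\norm{\ft^{-1}(f_{k_i,j_i})}_{L^6_{x,t}}$, and the space-time Strichartz estimate at the admissible pair $(6,6)$ (Lemma \ref{l33}(a)), applied after freezing the modulation variable as in the proof of Lemma \ref{l41}(c), yields $\norm{\ft^{-1}(f_{k_i,j_i})}_{L^6_{x,t}}\les 2^{j_i/2}\norm{f_{k_i,j_i}}_{L^2}$ with a constant independent of $k_i$. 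Since $k_4\leq k_3+5$ by hypothesis, combining these estimates produces the claimed bound $C\,2^{k_3/2}\prod_{i=1}^3 2^{j_i/2}\norm{f_{k_i,j_i}}_{L^2}$.

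I do not anticipate a genuine obstacle: this is the simplest of the high-frequency interactions. The only points requiring care are the reduction to the dyadic inequality, which is identical to the one in Propositions \ref{p71}--\ref{p72} (so it involves the time localizations and the multiplier bounds \eqref{eq:pBk2}--\eqref{eq:pBk3}), and the observation that the $(6,6)$ Strichartz estimate costs no derivatives: three copies of it, together with the single $2^{k_4}$ from $\partial_x$ and the $2^{-k_4/2}$ gained from the $N_{k_4}$ weight, produce exactly $2^{k_3/2}$ and nothing sharper, which is consistent with the fact that $|\Omega(\xi_1,\xi_2,\xi_3)|$ has no lower bound in this configuration.
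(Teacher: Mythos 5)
Your proposal is correct and takes essentially the same route as the paper. The paper reduces to the identical dyadic inequality and then invokes Corollary \ref{cor42}(c), which is precisely the $L^6$ Strichartz bound packaged (via Lemma \ref{l41}(c)) with a slight $2^{-j_{\max}/2}$ gain; you instead discard $1_{\widetilde D_{k_4,j_4}}$, sum the geometric series in $j_4$, and apply the raw $L^6$ estimate directly, which is a marginally cruder but still sufficient version of the same argument.
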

\begin{proof}
As in the proof of Proposition \ref{p71}, using \eqref{eq:pBk2} and
\eqref{eq:pBk3}, it suffices to prove that if $j_1,j_2,j_3\geq k_4$,
and $f_{k_i,j_i}: \R^2\rightarrow \R_+$ are supported in
$\widetilde{D}_{k_i,j_i}$, $i=1,2,3$, then
\begin{eqnarray*}
&&2^{k_4}\sum_{j_4\geq
k_4}2^{-j_4/2}\norm{1_{\widetilde{D}_{k_4,j_4}}\cdot
(f_{k_1,j_1}*f_{k_2,j_2}*f_{k_3,j_3})}_{L^2}\\
&&\leq
C2^{k_4/2}\cdot2^{j_1/2}\norm{f_{k_1,j_1}}_{L^2}\cdot2^{j_2/2}\norm{f_{k_2,j_2}}_{L^2}\cdot2^{j_3/2}\norm{f_{k_3,j_3}}_{L^2},
\end{eqnarray*}
which follows immediately from Corollary \ref{cor42} (c).
\end{proof}

\begin{proposition}\label{p74}
If $k_3\geq 20$, $|k_3-k_2|\leq 5$, $k_2-10\leq k_1\leq k_2$ and $
k_4\leq k_1-30$, then we have
\begin{eqnarray}\label{eq:p741}
\norm{R_{k_4}\partial_x(u_{k_1}v_{k_2}w_{k_3})}_{N_{k_4}}\leq
C\min(2^{k_4},1 )|k_2|
\norm{u_{k_1}}_{F_{k_1}}\norm{v_{k_2}}_{F_{k_2}}\norm{w_{k_3}}_{F_{k_3}}.
\end{eqnarray}
\end{proposition}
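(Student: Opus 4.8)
The plan is to follow the scheme of Propositions \ref{p71}--\ref{p73}, but with one new point forced by the geometry here: the output frequency $2^{k_4}$ is much smaller than the comparable input frequencies $2^{k_1}\sim 2^{k_2}\sim 2^{k_3}$, so the output time window of length $2^{-k_{4,+}}$ built into the $N_{k_4}$ norm is \emph{coarser} than the windows $2^{-k_i}$ used in the $F_{k_i}$ norms. Since $R_{k_4}$ and $\partial_x$ commute with multiplication by a function of $t$ alone, I would first decompose $\eta_0(2^{k_{4,+}}(t-t_{k_4}))=\sum_{|m|\les 2^{k_3-k_{4,+}}}\gamma_m(t)$ into $\sim 2^{k_3-k_{4,+}}$ smooth bumps at the finest input scale $2^{-k_3}$, distribute each $\gamma_m$ among the three factors, and write $(u_{k_1}v_{k_2}w_{k_3})\eta_0(2^{k_{4,+}}(t-t_{k_4}))=\sum_m h_m$ with $h_m$ supported in an interval of length $\sim 2^{-k_3}$ and, by \eqref{eq:pBk3}, controlled by $\norm{u_{k_1}}_{F_{k_1}}\norm{v_{k_2}}_{F_{k_2}}\norm{w_{k_3}}_{F_{k_3}}$. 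Since the $h_m$ have finitely overlapping time supports, an almost-orthogonality argument in the spirit of \cite{IKT} lets one reassemble the sum over $m$ at the price of an $\ell^2$-loss $2^{(k_3-k_{4,+})/2}$. Together with \eqref{eq:pBk2}--\eqref{eq:pBk3}, this reduces the proposition to the dyadic bound: for $f_{k_i,j_i}\geq 0$ supported in $\widetilde{D}_{k_i,j_i}$ with $j_i\geq k_3$,
\begin{eqnarray*}
&&2^{k_4}\sum_{j_4\geq k_{4,+}}2^{-j_4/2}\norm{1_{\widetilde{D}_{k_4,j_4}}(f_{k_1,j_1}*f_{k_2,j_2}*f_{k_3,j_3})}_{L^2}\\
&&\les 2^{-(k_3-k_{4,+})/2}\min(2^{k_4},1)\jb{k_2}\prod_{i=1}^3 2^{j_i/2}\norm{f_{k_i,j_i}}_{L^2}.
\end{eqnarray*}

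The structural input for this dyadic estimate is the resonance identity. In the integration region $\xi_i\in I_{k_i}$ ($i=1,2,3$) with $\xi_1+\xi_2+\xi_3\in I_{k_4}$, exactly two of $\xi_1,\xi_2,\xi_3$ are of one sign and the third of the opposite sign; since $\omega(\xi)=-\xi|\xi|$, $|\xi_i|\sim 2^{k_2}$ and $k_4\leq k_2-30$, the cross term $2\xi_i\xi_j\sim 2^{2k_2}$ dominates the $O(2^{k_2+k_4})$ corrections, so $|\Omega(\xi_1,\xi_2,\xi_3)|\sim 2^{2k_2}$. Hence $j_{max}:=\max(j_1,j_2,j_3,j_4)\geq 2k_2-C$, and since also $|\Omega|\les 2^{2k_2}$, the standard K-Z argument (see \cite{Taokz}) gives $|j_{max}-j_{sub}|\leq C$ whenever $j_{max}\geq 2k_2+C$. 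I would then argue by cases. If $j_{max}\leq 2k_2+C$, the convolution vanishes unless $j_4\les 2k_2$, so the $j_4$-summation has $\les\jb{k_2}$ terms, and Corollary \ref{cor42}(d), which supplies the frequency gain $2^{-k_{max}}=2^{-k_2}$, closes the estimate — this is precisely where the logarithm $\jb{k_2}$ of \eqref{eq:p741} appears. If $j_{max}\geq 2k_2+C$ and $j_4=j_{max}$, then once $j_1,j_2,j_3$ are fixed $j_4$ is confined to an $O(1)$ set and Corollary \ref{cor42}(c) or (d) suffices. If $j_{max}\geq 2k_2+C$ is attained at an input, then two of $j_1,j_2,j_3$ are $\sim j_{max}$ and Corollary \ref{cor42}(a) supplies the double modulation gain $2^{(j_{min}+j_{thd})/2}$; here the $j_4$-sum may cost a bare factor $\les j_{max}$, but this is harmless since it multiplies $2^{-j_{max}}$. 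In all cases the effective gain is at least $2^{-k_2}$, which dominates the partition loss $2^{(k_3-k_{4,+})/2}\leq 2^{k_3/2}$; combined with $k_4\leq k_2-30$ — so that $2^{k_4-k_{4,+}/2}$ equals $\min(2^{k_4},1)$ for $k_4\leq 0$ and only $2^{k_4/2}$ for $k_4>0$, the latter being absorbed by the spare factor $2^{-k_2+k_3/2}\les 2^{-k_2/2}$ — the bound follows.

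The hard part is getting the almost-orthogonality reduction right in the low-output-modulation regime: when $j_4<k_3$ the modulation projection is coarser than a single window $W_m$ and mixes $\sim 2^{k_3-j_4}$ of them, so one cannot simply pull the $\ell^2$-over-$m$ sum outside the norm. This is circumvented by decomposing each of $u_{k_1}\gamma_m,v_{k_2}\gamma_m,w_{k_3}\gamma_m$ into its own modulation \emph{before} taking the convolution, so that $\ft[\sum_m h_m]$ is rewritten as a sum over $(m,j_1,j_2,j_3)$ of convolutions of pieces still essentially supported near $W_m$; at each fixed output modulation these are $L^2$-almost-orthogonal in $m$, and the reduction then goes through, after which the dyadic estimate above applies. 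Verifying this orthogonality cleanly, and checking that the resonance gains in the various cases are large enough to absorb simultaneously the $\ell^2$-loss $2^{(k_3-k_{4,+})/2}$ and the $k_4>0$ discrepancy between $2^{k_4-k_{4,+}/2}$ and $\min(2^{k_4},1)$, is the technical core; once these are in place the remaining steps are the routine summations already met in Propositions \ref{p71}--\ref{p73}.
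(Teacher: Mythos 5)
Your overall architecture is the right one — partition the coarse $2^{-k_{4,+}}$ time window into $\sim 2^{k_2-k_{4,+}}$ bumps at the scale $2^{-k_2}\sim 2^{-k_3}$ of the inputs, distribute, and reduce to a single dyadic bound — and this is indeed how the paper proceeds. But there is a genuine gap in the reassembly step. You claim that since the $h_m$ have finitely overlapping time supports, one recovers the $m$-sum at an $\ell^2$ cost $2^{(k_3-k_{4,+})/2}$. This does not follow: the $B_{k_4}$-norm is of $\ell^1$-in-$j$ type, $\norm{f}_{B_{k_4}}=\sum_j 2^{j/2}\norm{\eta_j(\tau-\omega(\xi))f}_{L^2}$, so there is no inner product structure in which to invoke almost-orthogonality directly, and the relevant output modulations $j_4$ run down to $k_{4,+}\ll k_3$, where $\eta_{j_4}(\tau-\omega(\xi))$ spreads each piece over a time interval $\sim 2^{-j_4}\gg 2^{-k_3}$; at these $j_4$ the projected pieces overlap $\sim 2^{k_3-j_4}$ of their neighbours and are not $L^2$-nearly-orthogonal in $m$. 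Your proposed fix (decompose into modulations first, then convolve) does not cure this: the obstruction lives in the \emph{output} modulation, and the convolution of high-modulation inputs still has substantial content at every $j_4\geq k_{4,+}$.

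The paper sidesteps this entirely by taking the cheap route: the triangle inequality in $B_{k_4}$ gives the full $\ell^1$ loss $2^{k_2-k_{4,+}}$, and the reduction then becomes (cf.~\eqref{eq:p742})
\begin{eqnarray*}
2^{k_4}\,2^{k_2-k_{4,+}}\sum_{j_4\geq k_{4,+}}2^{-j_4/2}\norm{1_{\widetilde{D}_{k_4,j_4}}\,(f_{k_1,j_1}*f_{k_2,j_2}*f_{k_3,j_3})}_{L^2}\les \min(2^{k_4},1)\,|k_2|\prod_{i=1}^3 2^{j_i/2}\norm{f_{k_i,j_i}}_{L^2},
\end{eqnarray*}
which is stronger than the one you target (gain $2^{-(k_2-k_{4,+})}$ rather than $2^{-(k_2-k_{4,+})/2}$). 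It still closes, precisely because Corollary~\ref{cor42}(d) supplies a full $2^{-k_{max}}\sim 2^{-k_2}$; in the range $j_4\les 2k_2$ (forced by $|\Omega|\sim 2^{2k_2}$) one gets $2^{k_4}2^{k_2-k_{4,+}}\cdot |k_2|\cdot 2^{-k_2}=\min(2^{k_4},1)|k_2|$ on the nose, and the remaining ranges are handled by (a) and (b) of that corollary as in Propositions~\ref{p72}--\ref{p73}. So you do not need, and should not rely on, the square-root loss: replace the almost-orthogonality step with the plain triangle inequality, strengthen the target of your dyadic estimate accordingly, and the rest of your case analysis (resonance identity $|\Omega|\sim 2^{2k_2}$, splitting on whether $j_4$ attains $j_{\max}$, and the $\les |k_2|$ count of $j_4$'s in the borderline regime) goes through.
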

\begin{proof}
Let $\gamma:\R\rightarrow [0,1]$ denote a smooth function supported
in $[-1,1]$ with the property that
\[\sum_{n\in \Z}\gamma^3(x-n)\equiv 1, \quad x\in \R.\]
Using the definitions, the left-hand side of \eqref{eq:p741} is
dominated by
\begin{eqnarray*}
&&C\sup_{t_k\in \R}\normb{(\tau-\omega(\xi)+i2^{{k_4}_+})^{-1}\cdot
2^{k_4}1_{I_{k_4}}(\xi)\cdot \sum_{|m|\leq C2^{k_2-{k_4}_+}}\\
&&\quad \ft[u_{k_1}\eta_0(2^{{k_4}_+}(t-t_k))\gamma(2^{{k_2}}(t-t_k)-m)]*\\
&&\quad \ft[v_{k_2}\eta_0(2^{{k_4}_+}(t-t_k))\gamma(2^{{k_2}}(t-t_k)-m)]*\\
&&\quad
\ft[w_{k_3}\eta_0(2^{{k_4}_+}(t-t_k))\gamma(2^{{k_2}}(t-t_k)-m)]}_{B_k}.
\end{eqnarray*}
In view of the definitions, \eqref{eq:pBk2} and \eqref{eq:pBk3}, it
suffices to prove that if $j_1,j_2,j_3\geq k_2$, and
$f_{k_i,j_i}:\R^3 \ra \R_+$ are supported in
$\widetilde{D}_{k_i,j_i}$, $i=1,2,3$, then
\begin{eqnarray}\label{eq:p742}
&&2^{k_4}2^{k_2-{k_4}_+}\sum_{j_4\geq
{k_4}_+}2^{-j_4/2}\norm{1_{\widetilde{D}_{k_4,j_4}}\cdot
(f_{k_1,j_1}*f_{k_2,j_2}*f_{k_3,j_3})}_{L^2}\nonumber\\
&&\leq C\min(2^{k_4},1
)|k_2|\cdot2^{j_1/2}\norm{f_{k_1,j_1}}_{L^2}\cdot2^{j_2/2}\norm{f_{k_2,j_2}}_{L^2}\cdot2^{j_3/2}\norm{f_{k_3,j_3}}_{L^2}.\qquad
\end{eqnarray}
From the same argument as in Proposition \ref{p72}, we get
$j_{max}\geq 2k_2-30$. Then \eqref{eq:p742} follows from Corollary
\ref{cor42}.
\end{proof}

\begin{proposition}
If $k_3\geq 20$, $|k_3-k_2|\leq 5$, and $k_1, k_4\leq k_2-10$, then
\begin{eqnarray}
\norm{R_{k_4}\partial_x(u_{k_1}v_{k_2}w_{k_3})}_{N_{k_4}}\leq
C\min(2^{k_1/2},1 )|k_2|
\norm{u_{k_1}}_{F_{k_1}}\norm{v_{k_2}}_{F_{k_2}}\norm{w_{k_3}}_{F_{k_3}}.
\end{eqnarray}
\end{proposition}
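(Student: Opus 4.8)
This is the high-high-to-low interaction, in which the two large inputs $v_{k_2},w_{k_3}$ must nearly cancel. The plan is to follow the scheme of Propositions \ref{p71}--\ref{p74}. First I would reduce to a dyadic $L^2$ bound: choosing a smooth function $\gamma$ supported in $[-1,1]$ with $\sum_{n\in\Z}\gamma^3(\cdot-n)\equiv 1$ and, as in the proof of Proposition \ref{p74}, distributing one factor $\gamma(2^{k_2}(t-t_k)-m)$ to each of $u_{k_1},v_{k_2},w_{k_3}$ so that all three inputs are localized to the time scale $2^{-k_2}$ (the finest relevant scale), the left side of the asserted inequality is dominated by a single sum over $|m|\les 2^{k_2-(k_4)_+}$ of $B_{k_4}$-norms of time-localized triple convolutions. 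Using \eqref{eq:pBk2} and \eqref{eq:pBk3} it then suffices to prove, for nonnegative $f_{k_i,j_i}$ supported in $\wt{D}_{k_i,j_i}$ with $j_1,j_2,j_3\ges k_2$, that
\[2^{k_4}2^{k_2-(k_4)_+}\sum_{j_4\ges(k_4)_+}2^{-j_4/2}\norm{1_{\wt{D}_{k_4,j_4}}(f_{k_1,j_1}*f_{k_2,j_2}*f_{k_3,j_3})}_{L^2}\les\min(2^{k_1/2},1)\,|k_2|\prod_{i=1}^3 2^{j_i/2}\norm{f_{k_i,j_i}}_{L^2}.\]
Since $|\xi_1+\xi_2+\xi_3|\sim 2^{k_4}\ll 2^{k_2}\sim|\xi_2|\sim|\xi_3|$ on the support of the convolution, we must have $\xi_2\xi_3<0$; writing $\xi_4:=\xi_1+\xi_2+\xi_3$, this gives $|\xi_2+\xi_3|=|\xi_1-\xi_4|\les 2^{\max(k_1,k_4)}$.

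The key point is the resonance identity. From $\omega(\xi)=-\xi|\xi|$ (see \eqref{eq:dr}) and $\xi_2\xi_3<0$ one computes $\omega(\xi_2)+\omega(\xi_3)=(\xi_3-\xi_2)(\xi_2+\xi_3)$, hence $|\omega(\xi_2)+\omega(\xi_3)|\sim 2^{k_2}|\xi_2+\xi_3|$, whereas $|\omega(\xi_1)-\omega(\xi_4)|\les 2^{\max(k_1,k_4)}|\xi_1-\xi_4|$; since $k_2\ges\max(k_1,k_4)+10$, the first term dominates and
\[|\Omega(\xi_1,\xi_2,\xi_3)|\sim 2^{k_2}|\xi_1-\xi_4|.\]
I would then localize $|\xi_1-\xi_4|\sim 2^l$, $l\in\Z$ with $2^l\les 2^{\max(k_1,k_4)}$; on each such piece the identity forces $j_{max}\ges k_2+l-O(1)$ whenever $k_2+l\ges 1$. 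On each piece I would apply Corollary \ref{cor42}, via Remark \ref{rem43}: part (a) or (d) supplies the $L^2$ bound with frequency gain $2^{(k_1+k_4)/2}$ or $2^{-k_2/2}$ according to the configuration of the modulations (when $j_4=j_{max}$ is large, a $\tau$-support count gives $j_{sub}\sim j_{max}$, which makes part (a) effective), while part (c) covers the remaining small-$j_4$ configurations. One then sums the geometric series in $j_4$ and in $l$, using the constraint $l\les j_{max}-k_2+O(1)$ and a measure gain $2^{l/2}$ from the thin strip $|\xi_1-\xi_4|\sim 2^l$, produced exactly as in the proof of Lemma \ref{l41}(d) by changing variables so that $\xi_1-\xi_4$ becomes an integration variable and using $|\partial_{\xi_2}\Omega|\sim 2^{k_2}$. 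Carrying the prefactor $2^{k_4}2^{k_2-(k_4)_+}$ through, each sub-case yields the factor $\min(2^{k_1/2},1)$, and the logarithm $|k_2|$ is the only loss, coming from the borderline modulation configuration.

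The main obstacle is precisely this bookkeeping in the regime $|k_1-k_4|=O(1)$ with $|\xi_1-\xi_4|$ much smaller than $2^{k_1}$, where the resonance is weak and the measure gain $\sum_l 2^{l/2}$ must be balanced against the $j_4$-summation and the number $2^{k_2-(k_4)_+}$ of time blocks; here the change-of-variables computation of Lemma \ref{l41}(d) (case $\xi_1\xi_2<0$) is essential. I would also separate the routine dichotomies $k_4\les 0$ versus $k_4>0$, which changes $(k_4)_+$, and $k_1\les 0$ versus $k_1>0$, which produces $\min(2^{k_1/2},1)$ rather than just $2^{k_1/2}$, and double-check the endpoint $k_1=k_2-10$, where the low input is only marginally separated from the high ones.
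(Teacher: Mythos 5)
Your reduction to the dyadic $L^2$ estimate is exactly the paper's: the same $\gamma$ time-localization at scale $2^{-k_2}$, the same prefactor $2^{k_4}2^{k_2-(k_4)_+}$, and the same reduced claim with $j_1,j_2,j_3\ges k_2$. Where you diverge is in the main estimate. The paper's own proof is extremely terse, citing only Corollary \ref{cor42}(a) for $k_1\leq 0$ and (b) otherwise; you instead re-derive a refined bound by localizing $|\xi_1-\xi_4|\sim 2^l$, exploiting the resonance identity $|\Omega|\sim 2^{k_2}|\xi_1-\xi_4|$, changing variables with $|\partial_{\xi_2}\Omega|\sim 2^{k_2}$, and summing in $l$ under the constraint $l\les j_{\max}-k_2$. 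That is precisely the mechanism packaged in Lemma \ref{l41}(d), so you could spare yourself the re-derivation by citing Corollary \ref{cor42}(d) (with the caveat that its stated factor $2^{-k_{\max}/2}$ appears to be a misprint for $2^{-k_{\max}}$, which is what the proof of Lemma \ref{l41}(d) actually produces). Your instinct is nevertheless sound and in fact more cautious than the paper's text: in the regime you correctly flag as the ``main obstacle'' -- $k_1,k_4$ both positive and comparable, where the resonance can degenerate -- part (b) alone gives only $\les 2^{\min(k_1,k_4)/2}|k_2|\prod 2^{j_i/2}\norm{f_{k_i,j_i}}_{L^2}$, which exceeds the claimed bound; the $l$-localization (equivalently, part (d)) is what closes this case. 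Two cautions on the write-up. First, you cannot simultaneously apply Corollary \ref{cor42} on each $l$-piece \emph{and} claim a separate measure gain $2^{l/2}$ from the thin strip -- that double-counts; either invoke (d) directly (no $l$-splitting needed) or re-derive the $l$-localized bound from scratch as in the proof of Lemma \ref{l41}(d). Second, the $|k_2|$ loss is not from a geometric series in $j_4$: after the modulation factors cancel, the $j_4$-sum is $O(k_2)$ terms of comparable size, which is exactly why the paper truncates at $j_4\leq 10k_2$ (handling the tail separately via (a)).
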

\begin{proof}
As in the proof of Proposition \ref{p74}, it suffices to prove that
if $j_1,j_2,j_3\geq k_2$, and $f_{k_i,j_i}:\R^3 \ra \R_+$ are
supported in $\widetilde{D}_{k_i,j_i}$, $i=1,2,3$, then
\begin{eqnarray}\label{eq:p752}
&&2^{k_4}2^{k_2-{k_4}_+}\sum_{j_4\geq
{k_4}_+}2^{-j_4/2}\norm{1_{\widetilde{D}_{k_4,j_4}}\cdot
(f_{k_1,j_1}*f_{k_2,j_2}*f_{k_3,j_3})}_{L^2}\nonumber\\
&&\leq C\min(2^{k_1/2},1
)|k_2|\cdot2^{j_1/2}\norm{f_{k_1,j_1}}_{L^2}\cdot2^{j_2/2}\norm{f_{k_2,j_2}}_{L^2}\cdot2^{j_3/2}\norm{f_{k_3,j_3}}_{L^2}.\qquad
\quad
\end{eqnarray}
In proving \eqref{eq:p752} we may assume $j_4\leq 10 k_2$ in the
summation of \eqref{eq:p752}, otherwise we use Corollary \ref{cor42}
(a). Using Corollary \ref{cor42} (a) for $k_1\leq 0$, else using
Corollary \ref{cor42} (b), then we get
\begin{eqnarray*}
&&2^{k_4}2^{k_2-{k_4}_+}\sum_{j_4\geq
{k_4}_+}2^{-j_4/2}\norm{1_{\widetilde{D}_{k_4,j_4}}\cdot
(f_{k_1,j_1}*f_{k_2,j_2}*f_{k_3,j_3})}_{L^2}\\
&&\leq C\min(2^{k_1/2},1
)|k_2|2^{j_1/2}\norm{f_{k_1,j_1}}_{L^2}2^{j_2/2}\norm{f_{k_2,j_2}}_{L^2}2^{j_3/2}\norm{f_{k_3,j_3}}_{L^2}.
\end{eqnarray*}
Therefore, we complete the proof of the proposition.
\end{proof}

\begin{proposition}\label{p76}
If $k_1,k_2,k_3,k_4\leq 200$, then
\begin{eqnarray}
\norm{R_{k_4}\partial_x(u_{k_1}v_{k_2}w_{k_3})}_{N_{k_4}}\leq
C2^{k_{min}/2}2^{k_{thd}/2}
\norm{u_{k_1}}_{F_{k_1}}\norm{v_{k_2}}_{F_{k_2}}\norm{w_{k_3}}_{F_{k_3}}.
\end{eqnarray}
\end{proposition}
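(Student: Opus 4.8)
The argument is parallel to the proofs of Propositions \ref{p71}--\ref{p74}; since here the output frequency $2^{k_4}$ (and all the input frequencies) are bounded above by $2^{200}$, the resonance gains play no role, and everything reduces to the basic $L^2$ convolution bound of Corollary \ref{cor42} (a). Because $k_1,k_2,k_3,k_4\leq 200$, each ${k_i}_+$ is at most $200$, so the time--localization scale $2^{-{k_4}_+}$ and the regularizing factor $(\tau-\omega(\xi)+i2^{{k_4}_+})^{-1}$ entering $F_{k_4}$ and $N_{k_4}$ live at bounded dyadic scales. Arguing exactly as in the proof of Proposition \ref{p71} (using \eqref{eq:pBk2}, \eqref{eq:pBk3}, and splitting off the at most $200$ terms with $j_4<{k_4}_+$ as done there, which are controlled by the $j_4={k_4}_+$ term up to an absolute constant), one reduces matters to the analogue of \eqref{eq:p713}: if $j_1,j_2,j_3\geq {k_4}_+$ and the nonnegative functions $f_{k_i,j_i}$ are supported in $\widetilde{D}_{k_i,j_i}$, $i=1,2,3$, then
\begin{eqnarray*}
&&2^{k_4}\sum_{j_4\geq {k_4}_+}2^{-j_4/2}\norm{1_{\widetilde{D}_{k_4,j_4}}\cdot(f_{k_1,j_1}*f_{k_2,j_2}*f_{k_3,j_3})}_{L^2}\\
&&\qquad\leq C\,2^{(k_{min}+k_{thd})/2}\,2^{j_1/2}\norm{f_{k_1,j_1}}_{L^2}\,2^{j_2/2}\norm{f_{k_2,j_2}}_{L^2}\,2^{j_3/2}\norm{f_{k_3,j_3}}_{L^2},
\end{eqnarray*}
where $k_{min},k_{thd}$ denote the minimum and third-maximum of $k_1,k_2,k_3,k_4$.

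To prove this reduced estimate I would invoke Corollary \ref{cor42} (a) (in the form allowed by Remark \ref{rem43}), which bounds $\norm{1_{\widetilde{D}_{k_4,j_4}}\cdot(f_{k_1,j_1}*f_{k_2,j_2}*f_{k_3,j_3})}_{L^2}$ by $C2^{(k_{min}+k_{thd})/2}2^{(j_{min}+j_{thd})/2}\prod_{i=1}^3\norm{f_{k_i,j_i}}_{L^2}$, with $j_{min},j_{thd}$ now the minimum and third-maximum of $j_1,j_2,j_3,j_4$. The spatial factor it produces is precisely the one required, so it remains to check the elementary bound
\[
2^{k_4}\sum_{j_4\geq {k_4}_+}2^{-j_4/2}2^{(j_{min}+j_{thd})/2}\les 2^{(j_1+j_2+j_3)/2}.
\]
Writing $a\geq b\geq c$ for the decreasing rearrangement of $j_1,j_2,j_3$, one has $j_{min}+j_{thd}\leq b+c$, with equality as soon as $j_4\geq b$; hence the terms with $j_4\geq b$ form a geometric series summing to $\les 2^{-b/2}2^{(b+c)/2}=2^{c/2}$, while the at most $b$ terms with $j_4<b$ each contribute exactly $2^{c/2}$. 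Thus the left side is $\les 2^{k_4}(b+1)2^{c/2}$, and since $b+1\leq 2^b\leq 2^{(a+b)/2}$ (using $b\leq a$) and $2^{k_4}\leq 2^{200}$, this is $\les 2^{(a+b+c)/2}$. Summing the reduced estimate over $j_1,j_2,j_3$ against $\sum_{j}2^{j/2}\norm{f_{k_i,j}}_{L^2}\les\norm{u_{k_i}}_{F_{k_i}}$ (a consequence of the definition of $B_{k_i}$ together with \eqref{eq:pBk2}--\eqref{eq:pBk3}) then yields the proposition, the factor $2^{(k_{min}+k_{thd})/2}$ being exactly $2^{k_{min}/2}2^{k_{thd}/2}$.

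The only point calling for a moment of care -- and the reason the statement is restricted to $k_i\leq 200$ -- is that $\partial_x$ contributes the factor $2^{k_4}$ for which Corollary \ref{cor42} (a) offers no compensating gain in $j_4$; this is harmless exactly because $2^{k_4}\leq 2^{200}$ is an absolute constant, and the $j_4$-summation converges with only a logarithmic loss that is itself comfortably absorbed by $2^{(j_1+j_2+j_3)/2}$ on the right. I anticipate no genuine obstacle: this is a routine case analysis of the same flavour as the short-time trilinear estimates already established above.
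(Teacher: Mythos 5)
Your proof is correct and follows the route the paper leaves implicit in its one-line proof: reduce via \eqref{eq:pBk2}--\eqref{eq:pBk3} to a pointwise dyadic estimate, apply Corollary \ref{cor42}(a) with Remark \ref{rem43}, and carry out the $j_4$-summation, with the elementary inequality $2^{k_4}\sum_{j_4\geq {k_4}_+}2^{-j_4/2}2^{(j_{min}+j_{thd})/2}\les 2^{(j_1+j_2+j_3)/2}$ correctly verified by your convexity argument and the absolute bound $2^{k_4}\leq 2^{200}$. One detail you gloss over: when $k_4<\max_i k_i$ the output time-scale $2^{-{k_4}_+}$ is coarser than the input scales, so strictly one should subdivide the output time-cutoff as in Propositions \ref{p74}--7.5 before invoking the $F_{k_i}$ norms, but since all ${k_i}_+\leq 200$ this produces at most an absolute constant's worth of pieces and hence is harmless.
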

\begin{proof}
This follows immediately from the definitions, Corollary \ref{cor42}
(a), Remark \ref{rem43} and \eqref{eq:pBk2} and \eqref{eq:pBk3}.
\end{proof}

\section{Energy Estimates}

In this section we prove an energy estimate by using I-method
\cite{CKSTT}, following some ideas in \cite{KochTataru}. For the
difference equation of two modified Benjamin-Ono equations, we don't
know how to prove a similar energy estimate due to the lack of
symmetry. That's why we can only solve the half problem of
Conjecture.

\begin{proposition}\label{energyes}
Assume that $T\in (0,1]$ and $u\in C([-T,T]:H^\infty)$ is a
real-valued solution of the initial value problem
\begin{eqnarray}\label{eq:mBOenergy}
\left \{
\begin{array}{l}
u_t+\Hl u_{xx}=u^2u_x,\ (x,t)\in \R\times (-T,T);\\
u(x,0)=\phi(x),
\end{array}
\right.
\end{eqnarray}
Then, for $0\leq l<1/4$ and $s>1/4$, there exists $\delta_0>0$ such
that if $\norm{u}_{E^{l,s}(T)}\leq \delta_0$ then we have
\begin{equation}
\norm{u}_{E^{l,s}(T)}^2\les \norm{\phi}_{\dot{H}^l\cap
\dot{H}^s}^2+\norm{u}_{F^{\rev 4-,\rev 4+}(T)}^4\cdot
\norm{u}_{F^{l,s}(T)}^2.
\end{equation}
\end{proposition}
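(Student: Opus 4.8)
The plan is to run a frequency-localized energy estimate, extracting the cancellation produced by the symmetry of the equation (the ``$I$-method'' philosophy of \cite{CKSTT}, in the short-time setting of \cite{IKT,KochTataru}) and disposing of the non-resonant remainders by direct estimation with the symmetric bound of Section 4. For $k\ge1$, apply $R_k$ to \eqref{eq:mBOenergy} and pair with $R_ku$ in $L^2_x$; since $u$ is real-valued and $\Hl\partial_x^2$ is skew-adjoint (and commutes with the even multiplier $R_k$), the linear term drops, and with $u^2u_x=\frac13\partial_x(u^3)$ one gets, for every $t_k\in[-T,T]$,
\[
\norm{R_ku(t_k)}_{L^2}^2=\norm{R_k\phi}_{L^2}^2-\frac23\int_0^{t_k}\!\!\int_\R \partial_x R_ku\cdot R_k(u^3)\,dx\,dt .
\]
The low-frequency part of $\norm{u}_{E^{l,s}(T)}$ is $\norm{R_{\le0}u(0)}_{\dot H^l}=\norm{R_{\le0}\phi}_{\dot H^l}\le\norm{\phi}_{\dot H^l}$, so the proposition reduces to bounding
\[
\sum_{k\ge1}2^{2sk}\sup_{|t_k|\le T}\Big|\int_0^{t_k}\!\!\int_\R \partial_xR_ku\cdot R_k(u^3)\,dx\,dt\Big|
\]
by $\norm{\phi}_{\dot H^l\cap\dot H^s}^2+\norm{u}_{F^{\rev4-,\rev4+}(T)}^4\,\norm{u}_{F^{l,s}(T)}^2$, where one extends $u$ off $[-T,T]$ and uses the $2^{-k_+}$-scale time localization of Section 7 freely. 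Expanding $u=\sum_{k_i}R_{k_i}u$ turns the integrand into a sum of dyadic four-linear forms in $R_ku,R_{k_1}u,R_{k_2}u,R_{k_3}u$; by $\xi+\xi_1+\xi_2+\xi_3=0$ at least two of the four frequencies are $\sim2^k$.

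\textbf{Symmetrization of the dangerous interaction.} The only interaction in which $\partial_x$ costs a full $2^k$ and which is not already summable is $high\times low\times low\to high$: after relabeling, one frequency among $k_1,k_2,k_3$, say $k_1$, is $\sim2^k$ while $k_2,k_3\le k-C$. Here I would use the $I$-method identity
\[
2\int_\R R_ku\cdot R_k(u^2u_x)\,dx=\int_\R u^2\,\partial_x\big((R_ku)^2\big)\,dx+2\int_\R R_ku\cdot[R_k,u^2]\,u_x\,dx
=-2\int_\R uu_x\,(R_ku)^2\,dx+\text{(comm.)},
\]
which moves the derivative off the frequency-$2^k$ factor $R_ku$ onto a factor of $u^2$; when that factor has frequency $\le k-C$ the derivative costs only $2^{k_{\min}}$, while the commutator term $[R_k,u^2]u_x$ is supported where a factor of $u^2$ has frequency $\sim2^k$ and is handled separately. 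Integrating in $t$ over $[0,t_k]$, summing in $k$ with the weight $2^{2sk}$, estimating the two frequency-$2^k$ factors by $2^{-sk}\norm{u}_{E^{l,s}(T)}\le2^{-sk}\delta_0$ and the low factors by the maximal-function and smoothing bounds of Lemma \ref{l34}, one finds (after the $2^{k_{\min}}$ gain) that these contributions are $\les\delta_0^2\norm{u}_{E^{l,s}(T)}^2$, which is absorbed into the left side. The genuinely resonant Benjamin--Ono configurations $\xi_1\approx-\xi_4$, $\xi_2\approx-\xi_3$ lie inside this case and need nothing extra once the derivative loss is removed.

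\textbf{Remaining terms and the sextic bound.} All other interactions --- comparable frequencies, $high\times high\times high$, low output, and the non-resonant part of the case above --- have $|\Omega(\xi_1,\xi_2,\xi_3)|$ large relative to the product of the small frequencies, so $\partial_x$ is harmless; these I would estimate directly by Lemma \ref{l41} / Corollary \ref{cor42} (whose weights $\beta_{k,j}=1+2^{(j-2k)/2}$ are exactly tailored to the sub-case in which a low-frequency factor carries the largest modulation) together with the $L^6$-Strichartz, smoothing and maximal-function inequalities of Lemma \ref{l34}. In the remaining near-resonant pieces one performs a normal form: writing the $t$-integrand on the Fourier side as $e^{it\Omega}G(t)$ and integrating by parts in $t$ produces (i) boundary terms at $t=0,t_k$ carrying an $\Omega^{-1}$ gain, absorbed as before (and, at $t=0$, feeding into $\norm{\phi}_{\dot H^l\cap\dot H^s}^2$), and (ii) an integral term in which $\partial_t$ falls on one profile and is replaced, via the equation, by the profile of $R_{k_i}(u^2u_x)$, turning the quartic form into a sextic one. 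Distributing the two $2^{sk}$-weights onto two factors and leaving the other four at $F^{\rev4-,\rev4+}$-regularity, and using the $\Omega^{-1}$ gain with Lemma \ref{l41} and Lemma \ref{l34}, the sextic term is dominated by $\norm{u}_{F^{\rev4-,\rev4+}(T)}^4\norm{u}_{F^{l,s}(T)}^2$. Collecting everything and absorbing the $O(\delta_0^2\norm{u}_{E^{l,s}(T)}^2)$ contributions (using $\norm{u}_{E^{l,s}(T)}\le\delta_0$) into the left side gives the proposition.

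\textbf{Main obstacle.} The crux is the last step at the endpoint: the estimate must close for every $s>\rev4$, which is sharp, so there is essentially no room in the near-resonant $high\times low\times low$ geometry. This forces one to exploit the full strength of the spaces --- the precise $\beta_{k,j}$ weights to control the worst modulation configuration (a low-frequency factor carrying the largest modulation) together with the smoothing and maximal-function estimates to absorb the derivative --- and to organize the many sub-cases of the symmetrization step so that the derivative loss is genuinely removed in each resonant configuration of the Benjamin--Ono dispersion relation. The localized energy identity and the $I$-method algebra, by contrast, are routine.
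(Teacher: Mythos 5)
Your proposal has the right philosophy (an I-method-style energy estimate with a correction/normal form, plus the symmetric estimates of Section 4), but it misses the single most load-bearing device in the paper's argument and as written would prove a weaker statement than Proposition \ref{energyes}.

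The norm $\norm{u}_{E^{l,s}(T)}^2$ has the structure $\sum_{k\ge1}\sup_{t_k}2^{2sk}\norm{R_ku(t_k)}_{L^2}^2$, with the supremum \emph{inside} the dyadic sum. A frequency-localized energy identity at each fixed $k$, followed by ``summing in $k$ with the weight $2^{2sk}$'' as you describe, naturally controls $\sup_t\sum_k 2^{2sk}\norm{R_ku(t)}_{L^2}^2$, which is a strictly smaller quantity; interchanging the sup and the sum is not free. The paper handles this by Lemma~\ref{Hsre} (the slowly varying sequence $\{\beta_\lambda\}$ of Koch--Tataru) and by running the argument once for each target frequency $k_0$ with a tailored symbol $a(\xi)\sim2^{2ks}\max(1,\beta_{k_0}^{-1}2^{-\epsilon|k-k_0|})\in S^s_\epsilon$: this $a$ is large enough near $2^{k_0}$ to extract $\sup_t2^{2k_0s}\norm{R_{k_0}u(t)}^2\lesssim\beta_{k_0}(\cdots)$, yet summable in $k_0$ because $\sum\beta_\lambda\lesssim1$. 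Your proposal has no analogue of this, and without it the final summation does not produce the $E^{l,s}$ norm. This is a genuine gap, not a detail.

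Beyond that, your ``symmetrization'' $2\int R_ku\cdot R_k(u^2u_x)=-2\int uu_x(R_ku)^2+\text{(comm.)}$ is the physical-space counterpart of the paper's Fourier-side correction: the paper instead introduces $A(D)$ with symbol $a\in S^s_\epsilon$, sets $E_0(u)=(A(D)u,u)$, and constructs a quartic modification $E_1(u)$ with kernel $b_4=-\tfrac{\sum\xi_ja(\xi_j)}{\sum\omega(\xi_j)}$ (Proposition~\ref{exb4}), so that $\tfrac{d}{dt}(E_0+E_1)=R_6$ exactly, and then controls $|E_1|$ and $|\int R_6|$ (Propositions~\ref{diffenergy}, \ref{6linear}). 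Your commutator plus ``normal form in $t$'' sketch is aiming at the same cancellation, and the quantitative $b_4$ bounds in Proposition~\ref{exb4} play the role of your commutator estimate; but you never actually produce the sextic bound $\norm{u}_{F^{1/4-,1/4+}(T)}^4\norm{u}_{F^{l,s}(T)}^2$ with a checkable estimate, and the two extra factors in the normal-form term must be estimated in the short-time $F_k$ spaces (as in Proposition~\ref{6linear}), not merely with the $L^6$/maximal/smoothing bounds at the $Z_k$ level. So even granting the commutator step, the key quantitative estimates are not carried out, and the $\sum_k\sup_t$ structure is unaddressed.
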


The following definition was first introduced in \cite{KochTataru}.
\begin{definition}
Let $s\in \R$ and $\epsilon>0$. Then $S_\epsilon^s$ is the class of
spherical symmetric symbols with the following properties:

(i) symbol regularity,
\[|\partial^\alpha a(\xi)|\les a(\xi)(1+\xi^2)^{-\alpha/2}\]

(ii) decay at infinity, for $|\xi|\gg 1$,
\[s\leq \frac{\log a(\xi)}{\log (1+\xi^2)}\leq s+\epsilon, \quad s-\epsilon \leq \frac{d\log a(\xi)}{d\log (1+\xi^2)}\leq s+\epsilon.\]
\end{definition}

Assume $u\in C([-T,T]:H^\infty)$ solves \eqref{eq:mBOenergy} and
$a\in S_\epsilon^s$. Denote $A(D)=\ft^{-1}a(\xi)\ft$. We first set
\[E_0(u)=(A(D)u,u)=\int_{\xi_1+\xi_2=0}a(\xi_1)\widehat{u}(\xi_1)\widehat{u}(\xi_2).\]
Using the equation \eqref{eq:mBOenergy} and noting that $a(\xi)$ is
even while $\omega(\xi)$ is odd, then we easily get that
\begin{eqnarray*}
&&\frac{d}{dt}E_0(u)=R_4(u)\\
&&=-\frac{1}{6}\int_{\Gamma_4}i[\xi_1a(\xi_1)+\xi_2a(\xi_2)+\xi_3a(\xi_3)+\xi_4a(\xi_4)]\widehat{u}(\xi_1)\widehat{u}(\xi_2)\widehat{u}(\xi_3)\widehat{u}(\xi_4),
\end{eqnarray*}
where for $k\in \N$, we denote
\[\Gamma_k=\{\xi_1+\xi_2+\ldots+\xi_k=0\}.\]
Following the idea of I-method, we define a multi-linear correction
term to achieve a cancelation
\[E_1(u)=\int_{\Gamma_4}b_4(\xi_1,\xi_2,\xi_3,\xi_4)\widehat{u}(\xi_1)\widehat{u}(\xi_2)\widehat{u}(\xi_3)\widehat{u}(\xi_4),\]
where $b_4$ will be determined soon. Again using the equation
\eqref{eq:mBO}, we get
\begin{eqnarray*}
&&\frac{d}{dt}E_1(u)=R_6(u)\\
&&+\int_{\Gamma_4}ib_4(\xi_1,\xi_2,\xi_3,\xi_4)[\omega(\xi_1)+\omega(\xi_2)+\omega(\xi_3)+\omega(\xi_4)]\widehat{u}(\xi_1)\widehat{u}(\xi_2)\widehat{u}(\xi_3)\widehat{u}(\xi_4),
\end{eqnarray*}
where
\[R_6(u)=C\int_{\Gamma_6} b_4(\xi_1,\xi_2,\xi_3,\xi_4+\xi_5+\xi_6)(\xi_4+\xi_5+\xi_6)\prod_{j=1}^6 \widehat{u}(\xi_j).\]
To achieve the cancelation of the quadrilinear form we define $b_4$
on $\Gamma_4$ by
\[b_4(\xi_1,\xi_2,\xi_3,\xi_4)=C\frac{\xi_1a(\xi_1)+\xi_2a(\xi_2)+\xi_3a(\xi_3)+\xi_4a(\xi_4)}{\omega(\xi_1)+\omega(\xi_2)+\omega(\xi_3)+\omega(\xi_4)}.\]
Thus we get
\[\frac{d}{dt}(E_0(u)+E_1(u))=R_6(u).\]

\begin{proposition}\label{exb4}
Assume that $a\in S_\epsilon ^s$. Then for each dyadic $\lambda\leq
\alpha \leq \mu$ there is an extension of $b_4$ from the diagonal
set
\[\{(\xi_1,\xi_2,\xi_3,\xi_4)\in \Gamma_4, |\xi_1|\sim \lambda, |\xi_2|\sim \alpha, |\xi_3|, |\xi_4|\sim \mu\}\]
to the full dyadic set
\[\{(\xi_1,\xi_2,\xi_3,\xi_4)\in \R_4, |\xi_1|\sim \lambda, |\xi_2|\sim \alpha, |\xi_3|, |\xi_4|\sim \mu\}\]
which satisfies
\begin{equation}\label{eq:b4bd}
|b_4(\xi_1,\xi_2,\xi_3,\xi_4)|\les a(\mu)\mu^{-1}
\end{equation}
and
\begin{equation}\label{eq:b4dbd}
\sum_{j=1}^4|\partial_j b_4(\xi_1,\xi_2,\xi_3,\xi_4)|\les
a(\alpha)\mu^{-1}+ a(\mu)\mu^{-2}.
\end{equation}
\end{proposition}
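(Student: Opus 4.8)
The plan is to estimate $b_4$ and its $(\xi_1,\xi_2,\xi_3)$-derivatives on the diagonal $\Gamma_4$ by a divided-difference device, and then obtain the extension cheaply: extend $b_4$ off $\Gamma_4$ by composing with the projection $(\xi_1,\xi_2,\xi_3,\xi_4)\mapsto(\xi_1,\xi_2,\xi_3,-\xi_1-\xi_2-\xi_3)$ and multiplying by a smooth cutoff equal to $1$ on $\Gamma_4$ and supported where $|\xi_1+\xi_2+\xi_3+\xi_4|\les\epsilon_0\mu$. Since the projection has bounded derivatives and the cutoff contributes $\les\mu^{-1}$ per derivative, \eqref{eq:b4bd}--\eqref{eq:b4dbd} follow from the corresponding estimates for $b_4$ on $\Gamma_4$. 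By the permutation symmetry of $b_4$ on $\Gamma_4$, the evenness of $a$, and the oddness of $\omega$, it suffices to work in the region $|\xi_1|\sim\lambda\le|\xi_2|\sim\alpha\le|\xi_3|,|\xi_4|\sim\mu$. Put $g(\xi)=\xi a(\xi)$: it is smooth and odd, with $|g'(\xi)|\les a(\xi)$ and $|g''(\xi)|\les a(\xi)\jb{\xi}^{-1}$ -- the presence of $\jb{\xi}\ge1$ (not $|\xi|$) is what keeps every estimate below uniform down to frequency $1$; also $\omega(\xi)=-\xi|\xi|$ is odd with $\omega'(\xi)=-2|\xi|$ of a single sign. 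The numerator of $b_4$ is $C\sum_i g(\xi_i)$, the denominator $\sum_i\omega(\xi_i)$.

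The device: for a pairing $\{i,j\}\,|\,\{k,l\}$, oddness of $g$ gives
\[g(\xi_i)+g(\xi_j)=g(\xi_i)-g(-\xi_j)=(\xi_i+\xi_j)\,G_{ij},\qquad G_{ij}:=\int_0^1 g'\!\big({-}\xi_j+t(\xi_i+\xi_j)\big)\,dt,\]
and likewise $\omega(\xi_i)+\omega(\xi_j)=(\xi_i+\xi_j)H_{ij}$ with $H_{ij}=-2\int_0^1\big|{-}\xi_j+t(\xi_i+\xi_j)\big|\,dt$. On $\Gamma_4$ one has $\xi_i+\xi_j=-(\xi_k+\xi_l)$, so the common factor cancels and $b_4=C(G_{ij}-G_{kl})/(H_{ij}-H_{kl})$. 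Basic facts used throughout: $|G_{ij}|\les a(\max(|\xi_i|,|\xi_j|))$ (as $a$ is essentially nondecreasing) and $|\partial G_{ij}|\les\sup|g''|$ over the relevant segment; $H_{ij}\le0$, $|H_{ij}|\les\max(|\xi_i|,|\xi_j|)$, and $|H_{ij}|\ges|\xi_i|+|\xi_j|$ when $|\xi_i|\sim|\xi_j|$ (the $L^1$ norm of a linear function on $[0,1]$ being comparable to the average of its endpoint moduli); and $|\partial H_{ij}|\les1$.

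If $\alpha\le\epsilon_0\mu$ (one scale dominates) pair $\{3,4\}\,|\,\{1,2\}$: then $|H_{34}|\sim\mu$ while $|H_{12}|\les\alpha\ll\mu$, so $|H_{12}-H_{34}|\sim\mu$; and $|\xi_3+\xi_4|=|\xi_1+\xi_2|\les\alpha\ll\mu$ confines the $G_{34}$-segment to frequency $\sim\mu$, giving $|G_{34}|\les a(\mu)$, $|\partial G_{34}|\les a(\mu)\mu^{-1}$, whereas $|G_{12}|\les a(\alpha)$, $|\partial G_{12}|\les a(\alpha)$; the quotient rule produces $|b_4|\les a(\mu)\mu^{-1}$ and $|\partial b_4|\les a(\alpha)\mu^{-1}+a(\mu)\mu^{-2}$. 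If $\alpha\sim\mu$ but $\lambda\ll\mu$, keep the pairing $\{1,2\}\,|\,\{3,4\}$: here $|\xi_1+\xi_2|\sim\mu$, and no pair of the $\xi_i$ can sum to $0$ in this region (that would force a frequency of size $\sim\lambda$ to have size $\sim\mu$); the absence of three-wave resonances for $\omega$ then yields $|\sum_i\omega(\xi_i)|\sim\mu^2$, hence $|H_{12}-H_{34}|=|\sum_i\omega(\xi_i)|/|\xi_1+\xi_2|\sim\mu$, and since now $a(\alpha)\sim a(\mu)$ the crude bounds $|G_{12}-G_{34}|\les a(\mu)$, $|\partial(G_{12}-G_{34})|\les a(\mu)$ suffice.

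The remaining case $\lambda\sim\alpha\sim\mu$ -- all four frequencies comparable -- is the crux. Both targets now read $\les a(\mu)\mu^{-1}$, but no single pairing has a denominator bounded below: on $\Gamma_4$ the set $\{\sum_i\omega(\xi_i)=0\}$ inside the cube is exactly $\bigcup_{\{i,j\}}\{\xi_i+\xi_j=0\}$, and where two of these hyperplanes meet the denominator $H_{ij}-H_{kl}$ of every pairing degenerates. The plan: cover the cube by a non-resonant piece (where $|\sum_i\omega(\xi_i)|\ges\mu^2$, handled as above) together with conic neighborhoods of the hyperplanes $\{\xi_i+\xi_j=0\}$; on the neighborhood of one such hyperplane use the pairing $\{i,j\}\,|\,\{k,l\}$, for which $|H_{ij}|,|H_{kl}|\sim\mu$ and $H_{ij}-H_{kl}$ has size $\sim\big||\xi_i|-|\xi_k|\big|$, and note that the numerator $G_{ij}-G_{kl}$ carries a compensating factor of that same size, because $\sum_i g(\xi_i)$ vanishes on the whole resonant variety (on $\{\xi_i+\xi_j=0\}$ the complementary pair also sums to $0$, so $\sum_i g(\xi_i)=0$ by oddness of $g$) to the order to which $\sum_i\omega(\xi_i)$ does; one makes this quantitative by a second application of the fundamental theorem of calculus (second divided differences of $g'$ and of $\omega'$, using $|g''|\les a(\mu)\mu^{-1}$ for $|\xi|\sim\mu$) near the multiple points, and patches by a partition of unity subordinate to the covering. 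I expect this last step -- matching, with first derivatives, the vanishing of the numerator to that of the denominator along the entire resonant variety, particularly near its multiple points (equivalently, a smooth-division statement for $\sum_i g(\xi_i)$ by $\sum_i\omega(\xi_i)$ on $\Gamma_4$) -- to be the main technical obstacle; everything else is bookkeeping with the symbol calculus for $g$ and the single-sign property of $\omega'$.
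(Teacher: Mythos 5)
Your route is structurally different from the paper's: you propose to extend $b_4$ off $\Gamma_4$ by composing with the projection $\xi_4\mapsto -\xi_1-\xi_2-\xi_3$ and cutting off, and then reduce all estimates to tangential bounds for $b_4$ on the diagonal obtained from a divided-difference calculus tied to the pairings $\{i,j\}\,|\,\{k,l\}$. The paper instead writes down, in each dyadic/sign regime, an explicit rational formula for an extension and verifies the bounds directly. Your first two cases ($\alpha\le\epsilon_0\mu$ via the pairing $\{3,4\}\,|\,\{1,2\}$, and $\alpha\sim\mu$, $\lambda\ll\mu$ via $\{1,2\}\,|\,\{3,4\}$ with the non-resonance lower bound $|\sum\omega|\gtrsim\mu^2$) are correct in outline and match, up to repackaging, what the paper does in its case $\xi_1\xi_2<0$, $\lambda\ll\mu$ and in the ``good sign'' case $\xi_1\xi_2>0$.

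The genuine gap is exactly where you flag it: the case $\lambda\sim\alpha\sim\mu$ is not proved, only planned. Your plan (partition of unity over the non-resonant region and conic neighborhoods of the hyperplanes $\{\xi_i+\xi_j=0\}$, with a ``smooth division'' statement near the multiple points where two such hyperplanes meet) is not carried out, and it is not a trivial omission: with the single pairing $\{i,j\}\,|\,\{k,l\}$ the reduced denominator $H_{ij}-H_{kl}$ still vanishes along the complementary hyperplane, so near the codimension-two locus you are forced into a second divided difference \emph{and} you then need to glue the pieces while preserving the derivative bound \eqref{eq:b4dbd}, which is delicate because the partition-of-unity cutoffs live at scale $\mu$ in directions along which the local representations themselves degenerate. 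The paper avoids all of this with one move you did not find: for $\xi_1\xi_2<0$ the BO resonance function factors exactly as $\sum_i\omega(\xi_i)=(\xi_1+\xi_2)(\xi_1+\xi_3)$, and with $q(\xi_1,\xi_2)=(\xi_1a(\xi_1)+\xi_2a(\xi_2))/(\xi_1+\xi_2)$ (your $G_{12}$, a first divided difference of $g(\xi)=\xi a(\xi)$) the whole case $\lambda\sim\mu$ is the \emph{single} global formula
\[
b_4=\frac{q(\xi_1,\xi_2)-q\bigl(\xi_1-(\xi_1+\xi_3),\,\xi_2+(\xi_1+\xi_3)\bigr)}{\xi_1+\xi_3},
\]
a second divided difference of $q$ in the direction $\xi_1+\xi_3$, manifestly smooth on the full dyadic block (including both the resonant hyperplanes and their intersection), and immediately yielding \eqref{eq:b4bd}--\eqref{eq:b4dbd} from $|g''|\lesssim a(\mu)\mu^{-1}$. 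In short: you correctly diagnosed that a double divided difference is needed, but you did not see that the factored form of the BO denominator lets you set it up once, globally, instead of locally patching near multiple points; without that (or a completed version of your gluing argument) the proof is incomplete.
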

\begin{proof}
From symmetry we may assume $|\xi_1|\leq |\xi_2|\leq |\xi_3|\leq
|\xi_4|$ and $\xi_3>0, \xi_4<0$. We first consider the case that
$\xi_1\xi_2>0$, say $\xi_1,\xi_2>0$. Then
$\omega(\xi_1)+\omega(\xi_2)+\omega(\xi_3)+\omega(\xi_4)=-2(\xi_1\xi_2+\xi_2\xi_3+\xi_1\xi_3)$.
Thus in $\Gamma_4$ we have
\begin{eqnarray*}
Cb_4(\xi_1,\xi_2,\xi_3,\xi_4)=\frac{\xi_1a(\xi_1)+\xi_2a(\xi_2)}{\xi_1\xi_2+(\xi_2+\xi_1)\xi_3}+\frac{\xi_3a(\xi_3)+\xi_4a(\xi_4)}{\xi_1\xi_2+\xi_2\xi_3+\xi_1\xi_3}.
\end{eqnarray*}
Using $\xi_1+\xi_2+\xi_3+\xi_4=0$ we get
\begin{eqnarray*}
&&\frac{\xi_3a(\xi_3)+\xi_4a(\xi_4)}{\xi_1\xi_2+\xi_2\xi_3+\xi_1\xi_3}\\
&&=\frac{\xi_1\xi_2}{\xi_1\xi_2+\xi_2\xi_3+\xi_1\xi_3}\frac{\xi_3a(\xi_3)+\xi_4a(\xi_4)}{\xi_3(\xi_3+\xi_4)}-\frac{\xi_3a(\xi_3)+\xi_4a(\xi_4)}{\xi_3(\xi_3+\xi_4)}.
\end{eqnarray*}
Therefore, we extend $b_4$ by setting
\begin{eqnarray}\label{eq:b4c1}
&&Cb_4(\xi_1,\xi_2,\xi_3,\xi_4)=\frac{\xi_1a(\xi_1)+\xi_2a(\xi_2)}{\xi_1\xi_2+\xi_2\xi_3+\xi_1\xi_3}\nonumber\\
&&+\frac{\xi_1\xi_2}{\xi_1\xi_2+\xi_2\xi_3+\xi_1\xi_3}\frac{\xi_3a(\xi_3)+\xi_4a(\xi_4)}{\xi_3(\xi_3+\xi_4)}-\frac{\xi_3a(\xi_3)+\xi_4a(\xi_4)}{\xi_3(\xi_3+\xi_4)}.
\end{eqnarray}
It is easy to see from the properties of $a(\xi)$ that
\[|b_4(\xi_1,\xi_2,\xi_3,\xi_4)|\les a(\mu)\mu^{-1}.\]
It remains to check the derivatives. We only consider $|\partial_1
b_4|$, since the others can be handled in the similar ways. For
$|\partial_1 b_4|$ it suffices to consider the first term on the
right-hand side of \eqref{eq:b4c1}. Direct computations show that
\[\partial_{\xi_1}\brk{\frac{\xi_1a(\xi_1)+\xi_2a(\xi_2)}{\xi_1\xi_2+(\xi_2+\xi_1)\xi_3}}=\frac{[a(\xi_1)-a(\xi_2)]\xi_2\xi_3-\xi_2^2a(\xi_2)}{(\xi_1\xi_2+(\xi_2+\xi_1)\xi_3)^2}+\frac{a'(\xi_1)\xi_1}{\xi_1\xi_2+(\xi_2+\xi_1)\xi_3},\]
which satisfies \eqref{eq:b4dbd} as desired.

We consider now $\xi_1\xi_2<0$, say $\xi_1<0,\xi_2>0$. Thus we get
$\omega(\xi_1)+\omega(\xi_2)+\omega(\xi_3)+\omega(\xi_4)=\xi_1^2-\xi_2^2-\xi_3^2+\xi_4^2=(\xi_1+\xi_2)(\xi_1+\xi_3)$.
We will extend $b_4$ in the following cases.

(a) $\lambda\ll \mu$, $\alpha\leq \mu$. Then the extension of $b_4$
is defined using the formula
\[b_4(\xi_1,\xi_2,\xi_3,\xi_4)=\frac{\xi_1a(\xi_1)+\xi_2a(\xi_2)}{(\xi_1+\xi_2)(\xi_1+\xi_3)}-\frac{\xi_3a(\xi_3)+\xi_4a(\xi_4)}{(\xi_3+\xi_4)(\xi_1+\xi_3)}.\]
Since $\lambda \ll \mu$, we see that $|\xi_1+\xi_3|\sim \mu$. By
using the properties of $a(\xi)$ we see \eqref{eq:b4bd} and
\eqref{eq:b4dbd} are satisfied as desired.

(b) $\lambda\sim \mu$. Then the extension of $b_4$ is defined using
the formula
\[b_4(\xi_1,\xi_2,\xi_3,\xi_4)=\frac{\xi_1a(\xi_1)+\xi_2a(\xi_2)+\xi_3a(\xi_3)-(\xi_1+\xi_2+\xi_3)a(\xi_1+\xi_2+\xi_3)}{(\xi_1+\xi_2)(\xi_1+\xi_3)}.\]
To check the properties, setting
\[q(\xi_1,\xi_2)=\frac{\xi_1a(\xi_1)+\xi_2a(\xi_2)}{\xi_1+\xi_2},\]
then we get that
\[b_4(\xi_1,\xi_2,\xi_3,\xi_4)=\frac{q(\xi_1,\xi_2)-q(\xi_1-(\xi_1+\xi_3),\xi_2+(\xi_1+\xi_3))}{\xi_1+\xi_3},\]
from which we easily verify  \eqref{eq:b4bd} and \eqref{eq:b4dbd}.
\end{proof}

In view of the definition, for Proposition \ref{energyes} we are
mainly concerned with the control of the energy in high frequency.
From the definition we see that if $a\in S_\epsilon^s$ then
$a(\xi)\eta_{\geq 1}(\xi)\in S_\epsilon^s$. Let
\begin{equation}
A_\epsilon^s=\{a(\xi)\eta_{\geq 1}(\xi): a\in S_\epsilon^s\}.
\end{equation}
\begin{proposition}\label{diffenergy}
Assume $a\in A_\epsilon^s$ and $s-\epsilon\geq 0$, then we have
\[|E_1(u)|\les \norm{u}_{\dot{H}^{1/4-}\cap \dot{H}^{1/4+}}^2E_0(u).\]
\end{proposition}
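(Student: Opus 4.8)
The goal is to bound the quadrilinear correction term $E_1(u)=\int_{\Gamma_4}b_4(\xi_1,\xi_2,\xi_3,\xi_4)\prod_{j=1}^4\widehat{u}(\xi_j)$ by $\norm{u}_{\dot H^{1/4-}\cap\dot H^{1/4+}}^2 E_0(u)$. The plan is to run a dyadic decomposition in the four frequencies, use the pointwise symbol bound \eqref{eq:b4bd} from Proposition \ref{exb4}, namely $|b_4|\les a(\mu)\mu^{-1}$ where $\mu$ is the largest of the four frequency scales, and then estimate the resulting multilinear integral by Plancherel/Hölder in physical space. Since $a\in A_\epsilon^s$, the symbol $a$ is supported on $|\xi|\ges 1$, so at least the two largest frequencies are $\ges 1$; because $\xi_1+\xi_2+\xi_3+\xi_4=0$ the two largest frequency scales are comparable, say $|\xi_3|\sim|\xi_4|\sim\mu$ (after using the symmetry of $E_1$ in its arguments), while $|\xi_1|\sim\lambda\leq|\xi_2|\sim\alpha\leq\mu$.

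First I would write, for fixed dyadic $\lambda\leq\alpha\leq\mu$ with $\mu\ges 1$,
\begin{eqnarray*}
\Big|\int_{\Gamma_4}b_4\,\widehat{P_\lambda u}\,\widehat{P_\alpha u}\,\widehat{P_\mu u}\,\widehat{P_\mu u}\Big|
&\les& a(\mu)\mu^{-1}\int_{\R^2}|P_\lambda u|\,|P_\alpha u|\,|P_\mu u|^2\,dx\,dt\\
&\les& a(\mu)\mu^{-1}\norm{P_\lambda u}_{L_{x,t}^\infty}\norm{P_\alpha u}_{L_{x,t}^\infty}\norm{P_\mu u}_{L_{x,t}^2}^2,
\end{eqnarray*}
using Plancherel to convert $\Gamma_4$ into a spacetime integral and then Hölder. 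The two $L^2$ factors give $\mu^{-2s}a(\mu)\cdot a(\mu)\mu^{-1}\cdot\norm{P_\mu u}_{L^2}^2$, and since $a(\mu)\sim\mu^{s+O(\epsilon)}$ we have $a(\mu)^2\mu^{-1}\mu^{-2s}\cdot\mu^{2s}=a(\mu)^2\mu^{-1}$... more carefully: $E_0(u)\sim\sum_\mu a(\mu)\norm{P_\mu u}_{L^2}^2$, so I want the bound to be $\les(\text{something small in }\lambda,\alpha)\cdot a(\mu)\norm{P_\mu u}_{L^2}^2$. From the displayed estimate the $\mu$-dependent factor beyond $\norm{P_\mu u}_{L^2}^2$ is $a(\mu)\mu^{-1}$, and I need this to be $\les a(\mu)$ times a gain — indeed $\mu^{-1}\les 1$, and in fact the factor $\mu^{-1}$ absorbs the two low-frequency $L^\infty$ norms: by Bernstein, $\norm{P_\lambda u}_{L_{x,t}^\infty}\les\lambda^{1/2}\norm{P_\lambda u}_{L_x^2L_t^\infty}\les\lambda^{1/2}\cdot\lambda^{1/4}\norm{P_\lambda u}_{L_{x,t}^2}$ — no, I should instead use the maximal function / Strichartz estimates of Lemma \ref{l34}: $\norm{\ft^{-1}(f_\lambda)}_{L_x^4L_t^\infty}\les 2^{\lambda/4}\norm{f_\lambda}_{Z_\lambda}$, giving via Cauchy–Schwarz in $x$ two factors of $\lambda^{1/4}$ (really $\lambda^{1/4+}$) times $\norm{P_\lambda u}$ in an $F^{s}$-type norm, and these $\lambda^{1/4\pm}$ are exactly what is controlled by $\norm{u}_{\dot H^{1/4-}\cap\dot H^{1/4+}}$. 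I expect the cleanest route is to bound $\norm{P_\lambda u}_{L_x^4L_t^\infty}\norm{P_\alpha u}_{L_x^4L_t^\infty}\les\lambda^{1/4\pm}\alpha^{1/4\pm}\norm{P_\lambda u}_{L_t^\infty L_x^2}\norm{P_\alpha u}_{L_t^\infty L_x^2}$-type quantities (using $|J_0|\les1$ so the relevant time interval is bounded), so that summing $\sum_{\lambda\leq\alpha}\lambda^{1/4-}\lambda^{1/4+}\cdots$ converges and produces the two factors $\norm{u}_{\dot H^{1/4-}\cap\dot H^{1/4+}}$, while the fourth-power-in-$\mu$ piece reassembles as $\sum_\mu a(\mu)\norm{P_\mu u}_{L^2}^2=E_0(u)$ after noticing the leftover $\mu^{-1}$ (or a small positive power of $\mu^{-1}$ after the $\epsilon$-losses) is harmlessly $\les1$ and summable.

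The main obstacle, and the place I would spend the most care, is the bookkeeping of the $\epsilon$-losses: $a(\mu)$ is only comparable to $\mu^{s\pm\epsilon}$, not exactly $\mu^s$, so when I split $a(\mu)^2\mu^{-1}$ I must check that $a(\mu)\mu^{-1}$ times the $\mu^{2s}$-worth of derivatives hidden in the high-frequency $L^2$ norms still closes — i.e. that $a(\mu)^2\mu^{-1}\norm{P_\mu u}_{L_t^\infty L_x^2}^2$ really is $\les a(\mu)\cdot\mu^{-s+O(\epsilon)}\norm{P_\mu u}_{\dot H^s}^2\les a(\mu)\norm{P_\mu u}_{L^2}^2$ using $s>1/4>0$ and $\epsilon$ small, and that the double sum over $\lambda\leq\alpha\leq\mu$ of the geometric factors converges uniformly. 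A secondary technical point is justifying the reduction to $|\xi_3|\sim|\xi_4|\sim\mu$: since $a$ vanishes for $|\xi|\ll1$, in the regime where all four frequencies are $O(1)$ the estimate is trivial by Bernstein on a bounded frequency region, and when $\mu\gg1$ one genuinely has $|\xi_3|\sim|\xi_4|\sim\mu$ by the zero-sum constraint $\Gamma_4$, so $b_4$ is governed by \eqref{eq:b4bd} with this $\mu$. With these two points handled, summing in $\lambda,\alpha,\mu$ yields the claimed bound $|E_1(u)|\les\norm{u}_{\dot H^{1/4-}\cap\dot H^{1/4+}}^2 E_0(u)$.
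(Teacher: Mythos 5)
The key misstep is one of category: $E_1(u)$ is a \emph{fixed-time} functional of $u(\cdot,t)$, part of the modified energy $E_0(u)+E_1(u)$ whose $t$-derivative equals $R_6(u)$, and the right-hand side $\norm{u}_{\dot{H}^{1/4-}\cap\dot{H}^{1/4+}}^2E_0(u)$ is likewise evaluated at the same instant. In your first display you write the frequency integral over $\Gamma_4$ as $\int_{\R^2}\cdots\,dx\,dt$ and estimate it with $L_{x,t}^\infty$ and $L_{x,t}^2$ norms, and subsequently you propose to invoke the maximal-function and Strichartz bounds of Lemma \ref{l34}. None of this is applicable here: $\Gamma_4$ is a hyperplane in the \emph{spatial} frequency variables $\xi_1,\dots,\xi_4$ only, there is no time integration, and the smoothing/maximal/Strichartz estimates are spacetime bounds attached to the Bourgain-type spaces $Z_k$, $F_k$. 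They are used in the companion Proposition \ref{6linear}, where one genuinely estimates $\int_{-T}^TR_6(u)\,dt$, but they have no role in Proposition \ref{diffenergy}. If you run your argument to completion you would only get a time-averaged bound, which does not let one recover $\sup_t|E_0(u(t))|$ from $\sup_t|E_0(u(t))+E_1(u(t))|$ in the proof of Proposition \ref{energyes}.

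The fix is to stay at a fixed time and replace the Strichartz machinery by Bernstein. After the dyadic reduction and the symbol bound $|b_4|\les a(N_4)N_4^{-1}$ (which part of your plan sets up correctly, including the reduction to $N_3\sim N_4\ges1$), one estimates the convolution integral by Cauchy--Schwarz on the Fourier side (pair $\widehat{u_1}*\widehat{u_3}$ against $\widehat{u_2}*\widehat{u_4}$, say) together with the elementary bound $\norm{P_{N}u(t)}_{L^\infty_x}\les N^{1/2}\norm{P_{N}u(t)}_{L^2_x}$. This produces a factor $N_1^{1/2}N_2^{1/2}$ (not $N_1^{1/4}N_2^{1/4}$) on the low frequencies, so the surviving weight is
\[
N_4^{-1}N_1^{1/2}N_2^{1/2}\cdot a(N_4)\norm{R_{N_3}u}_{L^2}\norm{R_{N_4}u}_{L^2}\norm{R_{N_1}u}_{L^2}\norm{R_{N_2}u}_{L^2}\les N_4^{-1/2+\epsilon}N_1^{1/2}\norm{R_{N_1}u}_{L^2}\norm{R_{N_2}u}_{L^2}\cdot a(N_4)\norm{R_{N_3}u}_{L^2}\norm{R_{N_4}u}_{L^2},
\]
using $N_2\leq N_4$ and the slow variation of $a$ near $N_3\sim N_4$ to absorb $\epsilon$-losses. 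Summing the high-frequency pair over $N_3\sim N_4$ reassembles $E_0(u)$, and the remaining $\sum_{N_1\leq N_2\leq N_4}N_4^{-1/2+\epsilon}N_1^{1/2}\norm{R_{N_1}u}_{L^2}\norm{R_{N_2}u}_{L^2}$ is what is controlled by $\norm{u}_{\dot H^{1/4-}\cap\dot H^{1/4+}}^2$. Note also the extra defect in your chain $\norm{P_\lambda u}_{L_x^4L_t^\infty}\les\lambda^{1/4\pm}\norm{P_\lambda u}_{L_t^\infty L_x^2}$: the maximal-function gain $\lambda^{1/4}$ is from the \emph{free evolution} or a $Z_k$/$F_k$ norm, not an $L^\infty_tL^2_x$ norm of $u$, so even on its own terms that step is not valid.
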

\begin{proof}
Using the definition, we get
\[|E_1(u)|\leq \int_{\Gamma_4} |b_4(\xi_1,\xi_2,\xi_3,\xi_4)|\cdot|\widehat{u}(\xi_1)\widehat{u}(\xi_2)\widehat{u}(\xi_3)\widehat{u}(\xi_4)|.\]
From symmetries, we may assume that $|\xi_1|\leq |\xi_2|\leq
|\xi_3|\leq |\xi_4|$. Localizing $|\xi_j|\sim N_j$ for $N_j$ dyadic
number, we may assume $N_3\sim N_4\ges 1$. Then it follows from
Proposition \ref{exb4} that
\begin{eqnarray*}
|E_1(u)|&\les& \sum_{N_j}\int_{\Gamma_4,|\xi_j|\sim N_j}
{N_4}^{-1}|\widehat{u}(\xi_1)\widehat{u}(\xi_2)a(N_4)^{1/2}\widehat{u}(\xi_3)a(N_4)^{1/2}\widehat{u}(\xi_4)|\\
&\les&\sup_{N_4\ges 1}\sum_{N_1\leq N_2\leq N_4}
N_4^{-1/2+\epsilon}{N_1}^{1/2}\norm{R_{N_1}u}_2\norm{R_{N_2}u}_2E_0(u)\\
&\les&\norm{u}_{\dot{H}^{1/4-}\cap \dot{H}^{1/4+}}^2E_0(u).
\end{eqnarray*}
Therefore we complete the proof of the proposition.
\end{proof}

\begin{proposition}\label{6linear}
Assume $a\in A_\epsilon^s$, $s-\epsilon\geq 0$ and $T\in (0,1]$.
Then
\begin{equation}
\aabs{\int_{-T}^T R_6(u)dt}\les
\norm{u}_{F^{1/4-,1/4+}(T)}^4\norm{u}_{F^{l,s}(T)}^2.
\end{equation}
\end{proposition}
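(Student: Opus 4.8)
The quantity $R_6(u)$ is the sextilinear form
\[
R_6(u) = C\int_{\Gamma_6} b_4(\xi_1,\xi_2,\xi_3,\xi_4+\xi_5+\xi_6)(\xi_4+\xi_5+\xi_6)\prod_{j=1}^6 \widehat{u}(\xi_j),
\]
so after Fourier-inverting, $\int_{-T}^T R_6(u)\,dt$ becomes an integral over $\R\times[-T,T]$ of a product of six copies of $u$ (with one derivative distributed via the factor $\xi_4+\xi_5+\xi_6$ and the smoothing weight $b_4$). The plan is to dyadically decompose $u=\sum_k R_k u$, so that we must bound a sum over frequencies $N_1,\dots,N_6$ of integrals $\int_{-T}^T\int_\R \prod R_{N_i}u\,dx\,dt$ weighted by the symbol bound for $b_4$. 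By symmetry among the first three inputs, and separately among the last three, and using that $b_4$ is evaluated with arguments $\xi_1,\xi_2,\xi_3$ in the first slots and $\xi_4+\xi_5+\xi_6$ in the fourth, I would order $N_1\le N_2\le N_3$ and note $\Gamma_6$ forces the largest two of the six frequencies to be comparable; the derivative lands (morally) on the block $\xi_4+\xi_5+\xi_6$, whose size is at most the max frequency.

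The key estimate on the symbol is Proposition \ref{exb4}: on the relevant dyadic pieces $|b_4|\lesssim a(\mu)\mu^{-1}$ where $\mu$ is the largest of the four arguments and $a\in A_\epsilon^s$, so $a(\mu)\sim \mu^{s+O(\epsilon)}$. Thus the weight times the derivative contributes a net factor $\lesssim \mu^{s}$ (up to $\epsilon$-losses), exactly matching the two copies of $u$ that should be measured in $F^{l,s}(T)$ (one at frequency $\mu$, the conjugate large frequency also at size $\sim\mu$). The remaining four factors then only need to be controlled in $F^{1/4-,1/4+}(T)$. To execute the multilinear bound I would pass to the short-time spaces: write each $R_{N_i}u$ as a sum of pieces localized to time intervals of length $2^{-k_{\max,+}}$ using a partition of unity $\sum_n \gamma(2^{k}t - n)$ (as in the proof of Proposition \ref{p74}), reducing to a product of functions whose space-time Fourier transforms lie in the $B_k$-type spaces. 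Then estimate the six-fold product by Hölder in $x,t$ using the Strichartz, smoothing, and maximal-function embeddings of Lemma \ref{l34} — for instance putting the two highest-frequency factors in $L^\infty_x L^2_t$ (smoothing effect, gaining $2^{-\mu/2}$ each, which combined with the derivative $2^{\mu}$ and low frequencies still leaves enough) and the lower ones in $L^4_x L^\infty_t$ or $L^6_{x,t}$. Summation over the dyadic parameters is then geometric once the exponents are arranged, because $s > 1/4$ and $1/4\pm$ strictly beat the scaling threshold.

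More concretely, the core bilinear/multilinear input is a short-time $L^2$ bound analogous to Corollary \ref{cor42}, applied twice (grouping the six factors as $3+3$ or iterating the trilinear convolution bound), to reduce
\[
\aabs{\int_{-T}^T\!\!\int_\R \prod_{i=1}^6 R_{N_i}u\, dx\,dt}
\]
to a product of $F_{N_i}$ norms times a power of the largest frequency; one then checks that the symbol weight $\mu^{s-1}$ together with the derivative $\mu$ converts two of these $F_{N_i}$ norms into the $F^{l,s}(T)$ norm and the other four into $F^{1/4-,1/4+}(T)$ norms, with a summable surplus. The main obstacle I expect is the bookkeeping of the many frequency interaction regimes — in particular configurations where some of $\xi_4,\xi_5,\xi_6$ cancel so that $|\xi_4+\xi_5+\xi_6|$ is much smaller than $\mu$, which is actually favorable (the derivative is weak) but requires the refined resonance/smoothing estimates of Corollary \ref{cor42}(d) and the $l^1$-in-modulation structure of $B_k$ to close; and the symmetric case where the two low frequencies among $\xi_1,\xi_2,\xi_3$ are both near zero, which is precisely the high$\times$low interaction that forces the use of the $\beta_{k,j}$ weights and the short-time localization. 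Once those regimes are handled exactly as in Propositions \ref{p71}--\ref{p76}, the remaining cases are routine Hölder estimates with Lemma \ref{l34}.
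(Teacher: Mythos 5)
Your proposal follows essentially the same route as the paper: decompose dyadically in frequency, order $N_1\le N_2\le N_3$ and $N_4\le N_5\le N_6$ and use $\Gamma_6$ to force the two largest frequencies to be comparable, invoke Proposition~\ref{exb4} for the pointwise symbol bound $|b_4|\les a(\mu)\mu^{-1}$, partition time into windows of length $2^{-k_{\max,+}}$ via $\sum_n\gamma(2^{k_6}t-n)\equiv 1$, and then close by H\"older in $(x,t)$ exactly as you describe --- the two highest-frequency factors in $L^\infty_x L^2_t$ via the smoothing embedding of Lemma~\ref{l34}, the remaining four in $L^4_x L^\infty_t$. (One minor arithmetic slip: $a\in S_\epsilon^s$ means $a(\mu)\sim(1+\mu^2)^s\sim\mu^{2s}$, so the weight $b_4\cdot\xi_{456}$ contributes $\sim\mu^{2s}$, not $\mu^{s}$; this is still exactly what two $F^{l,s}$-factors at frequency $\sim\mu$ absorb, so the conclusion is unaffected.)

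The gap in your outline is the boundary effect from the fixed time interval. After extending $u$ off $[-T,T]$ and inserting the sharp cutoff $1_{[0,T]}(t)$, the product $\gamma(2^{k_6}t-n)1_{[0,T]}(t)$ is no longer smooth for the $O(1)$ indices $n$ whose window straddles $\{0\}$ or $\{T\}$. For those windows the sharp cutoff destroys the modulation localization, and the $l^1$-in-modulation $B_k$ norm of the cut-off piece cannot be controlled directly; the paper handles this by observing that sharp time truncation is still bounded on an $l^\infty$-in-modulation surrogate, $\sup_j 2^{j/2}\norm{\eta_j(\tau-\omega(\xi))\ft[1_I(t)\cdot\cdot]}_{L^2}\les\norm{\cdot}_{B_k}$, and then running a separate modulation-splitting argument (distinguishing $j_6\ges k_6$ from $j_6\les k_6$, using the resonance constraint $|j_{\max}-j_{\text{sub}}|\le 10$ and a Cauchy--Schwarz bound in place of H\"older) for those finitely many $n$. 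Without some such device the sum over $n$ does not close, since you cannot apply \eqref{eq:pBk3} to a discontinuous multiplier. You should also note that the paper first symmetrizes $R_6$ by averaging with the representation obtained after swapping $(\xi_1,\xi_2,\xi_3)\leftrightarrow(\xi_4,\xi_5,\xi_6)$, which puts the two triples on an equal footing and is what makes the three-case decomposition (high pair in the first triple, in the second, or one in each) natural; your plan implicitly does this by symmetry but does not state it, and the paper's Case 3 uses the symmetrized difference of the two $b_4$'s together with the derivative bound \eqref{eq:b4dbd} from Proposition~\ref{exb4}, not just the size bound \eqref{eq:b4bd}.
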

\begin{proof}
We first fix extension $\wt u \in C_0(\R:H^\infty)$ of $u$ such that
$\norm{R_k(\wt u)}_{F_k}\leq 2\norm{R_k(u)}_{F_k(T)}$, $k\in \Z$. It
suffices to prove that
\begin{equation}\label{eq:6linear2}
\aabs{\int_0^T R_6(\wt u)dt}\les \norm{\wt
u}_{F^{1/4-,1/4+}}^4\norm{\wt u}_{F^{l,s}}^2.
\end{equation}
For simplicity of the notations we still write $\wt u=u$. From
symmetry, we get
\begin{eqnarray*}
CR_6(u)&=&\int_{\Gamma_6}
[b_4(\xi_1,\xi_2,\xi_3,\xi_4+\xi_5+\xi_6)(\xi_4+\xi_5+\xi_6)\\
&&-b_4(-\xi_4,-\xi_5,-\xi_6,\xi_4+\xi_5+\xi_6)(\xi_4+\xi_5+\xi_6)]\prod_{j=1}^6
\widehat{u}(\xi_j).
\end{eqnarray*}
Localizing $|\xi_j|\sim N_j=2^{k_j}$ and using symmetry, we may
assume $N_1\leq N_2\leq N_3$, $N_4\leq N_5\leq N_6$ and
$\max(N_j)\sim \sub(N_j)\ges 1$ where $\max(N_j)$ and $\sub(N_j)$
are the maximum and second-maximum of $N_j,j=1,2,\ldots,6$. Let
$u_{k}=R_k(u)$ and $\xi_{456}=\xi_4+\xi_5+\xi_6$. Thus
\begin{eqnarray}\label{eq:6linear3}
\aabs{\int_0^T R_6(u)dt}&\les& \sum_{N_j}\abs{\int_0^T
\int_{\Gamma_6}
[b_4(\xi_1,\xi_2,\xi_3,\xi_{456})(\xi_{456})\nonumber\\
&&-b_4(-\xi_4,-\xi_5,-\xi_6,\xi_{456})(\xi_{456})]\prod_{j=1}^6
\widehat{u_{k_j}}(\xi_j)dt}.
\end{eqnarray}
Let $\gamma:\R\ra [0,1]$ denote a positive smooth function supported
in $[-1,1]$ with the property that
\[\sum_{n\in \Z}\gamma^6(x-n)\equiv 1, \quad x \in \R.\]
We will bound \eqref{eq:6linear2} in several cases.

{\bf Case 1.} $N_3\les N_5, N_6$ and $N_5\sim N_6 \ges 1$. Then we
get that the right-hand side of \eqref{eq:6linear3} is bounded by
\begin{eqnarray}\label{eq:6linear4}
&&\sum_{N_j}\sum_{|n|\les 2^{k_6}}\abs{\int_\R \int_{\Gamma_6}
[b_4(\xi_1,\xi_2,\xi_3,\xi_{456})-b_4(-\xi_4,-\xi_5,-\xi_6,\xi_{456})]\nonumber\\
&&\quad \xi_{456}[\gamma(2^{k_6}t-n)1_{[0,T]}(t)
\widehat{u_{k_1}}(\xi_1)]\prod_{j=2}^6[\gamma(2^{k_6}t-n)
\widehat{u_{k_j}}(\xi_j)]dt}.
\end{eqnarray}
We observe first that
\[|A|=|\{n: \gamma(2^{k_6}t-n)1_{[0,T]}(t)\ne \gamma(2^{k_6}t-n), 0,\ \forall\ t \in \R\}|\leq 4.\]
Let $f_{k_j}(\xi,\tau)=\ft[\gamma(2^{k_6}t-n)R_{k}u]$,
$j=1,2,\ldots,6$. Using proposition \ref{exb4} and Plancherel's
theorem, we easily get that the summation for $n\in A^c$ of
\eqref{eq:6linear4} is bounded by
\begin{eqnarray}\label{eq:6linear5}
\sum_{N_j}\sum_{|n|\les 2^{k_6},n\in
A^c}(a(N_3)N_3^{-1}+a(N_6)N_6^{-1})N_3\aabs{
\int_{\Gamma_6(\R^2)}\prod_{j=1}^6 f_{k_j}(\xi_j,\tau_j)}.
\end{eqnarray}
Using H\"older's inequality and the embedding properties of $B_k$,
we get
\begin{eqnarray*}
\aabs{ \int_{\Gamma_6(\R^2)}\prod_{j=1}^6
f_{k_j}(\xi_j,\tau_j)}&\les&
\prod_{j=1}^4\norm{\ft^{-1}(f_{k_j})}_{L_x^4L_t^\infty}\\
&&\norm{\ft^{-1}(f_{k_5})}_{L_x^\infty
L_t^2}\norm{\ft^{-1}(f_{k_6})}_{L_x^\infty L_t^2}\\
&\les&
N_5^{-1}\prod_{j=1}^4N_j^{1/4}\prod_{j=1}^6\norm{f_{k_j}}_{B_k}.
\end{eqnarray*}
Then we can bound \eqref{eq:6linear5} by
\begin{eqnarray*}
&&\sum_{N_j}(a(N_3)N_3^{-1}+a(N_6)N_6^{-1})N_3\prod_{j=1}^4N_j^{1/4}\prod_{j=1}^6\norm{f_{k_j}}_{F_k}\\
&\les& \norm{u}_{F^{1/4-,1/4+}}^4\norm{u}_{F^{l,s}}^2,
\end{eqnarray*}
which is \eqref{eq:6linear2} as desired.

For the summation of $n\in A$, we observe that if $I\subset \R$ is
an interval, $k\in \Z$, $f_k\in B_k$, and $f_k^I=\ft(1_I(t)\cdot
\ft^{-1}(f_k))$ then
\[\sup_{j\in \Z_+}2^{j/2}\norm{\eta_j(\tau-\omega(\xi))\cdot f_k^I}_{L^2}\les \norm{f_k}_{B_k}.\]
Let
$f_{k_i,j_i}=\eta_{j_i}(\tau-\omega(\xi))\ft[\gamma(2^{k_6}t-n)R_{k}u]$,
$i=1,2,\ldots,5$, and
$f_{k_6,j_6}=\eta_{j_6}(\tau-\omega(\xi))\ft[\gamma(2^{k_6}t-n)1_{[0,T]}(t)R_{k}u]$.
If $j_6\geq 100k_6$, then by checking the support properties we get
$\int_{\Gamma_6(\R^2)}\prod_{i=1}^6 f_{k_i,j_i}(\xi_j,\tau_j)\equiv
0$ unless $|j_{max}-j_{sub}|\leq 10$ and $j_{max}\geq 100k_6$, where
$j_{max}$ and $j_{sub}$ are the maximum and sub-maximum of
$j_1,j_2,\ldots,j_6$. By using Cauchy-Schwarz inequality, we get
that
\begin{eqnarray*}
&&\sum_{j_i}\aabs{\int_{\Gamma_6(\R^2)}\prod_{i=1}^6
f_{k_i,j_i}(\xi_j,\tau_j)}\\
&\les& 2^{(k_1+\ldots +k_6)/2}2^{(j_1+\ldots
+j_6)/2}2^{-(j_{max})}\prod_{i=1}^6\norm{f_{k_i,j_i}}_{L^2}\\
&\les&
N_5^{-1}\prod_{j=1}^4N_j^{1/4}\prod_{j=1}^6\norm{f_{k_j}}_{F_k},
\end{eqnarray*}
which is acceptable. If $j_6\leq 100k_6$ then we argue as before for
$n\in A^c$, hence we get that
\begin{eqnarray*}
\sum_{j_i}\aabs{\int_{\Gamma_6(\R^2)}\prod_{i=1}^6
f_{k_i,j_i}(\xi_j,\tau_j)} \les
k_6N_5^{-1}\prod_{j=1}^4N_j^{1/4}\prod_{j=1}^6\norm{f_{k_j}}_{F_k},
\end{eqnarray*}
which combined with \eqref{eq:6linear5} gives \eqref{eq:6linear2}.

{\bf Case 2.} $N_6\les N_2,N_3$ and $N_2\sim N_3\ges 1$. From
symmetry, this case is identical to Case 1. We omit the details.

{\bf Case 3.} $N_2,N_5\ll N_3,N_6$ and $N_6\sim N_3\ges 1$. From the
proof of Proposition \ref{exb4}, we get that
\[|b_4(\xi_1,\xi_2,\xi_3,\xi_{456})-b_4(-\xi_4,-\xi_5,-\xi_6,\xi_{456})|\cdot|\xi_{456}|\les a(N_3)+a(N_6)N_6^{-1}.\]
Then following the same argument as in Case 1, we obtain
\eqref{eq:6linear2} as desired.
\end{proof}

\begin{lemma}[Lemma 5.5, \cite{KochTataru}]\label{Hsre}
There is a sequence $\{\beta_\lambda\}$ with the following
properties:

(a) $\lambda^{2s}\norm{P_{\lambda}(u_0)}_{L^2}^2\leq \beta_\lambda
\norm{u_0}_{H^s}^2$,

(b) $\sum \beta_\lambda \les 1$,

(c) $\beta_\lambda$ is slowly varying in the sense that
\[|\log_2{\beta_\lambda}-\log_2{\beta_\mu}|\leq \half \epsilon |\log_2\lambda-\log_2\mu|.\]
\end{lemma}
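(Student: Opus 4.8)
The plan is to obtain $\{\beta_\lambda\}$ by regularizing the naive frequency envelope of $u_0$. First I would set, for each dyadic number $\lambda = 2^k$,
\[
\alpha_\lambda = \frac{\lambda^{2s}\norm{P_\lambda(u_0)}_{L^2}^2}{\norm{u_0}_{H^s}^2},
\]
so that $\alpha_\lambda$ automatically satisfies (a) with equality, and $\sum_\lambda \alpha_\lambda \les 1$ by the $\ell^2$ Littlewood--Paley characterization of $H^s$ (the hypothesis $s\geq 0$ being used only to sum the low dyadic scales $\lambda\leq 1$). The sequence $\{\alpha_\lambda\}$ need not be slowly varying, however --- all the $H^s$--mass of $u_0$ may sit in a single dyadic block --- so I would replace it by the mollified sequence
\[
\beta_\lambda = \sum_{\mu}\, 2^{-\frac{\epsilon}{2}\abs{\log_2\lambda-\log_2\mu}}\,\alpha_\mu,
\]
the sum running over all dyadic $\mu$; this is the convolution of $\{\alpha_\mu\}$ with the two-sided geometric weight $n\mapsto 2^{-\epsilon\abs{n}/2}$ in the variable $n=\log_2\mu$.

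Property (a) is then immediate, since keeping only the $\mu=\lambda$ term gives $\beta_\lambda\geq\alpha_\lambda$, whence $\lambda^{2s}\norm{P_\lambda(u_0)}_{L^2}^2 = \alpha_\lambda\norm{u_0}_{H^s}^2 \leq \beta_\lambda\norm{u_0}_{H^s}^2$. For (b) I would interchange the two summations,
\[
\sum_\lambda\beta_\lambda = \sum_\mu\alpha_\mu\sum_\lambda 2^{-\frac{\epsilon}{2}\abs{\log_2\lambda-\log_2\mu}} = \Big(\sum_{n\in\Z}2^{-\epsilon\abs{n}/2}\Big)\sum_\mu\alpha_\mu,
\]
and observe that the inner geometric series is finite and independent of $\mu$, so that $\sum_\lambda\beta_\lambda\les \sum_\mu\alpha_\mu \les 1$.

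For (c), the elementary observation is that $\big|\,\abs{\log_2(2\lambda)-\log_2\mu}-\abs{\log_2\lambda-\log_2\mu}\,\big|\leq 1$ for all dyadic $\lambda,\mu$, so each weight occurring in $\beta_{2\lambda}$ differs from the corresponding weight in $\beta_\lambda$ by at most a factor $2^{\pm\epsilon/2}$; summing in $\mu$ yields $2^{-\epsilon/2}\beta_\lambda\leq\beta_{2\lambda}\leq 2^{\epsilon/2}\beta_\lambda$, i.e.\ $\abs{\log_2\beta_{2\lambda}-\log_2\beta_\lambda}\leq\epsilon/2$. The general case follows by telescoping along the dyadic chain joining $\lambda$ to $\mu$ and applying the triangle inequality: if $\abs{\log_2\lambda-\log_2\mu}=N$ then $\abs{\log_2\beta_\lambda-\log_2\beta_\mu}\leq N\cdot\epsilon/2 = \frac{\epsilon}{2}\abs{\log_2\lambda-\log_2\mu}$. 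I do not anticipate a real obstacle here --- the argument is entirely soft --- the only point requiring some care being the bound $\sum_\lambda\alpha_\lambda\les 1$, which when $s=0$ must be read as the almost-orthogonality estimate $\sum_\lambda\norm{P_\lambda(u_0)}_{L^2}^2\les\norm{u_0}_{L^2}^2$ rather than deduced from $\sum_{k\leq 0}2^{2sk}<\infty$.
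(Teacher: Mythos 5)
Your construction is correct: the mollified envelope $\beta_\lambda=\sum_\mu 2^{-\frac{\epsilon}{2}|\log_2\lambda-\log_2\mu|}\alpha_\mu$ satisfies (a) by positivity of the terms, (b) by Tonelli and the summability of the geometric weight together with $\sum_\mu\alpha_\mu\les 1$, and (c) by the one-step ratio bound plus telescoping (the degenerate case $u_0=0$ being trivial, and the implicit constant in (b) depending only on $\epsilon$, which is harmless). Note that the paper itself offers no proof of this lemma --- it is quoted directly as Lemma 5.5 of \cite{KochTataru} --- and your argument is essentially the standard slowly-varying frequency-envelope construction used there.
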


\noindent{\bf Proof of Proposition \ref{energyes}.} \quad In view of
the definition, we get
\begin{eqnarray}
\norm{u}_{E^{l,s}(T)}^2\les \norm{P_{\leq
0}u_0}_{\dot{H}^l}^2+\sum_{k\geq 1}\sup_{t\in
[-T,T]}2^{2ks}\norm{P_k(u(t))}_{L^2}^2.
\end{eqnarray}
We will prove that if $k\geq 1$ then
\begin{eqnarray}\label{eq:energy2}
\sup_{t\in [-T,T]}2^{2ks}\norm{P_k(u(t))}_{L^2}^2\les
\beta_k(\norm{P_{\geq 1}u_0}_{H^s}^2+\norm{u}_{F^{\rev 4-,\rev
4+}(T)}^4\cdot \norm{u}_{F^{l,s}(T)}^2),
\end{eqnarray}
which suffices to prove Proposition \ref{energyes} in view of Lemma
\ref{Hsre} (b). In order to prove \eqref{eq:energy2} for some fixed
$k_0$ we define the sequence
\[a_k=2^{2ks}\max(1,\beta_{k_0}^{-1}2^{-\epsilon|k-k_0|}).\]
Using the slowly varying condition (iii), then we get
\begin{eqnarray*}
\sum_{k\geq 1}a_k\norm{P_k(u_0)}_{L^2}^2&\les&
\sum_{k}2^{2ks}\norm{P_k(u_0)}_{L^2}^2\\
&&+2^{-\epsilon|k-k_0|/2}2^{2ks}\beta_k^{-1}\norm{P_k(u_0)}_{L^2}^2\\
&\les& \norm{P_{\geq 1}(u_0)}_{H^s}^2.
\end{eqnarray*}
We may assume that $\beta_0=1$. Then we see that $\max(|\beta_k|,
|\beta_k^{-1}|)\leq 2^{k\epsilon/2}$. Correspondingly we find a
function $a(\xi)\in S_\epsilon^s$ so that
\[a(\xi)\sim a_k,\quad |\xi|\sim 2^k.\]
Thus we apply Proposition \ref{diffenergy}, \ref{6linear} for
$a(\xi)\eta_{\geq 1}(\xi)$, then we get
\[\sup_{t\in [-T,T]}|E_0(u(t)+E_1(u(t))|\leq |E_0(u_0)+E_1(u_0)|+\left|\int_{-T}^T R_6(u)dt\right|,\]
from which we see that
\[\sup_{t\in [-T,T]}|E_0(u(t)|\leq |E_0(u_0)|+\norm{u}_{F^{1/4-,1/4+}(T)}^4\norm{u}_{F^{l,s}(T)}^2.\]
Therefore, we get
\[\brk{\sum_{k\geq 1}a_k\norm{P_k(u(t))}_{L^2}^2}\les \norm{P_{\geq 1}u_0}_{H^s}^2+\norm{u}_{F^{1/4-,1/4+}(T)}^4\norm{u}_{F^{l,s}(T)}^2,\]
which at $k=k_0$ gives \eqref{eq:energy2} as desired.
\endprf

\section{Proof of Theorem \ref{aprioribound}}

In this section we devote to prove Theorem \ref{aprioribound}. The
main ingredients are energy estimates and short-time trilinear
estimates. The idea is due to Ionescu, Kenig and Tataru \cite{IKT}.

\begin{proposition}\label{p91}
Let $l,s\geq 0$, $T\in (0,1]$, and $u\in F^{l,s}(T)$, then
\begin{equation}
\sup_{t\in [-T,T]}\norm{u(t)}_{\dot{H}^l\cap \dot{H}^s}\les\
\norm{u}_{F^{l,s}(T)}.
\end{equation}
\end{proposition}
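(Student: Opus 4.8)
The plan is to reduce the statement to a frequency-localized embedding at the level of the spaces $F_k(T)$ and $N_k(T)$, namely that for each $k\in\Z$ and each $f\in F_k(T)$ one has
\[
\sup_{t\in[-T,T]}\norm{R_k(f)(t)}_{L^2}\les \norm{R_k(f)}_{F_k(T)}.
\]
Granting this, the proposition follows immediately: squaring, multiplying by $2^{2lk}$ for $k\le 0$ and by $2^{2sk}$ for $k\ge 1$, and summing in $k$ gives $\sup_{t}\norm{u(t)}_{\dot H^l\cap\dot H^s}^2\les\norm{u}_{F^{l,s}(T)}^2$ by the very definition of $F^{l,s}(T)$ and of the pair of homogeneous norms on the left (the $\dot H^l$ part being controlled by the low-frequency block $k\le 0$ and the $\dot H^s$ part by the block $k\ge 1$, exactly matching the two sums in $\norm{u}_{F^{l,s}}^2$). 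So the whole content is the single-dyadic-piece bound.

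To prove the dyadic bound, first I would pass from $F_k(T)$ to $F_k$: by definition of the local space there is an extension $\wt f$ with $\wt f=R_k(u)$ on $\R\times[-T,T]$ and $\norm{\wt f}_{F_k}\le 2\norm{R_k(u)}_{F_k(T)}$, so it suffices to bound $\sup_{t\in[-T,T]}\norm{\wt f(t)}_{L^2}$, and since $[-T,T]$ has length $\le 2$ while the $F_k$ norm involves a supremum over translates of the cutoff $\eta_0(2^{k_+}(t-t_k))$ at scale $2^{-k_+}$, we may localize in time: for any $t_0\in[-T,T]$ pick $t_k$ with $|t_0-t_k|\le 2^{-k_+}$, so that $\eta_0(2^{k_+}(t_0-t_k))\sim 1$, and then
\[
\norm{\wt f(t_0)}_{L^2}\les \norm{\big(\wt f\cdot\eta_0(2^{k_+}(\cdot-t_k))\big)(t_0)}_{L^2}.
\]
Now set $g=\wt f\cdot\eta_0(2^{k_+}(\cdot-t_k))$, so $\ft(g)\in B_k$ with $\norm{\ft(g)}_{B_k}\le\norm{\wt f}_{F_k}$. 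It remains to show $\norm{g(t_0)}_{L^2}\les\norm{\ft(g)}_{B_k}$ for any fixed $t_0$. Writing $g(t_0,\cdot)$ via the inverse Fourier transform in $\tau$, $\ft_x g(\xi,t_0)=\int \ft(g)(\xi,\tau)e^{it_0\tau}\,d\tau$, Plancherel in $x$ gives $\norm{g(t_0)}_{L^2}=\norm{\int\ft(g)(\xi,\tau)e^{it_0\tau}d\tau}_{L^2_\xi}\le\norm{\int|\ft(g)(\xi,\tau)|\,d\tau}_{L^2_\xi}$, which is exactly the quantity controlled by $\norm{\ft(g)}_{B_k}$ in inequality \eqref{eq:pBk1}. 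This closes the estimate.

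The only delicate point — and the step I would expect to need the most care — is the time-localization reduction from $F_k(T)$: one must check that restricting the $F_k$-supremum to a single well-chosen translate $t_k$ only loses a constant, i.e. that the pointwise-in-time $L^2$ norm at $t_0$ is genuinely dominated by one term $\norm{\ft[\wt f\cdot\eta_0(2^{k_+}(\cdot-t_k))]}_{B_k}$ rather than requiring the full family. This is where the fact that $\eta_0$ equals $1$ on $[-5/4,5/4]$ and the choice $|t_0-t_k|\le 2^{-k_+}$ make $\eta_0(2^{k_+}(t_0-t_k))$ bounded below, so multiplication by it is harmless at the point $t_0$; for $k\le 0$ the scale $2^{-k_+}=1$ and the interval $[-T,T]\subset[-1,1]$ is itself short enough that a single $t_k$ suffices. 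Everything else is the bookkeeping of \eqref{eq:pBk1} and the definitions, so the proof is short once this reduction is made cleanly.
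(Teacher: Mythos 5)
Your proof is correct and follows essentially the same route as the paper: reduce to a single dyadic block in $F_k$, cut off in time at scale $2^{-k_+}$ around the evaluation point so that the cutoff equals $1$ there, and then apply the elementary $B_k$ bound \eqref{eq:pBk1} to $\int|\ft(g)(\xi,\tau)|\,d\tau$. The only difference is cosmetic: the paper simply takes $t_k=t_0$ so that $\eta_0(0)=1$ exactly, which removes the need for the ``bounded below'' discussion in your last paragraph.
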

\begin{proof}
In view of the definitions, it suffices to prove that if $k\in \Z$,
$t_k\in [-1,1]$, and $\wt u_k \in F_k$ then
\begin{equation}
\norm{\ft[\wt u_k(t_k)]}_{L_\xi^2}\les \norm{\ft[\wt u_k\cdot
\eta_0(2^{k_+}(t-t_k))]}_{B_k}.
\end{equation}
Let $f_k=\ft[\wt u_k\cdot \eta_0(2^{k_+}(t-t_k))]$, so
\[\ft[\wt u_k(t_k)](\xi)=c\int_\R f_k(\xi,\tau)e^{it_k\tau}d\tau.\]
From the definition of $B_k$, we get that
\[ \norm{\ft[\wt
u_k(t_k)]}_{L_\xi^2}\les \normo{\int_\R
|f_k(\xi,\tau)|d\tau}_{L_\xi^2}\les \norm{f_k}_{B_k},\] which
completes the proof of the proposition.
\end{proof}

\begin{proposition}\label{p92}
Assume $T\in (0,1]$, $u,v\in C([-T,T]:H^\infty)$ and
\begin{equation}\label{eq:lBO}
u_t+\Hl u_{xx}=v \mbox{ on } \R^2\times (-T,T).
\end{equation}
Then for any $l,s\geq 0$,
\begin{equation}\label{eq:p921}
\norm{u}_{F^{l,s}(T)}\les \
\norm{u}_{E^{l,s}(T)}+\norm{v}_{N^{l,s}(T)}.
\end{equation}
\end{proposition}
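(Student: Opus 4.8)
The plan is to decompose into frequency pieces $R_k u$ and reduce \eqref{eq:p921} to the uniform dyadic estimate
\[
\norm{R_k u}_{F_k(T)}\les \norm{R_k(u(0))}_{L^2}+\norm{R_k v}_{N_k(T)},\qquad k\in\Z,
\]
with implicit constant independent of $k,T,u,v$. Granting this, \eqref{eq:p921} follows by summation with the weights $2^{2lk}$ ($k\le 0$) and $2^{2sk}$ ($k\ge1$): since $0\in[-T,T]$ one has $2^{2sk}\norm{R_k(u(0))}_{L^2}^2\le\sup_{t_k\in[-T,T]}2^{2sk}\norm{R_k(u(t_k))}_{L^2}^2$, so the high-frequency square sum of the first terms is $\les\norm{u}_{E^{l,s}(T)}^2$; for $k\le 0$ the projections $R_k$ have essentially disjoint Fourier supports whose union is $\{0<|\xi|<3/2\}$, hence $\sum_{k\le 0}2^{2lk}\norm{R_k(u(0))}_{L^2}^2\sim\norm{R_{\le 0}(u(0))}_{\dot H^l}^2$; and the square sum of the second terms is $\norm{v}_{N^{l,s}(T)}^2$ by definition.

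To establish the dyadic estimate, fix $k$. Since $R_k$ commutes with the linear evolution, $R_k u$ solves $\partial_t(R_k u)+\Hl(R_k u)_{xx}=R_k v$ on $(-T,T)$. Using the definition of $N_k(T)$ as an infimum, choose an extension $\wt v$ of $R_k v$ with $\norm{\wt v}_{N_k}\le 2\norm{R_k v}_{N_k(T)}$. By Duhamel's formula and uniqueness for the linear equation, the function $w(t)=W(t)(R_k u)(0)+\int_0^t W(t-s)\wt v(s)\,ds$ coincides with $R_k u$ on $[-T,T]\subset[-1,1]$, and $\widehat w$ is supported in $I_k\times\R$; so, fixing $\psi\in\Sch(\R)$ with $\psi\equiv 1$ on $[-1,1]$, the function $\psi(t)w(t)$ is an admissible extension of $R_k u$, whence
\[
\norm{R_k u}_{F_k(T)}\le \norm{\psi(t)W(t)(R_k u)(0)}_{F_k}+\normo{\psi(t)\int_0^t W(t-s)\wt v(s)\,ds}_{F_k}.
\]

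The first term is the frequency-localized analogue of Lemma \ref{l61}. Using $\ft_x[W(t)\phi](\xi,t)=e^{it\omega(\xi)}\widehat\phi(\xi)$, one checks that for every $t_k$ the function $\ft[\psi(t)\eta_0(2^{k_+}(t-t_k))W(t)(R_k u)(0)](\xi,\tau)$ is $\widehat{R_k u(0)}(\xi)$ times a function of $\tau-\omega(\xi)$ bounded, uniformly in $t_k$, by $2^{-k_+}(1+2^{-k_+}|\tau-\omega(\xi)|)^{-N}$ for any $N$; inserting this into the $B_k$ norm, the defining $l^1$-type series over the modulation index converges geometrically, so that $\norm{\psi(t)W(t)(R_k u)(0)}_{F_k}\les\norm{R_k(u(0))}_{L^2}$, uniformly in $k$.

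The second term is the crux, and is the short-time ($2^{-k_+}$-scale) analogue of Lemma \ref{l62}. Passing to the Fourier side exactly as in the proof of that lemma, the retarded operator is realized by the kernel
\[
\frac{\widehat\psi(\tau-\tau')-\widehat\psi(\tau-\omega(\xi))}{\tau'-\omega(\xi)}
\]
applied to $\ft(\wt v)$, and its only potential singularity, at $\tau'=\omega(\xi)$, is precisely compensated by the weight $(\tau-\omega(\xi)+i2^{k_+})^{-1}$ built into the definition of $N_k$. Decomposing $\wt v$ dyadically in the modulation $\tau'-\omega(\xi)$ and estimating the resulting pieces in the $B_k$ norm in the manner of the proof of Lemma 5.2 of \cite{IK} and the retarded estimates of \cite{IKT} --- using \eqref{eq:pBk1}--\eqref{eq:pBk3} and Lemma \ref{l31} --- yields $\normo{\psi(t)\int_0^t W(t-s)\wt v(s)\,ds}_{F_k}\les\norm{\wt v}_{N_k}\le 2\norm{R_k v}_{N_k(T)}$, uniformly in $k$. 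Adding the two bounds gives the dyadic estimate, and hence the proposition. The main difficulty is precisely this retarded estimate: one must check that the time localization built into the $F_k$ and $N_k$ norms --- a supremum over translates $t_k$ of windows of length $\sim 2^{-k_+}$ --- is compatible, uniformly over all frequencies $k\in\Z$ (including the low ones, where $2^{k_+}=1$), with the long-time Duhamel integral $\int_0^t$.
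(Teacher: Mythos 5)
Your reduction to a uniform dyadic estimate is the right shape, and the low-frequency part ($k\le 0$, where $2^{k_+}=1$ and a single window of length $\sim 1$ covers $[-T,T]$) is fine. But for $k\ge 1$ your claimed dyadic bound
\[
\norm{R_k u}_{F_k(T)}\ \les\ \norm{R_k(u(0))}_{L^2}+\norm{R_k v}_{N_k(T)}
\]
is \emph{false}, and this is a genuine gap, not a cosmetic one: it is exactly the reason the energy space $E^{l,s}$ carries the factor $\sup_{t_k\in[-T,T]}2^{2sk}\norm{R_k(u(t_k))}_{L^2}^2$ for $k\ge 1$ rather than the value at $t=0$ alone. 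To see the failure, take $u(0)=0$, $\phi_0$ with $\widehat\phi_0$ supported in $I_k$ and $v(t)=W(t)\phi_0$ (a resonant source). Then $u(t)=tW(t)\phi_0$ solves \eqref{eq:lBO}, and since $\norm{u}_{F_k(T)}\ges\sup_{t}\norm{u(t)}_{L^2}\sim T\norm{\phi_0}_{L^2}$ (by the analogue of Proposition \ref{p91}), the left-hand side is $\sim\norm{\phi_0}_{L^2}$ for $T\sim 1$. On the other hand, the Fourier support of $\ft[v\cdot\eta_0(2^k(t-t_k))]$ lies in modulation $\les 2^k$, where the weight $(\tau-\omega(\xi)+i2^k)^{-1}$ is $\sim 2^{-k}$, which gives $\norm{R_kv}_{N_k(T)}\sim 2^{-k}\norm{\phi_0}_{L^2}$. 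So your inequality would force $\norm{\phi_0}_{L^2}\les 2^{-k}\norm{\phi_0}_{L^2}$, impossible for $k$ large.

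The structural reason is the one you yourself flag at the end but then gloss over. The $F_k$ norm is a supremum over window centers $t_k\in\R$, while your extension $\psi(t)\big(W(t)u(0)+\int_0^t W(t-s)\wt v\big)$ centers Duhamel at $t=0$. For a window about $t_k$ with $|t_k|\gg 2^{-k_+}$, the integral $\int_0^t$ accumulates $\sim 2^{k_+}$ windows' worth of source, which the supremum-over-windows $N_k$ norm simply does not control. Adapting Lemma \ref{l62} / Lemma 5.2 of \cite{IK} (which are set up with the single global cutoff $\psi$) does not resolve this, because those lemmas work with the global-time $Z_k$ spaces rather than the short-time $F_k$ norm. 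The paper's proof resolves it by a different extension: it builds $\wt u$ that equals $u$ on $[-T,T]$ and is obtained by solving the cut-off linear equation outward from $\pm T$, shows via \eqref{eq:Sk} and \eqref{eq:pBk3} that the supremum in $F_k$ may be restricted to $t_k\in[-T,T]$, and then for each such $t_k$ \emph{re-centers Duhamel at $t_k$}: $u(t)=W(t-t_k)u(t_k)+\int_{t_k}^t W(t-s)v\,ds$. This reduces to the single-window estimate $\norm{\ft[u_k\eta_0(2^{k_+}t)]}_{B_k}\les\norm{\phi_k}_{L^2}+\norm{(\tau-\omega(\xi)+i2^{k_+})^{-1}\ft(v_k)}_{B_k}$ with initial data $\phi_k=R_k u(t_k)$ --- and it is precisely $\sup_{t_k\in[-T,T]}\norm{R_ku(t_k)}_{L^2}$, i.e.\ the $E^{l,s}$ norm, that feeds this. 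Your proof would need to replace $\norm{R_k(u(0))}_{L^2}$ by this supremum and carry out the re-centering; as written the retarded estimate cannot go through.
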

\begin{proof}
In view of the definitions, we see that the square of the right-hand
side of \eqref{eq:p921} is equivalent to
\begin{eqnarray}
&&\sum_{k\leq
0}\big(2^{2lk}\norm{R_k(u(0))}_{L^2}^2+2^{2lk}\norm{R_k(v)}_{N_k(T)}^2\big)\nonumber\\
&&+\sum_{k\geq 1}\big(\sup_{t_k\in
[-T,T]}2^{2sk}\norm{R_k(u(t_k))}_{L^2}^2
+2^{2sk}\norm{R_k(v)}_{N_k(T)}^2\big).
\end{eqnarray}
Thus, from definitions, it suffices to prove that if $k\in \Z$ and
$u,v \in C([-T,T]:H^\infty)$ solve \eqref{eq:lBO}, then
\begin{eqnarray}\label{eq:retardlinear}
\begin{array}{l}
\norm{R_k(u)}_{F_k(T)}\les
\norm{R_k(u(0))}_{L^2}+\norm{R_k(v)}_{N_k(T)} \mbox{ if } k\leq 0;\\
\norm{R_k(u)}_{F_k(T)}\les \sup_{t_k\in
[-T,T]}\norm{R_k(u(t_k))}_{L^2}+\norm{R_k(v)}_{N_k(T)} \mbox{ if }
k\geq 1.
\end{array}
\end{eqnarray}

Let $\wt v$ denote an extension of $R_k(v)$ such that $\norm{\wt
v}_{N_k}\leq C\norm{v}_{N_k(T)}$. Using \eqref{eq:Sk}, we may assume
that $\wt v$ is supported in $\R\times
[-T-2^{-k_+-10},T+2^{-k_+-10}]$, $k\in \Z$. Indeed, let $\beta(t)$
be a smooth function such that
\[\beta(t)=1, \mbox{ if }t\geq 1;\quad \beta(t)=0, \mbox{ if } t\leq 0.\]
Thus $\beta(2^{k_++10}(t+T+2^{-k_+-10}))$,
$\beta(-2^{k_++10}(t-T-2^{-k_+-10})) \in S_k$. Then we see that
$\beta(2^{k_++10}(t+T+2^{-k_+-10}))\beta(-2^{k_++10}(t-T-2^{-k_+-10}))
$ is supported in $[-T-2^{-k_+-10},T+2^{-k_+-10}]$, and equal to $1$
in $[-T,T]$. For $t\geq T$ we define
\[\wt u (t)=\eta_0(2^{k_++5}(t-T))[W(t-T)R_k(u(T))+\int_T^tW(t-s)(R_k(\wt v(s)))ds].\]
For $t\leq -T$ we define
\[\wt u (t)=\eta_0(2^{k_++5}(t+T))[W(t+T)R_k(u(-T))+\int_{-T}^tW(t-s)(R_k(\wt v(s)))ds].\]
For $t\in [-T,T]$ we define $\wt u(t)=u(t)$. It is clear that $\wt
u$ is an extension of u. Also, using \eqref{eq:Sk}, we get
\begin{eqnarray}\label{eq:extu}
\norm{u}_{F_k(T)}\les \sup_{t_k\in [-T,T]}\norm{\ft[\wt u \cdot
\eta_0(2^{k_+}(t-t_k))]}_{B_k}.
\end{eqnarray}
Indeed, to prove \eqref{eq:extu}, it suffices to prove that
\begin{eqnarray}
\sup_{t_k\in \R}\norm{\ft[\wt u \cdot
\eta_0(2^{k_+}(t-t_k))]}_{B_k}\les \sup_{t_k\in [-T,T]}\norm{\ft[\wt
u \cdot \eta_0(2^{k_+}(t-t_k))]}_{B_k}.
\end{eqnarray}
For $t_k>T$, since $\wt{u}$ is supported in
$[-T-2^{-k_+-5},T+2^{-k_+-5}]$, it is easy to see that
\[\wt{u}\eta_0(2^{k_+}(t-t_k))=\wt{u}\eta_0(2^{k_+}(t-T))\eta_0(2^{k_+}(t-t_k)).\]
Therefore, we get from \eqref{eq:pBk3} that
\[\sup_{t_k>T}\norm{\ft[\wt u \cdot
\eta_0(2^{k_+}(t-t_k))]}_{B_k}\les \sup_{t_k\in [-T,T]}\norm{\ft[\wt
u \cdot \eta_0(2^{k_+}(t-t_k))]}_{B_k}.\] Using the same way for
$t_k<-T$, we obtain \eqref{eq:extu} as desired.

It remains to prove \eqref{eq:retardlinear}. In view of the
definitions, \eqref{eq:extu} and \eqref{eq:pBk3}, it suffices to
prove that if $k\in \Z$, $\phi_k \in L^2$ with $\widehat{\phi_k}$
supported in $I_k$, and $v_k\in N_k$ then
\begin{eqnarray}
\norm{\ft[u_k\cdot \eta_0(2^{k_+}t)]}_{B_k}\les
\norm{\phi_k}_{L^2}+\norm{(\tau-\omega(\xi)+i2^{k_+})^{-1}\cdot
\ft(v_k)}_{B_k},
\end{eqnarray}
where
\begin{equation}
u_k(t)=W(t)(\phi_k)+\int_0^tW(t-s)(v_k(s))ds.
\end{equation}
Straightforward computations show that
\begin{eqnarray*}
&&\ft[u_k\cdot
\eta_0(2^{k_+}t)](\xi,\tau)=\widehat{\phi_k}(\xi)\cdot
2^{-k_+}\widehat{\eta_0}(2^{-k_+}(\tau-\omega(\xi)))\\
&& +C\int_\R \ft(v_k)(\xi,\tau')\cdot
\frac{2^{-k_+}\widehat{\eta_0}(2^{-k_+}(\tau-\tau'))-2^{-k_+}\widehat{\eta_0}(2^{-k_+}(\tau-\omega(\xi)))}{\tau'-\omega(\xi)}d\tau'.
\end{eqnarray*}
We observe now that
\begin{eqnarray*}
&&\aabs{\frac{2^{-k_+}\widehat{\eta_0}(2^{-k_+}(\tau-\tau'))-2^{-k_+}\widehat{\eta_0}(2^{-k_+}(\tau-\omega(\xi)))}{\tau'-\omega(\xi)}\cdot
(\tau'-\omega(\xi)+i2^{k_+})}\\
&&\les \
2^{-k_+}(1+2^{-k_+}|\tau-\tau'|)^{-4}+2^{-k_+}(1+2^{-k_+}|\tau-\omega(\xi)|)^{-4}.
\end{eqnarray*}
Using \eqref{eq:pBk1} and \eqref{eq:pBk2}, we complete the proof of
the proposition.
\end{proof}

We prove a crucial trilinear estimates in the following proposition.
\begin{proposition}\label{p93}
Let $0\leq l \leq 1/4$ and $s\geq 1/4$. Then
\begin{eqnarray}
\norm{\partial_x(uvw)}_{N^{l,s}(T)}&\les&
\norm{u}_{F^{l,s}(T)}\norm{v}_{F^{1/4,1/4}(T)}\norm{w}_{F^{1/4,1/4}(T)}\nonumber\\
&&+\norm{u}_{F^{1/4,1/4}(T)}\norm{v}_{F^{l,s}(T)}\norm{w}_{F^{1/4,1/4}(T)}\nonumber\\
&&+\norm{u}_{F^{1/4,1/4}(T)}\norm{v}_{F^{1/4,1/4}(T)}\norm{w}_{F^{l,s}(T)}.
\end{eqnarray}
\end{proposition}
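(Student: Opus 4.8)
The plan is to mimic the dyadic reduction in the proof of Proposition \ref{p63}, feeding in the short-time dyadic trilinear estimates of Section 7 (Propositions \ref{p71}--\ref{p76}) in place of the fixed-time ones. First I would pass to the non-local version: extending each dyadic piece $R_{k}u$, $R_{k}v$, $R_{k}w$ separately, so that one extension of $u$ simultaneously almost realizes the $F^{l,s}(T)$ and the $F^{1/4,1/4}(T)$ norms, it suffices to prove $\norm{\partial_x(uvw)}_{N^{l,s}}\les\norm{u}_{F^{l,s}}\norm{v}_{F^{1/4,1/4}}\norm{w}_{F^{1/4,1/4}}+(\text{cyclic})$ for functions on all of $\R\times\R$. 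Writing $u=\sum_{k_1}R_{k_1}u$ and so on and $\partial_x(uvw)=\sum_{k_4}R_{k_4}\partial_x(uvw)$, the goal is then to bound $\sum_{k_1,k_2,k_3}\norm{R_{k_4}\partial_x(R_{k_1}u\,R_{k_2}v\,R_{k_3}w)}_{N_{k_4}}$ for each $k_4$ and sum the result in $\ell^2_{k_4}$ against the weights defining $N^{l,s}$. Since $\partial_x(uvw)$ and the right-hand side are both symmetric functions of $u,v,w$, I may assume $k_1\le k_2\le k_3$ and assign the $F^{l,s}$ weight to the highest-frequency factor and the $F^{1/4,1/4}$ weights $2^{k_1/4},2^{k_2/4}$ to the other two; summing over the six orderings then reproduces the symmetric right-hand side.

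With $k_1\le k_2\le k_3$ fixed the sum over $k_4$ is essentially restricted to $k_4\le k_3+O(1)$, and I split into the usual regions (the analogues of $A_1,\dots,A_6$ in Proposition \ref{p63}, now allowing negative frequencies): all frequencies $O(1)$, where Proposition \ref{p76} applies; $k_3\gg1$ with $|k_4-k_3|\le5$ (output high), subdivided according to whether $k_2\ll k_3$, $k_1\ll k_2\sim k_3$ or $k_1\sim k_2\sim k_3$; and $k_3\gg1$ with $k_4\le k_3-10$ (output low, which forces $k_2\sim k_3$ by the resonance identity $|\Omega|\sim2^{2k_3}$). In each region I substitute the corresponding estimate from Section 7, normalize each factor norm by its assigned weight so the normalized sequences lie in $\ell^2$ with norm controlled by the relevant $F^{\cdot,\cdot}$ norm, and check that the leftover numerical kernel, after summing out the indices comparable to $k_3$ or $k_4$, is a bounded function of the remaining free index with geometric off-diagonal decay. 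The mechanisms producing this are: in the low$\times$high$\to$high and high$\times$high$\to$low regions, the distributed $F^{1/4,1/4}$ weights turn $\min(2^{k_1/2},\jb{k_2})$ (resp.\ $\min(2^{k_4},1)|k_2|$) into the bounded, off-diagonally decaying kernel $\min(2^{k_1/2},\jb{k_2})2^{-k_1/4}2^{-k_2/4}\le2^{(k_1-k_2)/4}\le1$; in the all-low region, the $2^{k_{min}/2}$ in Proposition \ref{p76} forces summability as $k_4\to-\infty$ and, in the worst sub-case $k_4=k_{min}$, leaves the bounded kernel $2^{(l+3/4)k_1}2^{-k_2/4}2^{-lk_3}\le2^{k_3/2}\le1$; and in the output-high case one uses, as always, the dichotomy on $j_{max}$ coming from the resonance function being large. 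Cauchy--Schwarz $\ell^2\times\ell^2\to\ell^1$ in the free indices (the third $\ell^2$ sequence entering through $\ell^\infty\subset\ell^2$), followed by an $\ell^2_{k_4}$ sum, then closes each case.

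The genuinely new difficulties, relative to Proposition \ref{p63} where the low-frequency region was finite-dimensional and hence trivial, are two. First, Propositions \ref{p71}, \ref{p72} and \ref{p74} carry logarithmically lossy factors $\min(2^{k_1/2},\jb{k_2})$, $\min(2^{k_1/2},1)$, $\min(2^{k_4},1)|k_2|$ rather than clean powers; I expect the main point to be checking that in the regime $k_1\ll k_2$ the partial sum $\sum_{k_1\le k_2}\min(2^{k_1/2},\jb{k_2})2^{-k_1/4}$ is $\les2^{k_2/4}$ for $k_2<0$ and $\les\jb{k_2}^{1/2}$ for $k_2\ge0$, so that once paired with the $2^{-k_2/4}$ from the $F^{1/4,1/4}$ weight on the frequency-$k_2$ factor no net logarithm survives --- consistent with $s\ge1/4$ being the natural closed threshold for this trilinear estimate, the strict inequality $s>1/4$ being needed only for the energy estimate of Proposition \ref{energyes}. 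Second, since $F^{l,s}$ now contains genuinely low frequencies carrying the weaker weight $2^{lk}\ge2^{k/4}$ for $k\le0$, one must verify that whenever the highest-frequency factor or the output sits at a very negative frequency the gain $2^{k_{min}/2}$ of Proposition \ref{p76} still dominates $2^{-lk_3}$ and $2^{-lk_4}$; doing this uniformly along the chain $k_4<k_1\le k_2\le k_3\le0$ and its permutations is the remaining routine but tedious point, and is where the hypotheses $0\le l\le1/4$ enter.
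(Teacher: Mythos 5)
Your proposal follows essentially the same path as the paper's proof: fix per-dyadic-block extensions so the $F_k(T)$ norms are almost realized, reduce to a non-local dyadic trilinear sum with $k_1\le k_2\le k_3$, split into the same frequency regions as in Proposition \ref{p63}, feed in Propositions \ref{p71}--\ref{p76}, and close with Schur-type summability of the resulting weight kernels. The paper's write-up is terser (it carries out only the $A_1$ example), whereas you spell out the low-frequency and logarithmic-loss bookkeeping; those checks are correct as stated, so the two proofs are the same argument at different levels of detail.
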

\begin{proof}
Since $R_kR_j=0$ if $k\neq j$, then we can fix extensions $\wt{u},
\wt{v}, \wt{w}$ of $u, v, w$ such that $\norm{R_k(\wt u)}_{F_k}\leq
2\norm{R_k(u)}_{F_k(T)}$, $\norm{R_k(\wt v)}_{F_k}\leq
2\norm{R_k(v)}_{F_k(T)}$ and $\norm{R_k(\wt w)}_{F_k}\leq
2\norm{R_k(w)}_{F_k(T)}$ for any $k\in \Z$. In view of definition,
we get
\begin{eqnarray*}
\norm{\partial_x(\wt u \wt v \wt
w)}_{N^{l,s}}^2&=&\sum_{k_4=-\infty}^{-1}2^{2lk_4}\norm{R_{k_4}(\partial_x(\wt
u \wt v \wt
w))}_{N_{k_4}}^2\\
&&+\sum_{k_4=0}^{\infty}2^{2sk_4}\norm{R_{k_4}(\partial_x(\wt u \wt
v \wt w))}_{N_{k_4}}^2.
\end{eqnarray*}
Let $\wt{u}_{k}=R_{k}(\wt u)$, $\wt{v}_{k}=R_{k}(\wt v)$ and
$\wt{w}_{k}=R_{k}(\wt u)$. Then we get
\begin{eqnarray*}
\norm{R_{k_4}(\partial_x(\wt u \wt v \wt w))}_{N_{k_4}}\les
\sum_{k_1,k_2,k_3\in \Z}\norm{R_{k_4}(\partial_x(\wt u_{k_1} \wt
v_{k_2} \wt w_{k_3}))}_{N_{k_4}}.
\end{eqnarray*}
From symmetry we may assume $k_1\leq k_2\leq k_3$. Dividing the
summation into several parts, we get
\begin{eqnarray}\label{eq:shorttrilinear}
\sum_{k_1\leq k_2\leq k_3}\norm{R_{k_4}(\partial_x(\wt u_{k_1} \wt
v_{k_2} \wt w_{k_3}))}_{N_{k_4}}\leq \sum_{j=1}^6\sum_{\{k_i\}\in
A_j}\norm{R_{k_4}(\partial_x(\wt u_{k_1} \wt v_{k_2} \wt
w_{k_3}))}_{N_{k_4}},\quad
\end{eqnarray}
where $A_j$, $j=1,2,\ldots,6$, are as in the proof of Proposition
\ref{p63}. We will apply Proposition \ref{p71}-\ref{p76} obtained in
the Section 7 to bound the six terms in \eqref{eq:shorttrilinear}.
For example, for the first term, from Proposition 7.1, we have
\begin{eqnarray*}
&&\normb{2^{sk_4}\sum_{\{k_i\}\in A_1}\norm{R_{k_4}(\partial_x(\wt
u_{k_1} \wt
v_{k_2} \wt w_{k_3}))}_{N_{k_4}}}_{l_{k_4}^2}\\
&\leq& C\normb{2^{sk_4}\sum_{k_i\in A_1} \min(2^{k_1/2}, |k_2|+1)
\norm{\wt u_{k_1}}_{F_{k_1}}\norm{\wt v_{k_2}}_{F_{k_2}}\norm{\wt w_{k_3}}_{F_{k_3}}}_{l_{k_4}^2}\\
&\leq& \norm{\wt u}_{F^{1/4,1/4}}\norm{\wt v}_{F^{1/4,1/4}}\norm{\wt
w}_{F^{l,s}}.
\end{eqnarray*}
For the other terms we can handle them in the similar ways.
Therefore we complete the proof of the proposition.
\end{proof}

We prove now Theorem \ref{aprioribound}. Fix $0<l<1/4$ and $s>1/4$.
By the scaling \eqref{eq:scaling} we may assume that
\begin{eqnarray}\label{eq:normphi}
\norm{\phi}_{\dot{H}^l\cap \dot{H}^s}\leq \delta_0/M,
\end{eqnarray}
where $M\gg 1$ and $\delta_0$ is given as in Proposition
\ref{energyes}. For any $T'\in [0,1]$, we denote
$X(T')=\norm{u}_{E^{l,s}(T')}+\norm{\partial_x(u^3)}_{N^{l,s}(T')}$.
We assume first that $X(T')$ is continuous and satisfies
\begin{eqnarray}\label{eq:XTcont}
\lim_{T'\rightarrow 0}X(T')\les \norm{\phi}_{\dot{H}^l\cap
\dot{H}^s}.
\end{eqnarray}
We will prove that
\begin{eqnarray}\label{eq:XTbound}
X(T')\leq 2c\delta_0/M,\quad \mbox{ for any } T'\in [0,1].
\end{eqnarray}
 By using bootstrap (e.g. see \cite{Taolocal}), we may
assume that $X(T')\leq 3c\delta_0/M$ for any $T'\in [0,1]$. It
follows from Proposition \ref{p92}, \ref{p93} that for any $T'\in
[0,1]$ we have
\begin{eqnarray}\label{eq:semipert}
\left\{\begin{array}{l} \norm{u}_{F^{l,s}(T')}\les \norm{u}_{E^{l,s}(T')}+\norm{\partial_x(u^3)}_{N^{l,s}(T')};\\
\norm{\partial_x(u^3)}_{N^{l,s}(T')}\les \norm{u}_{F^{l,s}(T')}^3;\\
\norm{u}^2_{E^{l,s}(T')}\les \norm{\phi}_{\dot{H}^l\cap
\dot{H}^s}^2+\norm{u}_{F^{l,s}(T')}^6.
\end{array}
\right.
\end{eqnarray}
Thus we get
\begin{eqnarray}
X(T')^2\les  \norm{\phi}_{\dot{H}^l\cap \dot{H}^s}^2+X(T')^6,
\end{eqnarray}
from which and \eqref{eq:normphi} and the assumption $X(T')\leq
3c\delta_0/M$, we obtain \eqref{eq:XTbound} as desired. Then, using
\eqref{eq:semipert}, \eqref{eq:XTbound} and Proposition \ref{p91},
we have
\begin{eqnarray}
\norm{u}_{C([-1,1];\dot{H}^l\cap \dot{H}^s)}\les
\norm{\phi}_{\dot{H}^l\cap \dot{H}^s}.
\end{eqnarray}
For general $\phi$ and the $L^2$ norm of the solution, we just use
the scaling \eqref{eq:scaling} and the $L^2$ conservation law
\eqref{eq:L2con}.

It remains to prove that $X(T)$ is continuous and \eqref{eq:XTcont}.
Obviously, for $u\in C([-T,T]:H^\infty)$ the first component
$T'\rightarrow \norm{u}_{E^{l,s}(T')}$ is increasing and continuous
on $[-T,T]$ and
\[\lim_{T'\rightarrow 0}\norm{u}_{E^{l,s}(T')}\les \norm{\phi}_{\dot{H}^l\cap
\dot{H}^s}.\] For the second component it follows from the similar
argument as in the proof of Lemma 4.2 in \cite{IKT}. We omit the
details.

\begin{remark}
From the proof we see that we actually prove a stronger result than
that stated in Theorem \ref{aprioribound}. We expect that some
wellposedness results hold for the initial data in $\dot{H}^l\cap
\dot{H}^s$.
\end{remark}

\noindent{\bf Acknowledgment.} This work was finished under the
patient advising of Prof. Carlos E. Kenig while the author was
visiting the Department of Mathematics at the University of Chicago
under the auspices of China Scholarship Council. The author is
deeply indebted to Prof. Kenig for the many encouragements and
precious advices. The author is also very grateful to Prof. Heping
Liu and Prof. Jianmin Liu, and especially to his advisors Prof.
Lizhong Peng and Prof. Baoxiang Wang for their encouragements,
supports and advisings.


\begin{thebibliography}{99}
\bibitem{ABFS}{L. Abdelouhab, J. L. Bona, M. Felland, J.-C. Saut, Nonlocal models for nonlinear, dispersive waves, Physica D. Nonlinear Phenomena 40 (1989), no. 3, 360-392.}
\bibitem{Ben}{T. B. Benjamin, Internal waves of permanent form in fluids of great depth, The Journal of Fluid Mechanics 29 (1967), 559-592.}
\bibitem{BL}{H. A. Biagioni and F. Linares, Ill-posedness for the derivative Schr\"odinger and generalized Benjamin-Ono equations, Transactions of the American Mathematical Society 353 (2001), no. 9, 3649-3659.}
\bibitem{BP}{N. Burq, F. Planchon, On well-posedness for the Benjamin-Ono equation, preprint, 2005.}

\bibitem{Bour}{J. Bourgain, Fourier transform restriction phenomena for certain
lattice subsets and applications to nonlinear evolution equations I,
II. Geom. Funct. Anal., 3:107-156, 209-262, 1993.}
\bibitem{CCT}{M. Christ, J. Colliander, T. Tao, A priori bounds and weak solutions for the nonlinear Schr\"odinger equation in Sobolev spaces of negative order, Preprint (2006).}
\bibitem{CKS}{J. Colliander, C. E. Kenig, and G. Staffilani, Local well-posedness for dispersion-generalized Benjamin-Ono equations, Differential and Integral Equations 16 (2003), no. 12, 1441-1472.}
\bibitem{CKSTT}{J. Colliander, M. Keel, G. Staffilani, H. Takaoka, T. Tao,
Sharp global well-posedness for KdV and modified KdV on $\R$ and
$\T$. J. Amer. Math. Soc., 16(3);705-749, 2003.}
\bibitem{GV}{J. Ginibre and G. Velo, Commutator expansions and smoothing properties of generalized Benjamin-Ono equations, Annales de l'Institut Henri Poincar\'{e}. Physique Th\'{e}orique 51 (1989), no. 2, 221-229.}
\bibitem{GV2}{J. Ginibre and G. Velo, Propri\'{e}t\'es de lissage et existence de solutions pour l'\'equation de Benjamin-Ono g\'en\'eralis\'ee, Comptes Rendus des S\'eances de l'Acad\'emie des Sciences. S\'erie I. Math\'ematique 308 (1989), no. 11, 309-314.}
\bibitem{GV3}{J. Ginibre and G. Velo, Smoothing properties and existence of solutions for the generalized Benjamin-Ono equation, Journal of Differential Equations 93 (1991), no. 1, 150-212.}
\bibitem{GW}{Z. Guo, B. Wang, Global well posedness and inviscid limit for the  Korteweg-de
Vries-Burgers equation, Preprint}
\bibitem{HN}{N. Hayashi and P. I. Naumkin, On the reduction of the modified Benjamin-Ono equation to the cubic derivative nonlinear Schro\"odinger equation, Discrete and Continuous Dynamical Systems 8 (2002), no. 1, 237-255.}
\bibitem{IK}{A. D. Ionescu, C. E. Kenig, Global well-posedness of
the Benjamin-Ono equation in low-regularity spaces, J. Amer. Math.
Soc., 20 (2007), no. 3, 753-798.}
\bibitem{IK2}{A. D. Ionescu, C. E. Kenig, Well-posedness of
the complex-valued Benjamin-Ono equation}
\bibitem{IKT}{A. D. Ionescu, C. E. Kenig, D. Tataru, Global well-posedness of
KP-I initial-value problem in the energy space, arXiv:0705.4239v1.}
\bibitem{Iorio}{R. J. I\'orio Jr., On the Cauchy problem for the Benjamin-Ono equation, Communications in Partial Differential Equations 11 (1986), no. 10, 1031-1081.}
\bibitem{KK}{C. E. Kenig, K. D. Koenig, On the local well-posedness of the Benjamin-Ono and modified Benjamin-Ono equations, Mathematical Research Letters 10 (2003), no. 5-6, 879-895.}
\bibitem{KPV}{C. E. Kenig, G. Ponce, L. Vega, Well-posedness and scattering results for the generalized Korteweg-de Vries equation via the contraction principle, Communications on Pure and Applied Mathematics 46 (1993), no. 4, 527-620.}
\bibitem{KPV2}{C. E. Kenig, G. Ponce, L. Vega, On the generalized Benjamin-Ono equation, Transactions of the American Mathematical Society 342 (1994), no. 1, 155-172.}
\bibitem{KPV3}{C. E. Kenig, G. Ponce, L. Vega, On the ill-posedness of some canonical dispersive equations, Duke Mathematical Journal 106 (2001), no. 3, 617-633.}
\bibitem{KPV4}{C. Kenig, G. Ponce, L. Vega, A bilinear estimate with
applications to the KdV equation. J. Amer. Math. Soc., 9:573-603,
1996. MR 96k:35159.}
\bibitem{KPV5}{C. Kenig, G. Ponce, L. Vega, Small solutions to nonlinear Schr\"odinger equations, Annales de l'Institut Henri Poincar\'e. Analyse Non Lin\'eaire 10 (1993), no. 3,
255-288.}
\bibitem{KeelT}{M. Keel, T. Tao, Endpoint Strichartz estimates, Amer. J. of Math., {\bf 120} (1998), 360-413.}
\bibitem{KenigT}{C. E. Kenig, H. Takaoka, Global wellposedness of the Modified Benjamin-Ono equation with initial data in $H^{1/2}$, International Mathematics Research Notices 2006 (2006), no. 1, 1-44.}
\bibitem{KochTataru}{H. Koch, D. Tataru, A Priori bounds for the 1-d cubic NLS in negative Sobolev spaces, International Mathematical Research Notices}
\bibitem{KT}{H. Koch, N. Tzvetkov, On the local well-posedness of the Benjamin-Ono equation in $H^s(\R)$, International Mathematics Research Notices 2003 (2003), no. 26, 1449-1464.}
\bibitem{KT2}{H. Koch, N. Tzvetkov, Nonlinear wave interactions for the Benjamin-Ono equation, International Mathematics Research Notices 2005 (2005), no. 30, 1833-1847.}
\bibitem{MR}{L. Molinet and F. Ribaud, Well-posedness results for the generalized Benjamin-Ono equation with arbitary large initial data, International Mathematics Research Notices 2004 (2004), no. 70, 3757-3795.}
\bibitem{MR2}{L. Molinet and F. Ribaud, Well-posedness results for the generalized Benjamin-Ono equation with small initial data, Journal de Math\'ematiques Pures et Appliqu\'ees. Neuvi\`{e}me S\'erie 83 (2004), no. 2, 277-311.}
\bibitem{Ono}{H. Ono, Algebraic solitary waves in stratified fluids, Journal of the Physical Society of Japan 39 (1975), no. 4, 1082-1091.}
\bibitem{Ponce}{G. Ponce, On the global well-posedness of the Benjamin-Ono equation, Differential and Integral Equations 4 (1991), no. 3, 527-542.}
\bibitem{Takaoka}{H. Takaoka, Well-posedness for the one-dimensional nonlinear Schr\"odinger equation with the derivative nonlinearity, Advances in Differential Equations 4 (1999), no. 4, 561-580.}
\bibitem{Tao}{T. Tao, Global well-posedness of the Benjamin-Ono equation in $H^1(\R)$, Journal of Hyperbolic Differential Equations 1 (2004), no. 1, 27-49.}
\bibitem{Taokz}{T. Tao, Multiplinear weighted convolution
of $L^2$ functions and applications to nonlinear dispersive
equations. Amer. J. Math., 123(5):839-908, 2001. MR 2002k:35283}
\bibitem{Taolocal}{T. Tao, Nonlinear Dispersive Equations: Local and Global Analysis. CBMS Regional Conference Series in Mathematics 106.}
\bibitem{Tataru}{D. Tataru, Local and global results for
wave maps I, Comm. Partial Differential Equations {\bf 23} (1998),
1781-1793.}


\end{thebibliography}
\end{document}